\def\crulefill{\leavevmode\leaders\hrule height 1pt\hfill\kern 0pt}
\long\def\QUERY#1{%
\leavevmode\newline%
\noindent$\star\star\star$\thinspace\textsf{Comment/Query}\crulefill\newline%
   \space #1\newline\hbox to 120mm{\crulefill}$\star\star\star$\newline
}
\numberwithin{equation}{section} \theoremstyle{definition}
\newtheorem{Defn}[equation]{Definition}%[section]
\theoremstyle{plain}
\newtheorem{Prop}[equation]{Proposition}
\newtheorem{Theorem}[equation]{Theorem}
\newtheorem{Assumption}[equation]{Assumption}
\newtheorem{Lemma}[equation]{Lemma}
\newtheorem{Cor}[equation]{Corollary}
\def\Case#1{\medskip\noindent\textbf{Case #1}:\leavevmode\newline}
\def\enumerate{\begingroup\ifnum\@enumdepth>3\@toodeep\else
      \advance\@enumdepth\@ne
      \edef\@enumctr{enum\romannumeral\the\@enumdepth}%
      \topsep\z@\parskip\z@
      \list{\csname label\@enumctr\endcsname}
        {\@nmbrlisttrue\let\@listctr\@enumctr
         \parsep\z@\itemsep\z@\topsep\z@
         \setcounter{\@enumctr}{0}
         \def\makelabel##1{\hss\llap{\rm ##1}}
       }\fi}
\let\epsilon=\varepsilon
\def\({\big(}
\def\){\big)}
\def\N{\mathbb N}
\def\R{\mathbb R}
\def\Z{\mathbb Z}
\def\0{\underline{0}}
\def\bu{\mathbf u}
\def\Bcal{\mathfrak B}
\def\Dcal{\mathcal D}
\def\Ef{{\mathcal E}}
\def\H{\mathscr H}
\let\proj=\varepsilon
\def\Nrf{\mathbb N_r^{f,n}}
\def\Sym{\mathfrak S}
\def\W{\mathscr W}
\def\Wlam{\W_{r,n}^{\gdom (f, \lambda)}}
\def\simk{\overset{k}\sim}
\def\simkk{\overset{k+1}\sim}
\def\siml{\overset{l}\sim}
\DeclareMathOperator{\End}{End} 
\DeclareMathOperator{\Rad}{Rad} \DeclareMathOperator*{\Res}{Res}
\let\gdom\rhd
\let\gedom\unrhd
\def\m{\mathfrak m}
\def\floor#1{\lfloor\tfrac#1\rfloor}
\def\UPD{\mathscr{T}^{ud}}
\def\Std{\mathscr{T}^{s}}
\def\bfs{\mathbf s}
\def\bft{\mathbf t}
\def\bfu{\mathbf u}
\def\bfv{\mathbf v}
\def\ts{\tilde\s}
\def\t{\mathfrak t}
\def\u{\mathfrak u}
\def\v{\mathfrak v}
\def\ts{\mathbf t}
  \gdef\set#1{\mathinner{\lbrace\,{\mathcode`\|"8000%
                                   \let|\midvert #1}\,\rbrace}}
  \gdef\seT#1{\mathinner{\Big\lbrace\,{\mathcode`\|"8000%
                                   \let|\midverT #1}\,\Big\rbrace}}
\def\midvert{\egroup\mid\bgroup}
\def\midverT{\egroup\,\Big|\,\bgroup}
\def\Set[#1]#2|#3|{\Big\{\ #2\ \Big| \
           \vcenter{\hsize #1mm\centering #3}\Big\}}
\def\map#1#2{\,{:}\,#1\!\longrightarrow\!#2}
\let\epsilon=\varepsilon
\def\({\big(}
\def\){\big)}
\def\N{\mathbb N}
\def\R{\mathbb R}
\def\Z{\mathbb Z}
\def\0{\underline{0}}
\def\bu{\mathbf u}
\def\Bcal{\mathfrak B}
\def\Dcal{\mathcal D}
\def\Ef{{\mathcal E}}
\def\H{\mathscr H}
\let\proj=\varepsilon
\def\Nrf{\mathbb N_r^{f,n}}
\def\Sym{\mathfrak S}
\def\W{\mathscr B}
\def\simk{\overset{k}\sim}
\def\simkk{\overset{k+1}\sim}
\let\gdom\rhd
\let\gedom\unrhd
\def\m{\mathfrak m}
\def\floor#1{\lfloor\tfrac#1\rfloor}
\def\UPD{\mathscr{T}^{ud}}
\def\Std{\mathscr{T}^{std}}
\def\s{\mathfrak s}
\def\ts{\tilde\s}
\def\t{\mathfrak t}
\def\u{\mathfrak u}
\def\v{\mathfrak v}
  \gdef\set#1{\mathinner{\lbrace\,{\mathcode`\|"8000%
                                   \let|\midvert #1}\,\rbrace}}
  \gdef\seT#1{\mathinner{\Big\lbrace\,{\mathcode`\|"8000%
                                   \let|\midverT #1}\,\Big\rbrace}}
\def\midvert{\egroup\mid\bgroup}
\def\midverT{\egroup\,\Big|\,\bgroup}
\def\Set[#1]#2|#3|{\Big\{\ #2\ \Big| \
           \vcenter{\hsize #1mm\centering #3}\Big\}}
\def\map#1#2{\,{:}\,#1\!\longrightarrow\!#2}
\title{The representations of cyclotomic BMW algebras}
\author{Hebing Rui}
\address{H. Rui, Department of Mathematics, East China Normal University, %
         200062 Shanghai, P.R. China.}
\email{hbrui@math.ecnu.edu.cn}
\author{Jie Xu }
\address{J. Xu, Department of Mathematics, East China Normal University, %
         200062 Shanghai, P.R. China.}
\email{52060601009@student.ecnu.edu.cn}
\begin{document}

\begin{abstract} In this paper, we  prove that the cyclotomic BMW algebras $\W_{2p+1, n}$  are cellular
in the sense of \cite{GL}. We also classify the irreducible
$\W_{2p+1, n}$-modules over a field.
\end{abstract}
 \sloppy \maketitle
%%%%%%%%%%%%%%%%%%%%%%%%%%%%%%%%%%%%%%%%%%%%%%%%%%%%%%%%%%%%%%%%%%%%%%%%%%
\sloppy \maketitle

\section{Introduction}

Let $r, n$ be two positive integers.
Haering--Oldenburg~\cite{HO:cycBMW} introduced a class of finite
dimensional algebras $\W_{r, n}$ called \textsf{cyclotomic BMW
algebras} in order to study the link invariants. Such algebras  are
associative algebras over a commutative ring $R$,  which are the
cyclotomic quotients of affine BMW algebras in \cite{HO:cycBMW} and
\cite{GoodmanHauschild}.

Motivated by Ariki, Mathas and Rui's work on
 cyclotomic Nazarov--Wenzl algebras~\cite{AMR},  we began to
study  $\W_{r, n}$ in August of 2004.  By using the method  given in
section~3  in \cite{AMR}, we constructed all possible irreducible
modules for $\W_{r, 2}$.\footnote{ This is the main result of
~\cite{Xu}.} A by-product is the definition of $\bu$-admissible
conditions given in Definition~\ref{add1} for certain parameters in
the ground ring $R$. We remark that there are two papers \cite{GH1}
and \cite{WY} in Arxiv which were posted at the end of  2006. In
those papers, there are  some  results for $\bu$-admissible
conditions.

In this paper,  we  will construct a class of rational functions,
which are similar to those  in \cite{AMR} for cyclotomic
Nazarov--Wenzl algebras. Such rational functions can be used to
construct the seminormal representations for $\W_{r, n}$ under
certain  conditions given in Lemma~\ref{generic u} and
Assumption~\ref{be}. By Wedderburn--Artin Theorem  on the semisimple
finite dimensional algebras, we  prove that the rank of  $\W_{r, n}$
is no less than $r^n (2n-1)!!$ if $\W_{r, n}$ is free. In order to
prove that $\W_{r, n}$ is free over $R$ with rank $r^n (2n-1)!!$, we
have to find a subset $S$ of $\W_{r, n}$ whose cardinality is $r^n
(2n-1)!!$ such that  $\W_{r, n}$ is generated by $S$  as $R$-module.
 If so, then $\W_{r, n}$ is free
over $R$ with rank $r^n(2n-1)!!$ as required.

In order to construct such a subset $S$,  we construct a class of
quotient modules of $\W_{r, n}$. Such modules can be used to
construct a filtration of two--sided ideals of $\W_{r, n}$.
Therefore, we can lift the set of generators for all such quotient
modules to get a set of generators for $R$-module $\W_{r, n}$.
Together with our previous results on the seminormal representations
for $\W_{r, n}$, we  prove that $\W_{r, n}$ is a cellular algebra in
the sense of \cite{GL}. In particular, $\W_{r, n}$ is free over $R$
with rank $r^n(2n-1)!!$ as required. For some technique reasons, we
have to assume that $r$ is odd when we construct the quotient
modules for $\W_{r, n}$. Using the results on the  representation
theory of cellular algebras given in \cite{GL}, we find the
relationship between the irreducible modules for Ariki--Koike
algebras and $\W_{r, n}$. This will enable us to classify the
irreducible $\W_{r, n}$--modules over a field.

We organize the paper as follows. In section~2, we recall the
definition of $\W_{r, n}$ over a commutative ring. In section~3, we
give all possible irreducible modules for $\W_{r, 2}$. We  also give
the definition of $\bu$-admissible conditions. Under this assumption
together with two conditions in Lemma~\ref{generic u} and
Assumption~\ref{be}, we construct the seminormal forms for $\W_{r,
n}$ in section~4. In section~5, we construct the quotient modules
for $\W_{r, n}$ for all odd positive integers $r$. At the end of
section~5, we  prove that such a $\W_{r, n}$ is cellular in the
sense of \cite{GL}.  Finally, we classify the irreducible $\W_{r,
n}$--modules for odd $r$  in section~6.

When we wrote the paper, we received Dr. Shona Yu's Ph. D thesis
entitled ``The cyclotomic Birman-Murakami-Wenzl Algebras"~
\cite{Yu1}. In her thesis, Yu has  proved that $\W_{r, n}$ is
cellular algebra for all $r$ by using different method.  However, we
could not understand why she had assumed that $\omega_0$, one of
parameters appeared in the definition of $\W_{r, n}$,  is invertible
when she constructed a subset of $\W_{r, n}$ which generates $\W_{r,
n}$ as $R$-module.

\textbf{Acknowledgment:} The first author was supported in part by
NSFC  and NCET. He also thanks Professor Ariki and Professor Mathas
for their collaboration in \cite{AMR}.

\section{Cyclotomic BMW algebras}
Throughout the paper,  we fix two positive integers $r, n$ and a
commutative ring $R$ with multiplicative identity~$1_R$.

\begin{Defn}
\label{Waff relations} Suppose that  $R$ is a commutative ring which
contains $q^{\pm 1}, u_1^{\pm 1}, \dots, u_r^{\pm 1},  \varrho^{\pm
1}, \delta^{\pm 1}$ with $\delta=q-q^{-1}$. Fix
$\Omega=\set{\omega_a|a\in \mathbb Z}\subseteq R$ such that
$$\omega_0=1-\delta^{-1}(\varrho-\varrho^{-1}).$$ The
\textsf{cyclotomic  BMW algebras} $\W_{r, n}$ is the unital
associative $R$--algebra  generated  by $\set{T_i,E_i, X_j^{\pm
1}|1\le i<n \text{ and }1\le j\le n } $ subject to the following
relations:

\begin{enumerate}
\item $X_i X_{i}^{-1}=X_{i}^{-1}X_i=1$ for $1\le i\le n$.
    \item (Kauffman skein relation )
$1=T_i^2-\delta T_i +\delta \varrho E_i$, for $1\le i<n$.
    \item (braid relations)
\begin{enumerate}
\item $T_iT_j=T_jT_i$ if $|i-j|>1$,
\item $T_iT_{i+1}T_i=T_{i+1}T_iT_{i+1}$, for $1\le i<n-1$,
\item $T_iX_j=X_jT_i$ if $j\ne i,i+1$.
\end{enumerate}
    \item (Idempotent relations)
$E_i^2=\omega_0E_i$, for $1\le i<n$.
    \item (Commutation relations)  $X_iX_j=X_jX_i$, for $1\le i,j\le n$.

\item (Skein relations)
\begin{enumerate}\item      $T_iX_i-X_{i+1}T_i=\delta X_{i+1}
(E_i-1)$,   for $1\le i<n$,
    \item     $X_iT_i-T_iX_{i+1}=\delta (E_i-1) X_{i+1}$, for $1\le i<n$.
    \end{enumerate}
    \item (Unwrapping relations)
        $E_1X_1^aE_1=\omega_a E_1$, for $a\in \mathbb Z$.
    \item (Tangle relations)
\begin{enumerate}
\item $E_iT_i=\varrho E_i=T_iE_i$, for $1\le i\le n-1$,

\item $E_{i+1}E_i=E_{i+1}T_iT_{i+1}=T_iT_{i+1} E_i$,for $1\le i\le n-2$.
\end{enumerate}

    \item (Untwisting relations)\begin{enumerate} \item
        $E_{i+1}E_iE_{i+1}=E_{i+1}$  for $1\le i\le n-2$,
        \item $E_iE_{i+1}E_i=E_i$, for $1\le i\le
        n-2$.\end{enumerate}
    \item (Anti--symmetry relations) $E_iX_iX_{i+1}=E_i=X_iX_{i+1}E_i$, for $1\le i<n$.
\item (Cyclotomic relation) $(X_1-u_1)(X_1-u_2)\cdots(X_1-u_r)=0$
\end{enumerate}
\end{Defn}

By Definition~\ref{Waff relations}(b)(f)(h), we have $X_i=T_{i-1}
X_{i-1}T_{i-1}$ for $2\le i\le n$. Therefore, $\W_{r, n}$ can also
be generated by $E_i, T_i, X_1$ for $1\le i\le n-1$. We will not
need this fact. What we will need is $X_i=T_{i-1} X_{i-1}T_{i-1}$
later on.

$\W_{r, n}$ was introduced by Haering--Oldenburg in order to study
the link invariants. It was re-defined by Goodman and Hauschild as
the quotient algebra of affine BMW algebra in
\cite{GoodmanHauschild}. Further,  Goodman and Hauschild
\cite{GoodmanHauschild} have proved that  affine BMW algebras are
free over $R$ by showing that they are isomorphic to the affine
Kauffman tangle algebras. We do not recall such  result since we do
not need it when we discuss the $\W_{r, n}$ later on.

The main purpose of this paper is to develop the representation
theory of $\W_{r, n}$ by using the method in \cite{AMR}.

\begin{Lemma} \label{antiinvolution}
There is  a unique R-linear anti-isomorphism $\ast: \W_{r,n}
\longrightarrow \W_{r,n}$ such that $T_i^\ast=T_i, E_i^\ast=E_i$ and
$X_j^\ast=X_j$ for all positive integers $ i<n$ and $j\leq
n$.\end{Lemma}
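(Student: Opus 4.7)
The plan is to define $\ast$ on generators as prescribed and extend by the universal property, then verify this extension descends to an anti-homomorphism of $\W_{r,n}$ by checking that every relation is preserved under reversal of words. Concretely, let $F$ be the free unital associative $R$-algebra on the symbols $\{T_i, E_i, X_j, Y_j\}$ (with $Y_j$ playing the role of $X_j^{-1}$), and let $\ast'\map{F}{F^{\mathrm{op}}}$ be the $R$-linear anti-automorphism fixing each generator. I want to show that $\ast'$ descends to a map on the quotient $\W_{r,n}=F/I$, where $I$ is the two-sided ideal generated by relations (a)--(k) of Definition~\ref{Waff relations}. This reduces to checking that the image of each relation under $\ast'$ lies in $I$; once this is established, $\ast$ is an anti-homomorphism, and since $\ast^2$ fixes the generators it is the identity, so $\ast$ is an involution and hence a bijection. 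Uniqueness is automatic since the generators determine $\ast$.

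The bulk of the work is the relation check. Most relations are visibly symmetric under word reversal: (a), (b), (d), (e), (g) and (k) involve only commuting factors or palindromic monomials, while (c)(i), (c)(iii), the first half of (i) and the symmetric pair in (j) are trivially reversed into themselves; the braid relation (c)(ii) reverses to itself directly. The genuine pairs to track are as follows: relation (f)(i) reverses to $X_iT_i - T_iX_{i+1} = \delta(E_i - 1)X_{i+1}$, which is exactly (f)(ii), and vice versa; relation (h)(i) reverses the two halves of $E_iT_i = \varrho E_i = T_iE_i$ into each other.

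The one place that needs a small derivation is (h)(ii), whose reverse reads $E_iE_{i+1} = T_{i+1}T_iE_{i+1} = E_iT_{i+1}T_i$. I would verify this directly inside $\W_{r,n}$ using the already-established relations. For instance, multiplying $T_iT_{i+1}E_i = E_{i+1}E_i$ on the right by $E_{i+1}$ and using $E_iE_{i+1}E_i = E_i$ together with (i) gives $T_iT_{i+1}E_iE_{i+1} = E_{i+1}$; the Kauffman skein relation (b) combined with $E_iT_i = \varrho E_i$ shows $T_i$ is invertible with $T_i^{-1} = T_i - \delta + \delta E_i$, and solving then yields $E_iE_{i+1} = T_{i+1}^{-1}T_i^{-1}E_{i+1}$; finally one multiplies by $T_iT_{i+1}$ and uses $T_{i+1}E_{i+1} = \varrho E_{i+1}$ to obtain the desired form. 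The other equality in the reversed (h)(ii) is handled by the symmetric manipulation starting from $E_{i+1}T_iT_{i+1} = E_{i+1}E_i$.

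The main obstacle, as indicated, is the algebraic verification for (h)(ii); everything else is a matter of inspecting that the listed relations are either palindromic or come in pairs swapped by reversal. Finally, to handle (a), one notes that $\ast'(Y_j) = Y_j$ and $\ast'(X_jY_j - 1) = Y_jX_j - 1$, which lies in $I$ since $X_jY_j = Y_jX_j = 1$ under the relation (a), so the inverse generators also cause no issue. This completes the proof.
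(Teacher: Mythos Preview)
Your approach is essentially the same as the paper's: both observe that all relations except (h)(ii) are manifestly symmetric or come in reversal pairs, and both reduce the proof to deriving the reversed form $E_iE_{i+1}=E_iT_{i+1}T_i=T_{i+1}T_iE_{i+1}$ inside $\W_{r,n}$ from (h)(ii) together with the untwisting relations (i) and the skein relation (b).

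One small imprecision: your final step ``multiply by $T_iT_{i+1}$ and use $T_{i+1}E_{i+1}=\varrho E_{i+1}$'' does not do what you want---left-multiplying $E_iE_{i+1}=T_{i+1}^{-1}T_i^{-1}E_{i+1}$ by $T_iT_{i+1}$ just returns $T_iT_{i+1}E_iE_{i+1}=E_{i+1}$, and right-multiplication is no better. The paper's fix (and the natural completion of your line) is to substitute $T_j^{-1}=T_j-\delta+\delta E_j$: from $E_i=E_iE_{i+1}T_iT_{i+1}$ one gets $E_iT_{i+1}^{-1}=E_iE_{i+1}T_i$, and expanding $T_{i+1}^{-1}$ on the left and $T_i=T_i^{-1}+\delta-\delta E_i$ on the right yields $E_iT_{i+1}=E_iE_{i+1}T_i^{-1}$, hence $E_iT_{i+1}T_i=E_iE_{i+1}$; the other equality is symmetric.
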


\begin{proof} By checking the defining relations for $\W_{r, n}$, $\ast$ is an $R$-linear anti-automorphism
if $E_iT_{i+1}T_i=E_iE_{i+1}=T_{i+1}T_iE_{i+1}$ for $1\le i\le n-2$.
In fact, By Definition~\ref{Waff relations}(h)(ii),
$E_{i+1}E_i=E_{i+1}T_iT_{i+1}$. So, $E_i=E_iE_{i+1}T_iT_{i+1}$ and
$E_iT_{i+1}^{-1}=E_iE_{i+1}T_i$. By Definition~\ref{Waff
relations}(b) and (h)(i), $E_i(T_{i+1}-\delta+\delta
E_{i+1})=E_iE_{i+1}(T_i^{-1}+\delta-\delta E_i)$. By
Definition~\ref{Waff relations}(i)(ii) again,
$E_iT_{i+1}=E_iE_{i+1}T_i^{-1}$.  In other words,
$E_iT_{i+1}T_i=E_iE_{i+1}$.  Similarly, we  can prove
$E_iE_{i+1}=T_{i+1}T_iE_{i+1}$.
\end{proof}

\begin {Lemma}\label{relations2} Given   positive integers $k\le n-1$ and $a$.
\begin{itemize}
\item [(1)] $T_kX_k^a=X_{k+1}^aT_k+\sum_{i=1}^a\delta
X_{k+1}^i(E_k-1)X_k^{a-i}$, \item [(2)]
$T_k^{-1}X_k^a=X_{k+1}^aT_k^{-1}+\sum_{i=1}^a\delta
X_{k+1}^{a-i}(E_k-1)X_k^i$,\item [(3)] $E_kX_k^a T_k=\varrho
E_kX_k^{-a}+\delta \sum_{i=1}^a E_kX_k^{a-i}E_kX_k^{-i}-\delta
\sum_{i=1}^a E_kX_k^{a-2i}$.
  \item [(4)] $T_kX_k^{-a}=X_{k+1}^{-a}T_k-\sum_{i=1}^{a}\delta
X_{k+1}^{-a+i}(E_k-1)X_k^{-i}$,
\item[(5)] $T_k^{-1}X_k^{-a}=X_{k+1}^{-a}T_k^{-1}-\sum_{i=1}^{a}\delta
X_{k+1}^{-i}(E_k-1)X_k^{-a+i}$,\item[(6)] $E_kX_k^{-a} T_k=\varrho
E_kX_k^{a}-\delta \sum_{i=1}^{a} E_kX_k^{-i}E_kX_k^{a-i}+\delta
\sum_{i=1}^{a} E_kX_k^{a-2i}$.\end{itemize}
\end{Lemma}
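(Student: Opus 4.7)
The plan is to establish parts (1)--(3) directly and then deduce (4)--(6) from them by purely algebraic rearrangement. Throughout I would exploit the skein relations~(f), the tangle relations (h), the anti-symmetry relations (j), the commutativity of $X_k$ and $X_{k+1}$ from (e), and the anti-involution $*$ of Lemma~\ref{antiinvolution}.

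For (1), I would induct on $a\ge 1$. The base case $a=1$ is just relation (f)(i) rewritten as $T_k X_k = X_{k+1}T_k + \delta X_{k+1}(E_k-1)$. For the inductive step, I multiply the identity for $a$ on the right by $X_k$ and apply the base case to the emerging $T_k X_k$; the new term $\delta X_{k+1}^{a+1}(E_k-1)$ slots in as the $i=a+1$ summand.

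For (2), I first need a skein-type identity for $T_k^{-1}$. Multiplying the Kauffman relation (b) by $T_k^{-1}$ and using $E_k T_k^{-1} = \varrho^{-1} E_k$ (a consequence of (h)(i)) gives $T_k^{-1} = T_k - \delta + \delta E_k$. Substituting this into $T_k^{-1} X_k - X_{k+1} T_k^{-1}$ and simplifying via (f)(i) and (e) yields the base case $T_k^{-1} X_k = X_{k+1} T_k^{-1} + \delta(E_k-1)X_k$. The induction on $a$ then runs parallel to part (1).

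For (3), I apply the anti-involution $*$ to (1), which (since $*$ fixes $T_k$, $E_k$, $X_j$) gives
\[X_k^a T_k = T_k X_{k+1}^a + \sum_{i=1}^a \delta X_k^{a-i}(E_k-1) X_{k+1}^i.\]
Left-multiplying by $E_k$ and invoking $E_k T_k = \varrho E_k$ from (h)(i), together with the identities $E_k X_{k+1}^j = E_k X_k^{-j}$ and $X_{k+1}^j E_k = X_k^{-j} E_k$ obtained by iterating the anti-symmetry relations (j), and the commutativity (e), converts every power of $X_{k+1}$ into the corresponding negative power of $X_k$ and reproduces the right-hand side of (3).

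Parts (4)--(6) follow by rearrangement. For (4), multiplying (1) on the right by $X_k^{-a}$ gives $T_k = X_{k+1}^a T_k X_k^{-a} + \sum_{i=1}^a \delta X_{k+1}^i (E_k-1) X_k^{-i}$; left-multiplying by $X_{k+1}^{-a}$ and reindexing yields (4). The analogous manipulation applied to (2) and (3) produces (5) and (6). The principal nuisance is step (3), where one must carefully distinguish which $X_{k+1}^i$ sits directly beside an $E_k$ (and hence collapses to $X_k^{-i}$) from one that must first commute past a power of $X_k$ before being absorbed (yielding $E_k X_k^{-i} X_k^{a-i} = E_k X_k^{a-2i}$); the sign and exponent accounting there is what I would expect to slow me down.
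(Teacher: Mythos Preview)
Your proposal is correct and follows essentially the same route as the paper: induction on $a$ for (1), the Kauffman relation (b) to handle $T_k^{-1}$ in (2), the anti-involution $*$ followed by left multiplication by $E_k$ together with the anti-symmetry relations (j) for (3), and algebraic rearrangement for the negative-exponent identities. The only minor organizational difference is that the paper deduces (6) from (4) (via the same $*$-then-$E_k$ trick used for (3)) rather than from (3) by rearrangement; your stated route for (6) is not the most direct, but all the necessary ingredients are present.
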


\begin{proof} (1) can be proved by induction on $a$. (2) follows
from (1) and \ref{Waff relations}(b). Applying anti-automorphism
$\ast$ on (1), we get the formula for $X_k^a T_k$. Multiplying $E_k$
on such a formula, we get (3). (4) follows from (1). (5) follows
from (4) and \ref{Waff relations}(b). (6) follows from (4).
\end{proof}

Acting  $E_1$ on  both sides of $X_1^b \prod_{i=1}^r(X_1-u_i)=0$ for
$b\in \mathbb Z$,  and Lemma~\ref{relations2}(1) for $k=1$,
respectively, we have
\begin{equation} \label{adm1}\begin{cases} & \sum_{s=0}^{r} (-1)^{r-s} \sigma_{r-s} (\bu)
\omega_{s+b} E_1=0\\ & \omega_a E_1=\omega_{-a}E_1+\sum_{i=1}^a
\varrho^{-1}\delta (\omega_{a-i} \omega_{-i}-\omega_{a-2i})E_1,\\
\end{cases}
\end{equation}
 where $\sigma_i(\bu)$ is the $i$-th basic symmetric
polynomial in $u_1, u_2, \cdots, u_r$.

If we assume that  $E_1$ is not a torsion element, then
\begin{equation}\label{ad1}
\begin{cases} & \sum_{s=0}^{r} (-1)^{r-s} \sigma_{r-s} (\bu)
\omega_{s+b}=0, \forall\  b\in \mathbb Z, \\
& \omega_a=\omega_{-a}+\sum_{i=1}^a \varrho^{-1}\delta (\omega_{a-i}
\omega_{-i}-\omega_{a-2i}).\\
\end{cases}
\end{equation}

\begin{Defn}\label{admissble}
The parameters $\omega_a\in R, a\in \mathbb Z$  are called
\textsf{admissible} if they satisfy (\ref{ad1}).
\end{Defn}

In \cite{AK}, Ariki and Koike introduced  $\H_{r,n}(\bu):=\H_{r,
n}$, \textsf{the cyclotomic Hecke algebra} of type $G(r, 1, n)$ or
the  \textsf{Ariki-Koike algebra}. By definition, it  is the unital
associative $R$-algebra generated by $y_1, \dots, y_n $ and $g_1,
g_2, \dots, g_{n-1}$ subject to the following relations:
\begin{enumerate}
\item $(g_i-q)(g_i+q^{-1})=0$, if $1\le i\le n-1$,
\item $g_ig_j=g_jg_i$, if $|i-j|>1$,
\item $g_ig_{i+1}g_i=g_{i+1}g_ig_{i+1}$, for $1\leq i<n-1$,
\item $g_iy_j=y_jg_i$, if $j\neq i,i+1$,
\item $y_iy_j=y_jy_i$, for $1\leq i,j\leq n$,
\item $y_{i+1}=g_iy_ig_i$,
for $1\leq i\leq n-1$,
\item $(y_1-u_1)(y_1-u_2)\dots(y_1-u_r)=0$.
\end{enumerate}
In this paper, since we are assuming that $u_1, \dots, u_r$ are
invertible in $R$, $y_i$'s are invertible in $\H_{r, n}$.

Let $\langle E_1\rangle$ be the two-sided ideal of $\W_{r, n}$
generated by $E_1$.  It is not difficult to see that there is an
epimorphism
\begin{equation}\label{eta} \eta: \H_{r,n}(\bu)\longrightarrow
\W_{r,n}/\langle E_1\rangle
\end{equation}  determined by:
$\eta(g_i)=T_i + \langle E_1\rangle$ and $\eta(y_j)= X_j + \langle
E_1\rangle$ for positive integers $i<n$ and $j\leq n$.

So, any $\W_{r,n}$-module, which is annihilated by $E_1$, is an
$\H_{r,n}(\bu)$-module. We will use  this fact frequently in the
next section.

\section{$\bu$-admissible conditions}
In this section,  unless otherwise stated, we always  assume that
$R=\mathbb Q(u_1, u_2, \cdots,
 u_r, q)$, where $q, u_1, u_2, \cdots, u_r$
 are algebraically independent over $\mathbb Q$. Let $\varrho^{\pm 1} \in R$, $\delta=q-q^{-1}$ and
 $\Omega=\set{\omega_a\mid a\in \mathbb Z }\subseteq R$ such that
$$\omega_0=1-\delta^{-1}(\varrho-\varrho^{-1}).$$

The main purpose of this section is to construct all possible
irreducible representations of $\W_{r, 2}$ over $R$. We find a set
of conditions on the parameters $\varrho$ and  $\{\omega_a\mid a\in
\mathbb Z\}$, called \textsf{ $\bu$-admissible conditions}, such
that the dimension of the corresponding $\W_{r, 2}$ is  $3r^2$.
These conditions, which are similar to those for the cyclotomic
Nazarov-Wenzl algebras in \cite{AMR},  are exactly what we need for
general $n$.\footnote{The results given in this section are the main
results of \cite{Xu}. Similar results can be found in \cite{WY}. We
remark the method we are using  here is almost the same as that used
in \cite{AMR}.}

\begin{Prop}
\label{E=0}Suppose that $M$ is an irreducible
$\W_{r,2}$--module such that $E_1M=0$. Then one of the following
results holds.
\begin{enumerate}
\item $M=Rm$ is one dimensional and the action of $\W_{r,2}$
        is determined by
$$T_1 m=\epsilon m,\quad E_1m=0, \quad X_1m=u_im,\quad\text{and}\quad
               X_2m=\epsilon^2 u_im,$$
where $\epsilon\in \{q, -q^{-1}\}$ and $1\le i\le r$. In particular,
up to isomorphism, there are exact $2r$ such representations.
\item $M$ is two dimensional and the action of $\W_{r,2}$ is given by
\begin{tabular}{lll}
$T_1\mapsto\frac{u_{j}}{u_{j} - u_{i}} \left(\begin{array}{ll}
\delta &q - u_{i}q^{-1}u_{j}^{-1} \\  q^{-1} -
{qu_{i}}{u_{j}^{-1}}& - \delta u_{i}u_{j}^{-1} \end{array}\right)$\\
$E_1\mapsto\left(\begin{array}{ll} 0&0\\0&0\end{array}\right)$\\
$X_{1}\mapsto\left(\begin{array}{ll}
u_{i}&0\\0&u_{j}\end{array}\right)$\\
$X_{2}\mapsto\left(\begin{array}{ll}
u_{j}&0\\0&u_{i}\end{array}\right)$\\
\end{tabular}

\noindent  where $1\le i, j\le  r$ with $i\neq j$. In particular, up
to isomorphism there are exact $\binom r2$ such representations.
\end{enumerate}
\end{Prop}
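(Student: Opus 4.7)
The plan is to identify $M$ with an irreducible module for the Ariki--Koike algebra via the surjection $\eta$ of (\ref{eta}), so that $T_1,X_1,X_2$ act as $g_1,y_1,y_2$. Since $u_1,\dots,u_r$ are algebraically independent (hence pairwise distinct), the cyclotomic relation $\prod_{k=1}^{r}(y_1-u_k)=0$ forces $y_1$ to be diagonalisable on $M$ with spectrum in $\{u_1,\dots,u_r\}$. I will fix a non-zero eigenvector $m_1$ with $y_1m_1=u_im_1$ and set $v=g_1m_1$.

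If $v\in Rm_1$, say $v=\epsilon m_1$, the Hecke relation $g_1^2m_1=(\delta g_1+1)m_1$ yields $\epsilon^2-\delta\epsilon-1=0$, so $\epsilon\in\{q,-q^{-1}\}$; moreover $y_2m_1=g_1y_1g_1m_1=\epsilon^2u_im_1$. Thus $Rm_1$ is a submodule, hence equals $M$ by irreducibility, accounting for the $2r$ modules of (a).

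Suppose instead $\{m_1,v\}$ is linearly independent. From $y_2=g_1y_1g_1$ and $g_1^{-1}=g_1-\delta$ I deduce the crossing identities $g_1y_1=y_2g_1-\delta y_2$ and $y_1g_1=g_1y_2-\delta y_2$. These show that $N:=Rm_1+Rv$ is $\H_{r,2}(\bu)$-invariant precisely when $y_2m_1\in Rm_1$, so I need $m_1$ to be a \emph{common} eigenvector of $y_1$ and $y_2$. Producing one is the main step. Pass to the cyclic $R[y_2]$-submodule $V:=R[y_2]m_1$, which lies in the $u_i$-eigenspace $M_i$ of $y_1$. Any root $\lambda$ of the characteristic polynomial of $y_2|_V$ admits an eigenvector $\tilde v_2$ over $\bar R$; if $\lambda\ne u_i$, the crossing identities show that $g_1\tilde v_2-\tfrac{\delta\lambda}{\lambda-u_i}\tilde v_2$ is a non-zero $y_1$-eigenvector with eigenvalue $\lambda$, forcing $\lambda\in\{u_1,\dots,u_r\}$. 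Hence the characteristic polynomial splits over $R$, so $y_2|_V$ has an eigenvector in $V$. If all such eigenvalues equalled $u_i$, then for any eigenvector $v_2\in V$ the vector $g_1v_2$ would satisfy $(y_1-u_i)g_1v_2=-\delta u_iv_2\ne0$ while $(y_1-u_i)^2g_1v_2=0$, contradicting diagonalisability of $y_1$ on $M$. So some eigenvalue is $u_j$ with $j\ne i$, and after replacing $m_1$ by the corresponding eigenvector I may assume $y_2m_1=u_jm_1$.

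The crossing identities now give
\[
g_1v=\delta v+m_1,\qquad y_1v=u_jv-\delta u_jm_1,\qquad y_2v=u_iv+\delta u_jm_1,
\]
so $N$ is $\H_{r,2}(\bu)$-invariant and, by irreducibility, equals $M$. To match the matrices of part~(b) I diagonalise $y_1$ on $N$: the vector $v-\tfrac{\delta u_j}{u_j-u_i}m_1$ is a $u_j$-eigenvector, and after rescaling by $c:=\tfrac{u_j-u_i}{q^{-1}u_j-qu_i}$ to define $m_2$, a direct $2\times 2$ computation converts the formulas above into the displayed matrices for $T_1$, $X_1$, $X_2$. Swapping $i\leftrightarrow j$ corresponds to swapping $m_1\leftrightarrow m_2$ and yields an isomorphic module, so unordered pairs $\{i,j\}$ enumerate the isomorphism classes, producing $\binom{r}{2}$ modules. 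The only work beyond the common-eigenvector step is this $2\times 2$ matrix arithmetic, in the spirit of \cite{AMR}.
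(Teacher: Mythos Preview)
Your reduction via $\eta$ to irreducible $\H_{r,2}$-modules is exactly how the paper begins. The paper then finishes in one line by citing \cite{AK} for the classification of irreducible $\H_{r,2}$-modules, and verifies by direct computation that (a) and (b) indeed define $\W_{r,2}$-modules with $E_1$ acting by zero. You instead rederive the $\H_{r,2}$ classification by hand; this is more self-contained than the paper's route, but considerably longer, and it carries a gap.

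The gap is your assertion that $w=g_1\tilde v_2-\tfrac{\delta\lambda}{\lambda-u_i}\tilde v_2$ is non-zero. If $w=0$ then $\bar R\,\tilde v_2$ is a one-dimensional submodule of $M\otimes_R\bar R$ on which $g_1,y_1,y_2$ act by the $R$-scalars $\epsilon\in\{q,-q^{-1}\}$, $u_i$, and $\epsilon^{2}u_i$; nothing you have written excludes this possibility, and irreducibility of $M$ over $R$ does not by itself prevent $M\otimes_R\bar R$ from acquiring a one-dimensional submodule. One fix: since those scalars already lie in $R$, this one-dimensional module $L$ is defined over $R$, and base-change of $\operatorname{Hom}$ shows that a nonzero embedding $L\otimes\bar R\hookrightarrow M\otimes\bar R$ forces a nonzero map $L\to M$ over $R$, contradicting irreducibility with $\dim_R M\ge 2$. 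Some such descent step is needed. You should also state explicitly, as the paper does, that the displayed matrices actually satisfy the $\W_{r,2}$-relations (with $E_1=0$); your argument establishes only that any irreducible with $E_1M=0$ must have this shape, not that these formulas define representations.
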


\begin{proof} It follows from (\ref{eta}) that  any irreducible $\W_{r, 2}$-module $M$
 has to be an irreducible $\H_{r, 2}$-module if $E_1 M=0$. By the results for $\H_{r, 2}$ in \cite{AK},
 $M$ has to be one of the modules given in (a) and
(b). By direct computation, both (a) and (b) do define the $\W_{r,
2}$-modules with trivial action of $E_1$ on them.
\end{proof}

\begin{Prop} \label{Ene0}
Suppose $\omega_0\ne0$.   There is  a unique irreducible $\W_{r,
2}$--module $M$ such that $E_1 M\ne0$. Moreover,  $\dim_R M\le r$.
If $d=\dim_R M$, then there exists a basis $\{m_1, m_2, \dots,m_d\}$
of~$M$ and scalars $\{v_1,\dots,v_d\}\subseteq\{u_1,\dots,u_r\}$,
with $v_i\ne v_j$ whenever~$i\ne j$, such that for $1\le i\le d$ the
following hold:
\begin{enumerate}
    \item $X_1 m_i=v_im_i$ and $X_2 m_i=v_i^{-1} m_i$,
    \item $E_1 m_i= \gamma_i (m_1+\dots+m_d)$
    \item $\displaystyle T_1 m_i= \frac{\delta (\gamma_i-1)}{v_i^2-1}m_i
        +\sum_{j\ne i}\frac{\delta\gamma_i}{v_iv_j-1}m_j$,
\end{enumerate}\noindent%
where   $\omega_{a} = \sum\limits_{j=1}^d v_{j}^{a}\gamma_{j}$,
$a\in \mathbb Z$, and

\begin{itemize} \item [(1)] $\gamma_{i} = (\gamma_d(v_i) + \delta^{-1}\varrho(v_{i}^{2}
- 1) \prod \limits_{j\neq i}v_{j})\prod\limits_{j\neq
i}\frac{v_{i}v_{j} - 1}{v_{i} - v_{j}}$, where $\gamma_d(z)=1$ for
$2\nmid d$ and $-z$, otherwise.
\item [(2)]$\varrho^{- 1}= \alpha \prod_{l=1}^d v_{l}$ where $\alpha\in \{1, -1\}$ if $2\nmid d$ and $\alpha\in \{q^{-1}, -q\}$, otherwise.
\item[(3)]
 $\omega_{0}=
\delta^{-1}\varrho(\prod\limits_{l=1}^d v_{l}^2 - 1) +
1-\frac{(-1)^d +1}{2}\prod_{i=1}^d v_i$.
\end{itemize}

Conversely,  if  $\gamma_{j}, \varrho^{-1}$,  $\omega_{a} =
\sum\limits_{j = 1}^d v_{j}^{a}\gamma_{j}$, $a\in \mathbb Z$, are
defined as above, then (a)-(c) define an irreducible
$\W_{r,2}$-module with $E_1 M\neq 0$.
\end{Prop}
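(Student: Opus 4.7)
My approach parallels section~3 of \cite{AMR} for cyclotomic Nazarov--Wenzl algebras. The key lever is that $\omega_0\ne0$ makes $\omega_0^{-1}E_1$ an idempotent, while the relations $T_1E_1=\varrho E_1$, $E_1X_1^aE_1=\omega_aE_1$, and $X_1X_2E_1=E_1$ entirely pin down the action on $V:=E_1M$. Fix a nonzero $w=E_1m\in V$ and let $W\subseteq M$ denote the $R[X_1]$-submodule it generates. Since $X_1$ satisfies the cyclotomic polynomial whose roots $u_1,\dots,u_r$ are distinct under our standing hypothesis on $R$, the operator $X_1|_W$ is semisimple and $W$ is finite dimensional, of dimension at most $r$. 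Using Lemma~\ref{relations2}(1) together with $T_1w=\varrho w$, $E_1w=\omega_0w$, and the consequence $X_2^iw=X_1^{-i}w$ of $X_1X_2w=w$ and commutativity of $X_1,X_2$, one verifies that $W$ is also stable under $T_1$, $E_1$ and $X_2$; irreducibility then forces $W=M$. Labelling the distinct $X_1$-eigenvalues occurring on $M$ by $v_1,\dots,v_d\in\{u_1,\dots,u_r\}$ and letting $p_i(X):=\prod_{j\ne i}(X-v_j)/(v_i-v_j)$ be the associated Lagrange projector, the vectors $m_i:=p_i(X_1)w$ form a basis of $M$ with $X_1m_i=v_im_i$ and $w=\sum_jm_j$, which establishes $d=\dim_RM\le r$.

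The three displayed formulas then follow by direct calculation. Commutativity of $X_1$ and $X_2$ gives $X_2m_i=p_i(X_1)X_2w=p_i(X_1)X_1^{-1}w=v_i^{-1}m_i$, proving (a). For (b), $E_1X_1^aE_1=\omega_aE_1$ forces $E_1q(X_1)w\in Rw$ for every polynomial $q$, hence $E_1m_i=\gamma_iw=\gamma_i\sum_jm_j$ for some $\gamma_i\in R$; expanding $X_1^a=\sum_jv_j^ap_j(X_1)$ in $E_1X_1^aw=\omega_aw$ then yields the identity $\omega_a=\sum_jv_j^a\gamma_j$ for all $a\in\mathbb Z$. For (c) I would apply Lemma~\ref{relations2}(1) to expand $T_1X_1^aw$ as an $R$-linear combination of the $X_1^kw$ (eliminating $T_1$ via $T_1w=\varrho w$ and $X_2$ via $X_2^iw=X_1^{-i}w$), then apply $p_i$ and re-expand in the basis $\{m_j\}$; after partial fractions the matrix coefficients $\delta(\gamma_i-1)/(v_i^2-1)$ and $\delta\gamma_i/(v_iv_j-1)$ appear.

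Finally, the constraints (1)--(3) are forced by the remaining defining relations. For (2), combining the explicit $T_1$-matrix from (c) with the identity $X_2m_i=T_1X_1T_1m_i=v_i^{-1}m_i$ (using $X_2=T_1X_1T_1$) and comparing determinants yields $\varrho^{-1}=\alpha\prod_lv_l$; the Kauffman skein relation $T_1^2=1+\delta T_1-\delta\varrho E_1$ then forces $\alpha\in\{1,-1\}$ when $d$ is odd and $\alpha\in\{q^{-1},-q\}$ (the roots of $\alpha^2-(q^{-1}-q)\alpha-1=0$) when $d$ is even. Formula (1) is obtained by inverting the Vandermonde relation $\omega_a=\sum v_j^a\gamma_j$ on $d$ consecutive values of $a$ and using (2) to eliminate $\varrho$, while (3) is the $a=0$ specialisation rewritten via $\omega_0=1-\delta^{-1}(\varrho-\varrho^{-1})$. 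Uniqueness follows because the $v_i$ and $\gamma_i$ are recovered from the $\omega_a$, and the converse direction is a mechanical verification of Definition~\ref{Waff relations} on the explicit matrices. The main obstacle is the partial-fraction simplification in the $T_1$-computation, together with the parity bookkeeping for $\alpha$ that separates the cases of even and odd $d$.
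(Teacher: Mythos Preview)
Your overall skeleton (cyclic vector $w\in E_1M$, $X_1$-eigenbasis via Lagrange projectors) matches the paper's and is correct; the derivations of (a) and (b) are fine. For (c) the paper takes a shorter route than your Lemma~\ref{relations2}(1) expansion: it applies the relation $T_1X_2-X_1T_1=\delta(1-E_1)X_2$ directly to the eigenvector $m_i$ and reads off the matrix entries $c_{ji}=\delta(\gamma_i-\delta_{ij})/(v_iv_j-1)$ in one step. Your approach would also work, just more laboriously.

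Where the proposal has genuine gaps is in the derivation of formulas (1)--(3) and in the converse. Inverting the Vandermonde system $\omega_a=\sum_jv_j^a\gamma_j$ expresses each $\gamma_i$ as a combination of $\omega_0,\dots,\omega_{d-1}$, but that is not the content of~(1), which gives $\gamma_i$ as an explicit rational function of the $v_j$ and $\varrho$ alone. The paper obtains~(1) from a different linear system: feeding~(c) into $T_1E_1m_i=\varrho E_1m_i$ yields the Cauchy-type equations
\[
\sum_{k=1}^d\frac{\gamma_k}{v_jv_k-1}=\delta^{-1}\varrho+\frac1{v_j^2-1},\qquad j=1,\dots,d,
\]
whose coefficient determinant is computed and shown to be nonzero, and whose explicit solution~(1) is then exhibited via the residue theorem applied to a rational function chosen so that its residues at $z=v_k$ recover the left side and those at $z\in\{0,\pm1,\infty\}$ the right. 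The parity function $\gamma_d(z)$ enters precisely here. Formulas~(3) and~(2) then follow in that order: $\omega_0=\sum_i\gamma_i$ is evaluated by a second residue computation to give~(3), and~(2) is obtained by solving $(\omega_0-1)\delta=\varrho^{-1}-\varrho$ as a quadratic in $\varrho^{-1}$. Your determinant comparison for~(2) gives only $\det(T_1)^2=\prod_lv_l^{-2}$; since on $\ker E_1$ the eigenvalues of $T_1$ lie in $\{q,-q^{-1}\}$ with a priori unknown multiplicities, $\det T_1$ by itself does not pin down $\alpha$.

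Finally, the converse is not a mechanical check. Verifying Definition~\ref{Waff relations}(b) on these matrices reduces (after comparing coefficients of $m_i$) to the identity
\[
\delta^2\gamma_i\sum_{j=1}^d\frac{\gamma_j}{(v_iv_j-1)^2}
+\delta^2\,\frac{v_i^2-(1+v_i^2)\gamma_i}{(v_i^2-1)^2}
+\varrho\delta\gamma_i=1,
\]
which the paper proves by yet another residue calculation; this is where the precise closed form of the $\gamma_i$ in~(1) is indispensable.
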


\begin{proof} The result can be proved by arguments similar to those
for the  cyclotomic Nazarov-Wenzl algebras in  \cite{AMR}. We
include a proof here.

By direct computation, one can verify that the $ R$-module $N$
generated by $ \{X_{1}^{\alpha}X_{2}^{\beta},
X_{1}^{\alpha}X_{2}^{\beta}T_1, X_{1}^{\alpha}E_1X_{1}^{\beta}| 0
\leq \alpha,\beta \leq r - 1\}$ is a right $\W_{r, 2}$-module. Since
$1\in N$, $N=\W_{r, 2}$. In particular, $\W_{r, 2}$ is of finite
dimension. So is any irreducible $\W_{r, 2}$-module.

Suppose that $M$ is an irreducible $\W_{r,2}$--module such that
$E_1M\ne0$ and  $d=\dim_R M$. We first show that (a)--(c) hold.

Since $u_1,\dots,u_r$ are pairwise distinct, we can fix a basis
$\{m_1,\dots,m_d\}$ of $M$ consisting of eigenvectors for $X_1$.
Write $X_1m_i=v_i m_i$, for some $v_i\in \{u_1, \cdots, u_r\}$. We
remark that we will  prove that $v_i\neq v_j$ whenever $i\neq j$.

Since we are assuming that $\omega_0\neq 0$,  we define
$f=\frac1{\omega_0}E_1$. Then $f^2=f$, $E_1f=fE_1=\omega_0 f$, and
$f\neq 0$. Also, we have  $fM\ne0$ since $E_1M\ne0$.

Fix an element $0\ne m\in fM$. Then $E_1m=\omega_0 m$ and
$T_1m=\varrho m$ (since $T_1E_1=\varrho E_1$).  As $X_1X_2E_1=E_1$,
$X_1X_2 E_1m =\omega_0m$. Therefore,  $X_1X_2m=m$.

We consider the  $R$-module $M'$ generated by
$\{m,X_1m,\dots,X_1^{d-1}m\}$. Since
$E_1X_1^km=E_1X_1^kfm=\frac1{\omega_0}E_1X_1^kE_1m
         =\frac{\omega_k}{\omega_0}E_1m
         =\omega_k m, $ $M'$ is stable under the action of $E_1$.
By Lemma~\ref{relations2}(3) for $k=1$ and
Lemma~\ref{antiinvolution},
\begin{align*}
T_1X_{1}^{a}m &= \frac{1}{\omega_{0}}T_1X_{1}^a E_1m\\
&=\frac{1}{\omega_{0}}(\varrho X_{1}^{ - a} + \sum\limits_{i =
1}^a \delta (\omega_{a - i}X_{1}^{ - i} - X_{1}^{a - 2i}))E_1m\\
&=\varrho X_{1}^{ - a}m + \sum\limits_{i = 1}^a \delta (\omega_{a -
i}X_{1}^{ - i}m - X_{1}^{a - 2i}m)
\end{align*}
Note that $(X_1-v_1) (X_1-v_2)\cdots (X_1-v_d)=0$ on $M$. Each
$X_1^a$ for $a\in \mathbb Z$ can be written as a linear combination
of $X_1^b$ with $0\le b\le d-1$.  Therefore, $M'$ is closed under
the action of $T_1$. Note that $X_2=T_1X_1T_1$. So, $M'$ is closed
under the action of $X_2$. We have $M'=M$ since $M$ is irreducible.
In particular, by $\dim M=d$,  $\{m, X_1m, \cdots, X_1^{d-1} m\}$ is
a basis of $M$. Since $E_1 X_1^k m=\omega_k m$, $E_1M=Rm$.

Write $m=\sum_{i=1}^dr_im_i$, for some $r_i\in R$. Suppose that
$r_i=0$ for some $i$. Then
$$\prod_{\substack{1\le j\le d\\j\ne i}}(X_1-v_j)\cdot m=0.$$
This contradicts the linear independence of
$\{m,X_1m,\dots,X_1^{d-1}m\}$; hence, $r_i\ne0$ for $i=1,\dots,d$.
By rescaling the $m_i$ if necessary, we do  assume that
\begin{equation}\label{m}m=m_1+m_2+\dots+m_d\end{equation}in the following.

By the previous arguments, all of the eigenvalues
$\{v_1,\dots,v_d\}$ of $X_1$ must be distinct. It is easy to verify
that $X_1X_2$ is central in $\W_{r,2}$,  $X_1 (X_1X_2) m_i=(X_1X_2)
X_1m_i$.  So $X_1X_2$ acts on $m_i$ as a scalar since $u_i$'s are
algebraic independent. On the other hand, since $X_1X_2 m=m$, by
(\ref{m}),  $X_1X_2$ acts as $1$ on $M$. Therefore, $X_2m_i=X_1^{-1}
m_i=v_i^{-1} m_i$, for $i=1,\dots,d$, proving (a).

 Since $u_1,\dots,u_r$
are algebraically independent,   $v_i^2-1$ and $v_iv_j-1$, for $i\ne
j$, are invertible in $R$. So the formula in part (c) makes sense.

As $E_1M=Rm$, we can define elements $\gamma_i\in R$ by
$$
E_1m_i=\gamma_im=\gamma_i(m_1+\dots+m_d),\qquad\text{for }
i=1,\dots,d.
$$
Write $T_1 m_i=\sum_{j=1}^d c_{ji}m_j$. By Definition~\ref{Waff
relations}(f) for $i=1$, we have
$$T_1X_{2}m_{i} - X_{1}T_1m_i = \delta X_{2}m_{i} - \delta
E_1X_{2}m_{i}.$$ So,
 $$\sum\limits_{j = 1}^d
v_{i}^{ - 1}c_{ji} m_{j} - \sum\limits_{j = 1}^d c_{ji} v_{j}m_{j} =
\delta v_{i}^{ - 1}m_{i} - \delta v_{i}^{ - 1}\gamma_{i}m.$$
Comparing the coefficients of $m_j$ on both sides of the above
equation, we have \begin{equation}\label{cji} c_{ji}
=\frac{\delta(\gamma_{i} - \delta_{ij})}{v_{i}v_{j} - 1},
\end{equation} where $\delta_{ij}$ is the Kronecker function.
Therefore,
$$T_1 m_{i} = \frac{\delta(\gamma_{i} - 1)}{v_{i}^{2} - 1}m_{i} +
\sum\limits_{1\leq j\leq d \atop j\neq i}
\frac{\delta\gamma_{i}}{v_{i}v_{j} - 1}m_{j}.$$ This proves (c).

Since  $\varrho E_1 = T_1E_1$, we have, for  any positive integer $
i\le d $:
$$\begin{aligned} \varrho\gamma_{i}\sum_{j = 1}^d m_{j} &= \varrho E_1m_{i} = T_1E_1m_{i}
= T_1(\gamma_{i}\sum_{j = 1}^d m_{j})\\
& = \gamma_{i}\sum\limits_{j = 1}^d (\frac{\delta(\gamma_{j} -
1)}{v_{j}^{2} - 1}m_{j} + \sum\limits_{1\leq k\leq d \atop k\neq
j} \frac{\delta\gamma_{j}}{v_{j}v_{k} - 1}m_{k})\\
&= \gamma_{i}(\sum\limits_{j = 1}^d \frac{\delta(\gamma_{j} -
1)}{v_{j}^{2} - 1}m_{j} + \sum\limits_{j = 1}^d \sum\limits_{1\leq
k\leq d \atop k\neq j} \frac{\delta\gamma_{k}}{v_{j}v_{k} -
1}m_{j})\\
\end{aligned}$$

Since $E_1M\neq 0$, there is at least a non-zero $\gamma_{i}$.
Therefore, $$\varrho \sum\limits_{j = 1}^d m_{j} = \sum\limits_{j =
1}^d \frac{\delta(\gamma_{j} - 1)}{v_{j}^{2} - 1}m_{j} +
\sum\limits_{j = 1}^d \sum\limits_{1\leq k\leq d \atop k\neq j}
\frac{\delta\gamma_{k}}{v_{j}v_{k} - 1}m_{j}.$$ Comparing the
coefficients of  $m_{j}$ on both sides of the equality, we have
\begin{equation}\label {uniquesolution}
\sum_{k = 1}^d \frac{\gamma_{k}}{v_{j}v_{k} - 1} = \delta^{ -
1}\varrho + \frac{1}{v_{j}^{2} - 1}, j = 1,2,\cdots,d.\end{equation}

The system of linear equations (\ref{uniquesolution}) on $\gamma_k$
has a unique solution since $\det A_d\neq 0$ where $A_d$ is
 the coefficient matrix such that the $(i, j)$-th entry of
 $A_d$ is  $(v_{i}v_{j} - 1)^{-1}$. In fact, we have
\begin{equation}\label{det} \det A_d= \prod\limits_{1 \leq k< j\leq d}(v_{k} -
v_{j})^{2}{\prod\limits_{1 \leq k, j\leq d}(v_{k}v_{j} -
 1)^{-1}}.
 \end{equation}

 In order to verify (\ref{det}), we first observe that
$\prod_{1 \leq k, j\leq d}(v_{k}v_{j} - 1)\cdot \det A_d$ is a
symmetric polynomial in $v_{1},\dots,v_{d}$. So, it is divided by
$v_{k} - v_{j}, k\neq j $. Therefore, there is a $f_d
(v_{1},\cdots,v_{d})\in \mathbb Q[v_1, \dots, v_d]$ such that
\begin{equation}\label{35}\prod_{1 \leq k, j\leq d}(v_{k}v_{j}
- 1)\cdot \det A_d = f_{d}(v_{1},\cdots,v_{d})\prod_{1 \leq k< j\leq
d}(v_{k} - v_{j})^{2}.\end{equation}

 Comparing the coefficients of $v_1$ on both sides of (\ref{35}),
we know that the highest degree of $v_1$ in $f_d(v_1, v_2, \cdots,
v_d)$ is zero. Since $f_d (v_1, v_2, \cdots, v_d) $ is a symmetric
polynomial in $v_1, v_2, \cdots, v_d$, $f_d(v_1, v_2, \cdots,
v_d)=c_d\in \mathbb Q$.

In order to determine $c_d$, we set $v_d=0$ and get
$$\det A_d|_{v_{d} = 0} = -\det A_{d-1}
\prod_{i=1}^{d-1} v_{i}^{2} $$

Using (\ref{35}) to rewrite the above equation, we have
$c_d=c_{d-1}$. By induction assumption,  $c_d=c_j$, $1\le j\le d-1$.
An easy computation shows that $c_1=1$. Thus $c_d=1$, proving
(\ref{det}).

We have proved that the system of linear equations given in
(\ref{uniquesolution}) has a unique solution.  We define
 $$f(z) =  \frac{\gamma_d(z)}{(z^{2} - 1)(v_{j}z -
1)}\prod_{l= 1}^d \frac{v_{l}z - 1}{z - v_{l}} +
\frac{\delta^{-1}\varrho}{z(v_{j}z - 1)} \prod_{l = 1}^d
\frac{v_{l}(v_{l}z - 1)}{z - v_{l}},
$$
where $\gamma_d(z)$ is defined in (1) of Proposition~\ref{Ene0}.  By
residue theorem for complete non-singular curves for $f(z)$, we have

$$ \sum_{k = 1}^d \frac{1}{v_{j}v_{k} - 1}(\gamma_d(v_k) +
\delta^{-1}\varrho(v_{k}^{2} - 1) \prod_{l\neq k}v_{l})\prod_{l\neq
k}\frac{v_{k}v_{l} - 1}{v_{k} - v_{l}} = \delta^{ - 1}\varrho +
\frac{1}{v_{j}^{2} - 1}.$$  The above equalities shows that
$\gamma_k$'s given in (1)  in Proposition~\ref{Ene0} satisfy
(\ref{uniquesolution}). We remark that the  left (resp. right) side
of  the above equality can be interpreted as $\sum_{k = 1}^d Res_{z
= v_{k}}f(z)dz$ (resp. $-\sum\limits_{v\in I} Res_{z=v} f(z)dz$ and
$I=\{\infty, \pm 1, 0\}$). This completes the proof of (b).

Now, we prove the formula about $\omega_a$, $a\in \mathbb Z$.
Since $E_1m=\omega_0m$ and $m=\sum_{i=1}^d m_i$,
$\omega_0=\sum_{i=1}^d \gamma_i$. Similarly, we have that
$\omega_a=\sum_{j=1}^d v_j^a \gamma_j$. In order to  compute
$\omega_0$, we  define
$$g(z) =
\frac{\gamma_d(z) }{z^{2} - 1}\prod\limits_{l = 1}^d \frac{v_{l}z -
1}{z - v_{l}} + \frac{\delta^{-1}\varrho}{z} \prod\limits_{l = 1}^d
\frac{v_{l}(v_{l}z - 1)}{z - v_{l}}.$$ Then $\omega_{0} = \sum_{i =
1}^d \gamma_{i} = \sum_{i = 1}^d Res_{z = v_{i}}g(z)dz$. By residue
theorem for complete non-singular curves for  $g(z)$, $\omega_0= -
\sum_{v\in I}Res_{z = v}f(z)dz$ and $I=\{\infty, \pm1, 0\}$. By
direct computation,
$$
\omega_0= \delta^{-1}\varrho(\prod\limits_{l=1}^d v_{l}^2 - 1) + 1
-\frac{(-1)^d+1} 2\prod_{l = 1}^d v_{l}.
$$
By  solving the equation $(\omega_0 - 1)\delta=\varrho^{-1} -
\varrho$, we get $\rho$ as required.

We next show that $M$ is uniquely determined, up to isomorphism.
Suppose that $\W_{r,2}$ has another irreducible module of dimension
$d'$ upon which $E_1$ acts non--trivially. Then, by the previous
arguments,
$$ \omega_0= \delta^{-1}\varrho'(\prod\limits_{l=1}^{d'}
w_{l}^2 - 1) + 1 -\frac{(-1)^{d'}+1} 2\prod_{l = 1}^{d'} w_{l}.
$$ for some $\set{w_1,\dots,w_{d'}}\subseteq\{u_1,\dots,u_r
\}$. Since we are assuming that $u_1,\dots,u_r, q$ are algebraically
independent,  $d'=d$, $\varrho=\varrho'$ and $w_i=v_{(i)\sigma}$,
for some $\sigma\in\Sym_d$ and $1\le i\le d$.  By (a)--(c), $M\cong
M'$ as required.

Finally, it remains to verify that (a)--(c) do  define a
representation of $\W_{r,2}(\bu)$ whenever
$\omega_a=\sum_{i=1}^dv_i^a\gamma_i$, for $a\in \mathbb Z$ and
$\gamma_i$, $\varrho$ as above. In other word, we need verify the
defining relations for $\W_{r, 2}$. More explicitly, we need verify
(a), (b), (d)--(h) and (j)-(k) in Definition~\ref{Waff relations}.

In fact, it is easy to verify  (a),(e),(j),(k).  Note that (d) and
(g) can be verified easily by using the formula
$\omega_a=\sum_{j=1}^d v_j^d \gamma_j$. (f) follows from (\ref{cji})
and  (h) follows from
$$\sum\limits_{k = 1}^d \frac{\gamma_{k}}{v_{j}v_{k} - 1} = \delta^{
- 1}\varrho + \frac{1}{v_{j}^{2} - 1}, j = 1,2,\cdots,d.$$ Finally,
we verify (b) in Definition~\ref{Waff relations}.

We have already proved that  Definition~\ref{Waff relations}(f)
holds on  $M$. Thus, the following equalities hold in $\End_R(M)$:
$$\begin{cases} & T_1^{2}X_{2} - T_1X_{1}T_1 =  \delta
T_1X_{2} - \varrho\delta E_1X_{2}\\
 & X_{2}T_1^{2} - T_1X_{1}T_1 = \delta
X_{2}T_1 - \varrho\delta X_{2}E_1\\
\end{cases}
$$
Therefore, $ (T_1^{2} - \delta T_1 + \varrho\delta E_1)X_{2} =
X_{2}(T_1^{2} - \delta T_1 + \varrho\delta E_1)$.  Since
$v_{i}^{-1}\neq v_j^{-1}$ whenever $i\neq j$, and  $X_2 m_i=v_i^{-1}
m_i$, $T_1^{2} - \delta T_1 + \varrho\delta E_1$  acts on
$\{m_{1},\cdots,m_{d}\}$ diagonally. So, for each positive integer
$i\le d$, there is a $c_i\in R$ such that $$(T_1^{2} - \delta T_1 +
\varrho\delta E_1)m_{i} = c_{i}m_{i}.$$ So, {\small \begin{align*}
&(T_1^{2} - \delta T_1 + \varrho \delta E_1)m_{i}\\
= &  T_1( \frac{\delta(\gamma_{i} - 1)}{v_{i}^{2} - 1}m_{i} +
\sum\limits_{1\leq j\leq d\atop j\neq
i}\frac{\delta\gamma_{i}}{v_{i}v_{j} - 1}m_{j})\\ & - \delta(
\frac{\delta(\gamma_{i} - 1)}{v_{i}^{2} - 1}m_{i}  +
\sum\limits_{1\leq j\leq d\atop j\neq
i}\frac{\delta\gamma_{i}}{v_{i}v_{j} - 1}m_{j}) +
\varrho\delta\gamma_{i}\sum\limits_{j = 1}^d m_{j}\\
 = &  \frac{\delta(\gamma_{i} - 1)}{v_{i}^{2} - 1}
(\frac{\delta(\gamma_{i} - 1)}{v_{i}^{2} - 1}m_{i} +
\sum\limits_{1\leq j\leq d\atop j\neq
i}\frac{\delta\gamma_{i}}{v_{i}v_{j} - 1}m_{j}) +\sum\limits_{1\leq
j\leq d\atop j\neq i}\frac{\delta\gamma_{i}}{v_{i}v_{j} -
1}(\frac{\delta(\gamma_{j}
- 1)}{v_{j}^{2} - 1}m_{j}\\
& + \sum\limits_{1\leq k\leq d\atop k\neq
j}\frac{\delta\gamma_{j}}{v_{j}v_{k} - 1}m_{k}) - \delta(
\frac{\delta(\gamma_{i} - 1)}{v_{i}^{2} - 1}m_{i} +
\sum\limits_{1\leq j\leq d\atop j\neq
i}\frac{\delta\gamma_{i}}{v_{i}v_{j} - 1}m_{j}) +
\varrho\delta\gamma_{i}\sum\limits_{j = 1}^d m_{j}\\
= & c_{i}m_{i}.
\end{align*}}
Comparing the coefficient of $m_i$ on both sides of the above
equality, we have $$c_{i} = \delta^{2}\gamma_{i}\sum_{j = 1}^d
\frac{\gamma_{j}}{(v_{i}v_{j} - 1)^2} + \delta^{2}\frac{v_{i}^{2} -
(1 + v_{i}^2)\gamma_{i}}{(v_{i}^{2} - 1)^2} +
\varrho\delta\gamma_{i}.$$

 Suppose $$h(z) =
\frac{\gamma_d(z) }{(z^{2} - 1)(v_{i}z - 1)^2}\prod\limits_{l = 1}^d
\frac{v_{l}z - 1}{z - v_{l}} + \frac{\delta^{-1}\varrho}{z(v_{i}z -
1)^2} \prod\limits_{l = 1}^d \frac{v_{l}(v_{l}z - 1)}{z - v_{l}}.$$
We use the residue theorem for complete non-singular curves for
$h(z)$ and the  equalities for $\gamma_i$, $1\le i\le d$, in
Proposition~\ref{Ene0} to compute $c_i$. We  discuss the case
$2\nmid d$ and leave the other to the reader.

By direct computation, we have $$Res_{z=v}h(z)=\begin{cases}
-\frac 12 (v_i-1)^{-2}, & \text{if $v=1$,}\\
-\frac 12 (v_i+1)^{-2}, & \text{if $v=-1$,}\\ \delta^{-1}\varrho, &
\text{if
$v=0$,}\\
\end{cases}
$$
and
$$\begin{cases} & Res_{z=v_i^{-1}}h(z)  =\frac{v_i^2}{(1-v_i^2)^2}\prod_{l\neq i}\frac{v_i-v_l}{v_iv_l-1}
                      +\delta^{-1}\varrho \frac{v_i^2}{1-v_i^2}\prod_{l\neq i}v_l\prod_{l\neq i}\frac{v_i-v_l}{v_iv_l-1}, \\
& Res_{z=v_k}h(z) =\frac{1}{(1-v_iv_k)^2}\prod_{l\neq
k}\frac{v_kv_l-1}{v_k-v_l}
                      +\delta^{-1}\varrho \frac{v_k^2-1}{(1-v_iv_k)^2}\prod_{l\neq k}v_l\prod_{l\neq k}\frac{v_kv_l-1}{v_k-v_l}.\\
                      \end{cases} $$
Using (1)--(2) of Proposition~\ref{Ene0} to rewrite the above
equalities yields
$$\begin{cases}  Res_{z=v_i^{-1}}h(z)  &=\frac{v_i^2}{\gamma_i(1-v_i^2)^2}-\frac1{\gamma_i\delta^{2}} \\
Res_{z=v_k}h(z) &=\frac{\gamma_k}{(1-v_iv_k)^2}.\\
                      \end{cases} $$

By the residue theorem for complete non-singular curves for $h(z)$,
we have
$$Res_{z\in I}h(z)+\sum_{k=1}^d Res_{z=v_k}h(z)=0$$
where $I=\{\pm 1, 0, v_i^{-1}\}$.   Rewriting the above equality
yields
$$\delta^{-1}\varrho= \frac{v_i^2+1}{(v_i^2-1)^2}-
\frac{v_i^2}{\gamma_i(1-v_i^2)^2}(1-\frac{\delta^{-2}(v_i^2-1)^2
}{v_i^2})-\sum_{k=1}^d \frac{\gamma_k}{(1-v_iv_k)^2}.$$ So,
$$c_i=\delta\varrho\gamma_i-\frac{v_i^2+1}{(v_i^2-1)^2}\delta^2\gamma_i+
\frac{v_i^2}{(1-v_i^2)^2}\delta^2+\delta^2\gamma_i\sum_{k=1}^d
\frac{\gamma_k}{(1-v_iv_k)^2}=1.$$ This shows that
Definition~\ref{Waff relations}(b) holds on $M$.\end{proof}

\begin{Theorem}\label{free2} Suppose   $R=\mathbb Q(u_1, u_2, \cdots, u_r,
q)$ where $u_1, \dots, u_r, q$ are algebraically independent over
$\mathbb Q$. If $\varrho$ and $\omega_a, a\in \mathbb Z$,  are given
as in Proposition~\ref{Ene0} for $d=r$,  then $\W_{r, 2}$ is
semisimple over $R$. Moreover, $S= \{X_{1}^{\alpha}X_{2}^{\beta},
X_{1}^{\alpha}X_{2}^{\beta}T_1, X_{1}^{\alpha}E_1X_{1}^{\beta}| 0
\leq \alpha,\beta \leq r - 1\}$ is an $R$--basis  of $\W_{r, 2}$.
\end{Theorem}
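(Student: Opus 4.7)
The strategy is to establish matching upper and lower bounds of $3r^{2}$ on $\dim_{R}\W_{r,2}$, which will simultaneously force the spanning set $S$ to be a basis and the algebra to be semisimple. The upper bound is already essentially in hand: as observed in the proof of Proposition~\ref{Ene0}, the $R$-span of $S$ is closed under right multiplication by each of the generators $T_{1}, E_{1}, X_{1}^{\pm 1}, X_{2}^{\pm 1}$ of $\W_{r,2}$ and contains $1$, so it exhausts $\W_{r,2}$ and therefore $\dim_{R}\W_{r,2} \le |S| = 3r^{2}$.

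For the lower bound I would produce enough pairwise non-isomorphic irreducible $\W_{r,2}$-modules. Since $R$ is a field and $u_{1},\ldots,u_{r},q$ are algebraically independent, the Ariki--Koike algebra $\H_{r,2}$ is semisimple, so each of its irreducibles pulls back through the epimorphism $\eta$ of (\ref{eta}) to an irreducible $\W_{r,2}$-module annihilated by $E_{1}$. Proposition~\ref{E=0} accounts for exactly $2r$ one-dimensional and $\binom{r}{2}$ two-dimensional pairwise non-isomorphic such modules. Separately, the converse part of Proposition~\ref{Ene0}, applied with $d = r$ and with the given $\varrho$ and $\omega_{a}$, produces one further irreducible $\W_{r,2}$-module of dimension~$r$ on which $E_{1}$ acts non-trivially; this module is distinct from all of the above since $E_{1}M\ne 0$.

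Summing the squared dimensions of these irreducibles yields
\[
2r\cdot 1^{2} + \binom{r}{2}\cdot 2^{2} + 1\cdot r^{2}
= 2r + 2r(r-1) + r^{2} = 3r^{2}.
\]
Applying the Wedderburn--Artin theorem to the finite-dimensional $R$-algebra $\W_{r,2}$ gives the reverse inequality $\dim_{R}\W_{r,2} \ge 3r^{2}$, and together with the upper bound this forces $\dim_{R}\W_{r,2} = 3r^{2}$. Hence $S$ is linearly independent and therefore an $R$-basis, while the Wedderburn--Artin inequality becomes an equality, which in turn forces $\W_{r,2}$ to be semisimple.

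The step requiring the most care is checking that the specific choice of $\varrho$ and $\{\omega_{a}\}$ prescribed by Proposition~\ref{Ene0} with $d=r$ does not obstruct the existence of the modules in Proposition~\ref{E=0}. This is unproblematic, since those modules are constructed using only the invertibility and algebraic independence of $u_{1},\ldots,u_{r}$, a property unaffected by defining $\omega_{a}$ as a polynomial in the $u_{i}$'s; hence all $2r + \binom{r}{2} + 1$ irreducibles live over the same base ring $R$ and the dimension count goes through.
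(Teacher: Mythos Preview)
Your proposal is correct and follows essentially the same argument as the paper: bound $\dim_R\W_{r,2}$ above by $3r^2$ using the spanning set $S$, bound it below by $3r^2$ via Wedderburn--Artin and the sum of squared dimensions of the irreducibles supplied by Propositions~\ref{E=0} and~\ref{Ene0}, and conclude simultaneously that $S$ is a basis and that the radical vanishes. Your closing remark about the compatibility of the prescribed $\varrho,\{\omega_a\}$ with the modules of Proposition~\ref{E=0} is a sensible extra sanity check that the paper leaves implicit.
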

\begin{proof} By direct computation, one can verify that  the $
R$-submodule $M$ of $\W_{r, 2}$  generated by $S$  is a right
$\W_{r, 2}$-module. Since $1\in S$,  $M=\W_{r, 2}$.  In particular,
 $\dim \W_{r, 2}\le 3 r^2$.

Under the assumption, we have, by Proposition~\ref{Ene0}, that there
is a unique  irreducible $\W_{r, 2}$-module $M$ with $E_1 M\neq 0$.
By Wedderburn-Artin Theorem for semisimple finite dimensional
algebras together with Proposition~\ref{E=0}, we have $$3r^2 = 2r +
4 \frac{r(r-1)}{2} + r^2= dim\W_{r,2}/Rad\W_{r,2}\leq dim {\W_{r,2}}
\leq 3r^2,
$$
where $\Rad \W_{r, 2}$ is the Jacobson radical of $\W_{r, 2}$. Thus
all inequalities given  above are equalities. In particular, $\W_{r,
2}$ is semisimple and  $S$ is an $R$--basis.
\end{proof}

\begin{Defn}
Suppose   $\mathbf{x} = (x_{1},x_{2},\cdots,x_{r})$ are variables.
For any non-negative integer $a$, Define $Q_{a}(\mathbf{x}),
Q'_{a}(\mathbf{x})$  such that
\begin{equation}\label{Q}\begin{cases} & \prod_{i = 1}^r \frac{y -
x_{i}}{x_{i}y - 1} = \sum_{a=0}^\infty Q_{a}(\mathbf{x})y^{a}, \\
& \prod_{i = 1}^r \frac{x_{i}y - 1}{y - x_{i}} = \sum_{a=
0}^\infty Q'_{a}(\mathbf{x})y^{a}.\\
\end{cases}
\end{equation}
\end{Defn}

We set $Q_a(\mathbf {x})=Q_a'(\mathbf {x})=0$ if $a<0$. For each
non-negative integer $a$, it is not difficult to verify that
$Q_{a}(\mathbf{x})$ (resp. $Q'_{a}(\mathbf{x})$)   is a symmetric
polynomial in variables $x_1, x_2, \cdots, x_r$ (resp. $x_1^{-1},
\dots, x_r^{-1}$). Further,
$$Q'_{a}(\mathbf{x})=Q_a(x_1^{-1}, \dots, x_r^{-1}).$$

\begin{Lemma} Suppose $R$ is an integral domain which contains the
identity $1_R$, and the units $u_{1}, u_{2}  ,\cdots,u_{r}, q,
(q-q^{-1})$ such that $u_iu_j^{\pm 1}\neq 1$. Let $F$ be the
quotient field of $R$. For $a\in \mathbb Z$, define
$$\omega_{a} =
\sum_{i = 1}^r (\gamma_d(u_i) + \delta^{-1}\varrho(u_{i}^{2} - 1)
\prod_{j\neq i}u_{j})u_{i}^{a}\prod_{j\neq i}\frac{u_{i}u_{j} -
1}{u_{i} - u_{j}}$$ where $\varrho$ is defined in (2) of
Proposition~\ref{Ene0} for $d=r$.
 Suppose that  $a \geq 0$.
 Then
\begin{equation}\label{oa}\omega_{a} =
\begin{cases} A +
\sum\limits_{k = 0}^{a - 1} \frac{1 + ( - 1)^{k}}{2}Q_{a - 1 -
k}(\mathbf u) - \delta^{-1}\varrho
\delta_{a0}, &\text{if $2\nmid r$},\\
A - \sum\limits_{k = 0}^a \frac{1 + ( - 1)^{k}}{2}Q_{a - k}(\mathbf
u) - \delta^{-1}\varrho \delta_{a0},
&\text{if $2\mid r$}, \\
\end{cases}
\end{equation}
and for $a>0$,
\begin{equation}\label{ob}\omega_{-a} =
\begin{cases} B +
\sum_{k = 0}^{a - 1} \frac{1 + ( - 1)^{k}}{2}Q'_{a - 1 -
k}(\mathbf u), &\text{if $2\nmid r$},\\
B - \sum_{k = 0}^{a-2} \frac{1 + ( - 1)^{k}}{2}Q'_{a - 2 -
k}(\mathbf u),
&\text{if $2\mid r$}, \\
\end{cases}
\end{equation}
where $$\begin{cases}  A &= \frac{1 + ( - 1)^{a}}{2} +
\delta^{-1}\varrho Q_{a}(\mathbf u)\prod_{i=1}^d  u_{i},\\
B& =\frac{1 + ( - 1)^{a}}{2} - \delta^{-1}\varrho  Q'_{a}(\mathbf u)
\prod_{i=1}^d u_{i}.\\
\end{cases}$$ In particular, $\omega_a\in R$ for all $a\in \mathbb Z$.
\end{Lemma}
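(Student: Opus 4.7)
The plan is to extend the residue-calculus argument used for $\omega_0$ in the proof of Proposition~\ref{Ene0} to arbitrary $a\in\mathbb Z$. For each such $a$, I introduce the meromorphic function
$$g_a(z)=\frac{\gamma_r(z)\,z^a}{z^2-1}\prod_{l=1}^r\frac{u_lz-1}{z-u_l}+\delta^{-1}\varrho\,z^{a-1}\prod_{l=1}^r\frac{u_l(u_lz-1)}{z-u_l},$$
chosen so that a direct computation at each simple pole $z=u_i$ reproduces exactly the $i$-th summand in the hypothesis' definition of $\omega_a$; hence $\omega_a=\sum_{i=1}^r\mathrm{Res}_{z=u_i}g_a(z)\,dz$. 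Applying the residue theorem on $\mathbb P^1$ then reduces the lemma to evaluating $\mathrm{Res}_{z=v}g_a\,dz$ at $v\in\{1,-1,0,\infty\}$.

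The residues at $z=\pm 1$ come only from the first summand: using $\prod_l\frac{u_l-1}{1-u_l}=(-1)^r$ and $\prod_l\frac{-u_l-1}{-1-u_l}=1$ together with the values of $\gamma_r(\pm 1)$, a quick check shows that in both parities of $r$ these two residues combine to $-\frac{1+(-1)^a}{2}$, supplying the term $\frac{1+(-1)^a}{2}$ that appears in both $A$ and $B$. For the remaining two residues I use the generating-function identities (\ref{Q}). Substituting $z=1/y$ turns the defining series of $Q_a$ into the expansion $\prod_l\frac{u_lz-1}{z-u_l}=\sum_{b\ge 0}Q_b(\mathbf u)z^{-b}$, valid near $z=\infty$, while the defining series of $Q'_a$ itself gives $\prod_l\frac{u_lz-1}{z-u_l}=\sum_{b\ge 0}Q'_b(\mathbf u)z^b$, valid near $z=0$. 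Multiplying each of these by the corresponding Laurent expansion of $\gamma_r(z)/(z^2-1)$ and reading off the coefficient of $z^{-1}$ produces the $Q_a$- and $Q'_a$-sums exactly as they occur in (\ref{oa}) and (\ref{ob}); the correction $-\delta^{-1}\varrho\,\delta_{a0}$ in (\ref{oa}) comes from the simple pole of the second summand at $z=0$ in the single case $a=0$.

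A parity split on $r$ is necessary because $\gamma_r(z)$ equals $1$ for odd $r$ but $-z$ for even $r$: the extra factor $-z$ shifts the $Q$-index by one and flips a sign, which is precisely what distinguishes the two rows in (\ref{oa}) and in (\ref{ob}). Moreover, for $a\ge 0$ only the residue at $\infty$ contributes $Q$-terms (the one at $0$ vanishes except for the $a=0$ correction), whereas for $a<0$ only the residue at $0$ contributes $Q'$-terms (the one at $\infty$ vanishes); this explains why (\ref{oa}) involves $Q$ while (\ref{ob}) involves $Q'$. Summing the four residues with the correct signs yields the required formulas, and the membership $\omega_a\in R$ then follows at once because $Q_a(\mathbf u)$, $Q'_a(\mathbf u)$, $\prod_l u_l$ and $\delta^{-1}\varrho$ all lie in $R$. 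The main obstacle will be the parity-dependent bookkeeping: verifying that the index shift induced by the $-z$ in the even case exactly matches the difference between the two rows of (\ref{oa}) and~(\ref{ob}), and confirming that the residues at $z=\pm 1$ really do collapse to the same constant for both parities of $r$.
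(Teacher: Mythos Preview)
Your proposal is correct and follows essentially the same route as the paper: the paper defines the identical function $f(z)=g_a(z)$, observes $\omega_a=\sum_i\Res_{z=u_i}f(z)\,dz$, applies the residue theorem over $\{\infty,\pm1\}$ (treating $a=0$ separately via Proposition~\ref{Ene0}, where the extra pole at $0$ appears), and then extracts the $Q_a$-terms by expanding near $\infty$ exactly as you describe. Your only minor deviation is that you fold the $a=0$ case into the general argument by keeping $0$ in the residue set throughout, which is a harmless unification.
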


\begin{proof} The result for $a=0$ follows from  Proposition~\ref{Ene0}.
Suppose $a$ is a positive integer.  We define
$$f(z) =
\frac{\gamma_d(z)   z^{a}}{z^{2} - 1}\prod\limits_{l = 1}^r
\frac{u_{l}z - 1}{z - u_{l}} + \delta^{-1}\varrho z^{a - 1}
\prod\limits_{l = 1}^r \frac{u_{l}(u_{l}z - 1)}{z - u_{l}}.
$$
Then  $\omega_{a} = \sum_{i = 1}^r Res_{z = u_{i}}f(z)dz$. By
residue theorem for complete non-singular curves for $f(z)$,
\begin{equation}\label{omm}
\omega_a= - \sum_{v\in \{\infty, \pm 1\}}Res_{z =
v}f(z)dz.\end{equation}

When $r$ is odd, $Res_{z = 1}f(z)dz = - \frac{1}{2}$ and $ Res_{z =
- 1}f(z)dz = - \frac{( - 1)^{a}}{2}$.  On the other hand, $-
z^{-2}f({z^{-1}}) =z^{-a} \varphi_{1}(z)+{z^{-a - 1}} \varphi_2(z)$,
where $$\begin{cases} & \varphi_{1}(z) = (z^{2} -
1)^{-1}\prod_{l = 1}^r \frac{z - u_{l}}{u_{l}z - 1}\\
&  \varphi_{2}(z) = - \delta^{-1}\varrho
\prod_{l = 1}^r \frac{(z - u_{l})u_l}{u_{l}z - 1}.\\
\end{cases} $$  Also, we have  $(\varphi_{2}(z))^{(a)}|_{z = 0}=- a!
\delta^{-1}\varrho \prod_{l = 1}^r u_{l}Q_{a}(\mathbf u)$ and
\begin{align*}
(\varphi_{1}(z))^{(a - 1)}|_{z = 0} &=\sum_{k = 0}^{a - 1}
\binom{a-1}{k}(\frac{1}{z^{2} - 1})^{(k)}\Big(\prod_{l = 1}^r
\frac{z - u_{l}}{u_{l}z - 1}\Big)^{(a - 1 - k)}|_{z = 0}\\
&=\sum_{k = 0}^{a - 1}\binom{a-1}{k}  (- 1)k!\frac{1 + (-
1)^{k}}{2}(a - 1 -
k)!Q_{a - 1 - k}(\textbf{u})\\
&= -(a-1)! \sum_{k = 0}^{a - 1} \frac{1 + ( - 1)^{k}}{2}Q_{a - 1 -
k}(\mathbf u).
\end{align*}
Note that $Res_{z = \infty}f(z)dz = Res_{z = 0}f_{1}(z)dz +Res_{z =
0}f_{2}(z)dz$ where $f_1(z)=z^{-a} \varphi_1(z)$ and
$f_2(z)=z^{-a-1} \varphi_2(z)$. So,
$$Res_{z = \infty}f(z)dz = - \sum\limits_{k = 0}^{a - 1} \frac{1 + (
- 1)^{k}}{2}Q_{a - 1 - k}(\mathbf u) - \delta^{-1}\varrho \prod_{l =
1}^r u_{l}Q_{a}(\mathbf u),
$$
 and (\ref{oa}) follows immediately from (\ref{omm}).

When $r$ is even, $\gamma_d(z)=-z$. When we compute $Res_{z=\infty}
f(z)dz$, we need compute $(\varphi_{1}(z))^{(a)}|_{z = 0}$.
Therefore, we need use $a$ instead of $a-1$ in (\ref{oa}) for odd
$r$. This implies the result for (\ref{oa}) in the case $2\mid r$.

One can verify (\ref{ob})
 by similar arguments as
above. Since $Q_{a}(\mathbf{u})$ (resp.   $Q'_{a}(\mathbf{u})$) are
polynomials in variables $u_1, u_2, \ldots, u_r$ (resp. $u_1^{-1},
\dots, u_r^{-1}$),
 $\omega_{a}\in R$  for all $a\in \mathbb Z $.
 \end{proof}

\begin{Defn}\label{add1} Suppose  $R$ is a commutative ring which contains identity
$1_R$, and the units $q, (q-q^{-1})$,  $u_i, 1\le i\le r$. Let
$\Omega = \{\omega_{a}\mid a\in \mathbb Z\}\subseteq R$. Then
$\Omega\cup\{\varrho\}$ is called  \textsf{$\mathbf{u}$-admissible}
if $\omega_{a}$ satisfy (\ref{oa}), (\ref{ob}) for $a\in \mathbb Z$
and $\varrho$ is given in Proposition~\ref{Ene0} for $d=r$.
\end{Defn}

By Theorem~\ref{free2}, $\W_{r, 2}$ is a free over $R$ with
dimension $3r^2$ if $R=\mathbb Q(u_1, \u_2, \cdots, u_r, q)$, and
$\Omega\cup \{\varrho\}$ is $\bfu$-admissible. We will show that
$\W_{r, n}$ is free over a commutative ring with rank $r^n (2n-1)
!!$ if $\Omega\cup\{\varrho\}$ is $\bfu$-admissible and $2\nmid r$.

 Motivated by Nazarov's work on Brauer algebras in \cite{Nazarov:brauer}, we
 define two generating functions $$\begin{cases} & \widetilde {w}_{1, +}(y) =
\sum_{a= 0}^{\infty} \omega_{a}y^{ - a}\\ & \widetilde{w}_{1,
-}(y) = \sum\limits_{a=1}^{\infty} \omega_{-a}y^{ - a}.\\
\end{cases}$$

\begin{Lemma} \label{uadmissequi}Suppose $y$ is an indeterminant.
Then $\Omega\cup\{\varrho\}$ is $\mathbf{u}$-admissible if and only
if $\varrho$ is given in Proposition~\ref{Ene0} for $d=r$ and
$$\begin{cases} \widetilde{w}_{1, +}(y) &=
\frac{y^{2}}{y^{2} - 1} - \delta^{-1}\varrho +
(\delta^{-1}\varrho\prod\limits_{l = 1}^r u_l +
\frac{y\gamma_d(y)}{y^{2} - 1})\prod_{l = 1}^r u_l\prod_{l = 1}^r
\frac{y - u_{l}^{-1}}{y - u_{l}},\\
\widetilde{w}_{1, -}(y) &= \frac{1}{y^{2} - 1} + \delta^{-1}\varrho
- \frac{1}{\prod_{l = 1}^r u_l}(\delta^{-1}\varrho\prod\limits_{l =
1}^r u_l - \frac{y}{\gamma_d(y)(y^{2} - 1)})\prod_{l = 1}^r \frac{y
- u_{l}}{y - u_{l}^{-1}}.\\
\end{cases}
$$
\end{Lemma}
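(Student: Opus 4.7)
The plan is to establish the claimed identities as equalities of formal Laurent series in $y^{-1}$ over $R$. Since the coefficients of such a series are uniquely determined, both directions of the ``if and only if'' reduce to a single calculation: substitute the explicit admissibility formulas (\ref{oa}) and (\ref{ob}) into the definitions of $\widetilde{w}_{1,+}(y)$ and $\widetilde{w}_{1,-}(y)$, and verify that the resulting expressions coincide with the stated closed forms.

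The two main ingredients are the generating-function identities obtained by substituting $y \mapsto y^{-1}$ in (\ref{Q}) and clearing denominators,
\begin{equation*}
\sum_{a \ge 0} Q_a(\mathbf u)\, y^{-a} = \prod_{i=1}^r u_i \cdot \prod_{i=1}^r \frac{y - u_i^{-1}}{y - u_i}, \qquad \sum_{a \ge 0} Q'_a(\mathbf u)\, y^{-a} = \frac{1}{\prod_{i=1}^r u_i} \cdot \prod_{i=1}^r \frac{y - u_i}{y - u_i^{-1}},
\end{equation*}
together with the elementary identities $\sum_{k \ge 0} \tfrac{1+(-1)^k}{2}\, y^{-k} = \tfrac{y^2}{y^2-1}$ and $\sum_{k \ge 1} \tfrac{1+(-1)^k}{2}\, y^{-k} = \tfrac{1}{y^2-1}$.

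Substituting (\ref{oa}) into $\widetilde{w}_{1,+}(y) = \sum_{a \ge 0} \omega_a y^{-a}$ decomposes the result into three contributions: the alternating-constant term yields $\tfrac{y^2}{y^2-1} - \delta^{-1}\varrho$; the term $\delta^{-1}\varrho\, Q_a(\mathbf u) \prod u_i$ yields $\delta^{-1}\varrho\,(\prod u_i)^2 \prod \tfrac{y - u_i^{-1}}{y - u_i}$ by the first identity above; and the double sum $\sum_{a}\sum_{k} \tfrac{1+(-1)^k}{2}\, Q_{a-1-k}(\mathbf u)\, y^{-a}$ decouples, after the substitution $b = a-1-k$ and reordering of summations, into the product $\tfrac{y}{y^2-1}\cdot \prod u_i \prod \tfrac{y - u_i^{-1}}{y - u_i}$. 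For odd $r$, where $\gamma_d(y) = 1$, combining these three pieces produces exactly the claimed closed form. The expression for $\widetilde{w}_{1,-}(y)$ is obtained by the same procedure applied to (\ref{ob}) using the second identity above.

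The main obstacle is purely bookkeeping: in the case $2 \mid r$, the formulas (\ref{oa}) and (\ref{ob}) exhibit a sign flip and a shift in summation range, while $\gamma_d(y) = -y$ instead of $1$. I would verify separately that the double-sum contribution becomes $-\tfrac{y^2}{y^2-1} \prod u_i \prod \tfrac{y - u_i^{-1}}{y - u_i}$, which is precisely $\tfrac{y \gamma_d(y)}{y^2-1}$ times the same product, so the sign and shift are exactly absorbed by $\gamma_d(y)$ in the closed form. Once both parities are checked, the converse follows at once, since the Laurent expansion of the claimed closed forms uniquely recovers the coefficients $\omega_{\pm a}$ prescribed by (\ref{oa}) and (\ref{ob}).
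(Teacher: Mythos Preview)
Your proposal is correct and follows essentially the same approach as the paper: substitute the explicit formulas (\ref{oa}) and (\ref{ob}) into the generating functions, decompose into three contributions using the $Q_a$-generating-function identity and the geometric series $\sum \tfrac{1+(-1)^k}{2}y^{-k}$, and sum. The paper works out only the odd-$r$ case for $\widetilde w_{1,+}$ explicitly and dismisses the remaining cases with ``similarly''; your sketch of how $\gamma_d(y)=-y$ absorbs the sign and shift in the even case, and your explicit remark that the converse follows from uniqueness of Laurent coefficients, are minor clarifications but not a different method.
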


\begin{proof}
When $r$ is odd, we have $\sum_{a=0}^{\infty} \frac{1 + (-
1)^{a}}{2}y^{- a} = \frac{y^{2}}{y^{2} - 1}$. By (\ref{Q}),
$$\varrho\prod_{l = 1}^r u_l\sum_{a= 0}^{\infty}
\delta^{-1}Q_{a}(\mathbf u)y^{- a} = \delta^{-1}\varrho\prod_{l =
1}^r u_l^2\prod_{l = 1}^r \frac{y - u_{l}^{-1}}{y - u_{l}}, $$ and
$$\begin{aligned} \sum_{a=0}^{\infty} \sum_{k = 0}^{a - 1} \frac{1 + ( -
1)^{k}}{2}Q_{a - 1 - k}(\mathbf u)y^{- a}& = (y^{-1}+y^{-3}+\cdots )
\sum_{a=0}^{\infty} Q_a(\bfu) y^{-a}\\
&= \frac{y}{y^{2} - 1}\prod_{l = 1}^r u_l\prod_{l = 1}^r \frac{y -
u_{l}^{-1}}{y - u_{l}}
\end{aligned}
$$
Taking the sum of the above equalities yields  the formula for
$\widetilde{w}_{1, +1}(y)$ in the case $r$ is odd. Similarly, one
can verify the formulae in other cases.\end{proof}

\begin{Cor} $\Omega\cup\{ \rho\}$ is $\bu$-admissible if and only if  \begin{enumerate}
\item $\varrho$ is defined by Proposition~\ref{Ene0} for $d=r$,
\item    $\omega_a$ for $0\le a\le r-1$ is determined by
$\tilde w_{1, +}(y)$, \item  $\Omega$ is admissible.\end{enumerate}
\end{Cor}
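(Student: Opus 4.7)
The plan is to prove both implications by leveraging Lemma~\ref{uadmissequi} and Proposition~\ref{Ene0}, thereby avoiding any direct manipulation of the closed forms (\ref{oa}) and (\ref{ob}).

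For the forward direction, assume $\Omega\cup\{\varrho\}$ is $\bu$-admissible. Condition~(1) is part of Definition~\ref{add1}. For~(2), Lemma~\ref{uadmissequi} gives the explicit rational form of $\tilde{w}_{1,+}(y)$, so expanding it as a power series in $y^{-1}$ reads off the values of $\omega_0,\omega_1,\dots,\omega_{r-1}$. For~(3), the cleanest route is to appeal to Proposition~\ref{Ene0}: the given $\omega_a$ are exactly the scalars realized on the irreducible $\W_{r,2}$-module $M$ with $E_1 M\neq 0$, and both equations in (\ref{adm1}) hold identically on $M$. Since $E_1\neq 0$ on $M$, we may cancel $E_1$ to obtain (\ref{ad1}), which is admissibility of $\Omega$.

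For the reverse direction, assume (1)--(3) and write $\tilde{\omega}_a$ for the values prescribed by the closed forms (\ref{oa}), (\ref{ob}). By the forward direction applied to this family, $\{\tilde{\omega}_a\}$ is admissible, and hypothesis~(2) gives $\omega_a=\tilde{\omega}_a$ for $0\le a\le r-1$. The first equation of (\ref{ad1}), taken with $b=a-r$, has leading coefficient $\sigma_0(\bu)=1$, so it expresses $\omega_a$ as a linear combination of $\omega_{a-r},\dots,\omega_{a-1}$; induction on $a$ then yields $\omega_a=\tilde{\omega}_a$ for all $a\ge 0$. For negative indices I would isolate the $i=a$ term in the second equation of (\ref{ad1}) and rewrite it as
\[
\omega_{-a}\cdot\varrho^{-2}\;=\;\omega_a-\sum_{i=1}^{a-1}\varrho^{-1}\delta\bigl(\omega_{a-i}\omega_{-i}-\omega_{a-2i}\bigr),
\]
using that $1+\varrho^{-1}\delta(\omega_0-1)=\varrho^{-2}$, which follows from $\omega_0=1-\delta^{-1}(\varrho-\varrho^{-1})$. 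Since $\varrho^{-2}$ is a unit, this determines $\omega_{-a}$ uniquely from $\omega_a$ and strictly lower-indexed data, and a second induction on $a\ge 1$ gives $\omega_{-a}=\tilde{\omega}_{-a}$.

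The principal obstacle would be a direct verification that the closed-form $\omega_a$ of Definition~\ref{add1} satisfy the admissibility recursion (\ref{ad1}); this is bypassed entirely by appealing to Proposition~\ref{Ene0}, where (\ref{ad1}) is automatic from the existence of the module $M$. What remains is the inductive propagation of admissibility from the initial data $\omega_0,\dots,\omega_{r-1}$ to the full family, which is routine once the two pivot units $\sigma_0(\bu)=1$ and $\varrho^{-2}$ have been identified.
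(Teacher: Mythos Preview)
Your argument is correct, and the reverse direction is essentially the paper's (the paper just asserts that admissibility determines all $\omega_a$ from $\omega_0,\dots,\omega_{r-1}$; you supply the details). One small simplification: for negative indices you do not need the second equation of (\ref{ad1}) at all, since the first equation already has an invertible bottom coefficient $\sigma_r(\bu)=u_1\cdots u_r$, so the same linear recursion runs downward as well as upward. Your route through the second equation works, but is more intricate than necessary.

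Where you genuinely diverge from the paper is the forward direction for~(3). The paper verifies the two admissibility relations by direct computation: for the first equation of (\ref{ad1}) it uses $\omega_a=\sum_j u_j^a\gamma_j$ together with the vanishing $\sum_{a=0}^r(-1)^{r-a}\sigma_{r-a}(\bu)u_j^a=0$; for the second it multiplies the closed forms of $\tilde w_{1,\pm}(y)$ from Lemma~\ref{uadmissequi}, collapses the product to $\frac{y^2}{(1-y^2)^2}-\delta^{-2}$, and reads off coefficients. You instead observe that (\ref{adm1}) are identities in $\W_{r,2}$ for \emph{any} choice of parameters, and then invoke the converse clause of Proposition~\ref{Ene0} (with $d=r$) to produce a module $M$ on which $E_1$ acts nontrivially, which forces the scalar relations (\ref{ad1}). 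This is cleaner and avoids the generating-function manipulation entirely; the cost is that it invokes Proposition~\ref{Ene0} and hence is, on its face, an argument over $\mathbb{Q}(u_1,\dots,u_r,q)$, after which one specializes. The paper's computation is more self-contained but longer.
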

\begin{proof} ``$\Longrightarrow$"
Let $c=\delta^{-1}\varrho u_1\cdots u_r$. By
Lemma~\ref{uadmissequi}, we have {\small $$ (\widetilde{w}_{1,
+}(y)-\frac{y^{2}}{y^{2} - 1} +
\delta^{-1}\varrho)(\widetilde{w}_{1, -}(y)-\frac{1}{y^{2} - 1} -
\delta^{-1}\varrho)
=(\frac{y\gamma_d(y)}{1-y^{2}}-c)(\frac{y\gamma_d(y)^{-1}}{(1-y^{2})}+c).
$$}

For any positive integers $r$ and all $\varrho$ in
Proposition~\ref{Ene0} for $d=r$, we have
$$(\frac{y\gamma_d(y)}{1-y^{2}}-c)(\frac{y\gamma_d(y)^{-1}}{(1-y^{2})}+c)=\frac{y^2}{(1-y^2)^2}-\delta^{-2}.
$$
So, \begin{equation}\label{abcd}(\widetilde{w}_{1,
+}(y)-\frac{y^{2}}{y^{2} - 1} +
\delta^{-1}\varrho)(\widetilde{w}_{1, -}(y)-\frac{1}{y^{2} - 1} -
\delta^{-1}\varrho)=\frac{y^2}{(1-y^2)^2}-\delta^{-2}.\end{equation}

Multiplying $(1-y^2)^2$ on both sides of (\ref{abcd}) yields an
equality given by  Laurent polynomials in indeterminate $y$.
Comparing the coefficients of $y^i, i\le 4$ on both sides of such
equality  yields the second equality in (\ref{ad1}). If $\omega_a=
\sum_{j=1}^r u_j^a \gamma_j$ where $\gamma_j$'s are
 given in Proposition~\ref{Ene0} for $d=r$, then
$$\begin{aligned} \sum_{a=0}^{r} (-1)^{r-a} \sigma_{r-a} (\bu)
\omega_{a+b} &= \sum_{a=0}^{r}
(-1)^{r-a} \sigma_{r-a} (\bu) \sum_{j=1}^r u_j^{a+b} \gamma_j\\
&= \sum_{j=1}^r (\sum_{a=0}^r (-1)^{r-a} \sigma_{r-a} (\bu) u_j^a)
u_j^b\gamma_j\\
&=0.\\
\end{aligned}$$
So, $\Omega$ is admissible.

``$\Longleftarrow$" Conversely, if $\Omega$ is admissible, then
$\omega_a$ for all $a\in \mathbb Z$ are determined by $\omega_0,
\omega_1, \dots, \omega_{r-1}$, uniquely. This implies the
result.\end{proof}

\section{The seminormal representations of $\W_{r,n}(\bu)$}

% for the next two sections we change these so that we do not
% have too many s's.
%\def\s{\mathfrak t}
\def\ts{\tilde\s}

In this section, unless otherwise stated,  we always keep the
following assumptions:

\begin{Assumption} (1) $R$ is a field which contains non zero elements $q, $ and
$u_i, 1\le i\le r$ with $o(q^2)>2n$. (2) The parameters $(u_1,u_2,
\dots, u_r)$ are generic in the sense of Definition~\ref{W-generic}.
(3) The root conditions~\ref{be} hold. (4) $\Omega\cup\{\varrho\}$
is $\bu$-admissible.\end{Assumption}

 The main purpose of this section is to construct the seminormal
representations for $\W_{r, n}$ over $R$. We remark that the method
we are using is the same as that for cyclotomic Nazarov-Wenzl
algebras in \cite{AMR}. We start by  recalling some combinatorics.

A  \textsf{partition} of $m$ is a sequence of non--negative integers
$\lambda=(\lambda_1,\lambda_2,\dots)$ such that $\lambda_i\ge
\lambda_{i+1}$ for all positive integers $i$ and
$|\lambda|:=\lambda_1+\lambda_2+\cdots=m$. Similarly, an
\textsf{$r$-partition} of $m$ is an ordered $r$--tuple
$\lambda=(\lambda^{(1)},\dots,\lambda^{(r)})$ of partitions
$\lambda^{(s)}$ such that
$|\lambda|:=|\lambda^{(1)}|+\dots+|\lambda^{(r)}|=m$. Let
$\Lambda_r^+(m)$ be the set of all $r$-partitions of $m$.

 If $\lambda$ and $\mu$ are two $r$-partitions we say that $\mu$ is
obtained from $\lambda$ by \textsf{adding} a box if there exists a
pair $(i,s)$ such that $\mu^{(s)}_i=\lambda^{(s)}_i+1$ and
$\mu^{(t)}_j=\lambda^{(t)}_j$ for $(j,t)\ne(i,s)$. In this situation
we will also say that $\lambda$ is obtained from $\mu$ by
\textsf{removing} a box and we write $\lambda\subset\mu$ and
$\mu\setminus\lambda=(s, i,\lambda^{(s)}_i+1)$. We will also say
that the triple $(s, i,\lambda^{(s)}_i+1)$ is an \textsf{addable}
(resp. \textsf{removable}) node of $\lambda$ (resp. $\mu$) which is
in the $i$-th row, $\lambda_i^{(s)}+1$-column of $s$-th component of
$\lambda$ (resp. $\mu$).

 Fix an integer $m$ with $0\le m\le\floor{n2}$. Let
 $\lambda\in \Lambda_{r}^+(n-2m)$. It has been defined in \cite{AMR} that
 an \textsf{$n$--updown
 $\lambda$--tableau}, or more simply an updown
$\lambda$--tableau, is a sequence $\t=(\t_0, \t_1,\t_2,\dots,\t_n)$
of $r$-partitions where $\t_n=\lambda$ and the $r$-partition $\t_i$
is obtained from $\t_{i-1}$ by either \textit{adding} or
\textit{removing} a box, for $i=1,\dots,n$.  When $i=0$, we always
assume that $\t_i=\emptyset$. Let $\UPD_n(\lambda)$ be the set of
$n$--updown $\lambda$--tableaux.

There is an equivalence relation $\simk$ on $\UPD_n(\lambda)$, which
has  been defined in \cite{AMR}.  Suppose $\s, \t\in
\UPD_n(\lambda)$. Then  $\t\simk \s$ if $\t_j=\s_j$
    whenever $1\le j\le n$ and $j\neq k$, for $\s,\t\in\UPD_n(\lambda)$.
The following result has been proved in  \cite{AMR}.

\begin{Lemma}\label{simk} Suppose $s\in \UPD_n(\lambda)$ with
$\s_{k-1}=\s_{k+1}$. Then there is a bijection between the set of
all addable and removable nodes of $\s_{k-1}$ and the set of $\t\in
\UPD_n(\lambda)$ with $\t\simk \s$.
\end{Lemma}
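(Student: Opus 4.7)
The plan is to build the bijection directly from the definitions. Suppose $\t \in \UPD_n(\lambda)$ satisfies $\t \simk \s$. By definition of $\simk$, we have $\t_j = \s_j$ for all $j \in \{0,1,\dots,n\} \setminus \{k\}$; in particular $\t_{k-1} = \s_{k-1}$ and, using the hypothesis, $\t_{k+1} = \s_{k+1} = \s_{k-1}$. The updown condition at steps $k$ and $k+1$ then requires that $\t_k$ is obtained from $\s_{k-1}$ by either adding or removing a box, and that $\s_{k-1}$ is in turn obtained from $\t_k$ by either adding or removing a box. These two conditions are equivalent and amount to the single statement that $\t_k$ and $\s_{k-1}$ differ by exactly one box $\alpha$, which is addable to $\s_{k-1}$ if $|\t_k| = |\s_{k-1}|+1$ and removable from $\s_{k-1}$ if $|\t_k| = |\s_{k-1}|-1$. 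Set $\Phi(\t) = \alpha$.

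For the inverse $\Psi$, take any addable or removable node $\alpha$ of $\s_{k-1}$ and define a sequence $\t$ by $\t_j = \s_j$ for $j \neq k$, and $\t_k = \s_{k-1} \cup \{\alpha\}$ or $\t_k = \s_{k-1} \setminus \{\alpha\}$ according as $\alpha$ is addable or removable. The updown conditions at indices $j \notin \{k,k+1\}$ hold because they coincide with those satisfied by $\s$; the conditions at $j = k$ and $j = k+1$ hold by construction, precisely because $\s_{k-1} = \s_{k+1}$ allows the single box $\alpha$ to be inserted (or removed) at step $k$ and then reversed at step $k+1$. Since $\t_n = \s_n = \lambda$, we have $\t \in \UPD_n(\lambda)$, and obviously $\t \simk \s$. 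Setting $\Psi(\alpha) = \t$ gives a map from the set of addable and removable nodes of $\s_{k-1}$ to $\{\t \in \UPD_n(\lambda) \mid \t \simk \s\}$.

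It is immediate from the constructions that $\Phi \circ \Psi$ and $\Psi \circ \Phi$ are the identity, so $\Phi$ and $\Psi$ are mutually inverse bijections. There is no serious obstacle here: the only thing to notice is that the hypothesis $\s_{k-1} = \s_{k+1}$ is exactly what is needed to ensure that every addable/removable node of $\s_{k-1}$ really does yield a valid element of $\UPD_n(\lambda)$, and conversely that any $\t \simk \s$ is determined by a single such node.
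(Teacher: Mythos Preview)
Your argument is correct and is precisely the natural direct construction of the bijection. The paper does not actually prove this lemma; it merely cites \cite{AMR}, so there is nothing further to compare---your proof supplies the elementary verification that the reference would contain.
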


Suppose that $\t\in \UPD_n(\lambda)$ where $\lambda\in
\Lambda_r^+(n-2f)$ and $0\le f\le \lfloor \frac n2\rfloor $. For
each positive integer $k\le n$,  either $\t_k\subset\t_{k-1}$ or
$\t_{k-1}\subset\t_k$. We define
\begin{equation} c_\t(k)=\begin{cases} \label{content}
     u_s q^{2(j-i)} ,  &\text{if }\t_k{\setminus}\t_{k-1}=(i,j,s),\\
     u_s^{-1} q^{-2(j-i)}, &\text{if }\t_{k-1}{\setminus}\t_{k}=(i,j,s).
\end{cases}\end{equation}
We call $c_\t(k)$ the \textsf{content} of $k$ in $\t$. Let
$\alpha=(i,j,s)$.   We also define
\begin{equation}\label{content1} c_\lambda (\alpha)=\begin{cases} u_s
q^{2(j-i)}, & \text{if $\alpha$ is an addable node of $\lambda$, }\\
u_s^{-1} q^{-2(j-i)}, & \text{if $\alpha$ is a removable node of
$\lambda$.}\\
\end{cases}
\end{equation}
 We write $c(\alpha)$ instead of $c_\lambda(\alpha)$ if
there is no confusion.

 The following condition, which is a counterpart of
the generic condition for cyclotomic Nazarov-Wenzl algebras in
\cite{AMR}, guarantees the existence of the seminormal
representations for $\W_{r, n}$.

\begin{Defn}\label{W-generic}
    The parameters $\bu=(u_1,\dots,u_r)$ are \textbf{generic} for
    $\W_{r,n}$ if whenever there exists $d\in\Z$ such that either
    $u_i u_j^{\pm 1}=q^{2d}1_R$ and $i\ne j$,
    or $u_i=\pm q^{d}\cdot1_R$ then $|d|\ge 2n$.
\end{Defn}

\begin{Lemma}\label{generic u}
    Suppose that the parameters $\bu$ are generic for $\W_{r,n}$
    and that $o(q^2)>2n$. Let $\s,\t\in\UPD_n(\lambda)$ where $\lambda\in \Lambda_r^+(n-2f)$ and $0\le f\le\floor{n2}$. Then
\begin{enumerate}
\item $\s=\t$ if and only if $c_\s(k)=c_\t(k)$, for all positive integers $k\le n$.
\item $c_\s(k)\neq c_\s(k+1)$, for all positive integers  $ k<n$.
\item if $\s_{k-1}=\s_{k+1}$ then $c_\s(k)\neq  c_\t(k)^{\pm 1} $, whenever
    $\t\simk\s$ and $\t\ne\s$,
\item $c_\t(k)\neq \pm q^{\pm 1}$ for all positive integers $k\le n$.
\end{enumerate}
\end{Lemma}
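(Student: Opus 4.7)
The plan is to reduce every claim to a single genericity observation. Each content $c_\t(k)$ takes the form $u_s^{\eta}q^{2\eta(j-i)}$ with $\eta\in\{+1,-1\}$ and $|j-i|<n$. Any equation between two such quantities, or between one of them and $\pm q^{\pm 1}$, can be rearranged into one of three shapes: (i) $u_s=\pm q^{e}$ with $|e|<2n$; (ii) $u_s u_{s'}^{\pm 1}=q^{2e}$ with $s\neq s'$ and $|e|<2n$; or (iii) $q^{2e}=1$ with $|e|<2n$. The first two are excluded by Definition~\ref{W-generic}; the third, by $o(q^2)>2n$, forces $e=0$. Hence every content equality either contradicts genericity or reduces to: same component $s$, same diagonal $j-i$, and same add/remove type.

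Part (d) is a direct instance of shape (i): $c_\t(k)=\pm q^{\pm 1}$ would give $u_s=\pm q^{e}$ with $|e|\le 2n-1$, contradicting genericity. Part (a) follows by induction on $k$, the base case being $\s_0=\t_0=\emptyset$. With $\s_{k-1}=\t_{k-1}$ as the induction hypothesis, the trichotomy forces $\s_k$ and $\t_k$ to arise from the same component by the same operation (add or remove) on the same diagonal; since any single component of a partition has at most one addable and at most one removable node on each diagonal, $\s_k=\t_k$.

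Part (b) splits into the four add/remove combinations at steps $k$ and $k+1$. The mixed combinations (add--remove or remove--add, including the degenerate case where the same cell is added at step $k$ and removed at step $k+1$) generate an equation of shape (i) and are killed by genericity. The pure add--add combination would require both $\s_{k-1}$ and $\s_k$ to have an addable node on the common diagonal $d$ in the common component $s$; but adding the unique addable node on diagonal $d$ to $\s_{k-1}^{(s)}$ fills it up and creates no new addable position on that diagonal, a contradiction. The pure remove--remove case is symmetric.

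Part (c) is the delicate step and I expect it to be the main obstacle. Here $\s_{k-1}=\s_{k+1}$, so each of $c_\s(k)$ and $c_\t(k)$ (for $\t\simk\s$) is read off from a single addable or removable node of the fixed partition $\s_{k-1}$, and Lemma~\ref{simk} says distinct $\t$ correspond to distinct such nodes. One checks on each of the four add/remove combinations for $(\s,\t)$ that both $c_\s(k)=c_\t(k)$ and $c_\s(k)=c_\t(k)^{-1}$ reduce, via the trichotomy, to two nodes on the same diagonal of the same component. The closing structural input is that the addable and removable nodes of a single-component partition lie on pairwise distinct diagonals --- their $j-i$ values strictly interleave along the boundary staircase --- so such a coincidence forces the two nodes, and hence $\s$ and $\t$, to agree.
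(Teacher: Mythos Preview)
Your argument is correct and follows essentially the same approach as the paper, which simply defers to \cite{AMR} with the remark that genericity forces the contents of the addable and removable nodes of any $\s_{k-1}$ to be pairwise distinct. You have supplied exactly those details. One small wording slip: in part~(b), the mixed add--remove case yields an equation of shape~(i) only when the two nodes lie in the same component; if the components differ you get shape~(ii) instead. Since both are excluded by Definition~\ref{W-generic}, this does not affect the argument.
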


\begin{proof} (a)-(c) can be proved by the arguments similar to
those in \cite{AMR}.  The key point is that the  assumptions imply
that the contents of the addable and removable nodes in $\lambda$
are distinct so an up-down $\lambda$--tableau $\s$ is uniquely
determined by the sequence of contents $c_\s(k)$, for $k=1,\dots,n$.
(d)  can be verified by direct computation.\end{proof}

Unless otherwise stated, we fix a $\lambda\in \Lambda_r^+(n-2f)$.
Motivated by Ariki, Mathas and Rui's work on cyclotomic
Nazarov-Wenzl algebras in \cite{AMR}, we introduce the following
rational functions in an indeterminate $y$. Such  functions will
play a key role in the construction of seminormal representations of
$\W_{r,n}$.

\begin{Defn}\label{rational-W}
Suppose that $\s\in \UPD_n(\lambda)$.  For $1\le k\le n$, define
rational functions $W_k(y, \s)$ by
$$ W_k(y,\s)= \frac{y^2}{y^2-1}-\delta^{-1}
\varrho+(\delta^{-1}\varrho \prod_{i=1}^r
u_i+\frac{y\gamma_r(y)}{y^2-1}) \prod_{i=1}^r u_i \prod_{\alpha}
\frac{y-c^{-1}(\alpha)}{y-c(\alpha)}, $$ where $\alpha$ runs over
the addable and removable nodes of the $r$-partition $\s_{k-1}$.
\end{Defn}

\begin{Lemma} Suppose $\lambda$ is an $r$-partition. Then
\begin{equation}\label{contidentity}\prod_{\alpha} c(\alpha)=\prod_{i=1}^r
u_i\end{equation} where $\alpha$ runs over all addable nodes and
removable nodes of  $\lambda$.\end{Lemma}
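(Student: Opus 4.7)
The addable and removable nodes of $\lambda$ split as the disjoint union, over $s = 1,\ldots, r$, of those of the single partition $\lambda^{(s)}$; by (\ref{content1}) the content of such a node in component $s$ involves only $u_s$ together with a power of $q^{\pm 2}$. Consequently
\[
    \prod_\alpha c(\alpha) = \prod_{s=1}^{r} \prod_{\alpha \in \lambda^{(s)}} c(\alpha),
\]
and the task reduces to showing, for each single partition $\mu$ regarded as the $s$-th component, that the product of the contents of all addable and removable nodes of $\mu$ equals $u_s$.

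For such a $\mu$, denote the sets of addable and removable nodes by $A$ and $R$. Then the product in question equals $u_s^{|A|-|R|}\,q^{2(\Sigma_A - \Sigma_R)}$, where $\Sigma_A = \sum_{(i,j)\in A}(j-i)$ and $\Sigma_R = \sum_{(i,j)\in R}(j-i)$. It therefore suffices to establish the two elementary combinatorial identities
\[
    |A| = |R| + 1 \quad \text{and} \quad \Sigma_A = \Sigma_R.
\]

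I would prove both by induction on $|\mu|$. The base case $\mu = \emptyset$ is immediate: $(1,1)$ is the unique addable node (with $j - i = 0$) and there are no removable nodes. For the inductive step, let $\mu' = \mu \cup \{\beta\}$ where $\beta = (i_0, j_0)$ is an addable box of $\mu$; only $\beta$ and its four neighbours can change status. The neatest way to organize the bookkeeping is via the Maya diagram of $\mu$, namely the bi-infinite binary sequence $(s_c)_{c \in \mathbb Z}$ with $s_c = 1$ iff $c = \mu_i - i$ for some $i \geq 1$: addable nodes correspond precisely to $10$-transitions $(s_{c-1} = 1, s_c = 0)$ and removable nodes to $01$-transitions, and adding $\beta$ simply swaps the adjacent pair $s_{c_0-1} = 1,\ s_{c_0} = 0$ at $c_0 = j_0 - i_0$. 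The four cases determined by the values of $(s_{c_0-2}, s_{c_0+1}) \in \{0,1\}^2$ then show by a direct computation that $\Delta(|A| - |R|) = 0$ and $\Delta(\Sigma_A - \Sigma_R) = 0$. The main obstacle is just organizing this case analysis, but the arithmetic in each case is a single line.
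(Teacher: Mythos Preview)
Your proof is correct, but the route differs from the paper's. Both reduce to a single component $\mu=\lambda^{(p)}$ and need $|A|=|R|+1$ together with $\Sigma_A=\Sigma_R$; the difference is how these two identities are obtained. The paper does no induction: it lists the addable nodes $(a_i,b_i)$, $1\le i\le k+1$, and removable nodes $(r_i,s_i)$, $1\le i\le k$, top to bottom along the boundary, records the elementary boundary relations $a_1=1$, $b_{k+1}=1$, $a_i=r_{i-1}+1$, $s_j=b_j-1$, and computes the exponent of $q$ as the telescoping sum
\[
\sum_{i=1}^{k+1}(b_i-a_i)+\sum_{i=1}^{k}(r_i-s_i)=b_{k+1}-1=0.
\]
This is a single line once the boundary relations are written down, and it simultaneously gives $|A|-|R|=1$ and $\Sigma_A-\Sigma_R=0$ without case analysis. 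Your Maya-diagram induction is perfectly valid and is a standard way to track how adding a box changes the addable/removable structure, but it introduces more machinery (the four cases on $(s_{c_0-2},s_{c_0+1})$) than the lemma actually needs. The paper's approach buys a shorter, non-inductive argument; yours buys a technique that generalises more readily to other boundary statistics.
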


\begin{proof}  It is known that  the number of addable nodes of
a partition, say $\mu$,  is equal to the number of the removable
nodes of $\mu$  plus $1$. We arrange the removable nodes (resp.
addable nodes) of $\mu$ from top to bottom.
 Therefore,  we assume that  $(p, r_i,
s_i), 1\le i\le k$ (resp. $(p, a_i, b_i), 1\le i\le k+1$) are all
removable (resp. addable) nodes of $\lambda^{(p)}$, the $p$-th
component of $\lambda$. We have $a_1=1$, $b_{k+1}=1$,
$a_i=r_{i-1}+1$ and $s_j=b_j-1$ for $2\le i\le k+1$ and  $1\le j\le
k$. By (\ref{content1}),
$$ \prod_{\alpha} c(\alpha)=u_p q^{2(\sum_{i=1}^{k+1}
(b_i-a_i)+\sum_{i=1}^k (r_i-s_i))}=u_p.$$ Multiplying the previous
equality for all positive integers $p\le  r$ yields
(\ref{contidentity}).
\end{proof}

\begin{Lemma}\label{w-residue}
    Suppose that $\bu$ is generic and $o(q^2)>2n$. Let
    $\s\in \UPD_n(\lambda)$ and $1\le k\le n$. Then
    $$ \frac{W_k(y, \s)}{y}=\sum_\alpha
         \Big(\Res_{y=c(\alpha)}\frac{W_k(y,\s)}{y}\Big)\cdot\frac 1{y-c(\alpha)},$$
    where $\alpha$ runs over the addable and removable nodes of
    $\s_{k-1}$.
\end{Lemma}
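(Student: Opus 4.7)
The plan is to view the claimed identity as the partial fraction expansion of $W_k(y,\s)/y$, regarded as a rational function in the variable $y$ over the field of fractions of $R$. Reading off Definition~\ref{rational-W}, the denominator of $W_k(y,\s)/y$ is built out of $y$, $y^{2}-1$, and $\prod_{\alpha}(y-c(\alpha))$, so the only possible poles of $W_k(y,\s)/y$ lie in $\{0,1,-1\}\cup\{c(\alpha):\alpha\}\cup\{\infty\}$. By Lemma~\ref{generic u} together with the genericity of $\bu$ and the hypothesis $o(q^{2})>2n$, the contents $c(\alpha)$ are pairwise distinct and distinct from $0$ and $\pm 1$; hence each factor $y-c(\alpha)$ occurs with multiplicity one in the denominator and the poles at the points $c(\alpha)$ are simple.

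The lemma therefore reduces to three regularity statements: $W_k(y,\s)/y$ is regular at $y=0$, at $y=\pm 1$, and vanishes at $y=\infty$. Once these are established, standard partial fractions over the field of fractions of $R$ yield exactly the stated formula. The key algebraic input in each case is \eqref{contidentity}, which gives $\prod_{\alpha}c(\alpha)=\prod_{i=1}^{r}u_i$, combined with the definition of $\gamma_r$ from Proposition~\ref{Ene0}(1). For $y=0$, a direct substitution works because $y\gamma_r(y)/(y^{2}-1)$ vanishes at $y=0$ for both parities of $r$, and because $\prod_{\alpha}c^{-1}(\alpha)/c(\alpha)=\bigl(\prod_{\alpha}c(\alpha)\bigr)^{-2}=\bigl(\prod_{i}u_i\bigr)^{-2}$; the two surviving pieces $-\delta^{-1}\varrho$ and $\delta^{-1}\varrho\prod u_i\cdot\prod u_i\cdot(\prod u_i)^{-2}$ then cancel. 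For $y=\pm 1$, I would compute the residues of $y^{2}/(y^{2}-1)$ and of $y\gamma_r(y)/(y^{2}-1)$ directly, then use the elementary identities $(1-c^{-1})/(1-c)=-c^{-1}$ and $(-1-c^{-1})/(-1-c)=c^{-1}$ together with \eqref{contidentity} to collapse the remaining product of contents to $(-1)^{\#\alpha}/\prod u_i$ at $y=1$ and to $1/\prod u_i$ at $y=-1$. Finally, at $y=\infty$ the ratio $\prod_{\alpha}(y-c^{-1}(\alpha))/(y-c(\alpha))$ tends to $1$, since numerator and denominator have the same degree and leading coefficient, so each factor of $W_k(y,\s)$ has a finite limit and hence $W_k(y,\s)/y\to 0$, ruling out any polynomial part.

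The main obstacle is the bookkeeping at $y=\pm 1$, where the cancellation has to be checked in four separate cases indexed by the sign of $\gamma_r(\pm 1)$ and the parity of $\#\alpha$. The saving observation is that the number of addable nodes of any $r$-partition exceeds the number of removable nodes by exactly $r$, so $\#\alpha$ has the same parity as $r$, and \eqref{contidentity} reduces every product of contents to the single constant $\prod u_i$ independently of which $\s_{k-1}$ one starts with. With these two facts in hand, all four parity cases produce the same cancellation of the two half-integer residues, and the proof is complete.
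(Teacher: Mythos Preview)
Your approach is correct and is essentially the same as the paper's: write $W_k(y,\s)/y$ in partial-fraction form and verify that the potential poles at $y=0$, $y=\pm 1$, and $y=\infty$ do not contribute, using \eqref{contidentity} for the cancellations. The paper gives the $y=\infty$ and $y=0$ computations explicitly and dismisses $y=\pm 1$ with ``similarly''; your write-up fills in that step with the parity observation $\#\alpha\equiv r\pmod 2$ and the identities $(1-c^{-1})/(1-c)=-c^{-1}$, $(-1-c^{-1})/(-1-c)=c^{-1}$, which is exactly the intended verification.
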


\begin{proof}
Since $\bu$ is generic and $o(q^2)>2n$,  $c(\alpha)$ are pairwise
distinct for different addable and removable nodes  $\alpha$ of
$\lambda$. Further, we have $c(\alpha)\not\in \{0, \pm 1\}$.
Therefore, we can write
$${W_k(y,\s)\over y}
    =a+\frac by +\frac c{y-1}+\frac d {y+1} +\sum_{ \alpha} \Big(\Res_{y=c(\alpha)}\frac{W_k(y,\s)}y\Big)
             \cdot {1\over y-c(\alpha)},$$
for some $a,b, c, d\in R$, where $\alpha$ runs over the addable and
removable nodes of $\s_{k-1}$. In order to prove the result, we need
verify $a=b=c=d=0$. In fact, $a=\lim_{y\rightarrow \infty}
{W_k(y,\s)\over y}=0$. We have   $b=\Res_{y=0}\tfrac{W_k(y,
\s)}{y}=-\delta^{-1} \varrho +\delta^{-1} \varrho\prod_{i=1}^r u_i^2
\prod_{\alpha} c(\alpha)^{-2}\overset
{(\ref{contidentity})}=-\delta^{-1}\varrho+\delta^{-1}\varrho =0$.
One can verify $c=d=0$ similarly.
\end{proof}

The following definition is the same as those for cyclotomic
Nazarov-Wenzl algebras if we use our rational functions $W_k(y, \s)
$ instead of those for  cyclotomic Nazarov-Wenzl algebras in
\cite{AMR}.

\begin{Defn}\label{e}
    Let $\lambda\in \Lambda_r^+(n-2f)$ for some non-negative integer $f\le \lfloor\frac n2\rfloor$.
     Assume that  $k$ is a positive  integer with
    $ k\le n$. If
    $\s, \t\in \UPD_n(\lambda)$
    with  $\s_{k-1}=\s_{k+1}$, then   we define the scalars
    $E_{\s\t}(k)\in R$ by
$$\label{ew}
E_{\s\t}(k)=\begin{cases}\displaystyle
    \Res_{y=c_\s(k)} \frac{W_k(y,\s)}{y},&\text{if }\s=\t,\\[10pt]
    \sqrt{E_{\s\s}(k)}\sqrt{E_{\t\t}(k)},&
              \text{if }\s\ne\t \text{ and }\t\simk \s,\\[10pt]
    0,&\text{otherwise}.
\end{cases}
$$
We remark that we have to  fix the choice of square roots
$\sqrt{E_{\s\s}(k)}$, for $\s\in\UPD_n(\lambda)$ and $1\le k\le n$,
which  we will illustrate late.
\end{Defn}

%Note that when $c_\s(k)\ne0$ then $\displaystyle
%E_{\s\s}(k)=\Res_{y=c_\s(k)}\frac{W_k(y,\s)+y-\frac12}{y}.$

In \cite{AMR}, there is no definition for  $E_{\s\s}(k)$ under the
assumption  $\s_{k-1}\ne\s_{k+1}$. In the current paper, we do not
need such a definition either.

If   $r$ is odd, then  $\rho^{-1}\in \{ u_1\cdots u_r, -u_1\cdots
u_r\}$. It follows from Definition~\ref{rational-W} that

\begin{equation}\label{ekodd}
    E_{\s\s}(k)=\begin{cases} \frac{1} {\varrho c_\s(k)}
 \big(\frac{c_\s(k)-c_\s(k)^{-1}}{\delta} +1\big)
                 \prod_{\alpha}\frac{c_\s(k)-c(\alpha)^{-1}}{c_\s(k)-c(\alpha)},
             &\text{if $\varrho^{-1}=u_1\cdots u_r$},\\
\frac{1} {\varrho c_\s(k)}
 \big(\frac{c_\s(k)-c_\s(k)^{-1}}{\delta} -1\big)
                 \prod_{\alpha}\frac{c_\s(k)-c(\alpha)^{-1}}{c_\s(k)-c(\alpha)},
             &\text{if $\varrho^{-1}=- u_1\cdots u_r$,}\\
\end{cases}
\end{equation} where $\alpha$ runs over all addable and removable
nodes of $\s_{k-1}$ with $\alpha\neq \s_k\setminus \s_{k-1}$.

If $r$ is even, then $\varrho^{-1}\in \{ q^{-1}u_1\cdots u_r,
-qu_1\cdots u_r\}$. So, \begin{equation}\label{ekeven}
    E_{\s\s}(k)=\begin{cases} \frac{1}{\varrho\delta}
    \big(1-\frac{q^2}{c_\s(k)^2})\big)
             \prod_{\alpha}\frac{c_\s(k)-c(\alpha)^{-1}}{c_\s(k)-c(\alpha)},
             &\text{if $\varrho^{-1}=q^{-1}\prod_{i=1}^r u_i$,}\\
\frac{1}{\varrho\delta}
    \big(1-\frac{1}{q^2c_\s(k)^2}\big)
             \prod_{\alpha}\frac{c_\s(k)-c(\alpha)^{-1}}{c_\s(k)-c(\alpha)},
             &\text{if $\varrho^{-1}=-q\prod_{i=1}^r u_i$, }\\
\end{cases}
\end{equation} where $\alpha$ runs over all addable and removable
nodes of $\s_{k-1}$ with $\alpha\neq \s_k\setminus \s_{k-1}$.

It follows from   Lemma~\ref{generic u} and
(\ref{ekodd})--(\ref{ekeven}) that
\begin{equation}\label{estne0}
E_{\s\t}(k)\ne0, \text{ if $\s\simk\t$}.\end{equation} Rewriting
Lemma~\ref{w-residue} yields the following equality:
\begin{equation}\label{w-res}
    \frac{W_k(y,\s)}{y}=\sum_{\t\simk\s}\frac{E_{\t\t}(k)}{y-c_\t(k)}.
\end{equation}

Given two partitions $\s$ and $\t$ write $\s\ominus\t=\alpha$ if
either $\t\subset\s$ and $\s\setminus\t=\alpha$, or $\s\subset\t$
and $\t\setminus\s=\alpha$.

Let $\mathfrak S_n$ be the symmetric group in $n$ letters. As an
Coxeter group, $\mathfrak S_n$  is generated by  $s_i:=(i, i+1)$
subject to the relations  $$\begin{cases} s_i^2=1, & \text{if $1\le i\le n-1$,}\\
s_is_j=s_js_i & \text{if $|i-j|>1$}\\
s_is_js_i=s_js_is_j, &\text{if $|i-j|=1$.}\\
\end{cases}
$$

Let $\s\in\UPD_n(\lambda)$ with  $\s_{k-1}\ne\s_{k+1}$, for some
$k$, $1\le k<n$.  Suppose that $\s_k\ominus\s_{k-1}$ and
$\s_{k+1}\ominus\s_k$ are in different rows and in different
columns. It is defined in \cite{AMR} that
$$s_k\s=(\s_1, \cdots,
\s_{k-1}, \t_{k}, \s_{k+1}, \cdots, \s_{n})\in \UPD_n(\lambda)$$
where $\t_{k}$ is the $r$-partition which is uniquely determined by
the conditions $\t_k\ominus\s_{k+1}=\s_{k-1}\ominus \s_{k}$ and
$\s_{k-1}\ominus\t_{k}=\s_{k}\ominus \s_{k+1}$. If the nodes
$\s_k\ominus\s_{k-1}$ and $\s_{k+1}\ominus\s_k$ are either  in the
same row, or  in the same column, then $s_k\s$ is not defined.

\begin{Defn}\label{ab}
    Let $\s\in\UPD_n(\lambda)$ with  $\s_{k-1}\ne\s_{k+1}$, for some $k$, $1\le k<n$.
 We define
$$
a_{\s}(k)=\frac{\delta c_{\s}(k+1)}{c_\s(k+1)-c_\s(k)}\quad\text{
and }\quad
        b_{\s}(k)={\sqrt{1-a_{\s}(k)^2+\delta a_{\s}(k)}}.
$$
We will fix the choice of square root for $b_\s(k)$ in (\ref{be}).
Since $\bfu$ is generic,   by Lemma~\ref{generic u}(b),
$c_\s(k+1)-c_\s(k)\ne0$. So, the formula for $a_s(k)$ makes sense.
\end{Defn}

As in \cite{AMR}, we do not define $a_\s(k)$ and $b_\s(k)$
 when $\s_{k-1}=\s_{k+1}$.  The following result can be verified
 easily.

\begin{Lemma}\label {x-equal}
    Suppose that $\s\in \UPD_n(\lambda)$ and $1\le k<n$. Then:
    \begin{enumerate}
    \item If $s_k\s$ is defined then $c_\s(k)=c_{s_k\s}(k+1)$
        and $c_\s(k+1)=c_{s_k\s}(k)$; consequently, $a_{s_k\s}(k)=\delta-a_\s(k)$.
    \item If $s_k\s$ is not defined then $a_{\s}(k)\in\{q,-q^{-1}\}$ and $b_{\s}(k)=0$.
    \end{enumerate}
\end{Lemma}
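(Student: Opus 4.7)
My plan is to prove both parts by direct case analysis. Set $\alpha = \s_k\ominus\s_{k-1}$ and $\beta = \s_{k+1}\ominus\s_k$; since $\s_{k-1}\neq\s_{k+1}$, either $\alpha\ne\beta$ as nodes, or they coincide as nodes but differ in type (one added, one removed). In all situations the content $c_\s(k)$ depends only on $\alpha$ and whether step $k$ adds or removes a box, and similarly $c_\s(k+1)$ depends only on $\beta$ and the direction at step $k+1$, via (\ref{content}).

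For part (a), the nodes $\alpha,\beta$ lie in different rows and different columns, and I treat the four cases according to whether each of $\alpha,\beta$ is added or removed. Consider, for instance, the case where both are added, so $\s_{k-1}\subset\s_k\subset\s_{k+1}$. The definition of $s_k\s$ requires $\t_k$ to satisfy $\t_k\ominus\s_{k-1}=\beta$ and $\t_k\ominus\s_{k+1}=\alpha$. I would verify that $\t_k=\s_{k-1}\cup\{\beta\}$ is the unique solution and is a genuine partition (using that $\alpha,\beta$ are in different rows and columns, so $\beta$ remains addable to $\s_{k-1}$). Then between $\s_{k-1}$ and $\t_k$ we add $\beta$, and between $\t_k$ and $\s_{k+1}$ we add $\alpha$; comparing with (\ref{content}) yields $c_{s_k\s}(k)=c_\s(k+1)$ and $c_{s_k\s}(k+1)=c_\s(k)$. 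The remaining three cases (added/removed, removed/added, both removed) are handled identically by the correct choice of $\t_k$ (e.g.\ $\t_k=\s_{k-1}\setminus\{\beta\}$ when both are removed). The final identity $a_{s_k\s}(k)=\delta-a_\s(k)$ then drops out of Definition~\ref{ab}:
\begin{equation*}
a_{s_k\s}(k)=\frac{\delta c_\s(k)}{c_\s(k)-c_\s(k+1)}=\delta-\frac{\delta c_\s(k+1)}{c_\s(k+1)-c_\s(k)}=\delta-a_\s(k).
\end{equation*}

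For part (b), since $\s_{k-1}\ne\s_{k+1}$ but $s_k\s$ is undefined, $\alpha$ and $\beta$ must lie in the same row or same column of the same component. I first rule out the mixed-type cases: if $\alpha$ is added and $\beta$ removed (or vice-versa) and they are in the same row (resp.\ column), then the removability condition forces $\beta=\alpha$, hence $\s_{k-1}=\s_{k+1}$, contradicting our hypothesis. Thus $\alpha,\beta$ are of the same type. In each of the four surviving sub-cases (both added, same row; both added, same column; both removed, same row; both removed, same column), a direct application of (\ref{content}) gives $c_\s(k+1)/c_\s(k)\in\{q^2,q^{-2}\}$. Substituting into Definition~\ref{ab} yields $a_\s(k)=q$ or $a_\s(k)=-q^{-1}$, and a one-line calculation shows $1-a_\s(k)^2+\delta a_\s(k)=0$ in both instances, so we may (and do) choose the square root $b_\s(k)=0$.

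The proof is essentially bookkeeping; the main obstacle is verifying part (a) without error across the four add/remove configurations, since one must track both the geometric node and its type (addable versus removable) to see that the swap in the definition of $s_k\s$ exactly transposes the two contents.
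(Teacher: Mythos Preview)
Your proof is correct and is precisely the direct verification the paper has in mind; the paper itself gives no proof, merely remarking before the statement that ``the following result can be verified easily.'' Your careful case analysis fills in exactly what the authors omit, and the minor phrasing ``we may (and do) choose the square root $b_\s(k)=0$'' could be tightened since $\sqrt{0}=0$ leaves no choice.
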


Finally, if $\s_{k-1}=\s_{k+1}$ and $\t\simk\s$, where $1\le k<n$,
we set
$$T_{\s\t}(k)=\delta \frac{E_{\s\t}(k)-\delta_{\s\t}}{c_\s(k)c_\t(k)-1}.$$
Note that $c_\s(k)c_\t(k)\ne1$ by Lemma~\ref{generic u}.

We will assume that we have chosen the square roots in the
definitions of $b_\s(k)$ and $E_{\s\t}(k)$ so that the following
equalities hold.

\begin{Assumption}[Root conditions]\label{be}
We assume that the ring $R$ is large enough so that
$\sqrt{E_{\s\s}(k)}\in R$ and $b_\s(k)=\sqrt{1-a_\s(k)^2+\delta
a_{\s}(k)}\in R$, for all $\s,\t\in\UPD_n(\lambda)$ and $1\le k<n$,
and that the following equalities hold:
\begin{enumerate}
  \item If $\s_{k-1}\ne\s_{k+1}$ and $s_k\s$ is defined
        then $b_{s_k\s}(k)=b_\s(k)$.
  \item If $\s_{k-1}\ne\s_{k+1}$ and $\s\siml\t$, where $|k-l|>1$,
      then $b_\s(k)=b_\t(k)$.
  \item If $\s_{k-1}\ne\s_{k+1}$, $\s_k\ne\s_{k+2}$ and $s_k\s$
    and $s_{k+1}\s$ are both defined then $b_{s_{k+1}\s}(k)=b_{s_k\s}(k+1)$.
  \item If $\s_{k-1}=\s_{k+1}$ and $\s_k=\s_{k+2}$ then
      $\sqrt{E_{\s\s}(k)}\sqrt{E_{\s\s}(k+1)}=1$.
  \item If $\s_{k-1}=\s_{k+1}$, $\t_{k-1}=\t_{k+1}$ and
      $E_{\s\s}(k)=E_{\t\t}(k)$ then
      $\sqrt{E_{\s\s}(k)}=\sqrt{E_{\t\t}(k)}$.
  \item If $\s_{k-1}=\s_{k+1}$, $\s_k=\s_{k+2}$ and
      $\t\simkk\s$, $\u\simk\s$ with $s_k\t$ and $s_{k+1}\u$ both
      defined and $s_k\t=s_{k+1}\u$ then
      $b_\t(k)\sqrt{E_{\t\t}(k+1)}=b_\u(k+1)\sqrt{E_{\u\u}(k)}$.
\end{enumerate}
\end{Assumption}

The following is the main result of this section.
\begin{Theorem} \label{seminormal} Let $\W_{r, n}$ be over a field $R$ such that the Assumption~4.1 holds.
  Let
$\Delta(\lambda)$ be the~$R$--vector space with basis
$\set{v_\s|\s\in\UPD_n(\lambda)}$. Then $\Delta(\lambda)$ becomes a
$\W_{r,n}$-module via
\begin{align*}
\bullet\quad
   T_k v_{\s}&=\begin{cases}
         \displaystyle \sum_{\t\simk\s} T_{\s\t}(k)v_\t,&
               \text{if }\s_{k-1}= \s_{k+1},\\[15pt]
     a_{\s}(k) v_{\s}+b_{\s}(k)v_{s_k\s}, &\text{if }\s_{k-1}\neq \s_{k+1},
      \end{cases}\\
\bullet\quad
    E_k v_{\s}&=\begin{cases}
        \displaystyle\sum_{\t\simk\s} E_{\s\t}(k)v_\t,&\text{if }\s_{k-1}=\s_{k+1}\\[15pt]
    0, & \text{if }\s_{k-1}\neq \s_{k+1},\\
      \end{cases}\\
\bullet\quad
     X_i v_\s&=c_\s(i)v_\s,
 \end{align*}
 for $1\le k<n$ and $1\le i\le n$.  When   $s_k\s$ is not defined, we set $v_{s_k\s}=0$.
\end{Theorem}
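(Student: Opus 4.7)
The plan is to verify that each defining relation of $\W_{r,n}$ in Definition~\ref{Waff relations} holds on the basis $\set{v_\s|\s\in\UPD_n(\lambda)}$, following the template of~\cite{AMR}. The formulas for the actions of $T_k$ and $E_k$ split by whether $\s_{k-1}=\s_{k+1}$ or not, so each relation decomposes into a short list of cases according to the pattern of equalities among $\s_{i-1},\s_i,\s_{i+1},\s_{i+2}$. The main strategy is to let the rational functions $W_k(y,\s)$ of Definition~\ref{rational-W} and the partial fraction expansion (\ref{w-res}) absorb most of the calculation.

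First I would dispose of the easy relations. The $X_i$ act diagonally in $\set{v_\s}$, so Definition~\ref{Waff relations}(a) and (e) are immediate. The cyclotomic relation (k) holds because $\s_1$ is obtained by adding a box $(s,1,1)$ to the empty tuple, giving $c_\s(1)=u_s\in\set{u_1,\dots,u_r}$. The relations $T_iX_j=X_jT_i$ and $E_iX_j=X_jE_i$ in (c) and (h) for $j\ne i,i{+}1$ follow because $T_i,E_i$ move $v_\s$ only to terms $v_\t$ with $\t_j=\s_j$. The skein relations (f) and the anti-symmetry relation (j) reduce, via (\ref{content}) and Definition~\ref{ab}, to the identity $c_\t(i)c_\t(i+1)=1$ whenever $\t_{i-1}=\t_{i+1}$, which holds because then the boxes $\t_i\ominus\t_{i-1}$ and $\t_{i+1}\ominus\t_i$ coincide.

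The central analytic input is the partial fraction expansion (\ref{w-res}). Taking $y\to\infty$ in $W_k(y,\s)$ and using the explicit form of $\varrho$ from Proposition~\ref{Ene0} yields $\sum_{\t\simk\s}E_{\t\t}(k)=\omega_0$; combined with the factorisation $E_{\s\t}(k)=\sqrt{E_{\s\s}(k)}\sqrt{E_{\t\t}(k)}$ from Definition~\ref{e}, this immediately gives the idempotent relation $E_k^2=\omega_0 E_k$. The unwrapping relation $E_1X_1^aE_1=\omega_a E_1$ follows similarly: one verifies $\sum_{\t}c_\t(1)^aE_{\t\t}(1)=\omega_a$ by reading off the coefficient of $y^{-a}$ in the Laurent expansion of $W_1(y,\s)$, which equals $\widetilde w_{1,+}(y)$ by the $\bu$-admissibility of $\Omega\cup\set{\varrho}$ and Lemma~\ref{uadmissequi}. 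The tangle relations $T_iE_i=\varrho E_i=E_iT_i$ and the Kauffman skein $T_i^2-\delta T_i+\delta\varrho E_i=1$ are then verified case-by-case: in the generic case one uses $a_\s(i)+a_{s_i\s}(i)=\delta$ and $b_\s(i)^2=1-a_\s(i)^2+\delta a_\s(i)$ from Lemma~\ref{x-equal} and Assumption~\ref{be}(a); in the nodal case one substitutes $T_{\s\t}(k)=\delta(E_{\s\t}(k)-\delta_{\s\t})/(c_\s(k)c_\t(k)-1)$ and reduces once more to (\ref{w-res}). The untwisting relations $E_{i+1}E_iE_{i+1}=E_{i+1}$ and $E_iE_{i+1}E_i=E_i$ reduce to identities among the residues $E_{\s\t}(k)$, with Assumption~\ref{be}(d) supplying the normalisation that cancels the extra scalar.

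The main obstacle, and the justification for the elaborate Assumption~\ref{be}, is the braid relation $T_iT_{i+1}T_i=T_{i+1}T_iT_{i+1}$. Its verification requires an exhaustive case split on the pattern of equalities within $(\s_{i-1},\s_i,\s_{i+1},\s_{i+2})$. In the fully generic case ($\s_{i-1}\ne\s_{i+1}$ and $\s_i\ne\s_{i+2}$) one arrives at a Yang--Baxter type identity in the coefficients $a_\s,b_\s$ whose consistency is precisely the content of Assumption~\ref{be}(a)--(c); in the mixed cases one gets identities combining $T_{\s\t}$ with $a_\s,b_\s$ that are resolved by Assumption~\ref{be}(e)--(f); in the fully nodal case the identity reduces to a compatibility between the residues $E_{\s\t}(i)$ and $E_{\s\t}(i+1)$ handled by (d)--(e). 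Each individual case is a finite calculation, but the combinatorial bookkeeping across all patterns is the principal labour; the role of Assumption~\ref{be} is exactly to absorb the sign and square-root ambiguities so that the two sides match on each $v_\s$. Once the braid relation is in hand, all defining relations hold on $\Delta(\lambda)$ and the theorem follows.
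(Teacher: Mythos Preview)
Your plan is essentially the paper's own: verify every relation of Definition~\ref{Waff relations} on the basis $\{v_\s\}$, with the rational functions $W_k(y,\s)$ and (\ref{w-res}) carrying the analytic load. The easy relations, the idempotent and unwrapping relations, and the Kauffman skein are handled exactly as you outline (the paper packages the residue identities you need as Proposition~\ref{identity}).

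Two points deserve attention. First, you do not mention relation~(h)(ii), $E_{k+1}E_k=E_{k+1}T_kT_{k+1}=T_kT_{k+1}E_k$. In the paper this is Proposition~\ref{see}, a four-case computation, and it is precisely here (not in the braid relation) that Assumption~\ref{be}(f) is invoked. Second, for the braid relation the paper does \emph{not} compute the mixed and fully nodal cases directly. In the mixed case ($\s_{k-1}=\s_{k+1}$, $\s_k\ne\s_{k+2}$, or the reverse) it writes $v_\s=\frac{1}{b_\t(k+1)}(T_{k+1}v_\t-a_\t(k+1)v_\t)$ with $\t=s_{k+1}\s$ in the already-settled generic configuration, and then uses the Kauffman skein to transport the braid relation from $v_\t$ to $v_\s$ (Lemma~\ref{sts rel2}). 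In the fully nodal case it applies the tangle relation~(h)(ii) to $E_kv_\s$ and uses that all the other summands $v_\t$ with $\t\simk\s$, $\t\ne\s$, fall into the mixed case (Proposition~\ref{end relations}). Your proposed direct computation in these cases would work in principle, but the paper's reduction is what keeps the argument finite; you should expect to discover this shortcut once you start writing out the nodal braid identity in full.
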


\begin{Defn}\label{seminormal rep}
We call $\Delta(\lambda)$ the \textsf{ seminormal representation} of
$\W_{r,n}(\bu)$ with respect to $\lambda$ for $\lambda\in
\Lambda_r^+(n-2f)$, and $0\le f\le \lfloor \frac n2\rfloor$.
\end{Defn}

In the remainder of this section, we will prove
Theorem~\ref{seminormal}. The rational functions $W_k(y, \s)$ play
the key role. As in \cite{AMR}, we  will work with formal (infinite)
linear combinations of elements of $\Delta(\lambda)$ and $\W_{r,n}$.
Let  $Z(A)$ be the center of an algebra $A$.

\begin{Lemma}\label{tilde W} Suppose that $R$ is a commutative ring which contains $1$ and invertible elements
$q,  \delta, u_1, \dots, u_r$ such that $\Omega\cup\{\varrho\}$ is
$\bu$-admissible.
   Given two integers  $k\ge1$ and  $a\ge0$.
     Then there is a
    $\omega^{(a)}_k\in Z(\W_{r,k-1})\cap R[X_1^{\pm 1},\dots,X_{k-1}^{\pm 1}]$
    such that
    $$E_kX_k^aE_k=\omega^{(a)}_kE_k.$$
Moreover, the generating series
    $\widetilde W_k(y)=\sum_{a=0}^\infty \omega_k^{(a)}y^{-a}$ satisfies

    \begin{equation} \label{omegaa}
    \frac{ \widetilde W_{k+1}(y)
    +\delta^{-1}\varrho-\frac{y^2}{y^2-1}}{
        \widetilde W_k(y)+\delta^{-1}\varrho-\frac{y^2}{y^2-1}}=
        \frac{(y-X_k)^2}{(y-X_k^{-1})^2}
         \cdot\frac{y-q^{-2}X_k^{-1}}{y-q^{-2}X_k}\cdot\frac{y-q^{2}X_k^{-1}}{y-q^{2}X_k}.
         \end{equation}
\end{Lemma}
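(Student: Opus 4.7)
I would proceed by induction on $k$. The base case $k=1$ is immediate: the unwrapping relation (Definition~\ref{Waff relations}(g)) gives $E_1 X_1^a E_1 = \omega_a E_1$, so $\omega_1^{(a)} = \omega_a$ lies in $R = Z(\W_{r,0})$ and $\widetilde W_1(y) = \sum_{a\ge 0} \omega_a y^{-a}$.

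For the inductive step I would compute $E_{k+1} X_{k+1}^a E_{k+1}$ using the substitution $X_{k+1} = T_k X_k T_k$, the Kauffman quadratic $T_k^2 = 1 + \delta T_k - \delta \varrho E_k$, and the following ingredients: the anti-symmetry $X_k X_{k+1} E_k = E_k$ (which iterates to $X_{k+1}^b E_k = X_k^{-b} E_k$); the untwistings $E_{k+1} E_k E_{k+1} = E_{k+1}$ and $E_k E_{k+1} E_k = E_k$; the tangle identity $E_{k+1} T_k T_{k+1} = E_{k+1} E_k$ and its anti-involute; the commutation of every generator of $\W_{r,k}$ with both $E_{k+1}$ and $X_{k+1}$ (a direct check against the defining relations, using that the Kauffman skein gives $E_i X_j = X_j E_i$ whenever $j \ne i, i+1$); the skein reductions of Lemma~\ref{relations2}; and the inductive identity $E_k X_k^b E_k = \omega_k^{(b)} E_k$. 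Together these should collapse $E_{k+1} X_{k+1}^a E_{k+1}$ into an expression of the form $\omega_{k+1}^{(a)} E_{k+1}$ with $\omega_{k+1}^{(a)} \in R[X_1^{\pm 1}, \ldots, X_k^{\pm 1}]$.

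Rather than bookkeeping each power of $X_{k+1}$ separately, I would package the computation via the formal resolvent $E_{k+1}(y - X_{k+1})^{-1} E_{k+1} = y^{-1}\widetilde W_{k+1}(y)\, E_{k+1}$, and pass the substitution $X_{k+1} = T_k X_k T_k$ through this resolvent. The quadratic $T_k^2 = 1 + \delta T_k - \delta\varrho E_k$ should then manifest in $y$-space as the target rational factor
\[
\frac{(y - X_k)^2\,(y - q^{-2} X_k^{-1})(y - q^2 X_k^{-1})}{(y - X_k^{-1})^2\,(y - q^{-2} X_k)(y - q^2 X_k)},
\]
the double factor $(y - X_k)^2/(y - X_k^{-1})^2$ arising from the conjugation by $T_k^2$, and the simple factors involving $q^{\pm 2}$ from the eigenvalues $q$ and $-q^{-1}$ of $T_k$.

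For centrality of $\omega_{k+1}^{(a)}$ in $Z(\W_{r,k})$, observe first that every generator of $\W_{r,k}$ commutes with both $E_{k+1}$ and $X_{k+1}$, so for any $h \in \W_{r,k}$ a direct calculation yields $(h\omega_{k+1}^{(a)} - \omega_{k+1}^{(a)} h) E_{k+1} = 0$. Commutation of $\omega_{k+1}^{(a)}$ with $T_i, E_i$ for $i \le k - 2$ and with $X_j$ for $j \le k$ is then automatic from the inductive hypothesis on $\omega_k^{(b)}$ and the fact that these generators commute with $X_k$. The delicate case is $i = k - 1$: here the commutativity must be read off the explicit polynomial form of $\omega_{k+1}^{(a)}$ in $X_{k-1}$ and $X_k$ supplied by the functional equation, exploiting the symmetry between $X_{k-1}$ and $X_k$ encoded in the recursion. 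The main obstacle I anticipate is organizing the resolvent manipulation cleanly enough for the target rational factor to emerge in closed form rather than as an unwieldy sum of skein corrections, together with this final $T_{k-1}, E_{k-1}$ commutativity check.
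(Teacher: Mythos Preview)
Your approach is essentially the paper's: induction on $k$ with the base case from the unwrapping relation, the resolvent $E_{k+1}(y-X_{k+1})^{-1}E_{k+1}$ to obtain the functional equation, and then centrality read off from the recursion. The paper's derivation of (\ref{omegaa}) proceeds exactly by expanding $E_{k+1}\, T_k\,(y-X_k)^{-1}\, T_k\, E_{k+1}$ two different ways (once via the skein identity $T_k(y-X_k)^{-1} = (y-X_{k+1})^{-1}T_k + \cdots$ and once via $E_{k+1}T_k = E_{k+1}E_k T_{k+1}^{-1}$), and comparing.

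There is, however, a genuine gap in your centrality argument for $T_{k-1}$ and $E_{k-1}$. Saying that the recursion ``encodes symmetry between $X_{k-1}$ and $X_k$'' is not enough: in the cyclotomic BMW algebra, symmetric Laurent polynomials in $X_{k-1}, X_k$ do \emph{not} commute with $T_{k-1}$ in general, because the skein relation $T_{k-1}X_{k-1} - X_k T_{k-1} = \delta X_k(E_{k-1}-1)$ injects $E_{k-1}$ corrections. For example, $X_{k-1} + X_k$ already fails to commute with $T_{k-1}$. What the paper does is iterate (\ref{omegaa}) once more to pass from $\widetilde W_{k-1}$ to $\widetilde W_{k+1}$; the resulting ratio is a product over $i\in\{k-1,k\}$ of the same rational factor in $X_i$. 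The paper then expands the specific series
\[
\frac{(1+X_{k-1}z)(1+X_k z)}{(1+X_{k-1}^{-1}z)(1+X_k^{-1}z)} = \sum_{m\ge 0} a_m z^m
\]
and verifies by induction on $m$ that $E_{k-1}a_m = a_m E_{k-1} = 0$ for $m\ge 1$ (from $E_{k-1}X_{k-1}X_k = E_{k-1}$) and that $T_{k-1}a_m = a_m T_{k-1}$, the $E_{k-1}$ corrections cancelling precisely because of the first fact. It is this annihilation by $E_{k-1}$, not mere symmetry, that makes the commutation go through; your sketch should include it.

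One further remark: your observation that $(h\,\omega_{k+1}^{(a)} - \omega_{k+1}^{(a)}\,h)E_{k+1} = 0$ for $h \in \W_{r,k}$ is correct but not directly usable here, since at this stage of the paper we do not yet know that right multiplication by $E_{k+1}$ is injective on $\W_{r,k}$ (freeness is established only later, using this very lemma). The paper therefore works entirely with the explicit recursion and avoids any appeal to cancellation of $E_{k+1}$.
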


\begin{proof} We prove the result by induction on $k$. When $k=1$,
the result follows from Definition~\ref{Waff relations}(g). In this
case,   $\tilde W_{1}(y)=\tilde w_{1, +}(y)$. Suppose that we have
already proved the result for all positive integers which are less
than $k+1$. In order to prove the result for $k+1$, we start with
the identity {\small \begin{equation}\label{omegaformula}
T_k\frac1{y-X_k}=\frac1{y-X_{k+1}}T_k+\delta\frac1{yX_k-1}E_k\frac1{y-X_k}
                         -\frac{\delta
                         X_{k+1}}{(y-X_k)(y-X_{k+1})},
                         \end{equation}}
We  multiply $(y-X_{k+1})(yX_k-1)$ (resp. $(y-X_k)$) on the left
(resp. right) of (\ref{omegaformula}). Then we use
Definition~\ref{Waff relations}(f),(j) to get the identity.
Multiplying (\ref{omegaformula}) on the left by $E_k$ and
    replacing $y$ by $y^{-1}$ yields
  {\small  \begin{equation}\label{omegaformula1} E_k\frac1{y-X_k}T_k=
  E_k(\frac{\varrho X_{k}}{yX_k-1}-\frac{\delta}{(y-X_k)(yX_{k}-1)})+\delta\frac{\widetilde W_k(y)}{y}E_k\frac1{yX_k-1}
                         \end{equation}}

Multiplying $T_k$ on the right of (\ref{omegaformula}) and  using
(\ref{omegaformula})--(\ref{omegaformula1}), we have
$$\begin{aligned} T_k\frac1{y-X_k}T_k&=\frac{1}{y-X_{k+1}}+\delta
                    T_k\frac{1}{y-X_k}+\frac{\delta^2 X_{k+1}}{(y-X_k)(y-X_{k+1})}\\
                   & -\frac{\delta^2}{yX_k-1}E_k\frac{1}{y-X_k}
                   -\frac{\varrho\delta X_k}{yX_k-1}E_k-\frac{\delta^2X_{k+1}^2}{(y-X_k)^2(y-X_{k+1})}
                  \\ &  +\frac{\delta^2}{X_k(y-X_k)(yX_k-1)}E_k\frac{1}{y-X_k}
                   -\frac{\delta}{y-X_k}T_k\frac{X_k}{y-X_k}\\ &-\frac{\delta^2 X_{k+1}}{(y-X_k)^2}
                     +\frac{\delta^2}{X_k(y-X_k)}E_k\frac{1}{y-X_k}
                   +\frac{\varrho\delta}{yX_k-1}E_k\frac{X_k}{yX_k-1}
                   \\ &-\frac{\delta^2}{yX_k-1}E_k\frac{1}{(y-X_k)(yX_k-1)}
                   +\frac{\delta^2 \widetilde
                   W_k(y)}{y(yX_k-1)}E_k\frac1{yX_k-1}\\
\end{aligned}$$
Note that $$\begin{aligned} & \frac{\delta^2
X_{k+1}}{(y-X_k)(y-X_{k+1})}-\frac{\delta^2X_{k+1}^2}{(y-X_k)^2(y-X_{k+1})}-\frac{\delta^2
X_{k+1}}{(y-X_k)^2}\\
= & \frac{\delta^2 X_k }{(y-X_k)^2} -\frac{\delta^2X_k}{(y-X_k)^2}
\frac{y}{y-X_{k+1}}.\\
\end{aligned}$$ By our induction assumption,
$$\begin{aligned} E_{k+1} T_{k} \frac 1{y-X_k} T_k E_{k+1}&
=(\frac 1 y-\frac{\delta^2X_k}{(y-X_k)^2}) E_{k+1}
\frac{y}{y-X_{k+1}} E_{k+1}\\ & +(
                    \frac{\varrho^{-1}\delta}{y-X_k} +\frac{\delta^2\omega_0 X_{k}}{(y-X_k)^2}
-\frac{\delta^2}{(yX_k-1)(y-X_k)} \\ &
 -\frac{\varrho\delta X_k}{yX_k-1}+\frac{\delta^2}{X_k(y-X_k)^2}
                    +\frac{\delta^2}{X_k(y-X_k)^2(yX_{k}-1)}\\ &
                   -\frac{\varrho^{-1}\delta X_k}{(y-X_k)^2}
                    +\frac{\varrho\delta X_k}{(yX_k-1)^2}
                    -\frac{\delta^2}{(yX_k-1)^2(y-X_k)}\\
               &      +\frac{\delta^2\widetilde
                   W_k(y)}{y(yX_k-1)^2})  E_{k+1}\\
\end{aligned}$$
    On the other hand,
\begin{align*}
& E_{k+1}T_k\frac1{y-X_k}T_kE_{k+1}=E_{k+1} E_k T_{k+1}^{-1}
\frac{1}{y-X_{k}} T_k E_{k+1}\\ =& E_{k+1}E_k \frac 1{y-X_k} E_k
E_{k+1}-\delta E_{k+1} E_k \frac 1
{y-X_k} T_k E_{k+1}+\delta \varrho^{-1} E_{k+1} \frac1{y-X_k} \\
\end{align*}
We use (\ref{omegaformula1}) to compute the second term in the right
hand side of the above equality.  Thus,  $$\begin{aligned}
E_{k+1}T_k\frac1{y-X_k}T_kE_{k+1}=&  (\frac{\widetilde
W_k(y)}{y}+\frac{\varrho^{-1}\delta}{y-X_k}
                         -\frac{\varrho\delta X_k}{yX_k-1}\\
    & +\frac{\delta^2}{(yX_k-1)(y-X_k)}-\frac{\delta^2 \widetilde
                      W_k(y)}{y(yX_k-1)})E_{k+1}\\
                      \end{aligned}
$$
Comparing the first and third  expressions of
$E_{k+1}T_k\frac1{y-X_k}T_kE_{k+1}$ yields
$$\begin{aligned} &( \frac{1}{y}  -\frac{\delta^2
X_{k}}{(y-X_k)^2})E_{k+1}\frac{y}{y-X_{k+1}}E_{k+1}+
                    (\frac{\varrho^{-1}\delta}{y-X_k}
                    +\frac{\delta^2\omega_0 X_{k}}{(y-X_k)^2}\\
                   &-\frac{\delta^2}{(yX_k-1)(y-X_k)}
                   -\frac{\varrho\delta
                   X_k}{yX_k-1}+\frac{\delta^2}{X_k(y-X_k)^2}\\
                   & +\frac{\delta^2}{X_k(y-X_k)^2(yX_{k}-1)}
                   -\frac{\varrho^{-1}\delta X_k}{(y-X_k)^2}
                    +\frac{\varrho\delta X_k}{(yX_k-1)^2}\\
                    &-\frac{\delta^2}{(yX_k-1)^2(y-X_k)}
                   +\frac{\delta^2\widetilde
                   W_k(y)}{y(yX_k-1)^2}) E_{k+1}\\
                   &=(\frac{\widetilde W_k(y)}{y}+\frac{\varrho^{-1}\delta}{y-X_k}
                         -\frac{\varrho\delta X_k}{yX_k-1}
     +\frac{\delta^2}{(yX_k-1)(y-X_k)}-\frac{\delta^2 \widetilde
                      W_k(y)}{y(yX_k-1)})E_{k+1}
\end{aligned}$$
So,
$$\begin{aligned} &( \frac{1}{y}  -\frac{\delta^2
X_{k}}{(y-X_k)^2})E_{k+1}\frac{y}{y-X_{k+1}}E_{k+1}\\
                   = & (-\frac{\varrho^{-1}\delta}{y-X_k}
                    -\frac{\delta^2\omega_0 X_{k}}{(y-X_k)^2}\\
                   &+\frac{\delta^2}{(yX_k-1)(y-X_k)}
                   +\frac{\varrho\delta
                   X_k}{yX_k-1}-\frac{\delta^2}{X_k(y-X_k)^2}\\
                   & -\frac{\delta^2}{X_k(y-X_k)^2(yX_{k}-1)}
                   +\frac{\varrho^{-1}\delta X_k}{(y-X_k)^2}
                    -\frac{\varrho\delta X_k}{(yX_k-1)^2}\\
                    &+\frac{\delta^2}{(yX_k-1)^2(y-X_k)}
                   -\frac{\delta^2\widetilde
                   W_k(y)}{y(yX_k-1)^2}\\
                   &+\frac{\widetilde W_k(y)}{y}+\frac{\varrho^{-1}\delta}{y-X_k}
                         -\frac{\varrho\delta X_k}{yX_k-1}
     +\frac{\delta^2}{(yX_k-1)(y-X_k)}-\frac{\delta^2 \widetilde
                      W_k(y)}{y(yX_k-1)})E_{k+1}\\
                      =&  ( \frac{1}{y}  -\frac{\delta^2
                      X_{k}}{(yX_k-1)^2}) \widetilde W_k(y)E_{k+1}
                      +\varrho\delta
                      X_k(\frac{1}{(y-X_k)^2}-\frac{1}{(yX_k-1)^2})E_{k+1}\\
                      &+\frac{\delta^2X_ky^2(1-X_{k}^2)}{(yX_k-1)^2(y-X_k)^2}E_{k+1}
\end{aligned}$$
Multiplying  $(yX_k-1)^2(y-X_k)^2$ on both sides of above equation
yields
$$\begin{aligned} &(yX_k-1)^2( \frac{(y-X_k)^2}{y}  -\delta^2
X_{k})E_{k+1}\frac{y}{y-X_{k+1}}E_{k+1}\\
                    = &(y-X_k)^2( \frac{(yX_k-1)^2}{y}  -\delta^2
                      X_{k}) \widetilde W_k(y)E_{k+1}
                      +\varrho\delta
                      X_k((yX_k-1)^2\\ & -(y-X_k)^2)E_{k+1}
                      +\delta^2X_ky^2(1-X_{k}^2)E_{k+1}.
\end{aligned}$$
So,
$$\begin{aligned} &(y-X_k^{-1})^2( (y-X_k)^2  -\delta^2
X_{k}y)E_{k+1}\frac{y}{y-X_{k+1}}E_{k+1}\\
                    = & (y-X_k)^2((y-X_k^{-1})^2  -\delta^2
                      X_{k}^{-1}y) \widetilde W_k(y)E_{k+1}
                      +\delta^2X_k^{-1}y^3(1-X_{k}^2)E_{k+1}\\
                      &+ \varrho\delta y
                      (X_k(y-X_k^{-1})^2-X_k^{-1}(y-X_k)^2)E_{k+1}.\\
\end{aligned}$$
Therefore,
$$\begin{aligned}
 &(y-X_k^{-1})^2(y-q^{-2}X_k)(y-q^{2}X_k)
 E_{k+1}\frac{y}{y-X_{k+1}}E_{k+1}\\ = &  (y-X_k)^2(y-q^{-2}X_k^{-1})(y-q^{2}X_k^{-1})\widetilde
 W_k(y)E_{K+1}\\
    &-(\delta^{-1}\varrho-\frac{y^2}{y^2-1})((y-X_k^{-1})^2(y-q^{-2}X_k)(y-q^{2}X_k)\\ & -(y-X_k)^2(y-q^{-2}X_k^{-1})(y-q^{2}X_k^{-1}))
\end{aligned}$$
Thus,
$$E_{k+1}\frac{y}{y-X_{k+1}}E_{k+1}=\widetilde W_{k+1}(y)E_{k+1},$$
where  $\widetilde W_{k+1}(y)$ satisfies the recursive relation
(\ref{omegaa}).

 Note that $X_k T_j=T_j X_k$ and $X_k E_j=E_j X_k$
for all positive integers $j\le k-2$. By induction assumption and
(\ref{omegaa}), $\omega_{k+1}^{(a)}$ commutates with
       $E_1,\dots,E_{k-2}$ and
      $T_1,\dots,T_{k-2}$. In order to verify \begin{equation}\label{Y} \omega_{k+1}^{(a)} Y =Y \omega_{k+1}^{(a)},
\text{ for $Y\in \{T_{k-1}, E_{k-1}\}$,}\end{equation}   we use the
following series
$$\sum_{m\ge0}a_mz^m=\frac{(1+X_{k-1}z)(1+X_kz)}{(1+X_{k-1}^{-1}z)(1+X_k^{-1}z)},$$
     where $z\in \{-y^{-1}, -q^{\pm2}y\}$,  $$\begin{aligned} a_0&=1,
     a_1=X_{k-1}+X_k-X_{k-1}^{-1}-X_{k}^{-1},\\
      a_2& =X_{k}X_{k-1}-X_{k}^{-1}X_{k-1}^{-1}-(X_{k}^{-1}+X_{k-1}^{-1})(X_{k}+X_{k-1}-X_{k}^{-1}-X_{k-1}^{-1})\\
     a_m& =-({X_{k-1}^{-1}+X_k^{-1}}) a_{m-1}-{X_{k-1}^{-1}X_k^{-1}}a_{m-2},\text{ for $ m\ge3 $.}\\
     \end{aligned}$$
   By induction, we have $E_{k-1}a_{j}=a_j E_{k-1}=0$ for all non-negative integers $j$.

 We  verify
  $ T_{k-1}a_m=a_mT_{k-1}$ for all $m\ge 0$ by induction on $m$.
There is nothing to be proved when $m=0$ since $a_0=1$.  By
Definition~\ref{Waff relations}(f),
$$
\begin{cases} & T_{k-1}(X_{k-1}+X_k)=(X_{k-1}+X_k)T_{k-1}+\delta(X_k E_{k-1} - E_{k-1}X_k)
 \\
& T_{k-1}(X_{k-1}^{-1}+X_k^{-1})
=(X_{k-1}^{-1}+X_k^{-1})T_{k-1}+\delta(X_{k-1}^{-1} E_{k-1} -
E_{k-1}X_{k-1}^{-1})\\
 \end{cases}$$
So, $T_{k-1}a_1=a_1T_{k-1}$.
 When  $m\ge 2$, since $T_{k-1}$ commute with
 $X_{k}X_{k-1}$ and $X_{k}^{-1}X_{k-1}^{-1}$, we have, by induction
 assumption that
 $$\begin{aligned}
 T_{k-1}a_m-a_mT_{k-1}= & (X_{k-1}^{-1}+X_k^{-1}) a_{m-1} T_{k-1}  -T_{k-1}(X_{k-1}^{-1}+X_k^{-1})a_{m-1}
\\
= &(X_{k-1}^{-1}+X_k^{-1}) a_{m-1}
T_{k-1}-(X_{k-1}^{-1}+X_k^{-1})T_{k-1}a_{m-1} \\ & -
\delta(X_{k-1}^{-1}
E_{k-1} - E_{k-1}X_{k-1}^{-1})a_{m-1}  = 0.\\
 \end{aligned}$$

Using (\ref{omegaa}) twice, we have {\small
\begin{equation}\label{w0}\begin{aligned} \frac{ \widetilde
W_{k+1}(y)
    +\delta^{-1}\varrho-\frac{y^2}{y^2-1}}{
        \widetilde W_{k-1}(y)+\delta^{-1}\varrho-\frac{y^2}{y^2-1}}&= \frac{(y-X_k)^2}{(y-X_k^{-1})^2}
         \cdot\frac{y-q^{-2}X_k^{-1}}{y-q^{-2}X_k}\cdot\frac{y-q^{2}X_k^{-1}}{y-q^{2}X_k}\\
       &\quad\frac{(y-X_{k-1})^2}{(y-X_{k-1}^{-1})^2}
         \cdot\frac{y-q^{-2}X_{k-1}^{-1}}{y-q^{-2}X_{k-1}}\cdot\frac{y-q^{2}X_{k-1}^{-1}}{y-q^{2}X_{k-1}}.
\end{aligned} \end{equation}}
Thus, $T_{k-1}, E_{k-1}$ commute with the right hand side of
(\ref{w0}). By induction assumption,  $T_{k-1}, E_{k-1}$ commutes
with $\tilde W_{k-1}(y)$, it has to commute with $\tilde
W_{k+1}(y)$. Thus, $\omega_{k+1}^{(a)}$ commutes with $T_{k-1}$ and
$E_{k-1}$.

Now, we prove  $\omega_k^{(a)}\in R[X_1^{\pm1}, \cdots, X_{k-1}^{\pm
1}]$.  Let $g_k(X_k)=(y-X_k^{-1})^2(y-q^{-2}X_k)(y-q^{2}X_k)$. By
(\ref{omegaa}), {\small  $$
 \widetilde W_{k+1}(y) g_k(X_k) =  \widetilde
W_{k}(y) g_k(X_k^{-1})
+(X_k-X_k^{-1})\delta^2y((\delta^{-1}\rho-1)y^2-\delta^{-1}\rho).$$
} Comparing the coefficients of $y^j$ for  $j\leq 4$ on both sides
of the last equation, we have the following results:
\begin{enumerate}
\item   $j=4$:  $\omega_{k+1}^{(0)}=\omega_{k}^{(0)}$. By induction
assumption, $\omega_{k+1}^{(0)}=\omega_0$.

\item   $j=3$:
$ \omega_{k+1}^{(1)} =\omega_{k}^{(1)}+\delta\rho^{-1}
(X_k-X_k^{-1}) $.

\item  $j=2$:
{\small $$ \omega_{k+1}^{(2)}=\omega_{k}^{(2)}
-\omega_{k}^{(1)}(2X_k+(q^2+q^{-2})X_k^{-1})
+\omega_{k+1}^{(1)}(2X_k^{-1}+(q^2+q^{-2})X_k).
$$
}
\item  $j=1$:
$$
\begin{aligned}
&\omega_{k+1}^{(3)}-\omega_{k}^{(3)}+(\omega_{k+1}^{(1)}-\omega_k^{(1)})(X_k^2+X_k^{-2}+2(q^2+q^{-2}))
\\
=&(\omega_{k+1}^{(2)}-\omega_{k}^{(0)})(2X_k^{-1}+(q^2+q^{-2})X_k)-\delta\rho(X_k-X_k^{-1})\\
& +(\omega_{k+1}^{(0)}-\omega_{k}^{(2)})
(2X_k+(q^2+q^{-2})X_k^{-1}).
\\
\end{aligned}
$$
\item $j\leq 0$:
$$
\begin{aligned}
&\omega_{k+1}^{(-j+4)}+\omega_{k+1}^{(-j)}+(\omega_{k+1}^{(-j+2)}-\omega_k^{(-j+2)})(X_k^2+X_k^{-2}+2(q^2+q^{-2}))\\
=&\omega_{k}^{(-j)}+\omega_{k}^{(-j+4)}+
(\omega_{k+1}^{(-j+1)}-\omega_{k}^{(-j+3)})(2X_k+(q^2+q^{-2})X_k^{-1})\\ & +(\omega_{k+1}^{(-j+3)}-\omega_{k}^{(-j+1)})(2X_K^{-1}+(q^2+q^{-2})X_k).\\
\end{aligned}
$$\end{enumerate}
By induction assumptions on $k$ and $a$ together with the formulae
in (a)-(e), we have that $\omega_k^{(a)}\in R[X_1^{\pm1}, \cdots,
X_{k-1}^{\pm 1}]$.
\end{proof}

\begin{Cor} \label{relation1} Suppose that $R$ is a commutative ring which contains $1$ and invertible elements
$q, \delta, u_1, \dots, u_r$ such that $\Omega\cup\{\varrho\}$ is
$\bu$-admissible.
 Given a  positive integer $k\le n-1$.
If $a\in \mathbb Z$, then
\begin{enumerate}
\item
$E_kX_k^aT_{k-1}^{\pm 1}E_k=\sum_{i=-a}^{a}f_iX_{k-1}^iE_k$;
\item
$E_kX_k^aT_{k+1}^{\pm 1}
E_k=\sum_{i=-a}^{a}g_iX_{k+2}^iE_k$,\end{enumerate} where $f_i,
g_i\in Z(\W_{r,k-1})\cap R[X_1^{\pm 1},\dots,X_{k-1}^{\pm 1}]$ for
$-a\le i\le a$.
\end{Cor}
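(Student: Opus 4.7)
The idea is to reduce each expression in (1) and (2) to expressions of the form $E_k X_k^i E_k$, which by Lemma~\ref{tilde W} equal $\omega_k^{(i)} E_k$ with $\omega_k^{(i)} \in Z(\W_{r,k-1})\cap R[X_1^{\pm 1},\dots,X_{k-1}^{\pm 1}]$, and to expressions $X_{k-1}^j E_k$ (respectively $X_{k+2}^j E_k$) whose scalars pick up only coefficients of the same type. For (1), the starting point is the identity
\[
T_{k-1} E_k = T_k^{-1} E_{k-1} E_k,
\]
obtained by multiplying $E_k E_{k-1} = T_{k-1}T_k E_{k-1}$ from Definition~\ref{Waff relations}(h)(ii) by $T_k^{-1}$ and applying the anti-involution $\ast$. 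Dually, $E_k T_{k-1} = E_k E_{k-1} T_k^{-1}$.

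First I would treat (1) with $T_{k-1}$ and $a\ge 0$. I would write $E_k X_k^a T_{k-1} E_k = E_k X_k^a T_k^{-1} E_{k-1} E_k$, then move $T_k^{-1}$ past $X_k^a$ using the $\ast$-dual of Lemma~\ref{relations2}(2):
\[
X_k^a T_k^{-1} = T_k^{-1} X_{k+1}^a + \sum_{i=1}^a \delta X_k^i (E_k-1) X_{k+1}^{a-i}.
\]
Using $E_k T_k^{-1}=\varrho^{-1} E_k$, the anti-symmetry relations (Definition~\ref{Waff relations}(j)) in the forms $X_{k+1} E_k = X_k^{-1} E_k$, $X_k E_{k-1} = X_{k-1}^{-1} E_{k-1}$, $E_{k-1} X_k = E_{k-1} X_{k-1}^{-1}$, the fact that $X_{k-1}$ commutes with $E_k$ (and $X_{k+1}$ with $E_{k-1}$), and the untwisting relation $E_k E_{k-1} E_k = E_k$, each summand collapses to the form (scalar in $Z(\W_{r,k-1})\cap R[X_1^{\pm 1},\ldots,X_{k-1}^{\pm 1}]$)$\cdot X_{k-1}^j E_k$. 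A direct computation I sketch yields
\[
E_k X_k^a T_{k-1} E_k \;=\; \varrho^{-1} X_{k-1}^a E_k + \delta \sum_{i=1}^a \omega_k^{(i)} X_{k-1}^{a-i} E_k - \delta \sum_{i=1}^a X_{k-1}^{a-2i} E_k,
\]
with exponents $a-2i$ running over $\{-a,-a+2,\ldots,a-2\}$, so the exponent range $[-a,a]$ required in the statement is achieved. To handle $T_{k-1}^{-1}$, I would apply the Kauffman skein relation $T_{k-1}^{-1} = T_{k-1} - \delta + \delta\varrho E_{k-1}$ and observe that $E_k X_k^a E_{k-1} E_k = X_{k-1}^{-a} E_k$ (again by the anti-symmetry and untwisting relations), so the extra terms remain in the required form. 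The case $a<0$ is then handled identically using Lemma~\ref{relations2}(4)--(5) in place of (1)--(2), and the case $a=0$ reduces to the known $E_k T_{k-1}^{\pm 1} E_k = \varrho^{\pm 1} E_k$.

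For (2), I would run the symmetric argument on the other side of $k$: start from $T_{k+1} E_k = T_k^{-1} E_{k+1} E_k$ (coming from $E_{k+1} E_k = T_k T_{k+1} E_k$), and use $X_{k+1} E_{k+1} = X_{k+2}^{-1} E_{k+1}$, $E_{k+1} X_{k+1} = X_{k+2}^{-1} E_{k+1}$, commutativity of $X_{k+2}$ with $E_k$, and $E_k E_{k+1} E_k = E_k$. The resulting expression has the form $\sum_{i=-a}^{a} g_i X_{k+2}^i E_k$ with $g_i \in Z(\W_{r,k-1})\cap R[X_1^{\pm 1},\dots,X_{k-1}^{\pm 1}]$, because the only non-central ingredients introduced along the way, namely $X_{k+2}$, have been extracted as the explicit multipliers of $E_k$; all remaining coefficients are polynomials in $\delta,\varrho^{\pm 1}$ and the central elements $\omega_k^{(i)}$.

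The main obstacle is purely bookkeeping: verifying in each reduction step that no residual factor of $X_k, X_{k+1}, E_{k-1},$ or $T_{k-1}$ remains except as a power of $X_{k-1}$ (respectively $X_{k+2}$) sitting immediately to the left of the final $E_k$, and that every scalar coefficient that survives belongs to $Z(\W_{r,k-1})\cap R[X_1^{\pm 1},\ldots,X_{k-1}^{\pm 1}]$. The centrality follows from Lemma~\ref{tilde W} and the fact that $X_1,\ldots,X_{k-1}$ commute mutually, while the positive/negative exponent bookkeeping is controlled precisely by the binomial-type sums in Lemma~\ref{relations2}.
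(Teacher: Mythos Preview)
Your proposal is correct and follows essentially the same approach as the paper, whose proof is a single sentence citing Lemma~\ref{relations2}(1)(2)(4)(5), Lemma~\ref{tilde W}, and Definition~\ref{Waff relations}. You have correctly identified the relevant tangle and anti-symmetry relations needed to collapse everything to powers of $X_{k-1}$ (respectively $X_{k+2}$) times $E_k$, and your explicit formula
\[
E_k X_k^a T_{k-1} E_k \;=\; \varrho^{-1} X_{k-1}^a E_k + \delta \sum_{i=1}^a \omega_k^{(i)} X_{k-1}^{a-i} E_k - \delta \sum_{i=1}^a X_{k-1}^{a-2i} E_k
\]
is exactly what the computation yields. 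One small point worth noting: when you pass to $a<0$ via Lemma~\ref{relations2}(4)--(5), the intermediate terms $E_k X_k^{j} E_k$ that appear have $j\le 0$, while Lemma~\ref{tilde W} as stated only covers $j\ge 0$. This is easily handled by induction on $|j|$ using Lemma~\ref{relations2}(6) right-multiplied by $E_k$ (and the identity $\varrho+\delta\omega_0-\delta=\varrho^{-1}$), which expresses $E_kX_k^{-j}E_k$ in terms of $E_kX_k^{m}E_k$ with $|m|<j$ and $\omega_k^{(j)}$; the paper glosses over this detail as well.
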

\begin{proof} Both (a) an (b)  follow from Lemma~\ref{relations2}(1)(2)(4)(5) and Lemma~\ref{tilde W} and Definition~\ref{Waff relations}.\end{proof}

By Lemma~\ref{tilde W}, we have
\begin{equation}\label{wk}\frac{
    \widetilde W_k(y)+\delta^{-1}\varrho-\frac{y^2}{y^2-1}}{
        \widetilde W_1(y)+\delta^{-1}\varrho-\frac{y^2}{y^2-1}}=
         \prod_{i=1}^{k-1} \frac{(y-X_i)^2}{(y-X_i^{-1})^2}
         \cdot\frac{y-q^{-2}X_i^{-1}}{y-q^{-2}X_i}\cdot\frac{y-q^{2}X_i^{-1}}{y-q^{2}X_i}.
\end{equation}
Since $\omega_k^{(a)}\in R[X_1^{\pm1}, \cdots, X_{k-1}^{\pm 1}]$, we
can define $\widetilde W_k(y,\s)\in R((y^{-1}))$ by $$\widetilde
W_k(y)v_\s=\widetilde W_k(y,\s)v_\s.$$

\begin{Prop}\label{W=tilde W}
Given an $\s\in \UPD_n(\lambda)$ and a positive integer  $ k\le n$,
we have $W_k(y,\s)=\widetilde W_k(y,\s)$.
\end{Prop}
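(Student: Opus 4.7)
The plan is to combine the recursive formula~(\ref{wk}) for $\widetilde W_k(y)$ with the explicit form of $\widetilde w_{1,+}(y)$ given by $\bu$-admissibility in Lemma~\ref{uadmissequi}, and reduce the proposition to a combinatorial identity on contents of addable/removable nodes of $\s_{k-1}$.

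Since $X_i v_\s = c_\s(i) v_\s$ for all $i$ and $\omega_k^{(a)}\in R[X_1^{\pm 1},\ldots,X_{k-1}^{\pm 1}]$ by Lemma~\ref{tilde W}, equation~(\ref{wk}) evaluated on $v_\s$ yields
$$\widetilde W_k(y,\s)+\delta^{-1}\varrho-\frac{y^2}{y^2-1}\;=\;\Bigl(\widetilde w_{1,+}(y)+\delta^{-1}\varrho-\frac{y^2}{y^2-1}\Bigr)\prod_{i=1}^{k-1} R(y,c_\s(i)),$$
where $R(y,c):=\frac{(y-c)^2}{(y-c^{-1})^2}\cdot\frac{y-q^{-2}c^{-1}}{y-q^{-2}c}\cdot\frac{y-q^{2}c^{-1}}{y-q^{2}c}$. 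Substituting the explicit formula for $\widetilde w_{1,+}(y)$ from Lemma~\ref{uadmissequi} and comparing with Definition~\ref{rational-W}, the proposition reduces to the combinatorial identity
$$\prod_\alpha \frac{y-c(\alpha)^{-1}}{y-c(\alpha)}\;=\;\prod_{l=1}^r \frac{y-u_l^{-1}}{y-u_l}\cdot\prod_{i=1}^{k-1} R(y,c_\s(i)),\qquad(\ast)$$
with $\alpha$ running over the addable and removable nodes of $\s_{k-1}$.

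I would then prove $(\ast)$ by induction on $k$. The base case $k=1$ is immediate: $\s_0=\emptyset$ has addable nodes $(1,1,s)$ of content $u_s$ for $s=1,\ldots,r$ and no removable nodes, so both sides collapse to $\prod_l(y-u_l^{-1})/(y-u_l)$. The inductive step amounts to showing that whenever $\s_k$ is obtained from $\s_{k-1}$ by adding or removing a single box $(i_0,j_0,s_0)$ with $c=c_\s(k)$, the ratio $P_{\s_k}(y)/P_{\s_{k-1}}(y)$ equals $R(y,c)$, where $P_\mu(y):=\prod_\alpha(y-c(\alpha)^{-1})/(y-c(\alpha))$ is taken over the addable and removable nodes of $\mu$.

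The main obstacle is this last combinatorial identity. The cell $(i_0,j_0,s_0)$ itself switches between the addable and removable list, and because the content convention~(\ref{content1}) inverts under this swap, its contribution to $P_{\s_k}/P_{\s_{k-1}}$ is exactly $(y-c)^2/(y-c^{-1})^2$. The only other nodes whose status can change are the four rim-neighbors $(i_0\pm 1,j_0,s_0)$ and $(i_0,j_0\pm 1,s_0)$ of the modified cell, whose contents have the form $q^{\pm 2}c^{\pm 1}$. A case analysis on the local shape of the $s_0$-component of $\s_{k-1}$ near $(i_0,j_0,s_0)$---distinguishing whether each neighbor is absent, interior, addable, or removable---should show that the net contribution from these four rim-neighbors telescopes to $\frac{y-q^{-2}c^{-1}}{y-q^{-2}c}\cdot\frac{y-q^{2}c^{-1}}{y-q^{2}c}$, completing $R(y,c)$. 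The removal case then follows from the addition case by exchanging the roles of $\s_{k-1}$ and $\s_k$.
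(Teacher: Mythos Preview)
Your proposal is correct and follows essentially the same route as the paper: both reduce the proposition, via Lemma~\ref{uadmissequi} and~(\ref{wk}), to the combinatorial identity $(\ast)$ relating $\prod_\alpha(y-c(\alpha)^{-1})/(y-c(\alpha))$ over addable/removable nodes of $\s_{k-1}$ to the product $\prod_l(y-u_l^{-1})/(y-u_l)\cdot\prod_{i<k}R(y,c_\s(i))$. The paper simply cites \cite[4.17]{AMR} for this identity, whereas you sketch the inductive argument directly; your case analysis of the rim-neighbors is the right mechanism (only two of the four actually change status in any given step, but in every subcase the resulting factor is exactly one of the two fractions $(y-q^{\pm2}c^{-1})/(y-q^{\mp2}c)$, so the product is $R(y,c)$ as claimed).
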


\begin{proof}
As $\Omega\cup\{\varrho\}$ is $\bu$-admissible, by
Lemma~\ref{uadmissequi} and (\ref{wk}),  we have $$\begin{aligned} &
\widetilde W_k(y, \s)+\delta^{-1}\varrho-\frac{y^2}{y^2-1}
       \\ = &  A\cdot\prod_{\ell =1}^r \frac{(y-u_\ell^{-1})}{(y-u_\ell)}
       \times    \prod_{i=1}^{k-1} \frac{(y-c_\s (i))^2}{(y-c_\s (i)^{-1})^2}
         \cdot\frac{y-q^{-2}c_\s (i)^{-1}}{y-q^{-2}c_\s (i)} \cdot\frac{y-q^{2}c_\s (i)^{-1}}{y-q^{2}c_\s
         (i)}.\\
         \end{aligned}$$
where
$$A=
(\varrho\delta^{-1}\prod_{\ell=1}^ru_\ell+\dfrac{y\gamma_r(y)}{y^{2}-1})
\prod\limits_{\ell=1}^ru_\ell.
$$
By  arguments similar to those for \cite[4.17]{AMR}  we can verify
that
$$ \widetilde W_k(y,\s)+\delta^{-1}\varrho-\frac{y^2}{y^2-1}
         =A\prod_{\alpha}\frac{y-c(\alpha)^{-1}}{y-c(\alpha)},$$
where $\alpha$ runs over the addable and removable nodes
of~$\s_{k-1}$. This proves $W_{k}(y, \s)=\tilde W_{k} (y, \s)$.
\end{proof}
One can verify the following result by similar  arguments to those
for  \cite[4.18]{AMR}.

\begin{Cor}\label{exe=oe}
    Suppose that $\s\in \UPD_n(\lambda)$ and that $1\le k<n$ and $a\ge0$. Then
    $E_kX_k^aE_kv_\s=\omega_k^{(a)} E_k v_\s$.
\end{Cor}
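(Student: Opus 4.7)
My plan is to reduce the computation to the identity $\sum_{\t\simk\s} E_{\t\t}(k)\, c_\t(k)^a = \omega_k^{(a)}(\s)$, where $\omega_k^{(a)}(\s)\in R$ denotes the scalar by which $\omega_k^{(a)}$ acts on $v_\s$ (well-defined because $\omega_k^{(a)}\in R[X_1^{\pm1},\dots,X_{k-1}^{\pm1}]$ by Lemma~\ref{tilde W} and $X_i$ acts diagonally on the basis $\{v_\u\}$).

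First I would dispose of the trivial case $\s_{k-1}\neq\s_{k+1}$: by the definition of the action, $E_k v_\s=0$, so both sides of the desired identity vanish. Assume now $\s_{k-1}=\s_{k+1}$. Using that $X_k v_\t = c_\t(k)v_\t$ and the formula for $E_k$, I compute
\[
E_k X_k^a E_k v_\s
= \sum_{\t\simk\s}\sum_{\u\simk\s} E_{\s\t}(k)\,c_\t(k)^a\, E_{\t\u}(k)\, v_\u .
\]
For each $\t\simk\s$ we have $\t_{k-1}=\s_{k-1}=\s_{k+1}=\t_{k+1}$, so the inner sum over $\u$ is legitimate and the product $E_{\s\t}(k)E_{\t\u}(k)$ factors, by Definition~\ref{e}, as $\sqrt{E_{\s\s}(k)}\,E_{\t\t}(k)\,\sqrt{E_{\u\u}(k)}$. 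Pulling the $\t$-independent factors out of the inner sum yields
\[
E_k X_k^a E_k v_\s = \Big(\sum_{\t\simk\s} E_{\t\t}(k)\,c_\t(k)^a\Big)\,E_k v_\s .
\]

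The heart of the proof is identifying the scalar in parentheses. Expanding the right-hand side of (\ref{w-res}) in $y^{-1}$ gives
\[
W_k(y,\s) = \sum_{a\ge 0}\Big(\sum_{\t\simk\s} E_{\t\t}(k)\,c_\t(k)^a\Big)y^{-a},
\]
so the parenthesized scalar is the coefficient of $y^{-a}$ in $W_k(y,\s)$. By Proposition~\ref{W=tilde W}, $W_k(y,\s)=\widetilde W_k(y,\s)$, and by the definition of $\widetilde W_k(y,\s)$ via $\widetilde W_k(y)v_\s=\widetilde W_k(y,\s)v_\s$, that coefficient is exactly $\omega_k^{(a)}(\s)$.

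Finally, since $\omega_k^{(a)}\in R[X_1^{\pm1},\dots,X_{k-1}^{\pm1}]$ and every $\u$ appearing in $E_k v_\s$ satisfies $\u_j=\s_j$ for $j<k$, the scalar $\omega_k^{(a)}(\s)$ equals $\omega_k^{(a)}(\u)$ for each such $\u$. Hence $\omega_k^{(a)}(\s)\,E_k v_\s = \omega_k^{(a)}E_k v_\s$, and the identity $E_kX_k^aE_kv_\s=\omega_k^{(a)}E_k v_\s$ follows. The one point that actually requires work is the series expansion of (\ref{w-res}) together with invoking Proposition~\ref{W=tilde W}; everything else is a bookkeeping exercise with the square-root factorization of $E_{\s\t}(k)$.
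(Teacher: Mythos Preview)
Your argument is correct and is exactly the computation the paper has in mind when it refers to the analogous result \cite[4.18]{AMR}: factor $E_{\s\t}(k)E_{\t\u}(k)=\sqrt{E_{\s\s}(k)}\,E_{\t\t}(k)\,\sqrt{E_{\u\u}(k)}$, pull out the $\t$-sum, and identify it with the coefficient of $y^{-a}$ in $W_k(y,\s)=\widetilde W_k(y,\s)$ via (\ref{w-res}) and Proposition~\ref{W=tilde W}. The final observation that $\omega_k^{(a)}\in R[X_1^{\pm1},\dots,X_{k-1}^{\pm1}]$ acts by the same scalar on every $v_\u$ with $\u\simk\s$ is needed and you supply it.
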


\begin{Lemma}\label{ees} Suppose that $\s\in \UPD_n(\lambda)$ with
$\s_{k-1}=\s_{k+1}$ and $\s_k=\s_{k+2}$. Then
$E_{\s\s}(k)E_{\s\s}(k+1)=1$.
\end{Lemma}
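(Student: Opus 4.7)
The assumption $\s_{k-1}=\s_{k+1}$ forces the node added (resp.\ removed) at step~$k$ to be removed (resp.\ added) at step~$k{+}1$, so the key combinatorial input is the identity $c_\s(k+1)=c_\s(k)^{-1}$. Write $x:=c_\s(k)$. My plan is to express both $E_{\s\s}(k)$ and $E_{\s\s}(k+1)$ as residues of the function $P_k(y,\s):=W_k(y,\s)+\delta^{-1}\varrho-\tfrac{y^2}{y^2-1}$ and its successor, and then exploit a symmetry $y\leftrightarrow y^{-1}$.

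First I would use Proposition~\ref{W=tilde W} to identify $W_k(y,\s)=\widetilde W_k(y,\s)$ and combine this with Lemma~\ref{tilde W}, acting on $v_\s$, to get the multiplicative recursion
\[
P_{k+1}(y,\s)=R_k(y,\s)\,P_k(y,\s),\qquad
R_k(y,\s)=\frac{(y-x)^2}{(y-x^{-1})^2}\cdot\frac{y-q^{-2}x^{-1}}{y-q^{-2}x}\cdot\frac{y-q^{2}x^{-1}}{y-q^{2}x}.
\]
Then $E_{\s\s}(k)=\Res_{y=x}\tfrac{P_k(y,\s)}{y}$ and $E_{\s\s}(k+1)=\Res_{y=x^{-1}}\tfrac{R_k(y,\s)P_k(y,\s)}{y}$. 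The factor $R_k(y,\s)$ has a double pole at $y=x^{-1}$ while $P_k(y,\s)$ has a simple zero there (since $x^{-1}=c(\alpha^*)^{-1}$ appears among the numerator factors), so the product has a simple pole and the residue reduces to a finite expression involving only the leading coefficients $A_k:=xE_{\s\s}(k)$ and $B_k:=P_k'(x^{-1},\s)$.

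The central step is the \emph{symmetry identity}
\[
P_k(y,\s)\,P_k(y^{-1},\s)=\delta^{-2}-\frac{y^2}{(y^2-1)^2},
\]
which I would establish by direct computation from the explicit formula in Definition~\ref{rational-W}. The product over addable/removable nodes telescopes to $(\prod u_i)^{-2}$ by the content identity~(\ref{contidentity}), while the prefactor $A(y)A(y^{-1})$ yields $\big(\delta^{-2}-y^2/(y^2-1)^2\big)(\prod u_i)^2$, using the particular value of $\varrho$ from Proposition~\ref{Ene0}(2) (the only place where the parity of $r$ enters, and both parities produce the same~$\delta^{-2}$).

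Expanding the symmetry identity as $y\to x$, where $P_k(y,\s)\sim A_k/(y-x)$ on one side and $P_k(y^{-1},\s)\sim -x^{-2}B_k(y-x)$ on the other, yields $A_kB_k=x^4/(x^2-1)^2-x^2/\delta^2$. Substituting into the residue formula for $E_{\s\s}(k+1)$ derived above and simplifying, the product becomes
\[
E_{\s\s}(k)E_{\s\s}(k+1)=\frac{(x^2-1)^2-\delta^2 x^2}{(1-q^{-2}x^2)(1-q^2x^2)},
\]
and expanding both numerator and denominator gives $x^4-(q^2+q^{-2})x^2+1$ on each side, proving the equality. The main obstacle is identifying and proving the symmetry identity cleanly; once in hand, everything else is bookkeeping with residues.
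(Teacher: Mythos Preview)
Your argument is correct, and it takes a genuinely different route from the paper's own proof of Lemma~\ref{ees}. The paper proceeds by direct case analysis: it writes $E_{\s\s}(k+1)$ as $\Res_{y=c_\s(k)^{-1}}W_{k+1}(y,\s)/y$, expands $W_{k+1}$ via the recursion, and then evaluates the resulting expression in the four separate cases determined by the parity of $r$ and the choice of $\varrho$, each time invoking the explicit formulae (\ref{ekodd})--(\ref{ekeven}) and the content identity (\ref{contidentity}) to recognise the answer as $1/E_{\s\s}(k)$.

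Your approach avoids this case split by proving the functional equation
\[
P_k(y,\s)\,P_k(y^{-1},\s)=\delta^{-2}-\frac{y^2}{(y^2-1)^2},
\]
which packages all four cases at once (the parity of $r$ and the value of $\varrho$ enter only in checking that the prefactor $A(y)A(y^{-1})$ collapses to the right constant, and both parities give $\delta^{-2}$). This identity is in fact the ``$P$-version'' of the calculation preceding (\ref{abcd}) in the proof of Corollary~3.9, so you have independently rediscovered that mechanism and put it to work here. Once the functional equation is in hand, the residue bookkeeping you outline is routine and correct: the leading-term identity $A_kB_k=x^4/(x^2-1)^2-x^2/\delta^2$ follows, and substituting into the expression for $E_{\s\s}(k+1)$ yields the claimed ratio with numerator and denominator both equal to $x^4-(q^2+q^{-2})x^2+1$.

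What each approach buys: the paper's proof is explicit and keeps close to the formulae actually used elsewhere for $E_{\t\t}(k)$, at the cost of four parallel verifications. Your proof is uniform and more conceptual, isolating a single symmetry of $P_k$ as the reason for the reciprocity, at the cost of an extra lemma (the functional equation) that must be established first. Both rely on the same genericity facts (Lemma~\ref{generic u}) to ensure that $P_k$ has a simple pole at $x$ and a simple zero at $x^{-1}$, and both ultimately use (\ref{contidentity}).
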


\begin{proof} By (\ref{omegaa})
and  Proposition \ref{W=tilde W},
$$
\begin{aligned}\label{abcd}
 W_{k+1}(y,\s)+&\delta^{-1}\varrho-\frac{y^2}{y^2-1}=
(W_k(y,\s)+\delta^{-1}\varrho-\frac{y^2}{y^2-1})\\
& \times {(y-c_{\s}(k))^2\over
(y-c_{\s}(k)^{-1})^2}{(y-q^{-2}c_{\s}(k)^{-1})\over
(y-q^{-2}c_{\s}(k))}{(y-q^{2}c_{\s}(k)^{-1})\over
(y-q^{2}c_{\s}(k))},
\end{aligned}$$
where $W_k(y,\s)$ is given by Definition~\ref{rational-W}. Note that
$c_\s(k) c_\s(k+1)=1$ and   $$E_{\s\s}(k+1)= Res_{y=c_\s(k+1)}
\frac{W_{k+1}(y,\s) }{y}= Res_{y=c_\s(k)^{-1}} \frac{W_{k+1}(y,\s)
}{y}.$$  There are four cases we need to discuss:

\Case {1. $2\nmid r$ and $\varrho^{-1}=u_1u_2\cdots u_r$}
\begin{align*}
E_{\s\s}(k+1)=&
\Big(\delta^{-1}c_{\s}(k)+\dfrac{c_{\s}(k)^2}{1-c_{\s}(k)^2}
\Big)\varrho^{-1}\prod\limits_{\t\simk\s,\t\neq \s}
\dfrac{c_{\s}(k)^{-1} - c_{\t}(k)^{-1}}{c_{\s}(k)^{-1}
-c_{\t}(k)}\\
&\Big(c_{\s}(k)^{-1}-c_{\s}(k)\Big)
\dfrac{c_{\s}(k)^{-1}-q^{-2}c_{\s}(k)^{-1}}{c_{\s}(k)^{-1}-q^{-2}c_{\s}(k)}
\dfrac{c_{\s}(k)^{-1}-q^2c_{\s}(k)^{-1}}{c_{\s}(k)^{-1}-q^2c_{\s}(k)}\\
&=\dfrac{\varrho\delta c_{\s}(k)^2}{c_{\s}(k)^2+\delta
c_{\s}(k)-1}\prod\limits_{\t\simk\s,\t\neq \s} \dfrac{c_{\s}(k) -
c_{\t}(k)}{c_{\s}(k) -c_{\t}(k)^{-1}}=\frac{1}{E_{\s\s}(k)}
\end{align*}

\Case{2. $2\nmid r$  and $\varrho^{-1}=-\prod_{l=1}^r u_i$}
\begin{align*}
E_{\s\s}(k+1)=&
\Big(-\delta^{-1}c_{\s}(k)+\dfrac{c_{\s}(k)^2}{1-c_{\s}(k)^2}
\Big)\Big(-\varrho^{-1}\Big)\prod\limits_{\t\simk\s,\t\neq \s}
\dfrac{c_{\s}(k)^{-1} - c_{\t}(k)^{-1}}{c_{\s}(k)^{-1}
-c_{\t}(k)}\\
&\Big(c_{\s}(k)^{-1}-c_{\s}(k)\Big)
\dfrac{c_{\s}(k)^{-1}-q^{-2}c_{\s}(k)^{-1}}{c_{\s}(k)^{-1}-q^{-2}c_{\s}(k)}
\dfrac{c_{\s}(k)^{-1}-q^2c_{\s}(k)^{-1}}{c_{\s}(k)^{-1}-q^2c_{\s}(k)}\\
=&\dfrac{\varrho\delta c_{\s}(k)^2}{c_{\s}(k)^2-\delta
c_{\s}(k)-1}\prod\limits_{\t\simk\s,\t\neq \s} \dfrac{c_{\s}(k) -
c_{\t}(k)}{c_{\s}(k) -c_{\t}(k)^{-1}}=\frac{1}{E_{\s\s}(k)}
\end{align*}

\Case{3. $2\mid r$ and $\varrho^{-1}=q^{-1} \prod_{l=1}^r u_i$}
\begin{align*}
E_{\s\s}(k+1)= &
\Big(q\delta^{-1}c_{\s}(k)-\dfrac{c_{\s}(k)}{1-c_{\s}(k)^2}
\Big)q\varrho^{-1}\prod\limits_{\t\simk\s,\t\neq \s}
\dfrac{c_{\s}(k)^{-1} - c_{\t}(k)^{-1}}{c_{\s}(k)^{-1}
-c_{\t}(k)}\\
&\Big(c_{\s}(k)^{-1}-c_{\s}(k)\Big)
\dfrac{c_{\s}(k)^{-1}-q^{-2}c_{\s}(k)^{-1}}{c_{\s}(k)^{-1}-q^{-2}c_{\s}(k)}
\dfrac{c_{\s}(k)^{-1}-q^2c_{\s}(k)^{-1}}{c_{\s}(k)^{-1}-q^2c_{\s}(k)}\\
= &\dfrac{\varrho\delta
c_{\s}(k)^2}{c_{\s}(k)^2-q^2}\prod\limits_{\t\simk\s,\t\neq \s}
\dfrac{c_{\s}(k) - c_{\t}(k)}{c_{\s}(k)
-c_{\t}(k)^{-1}}=\frac{1}{E_{\s\s}(k)}
\end{align*}
\Case {4. $2\mid r$ and $\varrho^{-1}=-q\prod_{l=1}^r u_i$}
\begin{align*}
E_{\s\s}(k+1)=&
\Big(-q^{-1}\delta^{-1}c_{\s}(k)-\dfrac{c_{\s}(k)}{1-c_{\s}(k)^2}
\Big)\Big(-q^{-1}\varrho^{-1}\Big)\prod\limits_{\t\simk\s,\t\neq \s}
\dfrac{c_{\s}(k)^{-1} - c_{\t}(k)^{-1}}{c_{\s}(k)^{-1}
-c_{\t}(k)}\\
&\Big(c_{\s}(k)^{-1}-c_{\s}(k)\Big)
\dfrac{c_{\s}(k)^{-1}-q^{-2}c_{\s}(k)^{-1}}{c_{\s}(k)^{-1}-q^{-2}c_{\s}(k)}
\dfrac{c_{\s}(k)^{-1}-q^2c_{\s}(k)^{-1}}{c_{\s}(k)^{-1}-q^2c_{\s}(k)}\\
=& \dfrac{\varrho\delta
c_{\s}(k)^2}{c_{\s}(k)^2-q^{-2}}\prod\limits_{\t\simk\s,\t\neq \s}
\dfrac{c_{\s}(k) - c_{\t}(k)}{c_{\s}(k)
-c_{\t}(k)^{-1}}=\frac{1}{E_{\s\s}(k)}.
\end{align*}
We remark that we use Lemma~\ref{simk}, (\ref{contidentity}) and
(\ref{ekodd})--(\ref{ekeven}) when we verify the equalities in
cases~1--4.
\end{proof}

The following result can be proved by  similar  arguments to those
for \cite[4.20]{AMR}. The only difference is that we use our
rational functions $W_k(y, \s)$ instead of those for cyclotomic
Nazarov-Wenzl algebras.

\begin{Lemma}\label{be equality}
Fix an integer $k$ with $1\le k<n-1$ and suppose that
$\s,\t,\u\in\UPD_n(\lambda)$ such that $\s_{k-1}=\s_{k+1}$,
$\s_k=\s_{k+2}$, $\t\simkk\s$, $\u\simk\s$ and that $s_k\t$ and
$s_{k+1}\u$ are both defined with $s_k\t=s_{k+1}\u$. Then
$$b_\t(k)^2E_{\t\t}(k+1)=b_\u(k+1)^2E_{\u\u}(k).$$
\end{Lemma}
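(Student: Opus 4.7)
The plan is to reduce the identity to an algebraic verification by exploiting the recursion from Lemma~\ref{tilde W}. Via Proposition~\ref{W=tilde W} this recursion reads
$$
W_{k+1}(y,\s)+\delta^{-1}\varrho-\frac{y^2}{y^2-1}=\left[W_k(y,\s)+\delta^{-1}\varrho-\frac{y^2}{y^2-1}\right]F(y,c_\s(k)),
$$
with $F(y,c):=\dfrac{(y-c)^2(y-q^{-2}c^{-1})(y-q^2c^{-1})}{(y-c^{-1})^2(y-q^{-2}c)(y-q^2c)}$. Since $\t\simkk\s$ gives $\t_{k-1}=\s_{k-1}$ and $\t_k=\s_k$, one has $W_{k+1}(y,\t)=W_{k+1}(y,\s)$; similarly $\u\simk\s$ gives $W_k(y,\u)=W_k(y,\s)$. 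Hence by \eqref{w-res} the quantities of interest may be realised as residues of $W_{k+1}(y,\s)/y$ and $W_k(y,\s)/y$ at $y=c_\t(k+1)$ and $y=c_\u(k)$ respectively.

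The crucial combinatorial observation is that, setting $\v:=s_k\t=s_{k+1}\u$, the operator $s_{k+1}$ leaves position $k$ fixed, so $\v_k=(s_{k+1}\u)_k=\u_k$ and therefore $c_\v(k)=c_\u(k)$. On the other hand Lemma~\ref{x-equal} applied to $\v=s_k\t$ yields $c_\v(k)=c_\t(k+1)$. Thus $c_\u(k)=c_\t(k+1)$, and the two residues are taken at the same value of~$y$. Under the running assumptions on $\bu$ and~$q$, the poles $c_\s(k)^{-1}$ and $q^{\pm2}c_\s(k)$ of $F(y,c_\s(k))$ are all distinct from $c_\u(k)$: the equalities $c_\u(k)=c_\s(k)^{\pm1}$ are excluded by Lemma~\ref{generic u}(c) applied to $\u\simk\s$, $\u\ne\s$, while $c_\u(k)=q^{\pm2}c_\s(k)$ is ruled out by a short case analysis on addable/removable nodes of $\s_{k-1}$ (the generic condition on $\bu$ handles inter-component collisions, and a partition-shape argument handles intra-component ones). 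Consequently the recursion yields
$$
E_{\t\t}(k+1)=F(c_\u(k),c_\s(k))\cdot E_{\u\u}(k).
$$

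What is left is to verify $b_\t(k)^2F(c_\u(k),c_\s(k))=b_\u(k+1)^2$. Write $c_1:=c_\s(k)=c_\t(k)=c_\u(k+2)$ and $c_2:=c_\u(k)=c_\t(k+1)$, and note $c_\u(k+1)=c_2^{-1}$ because $\u_{k-1}=\u_{k+1}$. A short rearrangement of $b_\s(k)^2=1-a_\s(k)^2+\delta a_\s(k)$ produces the factored form
$$
b_\s(k)^2=\frac{(c_\s(k+1)-q^2c_\s(k))(c_\s(k+1)-q^{-2}c_\s(k))}{(c_\s(k+1)-c_\s(k))^2}.
$$
Substituting gives $b_\t(k)^2=\frac{(c_2-q^2c_1)(c_2-q^{-2}c_1)}{(c_2-c_1)^2}$, and clearing $c_2^2$ yields $b_\u(k+1)^2=\frac{(c_1c_2-q^{2})(c_1c_2-q^{-2})}{(c_1c_2-1)^2}$; clearing $c_1^2$ from numerator and denominator of $F(c_2,c_1)$ then shows $b_\t(k)^2F(c_2,c_1)=b_\u(k+1)^2$.

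The main obstacle is the regularity check in the middle paragraph, where one must exclude accidental coincidences between $c_\u(k)$ and the singular loci of $F(y,c_\s(k))$; once this is done the rest is standard residue bookkeeping, and the argument is the $q$-deformed analogue of~\cite[4.20]{AMR}.
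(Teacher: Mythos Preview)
Your proof is correct and follows essentially the same approach as the paper, which does not spell out an argument but refers to \cite[4.20]{AMR} with the rational functions $W_k(y,\s)$ of Definition~\ref{rational-W} substituted in; your residue computation via the recursion of Lemma~\ref{tilde W}/Proposition~\ref{W=tilde W}, together with the key observation $c_\t(k+1)=c_\u(k)$ and the factored form of $b_\s(k)^2$, is precisely that argument. The regularity check you flag as the main obstacle (that $c_\u(k)$ avoids $c_\s(k)^{-1}$ and $q^{\pm2}c_\s(k)$) is likewise glossed over in the paper, and your sketch of the case analysis---generic $\bu$ for inter-component or mixed addable/removable collisions, and the standard fact that distinct addable (resp.\ removable) nodes of a single partition have $j-i$ values differing by at least $2$ for intra-component collisions---is the right way to fill it in.
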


The following combinatorial identities will be used in the proof of
Theorem~\ref{seminormal}.

\begin{Prop}\label{identity}  Suppose that $\s, \t'\in \UPD_n(\lambda)$ with
$\s_{k-1}=\s_{k+1}$, $\s_k\neq \s_{k+2}$,  $ \t'\simk \s$ and
$\t'\neq \s$, where $1\le k<n-1$. Then the following identities
hold:
\begin{enumerate}
    \item$\displaystyle\sum_{\t\simk\s} {E_{\t\t}(k)\over
c_\s(k)c_\t(k)-1} =\delta^{-1}\varrho+{\frac1{ c_\s(k)^2-1}},$
\item
{\small
$\displaystyle\sum_{\t\simk\s}\frac{E_{\t\t}(k)}{(c_\s(k)c_\t(k)-1)^2}
         =\frac{c_{\s}(k)^2+1}{(c_{\s}(k)^2-1)^2}-\delta^{-1}\varrho
+(\frac{1} {\delta^2}
         -\frac{c_{\s}(k)^2}{(c_{\s}(k)^2-1)^2})
         \frac{1} {E_{\s\s}(k)}$}
\item
$\displaystyle\sum_{\t\simk\s}
  {E_{\t\t}(k)\over (c_\s(k)c_\t(k)-1) (c_\t(k)c_{\t'}(k)-1)}=\frac{c_{\s}(k)c_{\t'}(k)+1}{(c_{\s}(k)^2-1)(c_{\t'}(k)^2-1)}-\delta^{-1}\varrho,$
\end{enumerate}
\end{Prop}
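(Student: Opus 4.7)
The backbone of the proof is the partial-fraction identity
$$
\frac{W_k(y,\s)}{y}=\sum_{\t\simk\s}\frac{E_{\t\t}(k)}{y-c_\t(k)}
$$
recorded in \eqref{w-res}. I plan to extract (a), (b), (c) from this identity by evaluating it, or a suitable companion function, at the point $y_0=c_\s(k)^{-1}$. The crucial observation that powers every case is: since $\s_{k-1}=\s_{k+1}$, the node $\alpha_0=\s_k\ominus\s_{k-1}$ appears in the product defining $W_k(y,\s)$, so $c(\alpha_0)=c_\s(k)$ and hence the numerator factor $y-c(\alpha_0)^{-1}=y-c_\s(k)^{-1}$ vanishes at $y_0$. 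In particular, writing $W_k(y,\s)=U(y)+V(y)P(y)$ with $U(y)=\frac{y^2}{y^2-1}-\delta^{-1}\varrho$ and $P(y)=\prod_\alpha\frac{y-c(\alpha)^{-1}}{y-c(\alpha)}$, one has $P(y_0)=0$, so $W_k(y_0,\s)=U(y_0)=\frac{1}{1-c_\s(k)^2}-\delta^{-1}\varrho$.

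For (a), I would rewrite $\frac{1}{c_\s(k)c_\t(k)-1}=-\frac{1}{c_\s(k)(y_0-c_\t(k))}$ and apply \eqref{w-res} at $y_0$. This gives the LHS equal to $-W_k(c_\s(k)^{-1},\s)$, which by the observation above equals $\frac{1}{c_\s(k)^2-1}+\delta^{-1}\varrho$, matching the RHS. For (c) I would use the same substitution on both $c_\t(k)c_\s(k)-1$ and $c_\t(k)c_{\t'}(k)-1$, then split with the partial-fraction
$$
\frac{1}{(a-x)(b-x)}=\frac{1}{a-b}\Bigl(\frac{1}{b-x}-\frac{1}{a-x}\Bigr)
$$
with $a=c_\s(k)^{-1}$, $b=c_{\t'}(k)^{-1}$, $x=c_\t(k)$. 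This reduces the LHS to a difference quotient of $y\mapsto yW_k(y,\s)$ between $c_\s(k)^{-1}$ and $c_{\t'}(k)^{-1}$; at both points the product $P$ vanishes, leaving only $yU(y)$, and a short algebraic simplification yields the stated RHS.

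For (b) the plan is to apply the residue theorem to
$$
f(y)=\frac{W_k(y,\s)}{y\,(c_\s(k)y-1)^2}.
$$
Its residue at each $c_\t(k)$ is $\frac{E_{\t\t}(k)}{(c_\s(k)c_\t(k)-1)^2}$, giving the LHS, while $\operatorname{Res}_{y=\infty}f=0$ (since $f=O(y^{-3})$, $W_k$ being bounded at infinity) and $\operatorname{Res}_{y=0}f=W_k(0,\s)=0$; the latter vanishes thanks to the content identity \eqref{contidentity} together with the choice of $\varrho$ in Proposition \ref{Ene0}, which makes the $U(0)$ and $V(0)P(0)$ contributions cancel. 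Summing all residues to zero identifies the LHS with $-\operatorname{Res}_{y=y_0}f=W_k(y_0,\s)-y_0W_k'(y_0,\s)$ (using $c_\s(k)^2y_0^2=1$). Then $W_k'(y_0,\s)=U'(y_0)+V(y_0)P'(y_0)$ since $P(y_0)=0$; the $U'$-contribution yields the explicit part $\frac{c_\s(k)^2+1}{(c_\s(k)^2-1)^2}-\delta^{-1}\varrho$ of the RHS. The factor $P'(y_0)$ equals $\frac{Q(y_0)}{y_0-c_\s(k)}$ where $Q(y)=\prod_{\alpha\ne\alpha_0}\frac{y-c(\alpha)^{-1}}{y-c(\alpha)}$, so the remaining piece is $-y_0V(y_0)P'(y_0)=\frac{V(y_0)Q(y_0)}{c_\s(k)-c_\s(k)^{-1}}$.

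The main obstacle is matching this last term with $\bigl(\delta^{-2}-\frac{c_\s(k)^2}{(c_\s(k)^2-1)^2}\bigr)/E_{\s\s}(k)$. I expect to handle it by computing $V(y_0)Q(y_0)$ case by case according to the four choices of $\varrho$ in Proposition \ref{Ene0} (parity of $r$ crossed with the sign), relating it to $V(c_\s(k))Q(c_\s(k))$ which, via the closed forms \eqref{ekodd}--\eqref{ekeven} for $E_{\s\s}(k)$, equals $c_\s(k)E_{\s\s}(k)/(c_\s(k)-c_\s(k)^{-1})$. The ratio $V(y_0)/V(c_\s(k))$ is elementary, and the ratio $Q(y_0)/Q(c_\s(k))$ can be telescoped using the substitution $y\mapsto y^{-1}$ in the product over addable/removable nodes of $\s_{k-1}$ together with \eqref{contidentity}. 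Once this substitution identity is in hand, the algebraic simplification collapses to the prescribed coefficient of $1/E_{\s\s}(k)$, completing (b).
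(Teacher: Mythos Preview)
Your proposal is correct and follows essentially the same route as the paper. For (a) and (c) the paper also evaluates \eqref{w-res} at $y=c_\s(k)^{-1}$ (and at $v=c_{\t'}(k)^{-1}$), using exactly the vanishing $P(c_\s(k)^{-1})=0$ you isolate; for (b) the paper works instead with the two--variable identity
\[
\sum_{\t\simk\s}\frac{E_{\t\t}(k)}{(y-c_\t(k))(v-c_\t(k))}
=\frac1{v-y}\Bigl(\frac{W_k(y,\s)}{y}-\frac{W_k(v,\s)}{v}\Bigr)
\]
(obtained by letting $E_k\frac{1}{(y-X_k)(v-X_k)}E_k$ act on $v_\s$) and specializes $y=v=c_\s(k)^{-1}$, but this is equivalent to your residue computation at the double pole---both produce the same term $V(y_0)Q(y_0)/(c_\s(k)^2-1)$, and the paper then finishes with precisely the four--case check against \eqref{ekodd}--\eqref{ekeven} and \eqref{contidentity} that you outline.
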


\begin{proof}
Evaluating both sides of (\ref{w-res})  at $y=c_\s(k)^{-1}$ and
using (\ref{rational-W}) gives (a). By Proposition~\ref{W=tilde W}
and Corollary~\ref{exe=oe} we have
$$ E_k {1\over (y-X_k) (v-X_k)}E_kv_{\s}
       ={1\over v-y}\Big( {W_k(y,\s)\over y}-{W_k(v,\s)\over v}\Big) E_k
v_{\s}.$$ Comparing the coefficients of $v_{\s}$ on both sides of
this equation yields
$$
  \sum_{\t\simk \s}\frac{E_{\t\t}(k)}{(y-c_\t(k))(v-c_\t(k))}
    =\frac1{v-y}\Big\{ \frac{W_k(y,\s)}{y}-\frac{W_k(v,\s)}{v} \Big\}.
$$
Let $y=c_\s(k)^{-1}$. We use  (a) to rewrite the above equality and
obtain the following equality:
\begin{equation}\label{b1}
\begin{aligned} & \sum_{\t\simk
\s}\frac{E_{\t\t}(k)}{(1-c_\s(k)c_\t(k))(v-c_\t(k))}
   \\ =&\frac{W_k(v,\s)+\delta^{-1}\varrho-\frac{v^2}{v^2-1}}
   {v(1-vc_\s(k))}-v^{-1}\delta^{-1}\varrho
   +\frac{v+c_{\s}(k)}{(v^2-1)(1-c_{\s}(k)^2)}  \\
\end{aligned}\end{equation}
Setting $v=c_{\t'}(k)^{-1}$ gives (c). Now we set $v=c_\s(k)^{-1}$.
There are four cases we need to  discuss.

When  $2\nmid r$ and $\prod_{l=1}^r u_i=\varrho^{-1}$, it follows
from
  (\ref{b1}) that
\begin{align*}
& \sum_{\t\simk\s}\frac{E_{\t\t}(k)}{(1-c_\s(k)c_\t(k))^2}
         +\delta^{-1}\varrho-\frac{1+c_{\s}(k)^2}{(1-c_{\s}(k)^2)^2}\\
         =& \varrho^{-1}\Big(\delta^{-1}c_{\s}(k)+\frac{c_{\s}(k)^2}{1-c_{\s}(k)^2}\Big)
          \prod_{\substack{\t\simk\s\\\t\neq\s}}\frac{c_\s(k)^{-1}-c_\t(k)^{-1}}{c_\s(k)^{-1}-c_\t(k)}\frac{-c_\s(k)^{-2}}{c_{\s}(k)^{-1}-c_{\s}(k)}
\\=& \frac{c_\s(k)^2-\delta c_\s(k)-1}{\delta (c_\s(k)^2-1)^2}
\varrho^{-1} c_{\s}(k)^2 \prod_{\t\simk \s} c_{\t}(k)^{-2}
\prod_{\t\neq \s} \frac{c_\t(k)-c_\s(k)}{c_\t(k)^{-1}-c_\s(k)}\\
=&  \frac{c_\s(k)^2-\delta c_\s(k)-1}{\delta (c_\s(k)^2-1)^2}
\varrho c_{\s}(k)^2 \prod_{\t\neq \s}
\frac{c_\t(k)-c_\s(k)}{c_\t(k)^{-1}-c_\s(k)}\text{ by (\ref{contidentity})} \\
=&(\delta^{-2}-\frac{c_{\s}(k)^2}{(1-c_{\s}(k)^2)^2})\frac{1}{E_{\s\s}(k)} \text{ by (\ref{ekodd}).}\\
\end{align*}

This proves (b) under the assumption  $2\nmid r$ and $\prod_{l=1}^r
u_i=\varrho^{-1}$. One can verify (b) in other cases. \end{proof}

We are going to check that the action of $\W_{r,n}(\bu)$ on
$\Delta(\lambda)$ respects the defining relations of
$\W_{r,n}(\bu)$.

\begin{Lemma}\label{start relations}
Suppose  $\s\in\UPD_n(\lambda)$. Then
\begin{enumerate}
    \item $E_i^2v_\s=\omega_0 E_iv_\s$, for $1\le i<n$.
    \item $E_1X_1^kE_1v_\s=\omega_kE_1v_\s$, for $k\in \mathbb Z$.
    \item $(X_1-u_1)(X_1-u_2)\cdots (X_1-u_r)v_{\s}=0$.
    \item $X_iX_j v_{\s}=X_jX_i v_{\s}$ for $1\le i, j\le n$.
    \item $E_iX_iX_{i+1} v_{\s}=X_iX_{i+1} E_i v_{\s}=E_iv_{\s}$,
           $1\le i\le n-1$.
    \item $(T_iX_i-X_{i+1}T_i)v_{\s}=\delta X_{i+1}(E_i-1)v_{\s}$, for $1\le i\le n-1$
    \item $(X_iT_i-T_{i+1}X_i)v_{\s}=\delta(E_i-1)X_{i+1}v_{\s}$, for $1\le i\le n-1$
    \item $T_kX_lv_{\s}=X_lT_kv_{\s}$ if  $l\neq k, k+1$.
    \item $E_kE_{k\pm1} E_kv_{\s}=E_k v_{\s}$.
    \item  $E_kT_k v_{\s}=\varrho E_k v_{\s}=T_kE_k v_{\s}$.
    \item $T_iT_j v_\s=T_jT_i v_\s$ if $|i-j|>1$.
    \item $X_iX_i^{-1}=X_i^{-1} X_i=1$ for $1\le i\le n$.
\end{enumerate}\end{Lemma}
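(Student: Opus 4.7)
The plan is to verify each of the twelve relations (a)--(l) by direct computation on the basis $\{v_\s : \s\in\UPD_n(\lambda)\}$, using the explicit action prescribed in Theorem~\ref{seminormal}, and to exploit the following toolkit: the partial fraction expansion (\ref{w-res}) of $W_k(y,\s)/y$, Corollary~\ref{exe=oe}, Proposition~\ref{identity}, Lemma~\ref{ees}, and the compatibilities of square roots gathered in the root conditions (Assumption~\ref{be}). A key observation used throughout is the content identity $c_\s(i)c_\s(i+1)=1$ whenever $\s_{i-1}=\s_{i+1}$, which is immediate from (\ref{content}) since $\s_i\ominus\s_{i-1}$ and $\s_{i+1}\ominus\s_i$ are then the same node.

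Several parts are immediate. Relation (l) is trivial since $X_i$ acts by the invertible scalar $c_\s(i)$; (d) holds because the $X_j$ act by scalars; (h) is a straightforward check because $c_\s(l)=c_\t(l)$ for every $\t$ appearing in the expansion of $T_k v_\s$ (since such $\t$ modifies only $\s_k$); (c) follows from $c_\s(1)\in\{u_1,\dots,u_r\}$ since $\s_0=\emptyset$; and (e) follows from the content identity above together with the vanishing $E_i v_\s=0$ when $\s_{i-1}\neq\s_{i+1}$. Thus the nontrivial relations are those involving $E_i$ and $T_i$ simultaneously.

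For the remaining relations we may reduce to $\s_{i-1}=\s_{i+1}$ in every case involving $E_i$. Relation (a) expands to $E_i^2 v_\s=\sum_{\t,\u\simk \s} E_{\s\t}(i)E_{\t\u}(i)v_\u$; since $E_{\s\t}(i)E_{\t\u}(i)=\sqrt{E_{\s\s}(i)}\sqrt{E_{\u\u}(i)}\,E_{\t\t}(i)$, it reduces to the identity $\sum_{\t\simk \s}E_{\t\t}(i)=\omega_0$, which I would obtain by taking the limit $y\to\infty$ in $W_i(y,\s)/y$ via Lemma~\ref{w-residue} and Definition~\ref{rational-W}, using the explicit form of $\varrho^{-1}$ given by Proposition~\ref{Ene0}(2). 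Relation (b) is Corollary~\ref{exe=oe} at $k=1$, noting $\omega_1^{(a)}=\omega_a$ by the initial condition $\widetilde W_1(y)=\widetilde w_{1,+}(y)$ in Lemma~\ref{tilde W}. Relation (j) reduces on matching coefficients of $v_\t$ in $T_k E_k v_\s$ to Proposition~\ref{identity}(a). Relation (i) reduces, after expanding and tracking the nonzero terms, to $E_{\s\s}(k)E_{\s\s}(k+1)=1$, which is exactly Lemma~\ref{ees}.

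The core work is in the two skein relations (f), (g) and the braid relation (k). For (k) only the cases where both $T_i$ and $T_j$ produce off-diagonal contributions need comment; there, equality of the cross coefficients is precisely what parts (b) and (c) of Assumption~\ref{be} were built to supply. For (f) and (g) I split into four subcases according to whether $\s_{k-1}=\s_{k+1}$ and, if not, whether $s_k\s$ is defined. The subcases with $\s_{k-1}\neq\s_{k+1}$ are short: both sides collapse to scalar identities in $a_\s(k),b_\s(k),c_\s(k),c_\s(k+1)$ that follow from Lemma~\ref{x-equal} and Definition~\ref{ab}. In the subcases with $\s_{k-1}=\s_{k+1}$ I extract the coefficient of each $v_\t$ on both sides: the diagonal coefficient $\t=\s$ reduces to Proposition~\ref{identity}(b), and the off-diagonal coefficient ($\t\simk\s$, $\t\neq\s$) reduces to Proposition~\ref{identity}(c). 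The main obstacle will be bookkeeping signs and square roots in (f) and (g) when $\s_{k-1}=\s_{k+1}$, where the consistency of the choices in Assumption~\ref{be} is indispensable; the overall scheme follows the analogous verification in \cite{AMR} for cyclotomic Nazarov--Wenzl algebras, with $W_k(y,\s)$ playing the role of the rational function used there.
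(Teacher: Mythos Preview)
Your plan is essentially correct and coincides with the paper's approach, but you have misallocated the combinatorial identities in Proposition~\ref{identity}. Relations (f) and (g) do \emph{not} require Proposition~\ref{identity}(b),(c): in the case $\s_{k-1}=\s_{k+1}$, the coefficient of $v_\t$ on the left of (f) is $T_{\s\t}(k)\bigl(c_\s(k)-c_\t(k)^{-1}\bigr)=\delta\,\dfrac{E_{\s\t}(k)-\delta_{\s\t}}{c_\s(k)c_\t(k)-1}\cdot\dfrac{c_\s(k)c_\t(k)-1}{c_\t(k)}=\delta c_\t(k)^{-1}\bigl(E_{\s\t}(k)-\delta_{\s\t}\bigr)$, which is exactly the coefficient on the right. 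In other words, $T_{\s\t}(k)$ was \emph{defined} precisely so that (f) holds (compare the derivation of (\ref{cji}) in Proposition~\ref{Ene0}); (g) is analogous. The paper accordingly lists (c)--(h) and (l) among the relations that ``can be verified easily''. The identities in Proposition~\ref{identity}(b),(c) are instead the key input for the Kauffman skein relation $T_k^2-\delta T_k+\delta\varrho E_k=1$, which is \emph{not} part of the present lemma but is treated separately in Lemma~\ref{s-square}.

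One small omission: your argument for (b) via Corollary~\ref{exe=oe} covers only $k\ge 0$, since that corollary and Lemma~\ref{tilde W} are stated for non-negative exponents. For $k<0$ the paper uses (c) to write $X_1^{-1}$ as a polynomial in $X_1$ (as $u_1\cdots u_r$ is invertible), reducing to the non-negative case; you should add this step.
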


\begin{proof} We have already proved (a) and (b) for  $k>0$ in
Corollary~\ref{exe=oe}. (c)-(h) and (l) can be verified easily. By
(c), we have (b) for all $k\in \mathbb Z$ with $k<0$. (i)-(k) can be
proved by
    arguments similar to those in \cite[4.23,4.25, 4.27a]{AMR}. When we
    prove (j) we need to use Proposition~\ref{identity}(a) instead of
    \cite[4.21a]{AMR}.
\end{proof}

It remains to check the defining relations (b), (c)(ii), (h)(ii)  in
Definition~\ref{Waff relations}.

\begin{Lemma}\label{s-square} Suppose that $\s\in \UPD_n(\lambda)$.
Then $(T_k^2-\delta T_k+\delta\varrho E_k) v_{\s}=v_{\s}$.
\end{Lemma}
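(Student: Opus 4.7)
The plan is to verify the Kauffman skein relation $(T_k^2-\delta T_k+\delta\varrho E_k)v_\s=v_\s$ by splitting into the two cases that govern the action formulas.

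\textbf{Case 1: $\s_{k-1}\ne\s_{k+1}$.} Then $E_kv_\s=0$, so I must show $T_k^2v_\s=v_\s+\delta T_kv_\s$. If $s_k\s$ is defined, then $s_k\s$ also satisfies $(s_k\s)_{k-1}\ne (s_k\s)_{k+1}$, so $T_kv_{s_k\s}=a_{s_k\s}(k)v_{s_k\s}+b_{s_k\s}(k)v_\s$. Expanding $T_k^2 v_\s$ and using Lemma~\ref{x-equal}(a) ($a_{s_k\s}(k)=\delta-a_\s(k)$), Assumption~\ref{be}(a) ($b_{s_k\s}(k)=b_\s(k)$), and the defining relation $b_\s(k)^2=1-a_\s(k)^2+\delta a_\s(k)$ from Definition~\ref{ab}, the coefficient of $v_\s$ becomes $a_\s(k)^2+b_\s(k)^2=1+\delta a_\s(k)$ and the coefficient of $v_{s_k\s}$ becomes $b_\s(k)(a_\s(k)+a_{s_k\s}(k))=\delta b_\s(k)$, giving exactly $v_\s+\delta T_kv_\s$. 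If $s_k\s$ is not defined, Lemma~\ref{x-equal}(b) gives $b_\s(k)=0$ and $a_\s(k)\in\{q,-q^{-1}\}$; a direct check shows $a_\s(k)^2-\delta a_\s(k)-1=0$ for both values.

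\textbf{Case 2: $\s_{k-1}=\s_{k+1}$.} Every $\t\simk\s$ also satisfies $\t_{k-1}=\t_{k+1}$, so the ``sum'' formulas for both $T_k$ and $E_k$ apply. I would compute the coefficient of $v_\u$ (for $\u\simk\s$) in $(T_k^2-\delta T_k+\delta\varrho E_k)v_\s$, namely
\[
\sum_{\t\simk\s}T_{\s\t}(k)T_{\t\u}(k)-\delta T_{\s\u}(k)+\delta\varrho E_{\s\u}(k).
\]
The crucial identity is $E_{\s\t}(k)E_{\t\u}(k)=E_{\t\t}(k)E_{\s\u}(k)$, which follows from $E_{\s\t}(k)=\sqrt{E_{\s\s}(k)}\sqrt{E_{\t\t}(k)}$. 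Writing $c=c_\s(k)$, substituting $T_{\s\t}(k)=\delta(E_{\s\t}(k)-\delta_{\s\t})/(c_\s(k)c_\t(k)-1)$ and carefully subtracting off the $\t=\s$ and (when $\u\ne\s$) $\t=\u$ corrections produced by the Kronecker deltas, the expression reduces to
\[
\delta^2 E_{\s\u}(k)\!\!\sum_{\t\simk\s}\!\!\frac{E_{\t\t}(k)}{(cc_\t(k)-1)(c_\t(k)c_\u(k)-1)}+\textrm{correction terms}.
\]

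\textbf{Diagonal ($\u=\s$):} The sum equals the one in Proposition~\ref{identity}(b). Plugging in and simplifying, the $\delta\varrho E_{\s\s}(k)$ term cancels $-\delta^{-1}\varrho\cdot\delta^2E_{\s\s}(k)$, and the remaining rational fractions collapse to $1$.

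\textbf{Off-diagonal ($\u\ne\s$):} The sum is given by Proposition~\ref{identity}(c). The two leftover correction terms combine to $-\frac{\delta^2 E_{\s\u}(k)}{cc_\u(k)-1}\bigl(\frac{1}{c^2-1}+\frac{1}{c_\u(k)^2-1}\bigr)$, and using the algebraic identity $(cc_\u(k)+1)(cc_\u(k)-1)-(c^2+c_\u(k)^2-2)=(c^2-1)(c_\u(k)^2-1)$, everything combines to $\frac{\delta^2 E_{\s\u}(k)}{cc_\u(k)-1}-\delta\varrho E_{\s\u}(k)$. This cancels exactly with $-\delta T_{\s\u}(k)+\delta\varrho E_{\s\u}(k)$, yielding $0$.

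The main obstacle I expect is bookkeeping in Case~2: correctly isolating the $\t=\s$ and $\t=\u$ contributions (where the $-\delta_{\s\t}$ and $-\delta_{\t\u}$ factors alter what would otherwise be a clean sum) and then fitting the output into the form of the residue identities in Proposition~\ref{identity}. Everything else is either straightforward linear algebra (Case~1) or a substitution into an already-established combinatorial identity.
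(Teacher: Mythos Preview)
Your proposal is correct and follows essentially the same approach as the paper: the same two-case split, the same use of Lemma~\ref{x-equal} and Assumption~\ref{be}(a) in Case~1, and the same reduction in Case~2 via the factorization $E_{\s\t}(k)E_{\t\u}(k)=E_{\t\t}(k)E_{\s\u}(k)$ to the residue identities of Proposition~\ref{identity}(b) for the diagonal coefficient and Proposition~\ref{identity}(c) for the off-diagonal one.
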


\begin{proof} We prove the result by computing the coefficient of
$v_\t$ in the expression of $(T_k^2-\delta T_k+\delta\varrho E_k)
v_{\s}$. There are two cases we  have to discuss as follows.

\Case{1. $\s_{k-1}\neq \s_{k+1}$} Then $E_kv_\s=0$. If $s_k\s$ is
not defined then $a_{\s}(k)\in\{q, -q^{-1}\}$ and $b_{\s}(k)=0$ (see
Lemma~\ref{x-equal}). So, $(T_k^2-\delta T_k+\delta\varrho
E_k)v_{\s}=v_{\s}$. If $s_k\s\in\UPD_n(\lambda)$ then by the choice
of the square roots in (\ref{be})(a) we have $$\begin{aligned}
(T_k^2-\delta T_k+\delta\varrho E_k)v_{\s}
    =&(T_k-\delta)\Big(a_{\s}(k)v_{\s}+b_{\s}(k) v_{s_k\s}\Big)\\
    =&a_{\s}(k)\Big(a_{\s}(k)v_{\s}+b_{\s}(k) v_{s_k\s}\Big)-\delta\Big(a_{\s}(k)v_{\s}+b_{\s}(k) v_{s_k\s}\Big)
      \\ & +b_{\s}(k)\Big(a_{s_k\s}(k)v_{s_k\s}+b_{s_k\s}(k) v_{\s}\Big)\\
   = & v_{\s}\quad {\small(\text{by
   Lemma~\ref {x-equal}.})}
\end{aligned}$$
\Case{2.  $\s_{k-1}=\s_{k+1}$} We have $$\begin{aligned} &
(T_k^2-\delta T_k+\delta\varrho E_k)v_\s\\ =& \sum_{\t\simk\s}
        \Big(\sum_{\v\simk\s}T_{\s\v}(k)T_{\v\t}(k)\Big)v_\t-\delta\sum_{\t\simk\s}T_{\s\t}(k)v_{\t}+\varrho\delta\sum_{\t\simk\s}E_{\s\t}(k)v_{\t}.\\
\end{aligned}$$
The coefficient of $v_\s$ in~$(T_k^2-\delta T_k+\delta\varrho
E_k)v_\s$ is equal to $1$ since $$\begin{aligned} &\quad
\sum_{\v\simk \s} T_{\s\v}(k) T_{\v\s}(k)-\delta
  T_{\s\s}(k)+\varrho\delta E_{\s\s}(k)\\
=&\sum_{\v\simk\s}\frac{\delta^2
  E_{\s\s}(k)E_{\v\v}(k)}{(c_\s(k)c_\v(k)-1)^2}
              -\frac{\delta^2
              (E_{\s\s}(k)-1)}{c_\s(k)^2-1}+\delta\varrho
              E_{\s\s}(k)+\frac{\delta^2(1-2E_{\s\s}(k))}{(c_\s(k)^2-1)^2}\\
=&\delta^2
E_{\s\s}(k)\Big(\frac{1+c_{\s}(k)^2}{(1-c_{\s}(k)^2)^2}+\frac{1}{\delta^2
E_{\s\s}(k)}-\frac{c_{\s}(k)^2}{(1-c_{\s}(k)^2)^2E_{\s\s}(k)}\Big)\\
&+\frac{\delta^2(1-2E_{\s\s}(k))}{(1-c_{\s}(k)^2)^2}+\frac{\delta^2(E_{\s\s}(k)-1)}{1-c_{\s}(k)^2}\quad
\text{(by Proposition~\ref{identity}(b)) } \\ =&1.\\
\end{aligned}$$

 If $\t\simk\s$ and $\t\ne\s$  then
the coefficient of $v_\t$ in $(T_k^2-\delta T_k+\delta\varrho
E_k)v_\s$ is zero since {\small \begin{align*} &\sum_{\v\simk \s}
T_{\s\v}(k)T_{\v\t}(k)-\delta T_{\s\t}(k)+\delta\varrho E_{\s\t}(k)\\
   =&\sum_{\substack{\v\sim \s\\\s\ne\v\ne\t}}
      \frac{\delta^2E_{\s\v}(k) E_{\v\t}(k)}
           {(c_\s(k)c_\v(k)-1)(c_\v(k)c_{\t}(k)-1)}
      +\frac{\delta(E_{\s\s}(k)-1)}{c_{\s}(k)^2-1}\frac{\delta
      E_{\s\t}(k)}{c_{\s}(k)c_{\t}(k)-1}\\
    &+\frac{\delta E_{\s\t}(k)}{c_{\s}(k)c_{\t}(k)-1}\frac{\delta(E_{\t\t}(k)-1)}{c_{\t}(k)^2-1}
    -\frac{\delta^2 E_{\s\t}(k)}{c_{\s}(k)c_{\t}(k)-1}+\delta\varrho
    E_{\s\t}(k)\\
   =& \delta^2E_{\s\t}(k)\Big(\sum_{\substack{\v\sim \s}}
      \frac{E_{\v\v}(k)}
           {(c_\s(k)c_\v(k)-1)(c_\v(k)c_{\t}(k)-1)}-\frac{1}{(c_{\s}(k)^2-1)(c_{\s}(k)c_{\t}(k)-1)}\\
    &-\frac{1}{(c_{\t}(k)^2-1)(c_{\s}(k)c_{\t}(k)-1)}
    -\frac{1}{c_{\s}(k)c_{\t}(k)-1}+\delta^{-1}\varrho
        \Big)\\
   =& \delta^2E_{\s\t}(k)\Big(\sum_{\substack{\v\sim \s}}
      \frac{E_{\v\v}(k)}
           {(c_\s(k)c_\v(k)-1)(c_\v(k)c_{\t}(k)-1)}-\frac{c_{\s}(k)c_{\t}(k)+1}{(c_{\s}(k)^2-1)(c_{\t}(k)^2-1)}
           +\delta^{-1}\varrho\Big)\\
   =&0 \quad (\text{by Proposition~\ref{identity}(c)}).
\end{align*}}
Therefore, $(T_k^2-\delta T_k+\delta\varrho E_k)v_\s=v_\s$.
\end{proof}

\begin{Prop}\label{see} Suppose that $\s\in \UPD_n(\lambda)$. Then
\begin{enumerate}
 \item $E_{k+1}E_{k} v_{\s}=T_kT_{k+1} E_kv_{\s}$.
 \item$E_{k+1}E_kv_{\s} =E_{k+1} T_{k}T_{k+1}v_{\s}$.
\end{enumerate}
\end{Prop}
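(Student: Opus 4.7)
The plan is to verify both (a) and (b) by direct computation on the seminormal basis $\{v_\s : \s \in \UPD_n(\lambda)\}$ of $\Delta(\lambda)$, comparing the coefficients of each $v_\u$ on the two sides. Since the formulas of Theorem~\ref{seminormal} for $T_j v_\s$ and $E_j v_\s$ branch according to whether $\s_{j-1} = \s_{j+1}$, the proof naturally splits into cases indexed by the local structure of $\s$ at positions $k-1, k, k+1, k+2$.

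For (a), the case $\s_{k-1} \neq \s_{k+1}$ is immediate, since then $E_k v_\s = 0$ and both sides vanish. When $\s_{k-1} = \s_{k+1}$, I would expand
$$E_k v_\s = \sum_{\t \simk \s} E_{\s\t}(k) v_\t,$$
then compute the left-hand side by applying $E_{k+1}$ (only the terms with $\t_k = \s_{k+2}$ contribute, since $E_{k+1} v_\t \ne 0$ forces $\t_k = \t_{k+2}$) and compute the right-hand side by successively applying $T_{k+1}$ and then $T_k$, branching again at each step on whether the intermediate vector has equal neighbouring entries. Matching the coefficient of a given $v_\u$ reduces to a rational-function identity in the contents $\{c_\s(j)\}$, which I would verify using Proposition~\ref{W=tilde W}, Corollary~\ref{exe=oe}, Lemma~\ref{ees}, and the combinatorial identities of Proposition~\ref{identity}.

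The main obstacle is the subcase $\s_{k-1} = \s_{k+1}$ together with $\s_k = \s_{k+2}$, where both sides become genuine double sums over intermediate updown tableaux. Matching the resulting coefficients demands not only the scalar identity $E_{\s\s}(k) E_{\s\s}(k+1) = 1$ of Lemma~\ref{ees} but also the compatibility $b_\t(k)^2 E_{\t\t}(k+1) = b_\u(k+1)^2 E_{\u\u}(k)$ of Lemma~\ref{be equality} together with the sign-choice condition $b_\t(k) \sqrt{E_{\t\t}(k+1)} = b_\u(k+1) \sqrt{E_{\u\u}(k)}$ from Assumption~\ref{be}(f); these are precisely the axioms that Assumption~\ref{be} was designed to guarantee.

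Part (b) admits a parallel case analysis. When $\s_{k-1} = \s_{k+1}$ the computation mirrors that of (a), again invoking Proposition~\ref{identity}, Lemma~\ref{ees}, Lemma~\ref{be equality}, and Assumption~\ref{be}. When $\s_{k-1} \neq \s_{k+1}$, the left-hand side vanishes and one must verify that $E_{k+1} T_k T_{k+1} v_\s = 0$; this is done by expanding $T_{k+1} v_\s$ according to whether $\s_k = \s_{k+2}$, then expanding $T_k$ on each resulting term with its own subcase analysis, and finally applying $E_{k+1}$ and checking that the contributions cancel via the same combinatorial identities. Alternatively, since once (a) is established, along with the relations already checked in Lemma~\ref{start relations} and Lemma~\ref{s-square}, an algebraic manipulation of the type carried out in the proof of Lemma~\ref{antiinvolution} --- multiplying (a) by $E_{k+1}$ on the right and rewriting $T_k^{-1}, T_{k+1}^{-1}$ via the Kauffman skein --- would yield (b) formally; this route would avoid redoing the hardest subcase.
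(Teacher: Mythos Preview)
Your main strategy (case-by-case coefficient comparison on the seminormal basis, invoking Lemma~\ref{ees} and Assumption~\ref{be}(f)) is correct and is what the paper does in spirit, but the paper organizes the computation more efficiently. Rather than computing $T_kT_{k+1}E_kv_\s$ and $E_{k+1}T_kT_{k+1}v_\s$ head-on, the paper first establishes the ``one-$T$'' intermediate identities
\[
T_kE_{k+1}E_kv_\s-\delta E_{k+1}E_kv_\s+\delta E_kv_\s=T_{k+1}E_kv_\s,
\qquad
E_{k+1}E_kT_{k+1}v_\s-\delta E_{k+1}E_kv_\s+\delta E_{k+1}v_\s=E_{k+1}T_kv_\s,
\]
and then recovers (a) and (b) by multiplying by the remaining $T$-factor and applying the Kauffman skein (Lemma~\ref{s-square}) together with Lemma~\ref{start relations}(i),(j). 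This halves the branching: in (a) one only applies a single $T_k$ to the simple vector $E_{k+1}E_kv_\s$ (supported on $\{\u\simkk\ts\}$), and there is no need to split on whether $\s_k=\s_{k+2}$ --- a single $\ts\simk\s$ with $\ts_k=\s_{k+2}$ (possibly $\ts=\s$) handles everything. The paper's proof of this proposition uses only Lemma~\ref{ees} and Assumption~\ref{be}(f), not Proposition~\ref{identity} or Lemma~\ref{be equality}; your direct two-$T$ expansion would likely force you into those heavier identities.

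Your proposed shortcut for (b) --- deducing it from (a) by an anti-involution-style manipulation --- does not work. Applying $\ast$ to (a) gives $E_kT_{k+1}T_k=E_kE_{k+1}$, a different relation; and passing between $T_kT_{k+1}E_k=E_{k+1}E_k$ and $E_{k+1}T_kT_{k+1}=E_{k+1}E_k$ algebraically would require the braid relation $T_kT_{k+1}T_k=T_{k+1}T_kT_{k+1}$, which has \emph{not} yet been verified on $\Delta(\lambda)$ (indeed, Proposition~\ref{see} is used in proving it). The paper therefore proves (b) by a genuinely separate four-case analysis on the pair $(\s_{k-1}\stackrel{?}{=}\s_{k+1},\ \s_k\stackrel{?}{=}\s_{k+2})$.
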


\begin{proof}
(a) We assume that $\s_{k-1}=\s_{k+1}$ since otherwise $E_{k+1}E_{k}
v_{\s}=T_kT_{k+1} E_kv_{\s}=0$. Let $\ts\in\UPD_n(\lambda)$ be  such
that $\ts\simk\s$ and $\ts_k=\s_{k+2}$. Note that $E_k v_\s=0$ for
any $\s\in \UPD_n(\lambda)$ with  $\s_{k-1}\neq \s_{k+1}$. So,
\begin{align*}
&T_kE_{k+1}E_k v_{\s}-\delta E_{k+1}E_kv_{\s}+\delta E_k v_{\s}\\
    =&(T_k-\delta)E_{k+1}E_{\s,\ts}(k)v_{\ts}+\delta
            \sum_{\t\simk\s}E_{\s\t}(k) v_{\t}\\
    =&(T_k-\delta)E_{\s,\ts}(k)\sum_{\t\simkk\ts}E_{\ts\t}(k+1) v_{\t}
            +\delta\sum_{\t\simk\s}E_{\s\t}(k) v_{\t}\\
    =&E_{\s,\ts}(k)E_{\ts,\ts}(k+1)\sum_{\t\simk\ts}T_{\ts\t}(k) v_{\t}
                 +\sum_{\t\simkk\ts,\t\ne\ts}E_{\s,\ts}(k)E_{\ts,\t}(k+1)(a_{\t}(k)
               v_{\t} + b_{\t}(k) v_{s_k\t})\\
    &-\delta E_{\s,\ts}(k)\sum_{\t\simkk\ts}E_{\ts\t}(k+1) v_{\t}
                  +\delta\sum_{\t\simk\s}E_{\s\t}(k) v_{\t}.
\end{align*}
If $s_k\t$ is defined, for $\t$ in the second sum, then
$(s_k\t)_k\ne \s_{k+2}$ and $\u=s_{k+1}s_k\t$ is also defined.
Further, we have $\u\simk\ts$ and $\u\ne\s$. Similarly,
$$
T_{k+1}E_kv_{\s}= E_{\s\ts}(k)\sum_{\t\simkk\ts}
       T_{\ts\t}(k+1)v_{\t}+\sum_{\t\simk\ts, \t\ne\ts} E_{\s\t}(k)
      \Big(a_\t(k+1)v_{\t}+b_\t(k+1) v_{s_{k+1} \t}\Big)
$$
We are going to  compare the coefficients of $v_{\t}$ in both
$T_kE_{k+1}E_k v_{\s}-\delta E_{k+1}E_kv_{\s}+\delta E_k v_{\s}$ and
$T_{k+1} E_k v_\s$. Note that $E_{\ts\ts}(k)E_{\ts\ts}(k+1)=1$ by
Lemma~\ref{ees}.

\Case{1. $\t=\ts$} Since $c_{\ts}(k)c_{\ts}(k+1)=1$, the definitions
and the remarks above show that the coefficient of $v_\t$ in
$T_kE_{k+1}E_k v_{\s}-\delta E_{k+1}E_kv_{\s}+\delta E_k v_{\s}$  is
equal to
\begin{align*}
 &E_{\s\ts}(k) E_{\ts\ts}(k+1)(T_{\ts\ts}(k)-\delta)+\delta
     E_{\s,\ts}(k)\\
 &=E_{\s\ts}(k)\Big(\delta
   E_{\ts\ts}(k+1)\frac{E_{\ts\ts}(k)-1}{c_{\ts}(k)^2-1}-\delta
   E_{\ts\ts}(k+1)+\delta \Big)\\
 &=E_{\s\ts}(k)\frac{\delta(E_{\ts\ts}(k+1)-1)}{c_{\ts}(k+1)^2-1}\\
 &=E_{\s\ts}(k)T_{\ts\ts}(k+1)
\end{align*}
which is the coefficient of $v_\t$ in $T_{k+1} E_kv_{\s}$.

\Case{2.  $\t\simk\ts$ and $\t\ne\ts$} Now, $c_{\ts}(k)=c_\t(k+2)$
and $c_\t(k+1)=c_\t(k)^{-1}$, so the coefficient of $v_\t$ in
$T_kE_{k+1}E_k v_{\s}-\delta E_{k+1}E_kv_{\s}+\delta E_k v_{\s}$ is
\begin{align*}
&E_{\s\ts}(k) E_{\ts\ts}(k+1)T_{\ts\t}(k)+\delta
     E_{\s\t}(k)\\
&=E_{\s\ts}(k) E_{\ts\ts}(k+1)\frac{\delta
E_{\ts\t}(k)}{c_{\ts}(k)c_\t(k)-1}+\delta
     E_{\s\t}(k)\\
& =\delta E_{\s\t}(k)\frac{1}{1-c_{\ts}(k)^{-1}c_\t(k)^{-1}}\\
&=\delta E_{\s\t}(k)\frac{c_\t(k+2)}{c_\t(k+2)-c_\t(k+1)}\\
&=E_{\s\t}(k)a_\t(k+1).
\end{align*}
which is the coefficient of $v_\t$ in $T_{k+1} E_kv_{\s}$.

\Case{3. $\t\simkk\ts$ and $\t\ne\ts$} Since
$c_\t(k)c_{\ts}(k+1)=1$, the coefficient of $v_\t$ in $T_kE_{k+1}E_k
v_{\s}-\delta E_{k+1}E_kv_{\s}+\delta E_k v_{\s}$
 is
\begin{align*}
&(a_{\t}(k)-\delta)E_{\ts\t}(k+1)E_{\s\ts}(k)\\
=&(\frac{\delta
c_\t(k+1)}{c_\t(k+1)-c_\t(k)}-\delta)E_{\ts\t}(k+1)E_{\s\ts}(k)\\
=&E_{\s\ts}(k)\frac{\delta E_{\ts\t}(k+1)}{c_\t(k+1)c_{\ts}(k+1)-1}\\
=&E_{\s\ts}(k)T_{\ts\t}(k+1)
\end{align*}
which is the coefficient of $v_\t$ in $T_{k+1} E_kv_{\s}$.

Now suppose that $s_k\t$ is defined and let $\u=s_{k+1}s_k\t$ be as
above. Then the coefficient of $v_{s_k\t}$ in $T_kE_{k+1}E_k
v_{\s}-\delta E_{k+1}E_kv_{\s}+\delta E_k v_{\s}$ is
\begin{align*}
E_{\s\ts}(k)E_{\ts\t}(k+1)b_\t(k)
  &=\sqrt{E_{\s\s}(k)}\sqrt{E_{\t\t}(k+1)}b_\t(k)\\
  &=\sqrt{E_{\s\s}(k)}\sqrt{E_{\u\u}(k)}b_\u(k+1)\\
  &=E_{\s\u}(k)b_\u(k+1),
\end{align*}
where the second equality comes from (\ref{be})(f). As
$s_k\t=s_{k+1}\u$ this is the coefficient of $v_{s_k\t}$ in $T_{k+1}
E_kv_{\s}$.

In summary, we have proved that $(T_kE_{k+1}E_k -\delta
E_{k+1}E_k+\delta E_k) v_{\s}= T_{k+1} E_kv_{\s}$. By
Lemma~\ref{s-square} and Lemma~\ref{start relations}(j),
$$\begin{aligned} E_{k+1}E_k v_\s& =(T_k^2-\delta T_k+\delta\varrho
E_k) E_{k+1}E_k v_\s\\ &= T_k(T_kE_{k+1}E_k v_{\s}-\delta
E_{k+1}E_kv_{\s}+\delta E_k v_{\s})\\
&=T_k( T_{k+1} E_kv_{\s})=T_kT_{k+1}E_k v_\s, \\
\end{aligned}$$ and (a) follows.

 In order to prove (b), we need to consider four cases as
follows.

\Case{1. $\s_k=\s_{k+2}$ and $\s_{k-1}=\s_{k+1}$} We have
$$\begin{aligned}& E_{k+1}E_{k} T_{k+1}
v_{\s}-\delta E_{k+1}E_{k} v_{\s}+\delta E_{k+1} v_{\s}\\
=&E_{k+1}E_k T_{\s\s}(k+1)v_{\s}-\delta E_{k+1}E_{k} v_{\s}+\delta E_{k+1} v_{\s}\\
=&(T_{\s\s}(k+1)-\delta)E_{k+1}E_{\s\s}(k)v_{\s}+\delta E_{k+1} v_{\s}\\
=&((T_{\s\s}(k+1)-\delta)E_{\s\s}(k)+\delta) E_{k+1} v_{\s}\\
=&\frac{\delta c_\s(k+1)^2}{c_\s(k+1)^2-1}(1-E_{\s\s}(k)) E_{k+1} v_{\s}\\
=&\frac{\delta(E_{\s\s}(k)-1)}{c_\s(k)^2-1} E_{k+1} v_{\s}\\
=&T_{\s\s}(k) E_{k+1}v_{\s}=E_{k+1}T_{k} v_{\s}.
\end{aligned}
$$

\Case{2. $\s_k\neq \s_{k+2}$ and $\s_{k-1}=\s_{k+1}$} Define
$\ts\in\UPD_n(\lambda)$ to be the unique updown tableau such that
$\ts\simk\s$ and $\ts_k=\s_{k+2}$. Then $\ts\ne\s$ and
\begin{align*}
&E_{k+1}E_{k} T_{k+1}
v_{\s}-\delta E_{k+1}E_{k} v_{\s}+\delta E_{k+1} v_{\s}\\
=&E_{k+1}E_{k}( a_\s(k+1)v_{\s}+b_\s(k+1)v_{s_{k+1}{\s}})-\delta
E_{k+1}E_{k} v_{\s}\\
=&(a_\s(k+1)-\delta)E_{k+1}E_{\s\ts}(k)v_{\ts}\\
=&\frac{\delta E_{\s\ts}(k)c_\s(k+1)}{c_\s(k+2)-c_\s(k+1)}E_{k+1}v_{\ts}\\
=&\frac{\delta E_{\s\ts}(k)}{c_\s(k)c_{\ts}(k)-1}E_{k+1}v_{\ts}\\
=&T_{\s\ts}(k)E_{k+1}v_{\ts}=E_{k+1}T_kv_{\ts}
\end{align*}
where the last second equality uses the facts that
$c_\s(k+1)c_\s(k)=1$, $c_\s(k+2)=c_{\ts}(k)$ and
$(s_{k+1}\s)_{k-1}\ne(s_{k+1}\s)_{k+1}$.

\Case{3.  $\s_k=\s_{k+2}$ and $\s_{k-1}\neq\s_{k+1}$} Let
$\ts\in\UPD_n(\lambda)$ such that $\ts\simkk\s$ and
$\ts_{k+1}=\s_{k-1}$. Then
\begin{align*}
   &E_{k+1}E_{k} T_{k+1}
v_{\s}-\delta E_{k+1}E_{k} v_{\s}+\delta E_{k+1} v_{\s}\\
=&E_{k+1}E_{k} T_{\s\ts}(k+1)v_{\ts}+\delta E_{k+1} v_{\s}\\
=&T_{\s\ts}(k+1)E_{\ts\ts}(k)E_{k+1}v_{\ts}+\delta E_{k+1} v_{\s}\\
=&T_{\s\ts}(k+1)E_{\ts\ts}(k)\sum_{\t\simkk\ts}
E_{\ts\t}(k+1)v_\t+\delta\sum_{\t\simkk\s} E_{\s\t}(k+1)v_\t
\end{align*}
and
$E_{k+1}T_kv_\s=E_{k+1}(a_\s(k)v_\s+b_\s(k)v_{s_k\s})=a_\s(k)\sum_{\t\simkk\s}
E_{\s\t}(k+1)v_\t$ since $(s_k\s)_{k}\neq(s_k\s)_{k+2}$ However,
since $c_{\ts}(k+1)=c_\s(k)^{-1}$, we have
\begin{align*}
& \quad T_{\s\ts}(k+1)E_{\ts\ts}(k)E_{\ts\t}(k+1)+\delta E_{\s\t}(k+1)\\
&=\frac{\delta\sqrt{E_{\s\s}(k+1)}\sqrt{E_{\t\t}(k+1)}}{c_\s(k+1)c_{{\tilde\s}}(k+1)-1}+\delta
E_{\s\t}(k+1)\\
&=\frac{\delta
c_\s(k+1)}{c_\s(k+1)-c_\s(k)}E_{\s\t}(k+1)=a_\s(k)E_{\s\t}(k+1).
\end{align*}
So, $E_{k+1}E_{k} T_{k+1} v_{\s}-\delta E_{k+1}E_{k} v_{\s}+\delta
E_{k+1} v_{\s}=E_{k+1}T_kv_\s$.

\Case{4.  $\s_k\neq \s_{k+2}$ and $\s_{k-1}\neq \s_{k+1}$} Under our
assumptions,  we have $E_{k+1}E_{k} T_{k+1} v_{\s}-\delta
E_{k+1}E_{k} v_{\s}+\delta E_{k+1}
v_{\s}=b_\s(k+1)E_{k+1}E_kv_{s_{k+1}\s}$ and
$E_{k+1}T_kv_\s=b_\s(k)E_{k+1}v_{s_k\s}$. If
$(s_{k+1}\s)_{k-1}\ne(s_{k+1}\s)_{k+1}$ then
$(s_{k}\s)_{k}\ne(s_{k}\s)_{k+2}$. So, $E_{k+1}E_{k} T_{k+1}
v_{\s}-\delta E_{k+1}E_{k} v_{\s}+\delta E_{k+1}
v_{\s}=0=E_{k+1}T_{k}v_{\s}$.

Suppose now that $(s_{k+1}\s)_{k-1}=(s_{k+1}\s)_{k+1}$ and let
$\ts\in\UPD_n(\lambda)$ be the unique updown tableau such that
$\ts\simk s_{k+1}\s$ and $\ts_{k}=\s_{k+2}$. Set $\t=s_k\s$ and
$\u=s_{k+1}\s$ and observe that the assumptions of (\ref{be})(f)
hold, so that
$b_\t(k)\sqrt{E_{\t\t}(k+1)}=b_\u(k+1)\sqrt{E_{\u\u}(k)}$. As
$b_\s(k)=b_\t(k)$ and $b_\s(k+1)=b_\u(k+1)$. By   (\ref{be})(d),
together with the fact that $\t'\simkk\ts$ if and only if $\t'\simkk
s_k\s$, we have
\begin{align*}
& E_{k+1}E_{k} T_{k+1} v_{\s}-\delta E_{k+1}E_{k} v_{\s}+\delta
E_{k+1} v_{\s}\\
  =&b_{\s}(k+1)E_{k+1}\sum_{\t'\simk\ts}
      E_{\u\t'}(k)v_{\t'}\\
  =&b_{\s}(k+1)E_{\u\ts}(k)E_{k+1}v_{\ts}\\
  =&b_{\s}(k+1)E_{\u\ts}(k)\sum_{\t'\simkk\ts}E_{\ts\t'}(k+1)v_{\t'}\\
  =&b_\s(k)\sum_{\t'\simkk\ts}E_{\t\t'}(k+1)v_{\t'}\\
  =&b_\s(k)E_{k+1}v_{s_k\s}=E_{k+1}T_kv_\s
\end{align*}
In summary, we have proved that $E_{k+1}E_{k} T_{k+1} v_{\s}-\delta
E_{k+1}E_{k} v_{\s}+\delta E_{k+1} v_{\s}=E_{k+1}T_kv_\s$ for any
$\s\in \UPD_n(\lambda)$. So, $E_{k+1}T_k (T_{k+1}v_\s)=
(E_{k+1}E_{k} T_{k+1} -\delta E_{k+1}E_{k} +\delta E_{k+1}) (T_{k+1}
v_{\s})$. Now, (b) follows from  Lemma~\ref{s-square} and
Lemma~\ref{start relations}(i)(j), immediately.
\end{proof}

\begin{Lemma}\label{sts} Suppose that $\s\in \UPD_n(\lambda)$ with
$\s_{k-1}\neq \s_{k+1}$ and $\s_{k}\neq \s_{k+2}$, where $1\le
k<n-1$. Then $T_kT_{k+1}T_kv_{\s}=T_{k+1}T_kT_{k+1} v_{\s}$.
\end{Lemma}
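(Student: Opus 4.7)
The plan is to verify the braid relation by a direct computation, expanding both sides $T_kT_{k+1}T_kv_\s$ and $T_{k+1}T_kT_{k+1}v_\s$ in the basis $\{v_\t\mid \t\in \UPD_n(\lambda)\}$ and comparing the coefficient of each basis vector. Under the hypotheses $\s_{k-1}\ne\s_{k+1}$ and $\s_k\ne\s_{k+2}$, Theorem~\ref{seminormal} gives
$$T_kv_\s=a_\s(k)v_\s+b_\s(k)v_{s_k\s},\qquad T_{k+1}v_\s=a_\s(k+1)v_\s+b_\s(k+1)v_{s_{k+1}\s},$$
so I would apply the three operators to $v_\s$ in order and record at each step which updown $\lambda$-tableaux appear. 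The tableaux entering the computation lie in the $\Sym_3$-orbit generated by $s_k$ and $s_{k+1}$, namely $\s,\,s_k\s,\,s_{k+1}\s,\,s_ks_{k+1}\s,\,s_{k+1}s_k\s,\,s_ks_{k+1}s_k\s$ (whenever these are defined), possibly augmented by tableaux $\t\simk\u$ or $\t\simkk\u$ whenever an intermediate tableau $\u$ triggers the ``equal-adjacency'' condition $\u_{j-1}=\u_{j+1}$ that forces the $E$-type formula for $T_j$.

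The verification then splits into cases according to whether certain equal-adjacencies occur at intermediate steps, e.g.\ whether $(s_k\s)_k=(s_k\s)_{k+2}$ (which would force $T_{k+1}v_{s_k\s}$ to be computed from the $T_{(s_k\s)\t}(k+1)$ formula rather than from $a,b$), and symmetrically whether $(s_{k+1}\s)_{k-1}=(s_{k+1}\s)_{k+1}$. In each subcase, the coefficient of each basis vector on the two sides is matched using Lemma~\ref{x-equal} (supplying $a_{s_k\s}(k)=\delta-a_\s(k)$ and the content identities $c_\s(k)=c_{s_k\s}(k+1)$, $c_\s(k+1)=c_{s_k\s}(k)$), together with the symmetry choices in Assumption~\ref{be}(a)--(c), which force the relevant $b$-scalars to agree under the $s_k$- and $\simk$-actions. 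Lemma~\ref{generic u} ensures that all contents encountered are distinct, and in particular $c_\s(k)c_\t(k)\ne 1$ and $c_\s(k+1)\ne c_\s(k)$ in every case arising here, so the rational expressions $a_\s(\cdot),b_\s(\cdot),T_{\s\t}(\cdot)$ are well defined.

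The main obstacle is the most intricate subcase, in which equal-adjacencies arise at the middle step on both sides simultaneously, so that a single tableau in the output collects contributions from both the ``$a,b$'' channel and the ``$T_{\s\t}(\cdot),E_{\s\t}(\cdot)$'' channel. The compatibility between these contributions is controlled precisely by Assumption~\ref{be}(d)--(f); in particular, Assumption~\ref{be}(f) equates $b_\t(k)\sqrt{E_{\t\t}(k+1)}$ with $b_\u(k+1)\sqrt{E_{\u\u}(k)}$ in exactly the configuration that arises here, and Lemma~\ref{ees} supplies $E_{\s\s}(k)E_{\s\s}(k+1)=1$ when the tableau reverts to $\s$ after two steps. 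The remaining arithmetic reduces to the residue identities of Proposition~\ref{identity}(a)--(c) applied to the intermediate updown tableaux. Once these identities are in hand the coefficient comparison goes through in every subcase and the braid relation follows. The overall structure of the argument parallels the analogous computation in \cite{AMR} for cyclotomic Nazarov--Wenzl algebras, with our rational functions $W_k(y,\s)$ playing the role of the ones used there.
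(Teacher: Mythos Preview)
Your plan is the paper's plan: expand both sides using the $a,b$-formula and compare coefficients case by case, exactly as in \cite[4.28]{AMR}. The paper only illustrates one subcase (the coefficient of $v_\s$ when $s_k\s$ is undefined or $(s_k\s)_k\ne(s_k\s)_{k+2}$) and checks the identity (\ref{vs}) by hand, leaving the remaining subcases to the reader; your proposal fills in more of that structure, and you correctly observe that the subcase $(s_k\s)_k=(s_k\s)_{k+2}$ occurs symmetrically with $(s_{k+1}\s)_{k-1}=(s_{k+1}\s)_{k+1}$.

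One caution about the ingredient list. Lemma~\ref{ees} and parts (d), (f) of Assumption~\ref{be} are stated for a tableau whose own entries satisfy \emph{both} $\s_{k-1}=\s_{k+1}$ and $\s_k=\s_{k+2}$; that is the opposite of the present hypotheses, and the paper deploys those tools only later (in Proposition~\ref{see} and Proposition~\ref{end relations}). In this lemma you would need to apply them, if at all, not to $\s$ but to the deeply nested intermediate tableau $\t^*\simkk s_k\s$ with $\t^*_{k+1}=\s_{k-1}$, which does satisfy both equal-adjacencies. The actual coefficient match in the intricate subcase is driven primarily by Lemma~\ref{x-equal}, Assumption~\ref{be}(a)--(c), and direct algebra with the explicit $T_{\s\t}(k)$ formula; invoking Proposition~\ref{identity} at index $k+1$ for $s_k\s$ is reasonable, but citing Lemma~\ref{ees} and (\ref{be})(d)(f) here is likely more machinery than the computation requires.
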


\begin{proof} One can verify the result   without difficult if he
uses the arguments in the proof of Lemma~\cite[4.28]{AMR}. We only
give an example to illustrate it and leave the others to the reader.

Suppose that either $s_k\s$ is not defined, or $s_k\s$ is defined
and $(s_k\s)_{k}\neq (s_k\s)_{k+2}$. In this case, the formulae for
$T_kT_{k+1}T_k v_{\s}$ and $T_{k+1}T_kT_{k+1} v_{\s}$ are exactly
the same as those given in the proof of \cite[4.28]{AMR} up to the
definitions of $a_\t(k), b_t(k) $ etc. We can verify $T_kT_{k+1}T_k
v_{\s}=T_{k+1}T_kT_{k+1} v_{\s}$ by comparing the coefficients of
$v_\u$ on both sides of the above equality. For example, we need to
show \begin{equation}\begin{aligned}\label{vs} &
a_{\s}(k)^2a_{\s}(k+1)+a_{s_k\s}(k+1)(1-a_{\s}(k)^2+\delta
a_{\s}(k))\\
=&a_{\s}(k)a_{\s}(k+1)^2+a_{s_{k+1}\s}(k)(1-a_{\s}(k+1)^2+\delta
a_{\s}(k+1))\\
\end{aligned}
\end{equation}
when we prove that the coefficients of $v_\s$ in  $T_kT_{k+1}T_k
v_{\s}=T_{k+1}T_kT_{k+1} v_{\s}$  are equal. The reader should
compare the (\ref{vs}) with that given in line 4,  in
\cite[p93]{AMR}. In our case, $a_\s(k)=\delta
c(\beta)(c(\beta)-c(\alpha))^{-1}$, $a_\s(k+1)=\delta
c(\gamma)(c(\gamma)-c(\beta))^{-1}$,
$a_{s_{k+1}\s}(k)=a_{s_{k}\s}(k+1)=\delta
c(\gamma)(c(\gamma)-c(\alpha))^{-1}$ if we write
 $\s_{k}\ominus\s_{k-1}=\alpha,
\s_{k+1}\ominus\s_{k}=\beta,\s_{k+2}\ominus\s_{k+1}=\gamma$. By
direct computation, we can verify (\ref{vs}) easily.
\end{proof}

\begin{Lemma}\label{sts rel2} Suppose that $\s\in \UPD_{n}(\lambda)$
and that either $\s_{k-1}=\s_{k+1}$ and $\s_k\neq \s_{k+2}$, or
$\s_{k-1}\neq \s_{k+1}$ and $\s_k= \s_{k+2}$, for $1\le k<n-1$. Then
$T_kT_{k+1}T_k v_{\s}=T_{k+1}T_kT_{k+1} v_{\s}$.
\end{Lemma}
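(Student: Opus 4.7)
The plan is to handle the two mixed cases separately, computing both sides of the braid relation on $v_\s$ and comparing the coefficient of each basis element $v_\u$ that can appear. Throughout I would mirror the strategy of \cite[4.28]{AMR}, except that the rational functions $W_k(y,\s)$ replace those used there, so the combinatorial identities in Proposition~\ref{identity} (a)--(c) become the key computational inputs.

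In the first case ($\s_{k-1}=\s_{k+1}$, $\s_k\ne\s_{k+2}$), I would let $\ts$ denote the unique $\ts\simk\s$ with $\ts_k=\s_{k+2}$. Then $T_kv_\s=\sum_{\t\simk\s}T_{\s\t}(k)v_\t$, while $T_{k+1}v_\s=a_\s(k+1)v_\s+b_\s(k+1)v_{s_{k+1}\s}$, since $E_{k+1}v_\s=0$. Expanding $T_kT_{k+1}T_kv_\s$ and $T_{k+1}T_kT_{k+1}v_\s$ and collecting the coefficient of $v_\u$, four types of $\u$ arise: $\u=\s$, $\u=\ts$, $\u\simk\s$ with $\u\notin\{\s,\ts\}$, and $\u=s_{k+1}\s'$ for some $\s'\simk\s$. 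For the ``diagonal'' coefficients one reduces the sums over $\t\simk\s$ using Proposition~\ref{identity}(a)--(c) (the same identities used in Lemma~\ref{s-square}), and the resulting expressions match because $c_\ts(k)c_\s(k)=c_{s_{k+1}\s}(k+2)c_\s(k+2)^{-1}$-type relations translate contents. For the terms involving $s_{k+1}\s$ or $s_{k+1}\ts$, condition (\ref{be})(c) (i.e.\ $b_{s_{k+1}\s}(k)=b_{s_k\s}(k+1)$ in the appropriate guise) and (\ref{be})(f) convert the square-root products correctly.

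In the second case ($\s_{k-1}\ne\s_{k+1}$, $\s_k=\s_{k+2}$) the roles of $T_k$ and $T_{k+1}$ are swapped. Now $E_kv_\s=0$ but $E_{k+1}$ acts non-trivially on the $\simkk$-class of $\s$. I would write $T_kv_\s=a_\s(k)v_\s+b_\s(k)v_{s_k\s}$ and $T_{k+1}v_\s=\sum_{\t\simkk\s}T_{\s\t}(k+1)v_\t$, then expand both triple products and again match coefficients of $v_\u$ case-by-case. The compatibility of square roots that makes the cross terms involving $v_{s_k\s'}$ and $v_{s_{k+1}\s'}$ agree is exactly Lemma~\ref{be equality} together with (\ref{be})(a)(c)(f); the diagonal (scalar) coefficients again collapse via Proposition~\ref{identity}. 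By Lemma~\ref{generic u} the contents appearing in denominators are nonzero, so all rational expressions make sense.

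The main obstacle is bookkeeping: one must partition the set of $\u$ contributing to either side (including the mixed terms of the form $s_{k}\t$ with $\t\simkk\s$ or $s_{k+1}\t$ with $\t\simk\s$) and verify that each nontrivial coefficient identity reduces to one of the already-established pieces (Proposition~\ref{identity}, Lemma~\ref{ees}, Lemma~\ref{be equality}, or the root conditions in Assumption~\ref{be}). Rather than reprove each identity, I would carry out one representative calculation in detail — say Case~1 with $\u\simk\s$ and $\u\ne\s,\ts$, where both sides produce a sum of the shape $\sum_{\v\simk\s}E_{\v\v}(k)/((c_\s(k)c_\v(k)-1)(c_\v(k)c_\u(k)-1))$ that is collapsed by Proposition~\ref{identity}(c) — and then indicate that the remaining subcases follow by the same pattern, exactly as in the proof of \cite[4.28]{AMR}.
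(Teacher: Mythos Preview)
Your proposal describes a direct coefficient-by-coefficient verification, expanding both triple products on $v_\s$ and matching terms using Proposition~\ref{identity} and the root conditions. This would work in principle, but it is a \emph{genuinely different and considerably longer} route than the paper's.

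The paper does not expand either side directly. Instead it \emph{bootstraps from Lemma~\ref{sts}} (the case $\s_{k-1}\neq\s_{k+1}$ and $\s_k\neq\s_{k+2}$, which has already been proved). In Case~1 of the paper's argument, one assumes $s_{k+1}\s$ is defined, sets $\t=s_{k+1}\s$, and observes that $\t$ satisfies the hypotheses of Lemma~\ref{sts}. Since $T_{k+1}v_\t=a_\t(k+1)v_\t+b_\t(k+1)v_\s$ with $b_\t(k+1)\neq0$, one can solve for $v_\s$ and then use the already-verified quadratic relation $T_k^2=1+\delta T_k-\varrho\delta E_k$ (Lemma~\ref{s-square}) together with the tangle relations (Proposition~\ref{see} and Lemma~\ref{start relations}(j)) to transfer the braid relation from $v_\t$ to $v_\s$ with essentially no computation. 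Case~2 (where $s_{k+1}\s$ is undefined) is handled by applying the relation to $E_kv_\s$, using that it holds for every $\t\simk\s$ with $\t\neq\s$ by Case~1, and dividing by $E_{\s\s}(k)\neq0$.

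What the paper's approach buys is that all the delicate combinatorial identities you plan to invoke (Proposition~\ref{identity}, Lemma~\ref{be equality}, the various root conditions) are replaced by a short algebraic manipulation leaning on relations already checked on $\Delta(\lambda)$. Your approach would presumably work---it is the analogue of what one does for the ``pure'' cases---but the bookkeeping you flag as the main obstacle is precisely what the paper's argument circumvents. Note also that the relevant model in \cite{AMR} is~4.29, not~4.28; the latter is the simpler case where neither $E_k$ nor $E_{k+1}$ acts nontrivially.
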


\begin{proof} The result can be proved by arguments given in the proof of
\cite[4.29]{AMR}.  Since it does not involve huge computation, we
include a proof here.

 \Case{1. $s_{k+1}\s$ is defined} Suppose first
that $\s_{k-1}=\s_{k+1}$ and $\s_k\neq \s_{k+2}$. Then
$\t=s_{k+1}\s\in \UPD(\lambda)$ is well-defined. Furthermore,
$\t_k\neq \t_{k+2}$ and $\t_{k-1}\neq \t_{k+1}$, so $T_kT_{k+1}T_k
v_{\t}=T_{k+1}T_kT_{k+1} v_{\t}$ by Lemma~\ref{sts}. Now,
$T_{k+1}v_\t=a_\t(k+1)v_\t+b_\t(k+1)v_\s$ and $b_\t(k+1)\ne0$.
Therefore $$\begin{aligned} T_kT_{k+1}T_kv_\s
   &=\frac1{b_\t(k+1)}T_kT_{k+1}T_k\Big(T_{k+1}v_\t-a_\t(k+1)v_\t\Big)\\
   &=\frac1{b_\t(k+1)}\Big(T_k(T_{k+1}T_kT_{k+1})v_\t-a_\t(k+1)(T_kT_{k+1}T_k)v_\t\Big)\\
   &=\frac1{b_\t(k+1)}\Big(T_k(T_kT_{k+1}T_k)v_\t
                  -a_\t(k+1)(T_{k+1}T_kT_{k+1})v_\t\Big)\\
                  \end{aligned}
                  $$
by Lemma~\ref{sts}. Hence, using Lemma~\ref{s-square} twice, {\small
$$\begin{aligned} T_kT_{k+1}T_kv_\s
   &=\frac1{b_\t(k+1)}\Big((1+\delta T_k-\varrho\delta E_k)T_{k+1}T_kv_\t-a_\t(k+1)(T_{k+1}T_kT_{k+1})v_\t\Big)\\
      &=\frac1{b_\t(k+1)}\Big(T_{k+1}T_k(1+\delta T_{k+1}-\varrho\delta E_{k+1})v_\t
            -a_\t(k+1)(T_{k+1}T_kT_{k+1})v_\t\Big)\\
   &=\frac1{b_\t(k+1)}(T_{k+1}T_kT_{k+1})\Big(T_{k+1}v_\t
                  -a_\t(k+1)v_\t\Big)\\
   &=(T_{k+1}T_kT_{k+1})v_\s
\end{aligned}$$}
as required.

The case when $\s_{k-1}\neq \s_{k+1}$ and $\s_k= \s_{k+2}$ can be
proved similarly.

\Case{2. $s_{k+1}\s$ is not defined} This is  equivalent to saying
that the two nodes $\s_{k+2}\ominus\s_{k+1}$ and $\s_{k+1}\ominus
\s_k$ are either in the same row or in the same column. Therefore,
either $\s_k\subset \s_{k+1}\subset \s_{k+2}$ or $\s_k\supset
\s_{k+1}\supset \s_{k+2}$. Note that in either case
$\s_{k-1}=\s_{k+1}$, so we have
$$ E_k v_\s=\sum_{\substack{\t\simk\s\\\t\neq \s}}
        E_{\s\t}(k) v_\t +E_{\s\s}(k) v_\s.$$
Using  Lemma~\ref{see} and Lemma~\ref{start relations}(j) twice, we
have $T_{k}T_{k+1}T_kE_k v_\s=\varrho T_kT_{k+1}E_kv_\s=\varrho
E_{k+1}E_k v_\s=T_{k+1}E_{k+1}E_kv_\s=T_{k+1}T_{k}T_{k+1}E_kv_\s$.

Suppose that $\t\simk\s$ and $\t\ne\s$. Since  the two boxes
$\s_{k+2}\ominus\s_{k+1}$ and $\s_{k+1}\ominus \t_k$ belong to
different rows and columns,  $s_{k+1}\t$ is well-defined and
$\t_{k-1}=\t_{k+1}$. By Case~1, $T_{k+1}T_{k}T_{k+1}
v_\t=T_kT_{k+1}T_{k} v_\t$. Consequently,
$T_{k+1}T_{k}T_{k+1}E_{\s\s}(k)v_\s=T_k
T_{k+1}T_{k}E_{\s\s}(k)v_\s$. Canceling the non-zero factor
$E_{\s\s}(k)$ shows that $T_{k}T_{k+1}T_k v_\s=T_{k+1}T_kT_{k+1}
v_\s$.
\end{proof}

\begin{Prop}\label{end relations} Suppose that $1\le k<n-1$
    and $\s\in \UPD_{n}(\lambda)$. Then
    $T_kT_{k+1}T_k v_{\s}=T_{k+1}T_kT_{k+1} v_{\s}$.
\end{Prop}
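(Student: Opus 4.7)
My plan is to observe that Lemmas~\ref{sts} and \ref{sts rel2} between them dispose of every case except the one in which $\s_{k-1}=\s_{k+1}$ and $\s_k=\s_{k+2}$ simultaneously, and to dispatch this remaining case. In this configuration neither $s_k\s$ nor $s_{k+1}\s$ is defined, so the direct coefficient-comparison used in Lemma~\ref{sts} is not available. Instead I will introduce the operator $P:=T_kT_{k+1}T_k-T_{k+1}T_kT_{k+1}$ and show $Pv_\s=0$ by propagating the identity through $E_k$, which acts non-degenerately on $v_\s$.

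First I would use the expansion
$$E_kv_\s=E_{\s\s}(k)\,v_\s+\sum_{\substack{\t\simk\s\\ \t\ne\s}}E_{\s\t}(k)\,v_\t,$$
in which $E_{\s\s}(k)\ne 0$ by the generic hypotheses; this follows from the explicit formulas~\eqref{ekodd}--\eqref{ekeven} together with Lemma~\ref{generic u} and is already recorded in~\eqref{estne0}. For each $\t\simk\s$ with $\t\ne\s$ one has $\t_{k-1}=\s_{k-1}=\s_{k+1}=\t_{k+1}$ while $\t_k\ne\s_k=\s_{k+2}=\t_{k+2}$, so $\t$ satisfies the first alternative of Lemma~\ref{sts rel2} and therefore $Pv_\t=0$. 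Applying $P$ to the displayed expansion and rearranging gives $E_{\s\s}(k)\,Pv_\s=PE_kv_\s$, so it is enough to check that $PE_kv_\s=0$.

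For this I would appeal only to relations already established on $\Delta(\lambda)$. From $T_kE_kv_\s=\varrho E_kv_\s$ (Lemma~\ref{start relations}(j)) and Proposition~\ref{see}(a),
$$T_kT_{k+1}T_kE_kv_\s=\varrho\,T_kT_{k+1}E_kv_\s=\varrho\,E_{k+1}E_kv_\s.$$
Independently, applying $T_{k+1}$ to the module identity $T_kT_{k+1}E_kv_\s=E_{k+1}E_kv_\s$ of Proposition~\ref{see}(a) and then invoking $T_{k+1}E_{k+1}=\varrho E_{k+1}$,
$$T_{k+1}T_kT_{k+1}E_kv_\s=T_{k+1}E_{k+1}E_kv_\s=\varrho\,E_{k+1}E_kv_\s.$$
Subtraction yields $PE_kv_\s=0$, and hence $Pv_\s=0$, completing the proof.

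The only genuine subtlety in this plan is the non-vanishing of $E_{\s\s}(k)$, which however is immediate from the generic condition on $\mathbf u$ and $o(q^2)>2n$ through the closed-form product expressions; everything else is formal manipulation with the idempotent relation $T_kE_k=\varrho E_k$ and the see relations of Proposition~\ref{see}, applied to an $E_k$-expansion that is already forced to lie in the region controlled by Lemma~\ref{sts rel2}.
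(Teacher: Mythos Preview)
Your proposal is correct and follows essentially the same argument as the paper: reduce to the case $\s_{k-1}=\s_{k+1}$, $\s_k=\s_{k+2}$ via Lemmas~\ref{sts} and~\ref{sts rel2}, expand $E_kv_\s$, observe that the braid relation already holds on every $v_\t$ with $\t\simk\s$, $\t\ne\s$, show $PE_kv_\s=0$ via Proposition~\ref{see}(a) and Lemma~\ref{start relations}(j), and cancel the nonzero factor $E_{\s\s}(k)$. The only cosmetic difference is that you isolate the operator $P$ and the identity $E_{\s\s}(k)Pv_\s=PE_kv_\s$ explicitly, whereas the paper writes the same computation directly on the expansion of $E_kv_\s$.
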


\begin{proof} By Lemma~\ref{sts} and Lemma~\ref{sts rel2}, we need to
 consider the case when $\s_{k-1}=\s_{k+1}$ and $\s_k= \s_{k+2}$.
By Proposition~\ref{see}(a) and Proposition~\ref{start
relations}(j),
$$T_{k+1}T_kT_{k+1}E_kv_\s= T_{k+1}E_{k+1}E_kv_\s= \varrho E_{k+1}E_kv_\s
=\varrho T_kT_{k+1}E_kv_\s = T_kT_{k+1}T_kE_kv_\s
$$
Therefore,
$$
\(T_{k+1}T_kT_{k+1}-T_kT_{k+1}T_k\)
   \Big(E_{\s\s}(k)v_\s+\sum_{\t\simk\s, \t\ne\s} E_{\s\t}(k)v_\t\Big)=0.$$
Now, if $\t\simk \s$ and $\t\neq \s$ then $T_kT_{k+1}T_k
v_{\t}=T_{k+1}T_k T_{k+1}v_{\t}$ by Lemma~\ref{sts rel2}.
Consequently, $T_kT_{k+1}T_k v_{\s}=T_{k+1}T_k T_{k+1}v_{\s}$ since
$E_{\s\s}(k)\ne0$. This completes the proof.
\end{proof}

\begin{proof}[Proof of Theorem~\ref{seminormal}]
  We have already checked the defining relations for $\W_{r, n}$ on $\Delta(\lambda)$. So,
   $\Delta(\lambda)$ is a
    $\W_{r,n}(\bu)$--module, as we wanted to show.
\end{proof}

The following result shows that we can chose $u_i, q\in \R$ such
that the root conditions can be satisfied in $\R$.

\begin{Lemma}\label{be real}
Suppose that $R=\R$.  We choose $q,u_i\in R^+$  in such a way that
$|log_{q^2}u_1|>\cdots>|log_{q^2}u_r|\ge n$ and
$|log_{q^2}u_i|-|log_{q^2}u_{i+1}|\ge 2n$, and one of the following
conditions holds:
\begin{enumerate}
\item $q>1$  if either $2\nmid r$  and $
\varrho^{-1}=\prod\limits_{l=1}^ru_l$ or $2\mid r$  and $
\varrho^{-1}=q^{-1}\prod\limits_{l=1}^ru_l $. Furthermore, we assume
that $ log_{q^2}u_i<0$   if $2\mid i$ and $log_{q^2}u_i>0$  if
$2\nmid i$.
\item  $ 0<q<1$
if either  $2\nmid r$ and $\varrho^{-1}=-\prod\limits_{l=1}^ru_l$ or
$2\mid r$  and $ \varrho^{-1}=-q\prod\limits_{l=1}^ru_l$.
Furthermore, we assume that $log_{q^2}u_i>0$ if $2\mid i$ and
$log_{q^2}u_i<0$  if $2\nmid i$.\end{enumerate}

\noindent%
Suppose that $\s\in\UPD_n(\lambda)$ and $1\le k<n$. Then
$1-a_{\s}(k)^2+\delta a_{\s}(k)\geq 0$, if $\s_{k-1}\ne\s_{k+1}$,
and $E_{\s\s}(k)>0$, if $\s_{k-1}=\s_{k+1}$. In particular, the Root
Condition (\ref{be}) holds if we choose positive square roots
$\sqrt{b_{\s}(k)}\ge0$ and $\sqrt{E_{\s\s}(k)}>0$.
\end{Lemma}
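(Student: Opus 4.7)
The plan is to obtain closed-form expressions for $1-a_\s(k)^2+\delta a_\s(k)$ and for $E_{\s\s}(k)$, and then verify the required signs by a case analysis driven by the sign conventions imposed on $\log_{q^2}u_i$. Writing $c_1=c_\s(k)$ and $c_2=c_\s(k+1)$, a short algebraic manipulation from Definition~\ref{ab} gives
$$
1-a_\s(k)^2+\delta a_\s(k)=\frac{(c_1-q^2c_2)(c_1-q^{-2}c_2)}{(c_2-c_1)^2}.
$$
Since $q,u_i>0$ every content is a positive real, so the denominator is strictly positive and it suffices to show $|\log_{q^2}(c_1/c_2)|\ge 1$. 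Every content has the shape $u_s^{\epsilon}q^{2(j-i)\epsilon}$ with $\epsilon\in\{\pm 1\}$ and $|j-i|\le n-1$; the hypotheses $|\log_{q^2}u_r|\ge n$ and $|\log_{q^2}u_i|-|\log_{q^2}u_{i+1}|\ge 2n$ guarantee that contents coming from different components of $\s_{k-1}$ are separated by much more than $2$ in $\log_{q^2}$, while contents from the same component are separated by at least $2$ because $\s_{k-1}\ne\s_{k+1}$ forces the two added/removed boxes to sit at different positions and Lemma~\ref{generic u} rules out further coincidences.

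For $E_{\s\s}(k)>0$ I will use the explicit formulas (\ref{ekodd})--(\ref{ekeven}) and split into the four cases indicated there. In each case $E_{\s\s}(k)$ factors as a product of three pieces: a scalar prefactor (either $1/(\varrho c_\s(k))$ or $1/(\varrho\delta)$), a middle factor of the shape $(c_\s(k)-c_\s(k)^{-1})/\delta\pm 1$ or $1-q^{\pm 2}c_\s(k)^{-2}$, and the product $\prod_{\alpha}(c_\s(k)-c(\alpha)^{-1})/(c_\s(k)-c(\alpha))$ over addable and removable nodes $\alpha$ of $\s_{k-1}$ different from $\s_k\setminus\s_{k-1}$. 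The sign of the prefactor is fixed by the choice of $\varrho^{-1}$ together with the parity pattern imposed on the $\log_{q^2}u_i$. The sign of the middle factor is governed by whether $c_\s(k)>1$ or $<1$, which in turn is determined by the component $u_s$ appearing in $c_\s(k)$, using that $|\log_{q^2}u_s|\ge n>|j-i|$ forces $c_\s(k)$ and $u_s^{\epsilon}$ to lie on the same side of $1$. The sign of the third factor likewise factorizes component by component and is controlled by the same dominance; in each of the four cases the three signs multiply to $+1$.

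The main obstacle is the sign bookkeeping: the outcome depends simultaneously on the parity of $r$, the choice of $\varrho^{-1}$, the parities of the indices $i$ (via the alternating sign convention on $\log_{q^2}u_i$), and the orientation $\epsilon$ of each content; the case hypotheses in the statement are designed precisely to make the three factor-signs cancel inside each of the four cases. Once both inequalities are established, taking the positive square roots $b_\s(k)\ge 0$ and $\sqrt{E_{\s\s}(k)}>0$ yields elements of $\mathbb{R}$; the equalities in Assumption~\ref{be} then follow because each of them asserts an equality of square roots whose squares already coincide (by the formulas of Definition~\ref{e} and by Lemma~\ref{be equality}), and the positive square root of a positive real is unique.
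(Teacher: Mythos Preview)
Your outline follows the same route as the paper: the same factorization of $1-a_\s(k)^2+\delta a_\s(k)$, the same reduction to $|\log_{q^2}(c_1/c_2)|\ge 1$, and the same four-case analysis of $E_{\s\s}(k)$ via (\ref{ekodd})--(\ref{ekeven}). So the strategy is correct.

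However, the part you label ``the main obstacle'' is essentially the entire content of the lemma, and your sketch does not capture the mechanism that makes the signs cancel. The paper's computation is more delicate than a three-factor sign check. Writing $\alpha=\s_k\ominus\s_{k-1}$ with $\alpha$ in component~$t$, one must compute separately, for each $t'$, the sign $\epsilon_{t'}$ of the subproduct
\[
\prod_{\beta\ \text{in component }t'}\frac{c(\alpha)c(\beta)-1}{c(\alpha)-c(\beta)}.
\]
For $t'\ne t$ this depends only on whether $t'<t$ or $t'>t$ (and the alternating sign convention on $\log_{q^2}u_i$ is exactly what forces $\prod_{t'\ne t}\epsilon_{t'}=(-1)^{t-1}$). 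But $\epsilon_t$ itself depends on whether $\alpha$ is an \emph{addable} or a \emph{removable} node of $\s_{k-1}$: it equals $+1$ in the addable case and $-1$ in the removable case, giving the dichotomy recorded in the paper's formula~(\ref{sign}). The middle factor in $E_{\s\s}(k)$ carries precisely the compensating dependence on the pair (parity of~$t$, addable/removable type of~$\alpha$), and it is the interlocking of these two dependencies---not merely which side of $1$ the content $c_\s(k)$ lies on---that forces $E_{\s\s}(k)>0$ in every case. Your proposal asserts this cancellation without isolating it; to convert the plan into a proof you need to track the addable/removable distinction through both the middle factor and the product over nodes, component by component, as the paper does.
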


\begin{proof}

We start with the case  $\s_{k-1}\ne\s_{k+1}$. Let
$\alpha=\s_k\ominus\s_{k-1}$ and $\beta=\s_{k+1}\ominus\s_k$. Define
$S=\{a\in \mathbb R^+||log_{q^2}a|\geq 1\}$. By the definitions of
$c(\alpha)$ and $c(\beta)$,
${c(\beta)}{c(\alpha)^{-1}}=u_i^{\pm1}u_j^{\pm1}q^{2(\pm k\pm l)}$
for some integers $i, j, k$ and $ l$. We want to prove $c(\beta)
c(\alpha)^{-1}\in S$. There are two cases we need to discuss:

\Case {1.  $u_i^{\pm1}u_j^{\pm1}=1$} In this case,   $\alpha$ and
$\beta$ are in the same component of $\lambda$. Also, both $\alpha$
and $\beta$ are  either  removable nodes  or addable nodes of
$\lambda$. By Lemma~\ref{generic u} ${c(\beta)}{c(\alpha)^{-1}}\neq
1$. Therefore, ${c(\beta)}{c(\alpha)^{-1}}=q^{2(\pm k\pm l)}\in S$.

\Case {2:  $u_i^{\pm1}u_j^{\pm1}\neq 1$} We have
$$\begin{aligned} &|log_{q^2}(u_i^{\pm1}u_j^{\pm1}q^{2(\pm k\pm
l)})|=|\pm
log_{q^2}u_i \pm log_{q^2}u_j \pm k\pm l|\\
\geq & | log_{q^2}u_i \pm log_{q^2}u_j|- |k\pm l|\geq 2n - |k\pm
l|\geq 1.\\
\end{aligned} $$
Hence, ${c(\beta)}{c(\alpha)^{-1}}\in S$. So, $$\begin{aligned}
1-a_{\s}(k)^2+\delta
a_{\s}(k)&=\frac{(c(\beta)-q^{-2}c(\alpha))(c(\beta)-q^{2}c(\alpha))}{(c(\beta)-c(\alpha))^2}\\
         &=\frac{(\frac{c(\beta)}{c(\alpha)}-q^{-2})(\frac{c(\beta)}{c(\alpha)}-q^{2})}{(\frac{c(\beta)}{c(\alpha)}-1)^2}>0
\end{aligned}$$

Now, we prove  $E_{\s\s}(k)>0$.  Since we are assuming that
$|log_{q^2}u_i|-|log_{q^2}u_{i+1}|\ge 2n$,  $|log_{q^2}u_t\pm
log_{q^2}u_{t'}|\ge 2n$ if $t'\neq t$. Therefore, the signs of
$log_{q^2}u_t^{\pm1}u_{t'}^{\pm1}q^{2(\pm c\pm d)}$ and
 $\pm log_{q^2}u_t\pm log_{q^2}u_{t'}$ are the same. In other words,
 \begin{equation}\label{signs} \frac{u_t^{\pm1}u_{t'}^{\pm1}q^{2(\pm c\pm d)
}-1}{u_t^{\pm1}u_{t'}^{\pm1}-1}>0.\end{equation}

Similarly, we can verify  \begin{equation}\label{signs1}
\frac{u_t^{\pm 2}q^{2(\pm c\pm d) }-1}{u_t^{\pm
2}-1}>0.\end{equation}

Next we consider the case $\s_{k-1}=\s_{k+1}$. Let
$\alpha=\s_k\ominus\s_{k-1}$ and $\lambda=\s_{k-1}$. Write
$\alpha=(i,j,t)$.

Let $u_tq^{2c_i}$, for $1\le i\le l+1$, be the contents of the
addable nodes of $\lambda^{(t)}$ and let $u_t^{-1}q^{-2d_j}$, for
$1\le j\le l$, be the contents of the removable nodes of
$\lambda^{(t)}$. We may assume that
$$c_1>d_1>\cdots>c_l>d_l>c_{l+1}.$$
Let $\epsilon_t$ be the sign of the product of
$\frac{c(\alpha)c(\beta)-1}{c(\alpha)-c(\beta)}$, where $\beta$ runs
over all of the addable and removable nodes of $\lambda^{(t)}$ such
that $\beta\ne\alpha$. First we consider  $\epsilon_{t'}$ where
$t'\neq t$. By (\ref{signs}),  $\epsilon_{t'}$ is equal to either
the sign of
$$
\frac{(u_t ^{-1}u_{t'}^{-1}-1)^l}{(u_t^{-1}-u_{t'}^{-1})^l}
\frac{(u_t^{-1}u_{t'}-1)^{l+1}}{(u_t^{-1}-u_{t'})^{l+1}}
=\frac{u_{t'}-u_{t}}{1-u_tu_{t'}}.
$$
or the sign of
$$
\frac{(u_t u_{t'}^{-1}-1)^l}{(u_t-u_{t'}^{-1})^l}
\frac{(u_tu_{t'}-1)^{l+1}}{(u_t-u_{t'})^{l+1}}
=\frac{1-u_tu_{t'}}{u_{t'}-u_{t}}.
$$
It is not difficult to see that the signs of the last two equations
are the same. Suppose $t'<t$ and $q>1$. There are four cases we have
to discuss.
\begin{itemize}
\item Both $t'$ and  $t$ are odd. Then $u_{t'}>u_t$ and  $u_tu_{t'}>1$.
\item $t'$ is odd and $t$ is even. Then $u_{t'}>u_t$  and  $u_tu_{t'}>1$.
\item $t'$ is even and $t$ is odd. Then $u_{t'}<u_t$ and
$u_tu_{t'}<1$.
\item Both $t'$ and  $t$ are even. Then $u_{t'}<u_t$ and $u_tu_{t'}<1$.
\end{itemize}
 So, $\varepsilon_{t'}<0$ if $t'<t$. When $t'>t$, we switch the role
 between $t$ and $t'$. So, $\epsilon_{t'}>0$.
When $0<q<1$, we use $q^{-1}$ instead of the previous $q$. Since
$|log_{(q^{-1})^2}u_i|=|log_{q^2}u_i|$, we still have
$\varepsilon_{t'}<0$ if $t'<t$ and  $\epsilon_{t'}>0$ if $t'>t$.
Hence
$$\prod_{t'\ne t}\epsilon_{t'}=(-1)^{t-1}.$$

Suppose  $c(\alpha)=u_tq^{2c_i}$, for some $i$. $\epsilon_t$ is
equal to the sign of
$$\prod\limits_{k=1,\atop k\ne i}^{l+1}\frac{u_t^2q^{2(c_i+c_k)}-1}{u_t(q^{2c_i}-q^{2c_k})}
\prod_{k=1}^l\frac{q^{2(c_i-d_k)}-1}{u_tq^{2c_i}-u_t^{-1}q^{-2d_k}}.$$
By  (\ref{signs1}), it is   equal to the sign of
$$\prod\limits_{k=1,\atop k\ne i}^{l+1}\frac{1}{q^{(2c_i-2c_k)}-1}\prod_{k=1}^l(q^{2(c_i-d_k)}-1)$$
so $\epsilon_t=(-1)^{i-1}(-1)^{i-1}=1$ if $q>1$ and
$\epsilon_t=(-1)^{l-i+1}(-1)^{l-i+1}=1$ if $0<q<1$.

If $c(\alpha)=u_t^{-1}q^{-2d_j}$, for some $j$, then $\epsilon_t$ is
equal to the sign of
$$\prod\limits_{k=1,\atop k\ne j}^{l}\frac{u_t^{-2}q^{-2(d_j+d_k)}-1}{u_t^{-1}(q^{-2d_j}-q^{-2d_k})}
\prod_{k=1}^{l+1}\frac{q^{2(c_k-d_j)}-1}{u_t^{-1}q^{-2d_k}-u_tq^{2c_k}}.$$
By (\ref{signs1}), it  is equal to the sign of
$$\prod\limits_{k=1,\atop k\ne j}^{l}\frac{1}{q^{-2(d_j-d_k)}-1}\prod_{k=1}^{l+1}(q^{2(c_k-d_j)}-1).$$
So $\epsilon_t=(-1)^{l-j}(-1)^{l+1-j}=-1$ if $q>1$ and
$\epsilon_t=(-1)^{j-1}(-1)^{j}=-1$ if $0<q<1$. In summary, we have
proved
\begin{equation}\label{sign}\begin{cases}
     \prod_{1\le t'\le r}\epsilon_{t'}=(-1)^{t-1}, & \text{ if }
     c(\alpha)=u_tq^{2c_i} \text{ for some $i$ }, \\
     \prod_{1\le t'\le r}\epsilon_{t'}=(-1)^t, &\text{ if } c(\alpha)=u_t^{-1}q^{-2d_j}, \text{ for some
     $j$ }.
\end{cases}\end{equation}

We determine the  sign of $E_{\s\s}(k)$ as follows.

\Case {1. $q>1$, $r$ is odd and $\varrho^{-1}=\prod_{l=1}^r u_i$}
$$\begin{aligned}
E_{\s\s}(k)&= \frac{1} {\varrho c(\alpha)}
 (\frac{c(\alpha)-c(\alpha)^{-1}}{\delta} +1)
                 \prod_{\beta\neq
                 \alpha}\frac{c(\alpha)-c(\beta)^{-1}}{c(\alpha)-c(\beta)}\\
&=\frac{1} {\varrho \delta c(\alpha)^2}(c(\alpha)^2+\delta c(\alpha)
- 1)\frac{c(\alpha)}{ \prod_{l=1}^ru_l} \prod_{\beta\neq
                 \alpha}\frac{c(\alpha)c(\beta)-1}{c(\alpha)-c(\beta)}\\
&=\frac{1} {\delta c(\alpha)}(c(\alpha) - q^{-1})(c(\alpha) +
q)\prod_{\beta\neq
                 \alpha}\frac{c(\alpha)c(\beta)-1}{c(\alpha)-c(\beta)}
\end{aligned}$$
On the other hand, under our assumption, we have
\begin{itemize}\item $c(\alpha)<q^{-1}$  if either $2\mid t$ and
     $c(\alpha)=u_tq^{2c_i}$ or $2\nmid t$ and $c(\alpha)=u_t^{-1}q^{-2d_i}$ for some
     $i$, \item     $c(\alpha)>q^{-1}$ if either $2\nmid t$ and
     $c(\alpha)=u_tq^{2c_i}$ or $2\mid t$  and $c(\alpha)=u_t^{-1}q^{-2d_i}$  for some
     $i$.\end{itemize}
     Since we are assuming that $q>1$, $\delta>0$. By (\ref{sign}), $E_{\s\s}(k)>0$ as required.

 \Case{2. $0<q<1$, $r$ is odd and $\varrho^{-1}=-\prod_{l=1}^r
u_i$}
$$\begin{aligned}
E_{\s\s}(k)= \frac{1} {\varrho c(\alpha)}
 &(\frac{c(\alpha)-c(\alpha)^{-1}}{\delta} -1)
                 \prod_{\beta\neq
                 \alpha}\frac{c(\alpha)-c(\beta)^{-1}}{c(\alpha)-c(\beta)}\\
&=\frac{1} {\varrho \delta c(\alpha)^2}(c(\alpha)^2-\delta c(\alpha)
- 1)\frac{c(\alpha)}{ \prod_{l=1}^ru_l} \prod_{\beta\neq
                 \alpha}\frac{c(\alpha)c(\beta)-1}{c(\alpha)-c(\beta)}\\
&=\frac{-1} {\delta c(\alpha)}(c(\alpha) + q^{-1})(c(\alpha) -
q)\prod_{\beta\neq
                 \alpha}\frac{c(\alpha)c(\beta)-1}{c(\alpha)-c(\beta)}
\end{aligned}$$
On the other hand, under our assumption, we have
\begin{itemize}\item $c(\alpha)<q$  if either $2\mid t$ and
     $c(\alpha)=u_tq^{2c_i}$ or $2\nmid t$ and $c(\alpha)=u_t^{-1}q^{-2d_i}$ for some
     $i$, \item     $c(\alpha)>q$ if either $2\nmid t$ and
     $c(\alpha)=u_tq^{2c_i}$ or $2\mid t$  and $c(\alpha)=u_t^{-1}q^{-2d_i}$  for some
     $i$.\end{itemize}
     Since we are assuming that $0<q<1$, $\delta<0$. By (\ref{sign}), $E_{\s\s}(k)>0$ as required.

 \Case{3. $q>1$, $r$ is even and
$\varrho^{-1}=q^{-1} \prod_{l=1}^r u_i$}
$$\begin{aligned}
E_{\s\s}(k)= \frac{1} {\varrho \delta}
 &(1 - \frac{q^2}{c(\alpha)^2} )
                 \prod_{\beta\neq
                 \alpha}\frac{c(\alpha)-c(\beta)^{-1}}{c(\alpha)-c(\beta)}\\
&=\frac{1} {\varrho \delta c(\alpha)^2}(c(\alpha)^2 -
q^2)\frac{c(\alpha)}{ \prod_{l=1}^ru_l} \prod_{\beta\neq
                 \alpha}\frac{c(\alpha)c(\beta)-1}{c(\alpha)-c(\beta)}\\
&=\frac{1} {\delta q c(\alpha)}(c(\alpha) + q)(c(\alpha) -
q)\prod_{\beta\neq
                 \alpha}\frac{c(\alpha)c(\beta)-1}{c(\alpha)-c(\beta)}
\end{aligned}$$

On the other hand, under our assumption, we have
\begin{itemize}\item $c(\alpha)<q$  if either $2\mid t$ and
     $c(\alpha)=u_tq^{2c_i}$ or $2\nmid t$ and $c(\alpha)=u_t^{-1}q^{-2d_i}$ for some
     $i$, \item     $c(\alpha)>q$ if either $2\nmid t$ and
     $c(\alpha)=u_tq^{2c_i}$ or $2\mid t$  and $c(\alpha)=u_t^{-1}q^{-2d_i}$  for some
     $i$.\end{itemize}
     Since we are assuming that $q>1$, $\delta>0$. By (\ref{sign}), $E_{\s\s}(k)>0$ as required.

\Case {4. $0<q<1$, $r$ is even and $\varrho^{-1}=-q\prod_{l=1}^r
u_i$}

$$\begin{aligned}
E_{\s\s}(k)= \frac{1} {\varrho \delta}
 &(1 - \frac{1}{q^2c(\alpha)^2} )
                 \prod_{\beta\neq
                 \alpha}\frac{c(\alpha)-c(\beta)^{-1}}{c(\alpha)-c(\beta)}\\
&=\frac{1} {\varrho \delta c(\alpha)^2}(c(\alpha)^2 -
q^{-2})\frac{c(\alpha)}{ \prod_{l=1}^ru_l} \prod_{\beta\neq
                 \alpha}\frac{c(\alpha)c(\beta)-1}{c(\alpha)-c(\beta)}\\
&=\frac{-q} {\delta  c(\alpha)}(c(\alpha) + q^{-1})(c(\alpha) -
q^{-1})\prod_{\beta\neq
                 \alpha}\frac{c(\alpha)c(\beta)-1}{c(\alpha)-c(\beta)}
\end{aligned}$$

On the other hand, under our assumption, we have
\begin{itemize}\item $c(\alpha)<q^{-1}$  if either $2\mid t$ and
     $c(\alpha)=u_tq^{2c_i}$ or $2\nmid t$ and $c(\alpha)=u_t^{-1}q^{-2d_i}$ for some
     $i$, \item     $c(\alpha)>q^{-1}$ if either $2\nmid t$ and
     $c(\alpha)=u_tq^{2c_i}$ or $2\mid t$  and $c(\alpha)=u_t^{-1}q^{-2d_i}$  for some
     $i$.\end{itemize}
     Since we are assuming that $0<q<1$, $\delta<0$. By (\ref{sign}), $E_{\s\s}(k)>0$ as required.
\end{proof}

\section{A cellular basis of $\W_{r,n}(\bu)$ with odd $r$}

Throughout this section, unless otherwise stated,  we always keep
the following assumption:

\begin{Assumption}\label{assumption5} Let  $R$ be a commutative ring
containing invertible elements $q, q-q^{-1}, $ and $u_i$, $ 1\le
i\le r$. We also assume that  $\Omega\cup\{\varrho\}\subseteq R$ is
$\bu$--admissible.\end{Assumption}

The main purpose of this section is to construct a cellular basis
for $\W_{r,n}$. We remark that we  assume that $r$ is odd. In other
words, $r=2p+1$ for some non-negative integer $p$.

In \cite{AK}, Ariki and Koike have proved that Ariki-Koike algebra
$\H_{r, n}$ is free over $R$.  Given a non-negative integer
$f\le\floor{n2}$.  Then  $\H_{r, n-2f}$ can be identified  with the
subalgebra of $\H_{r,n}$. Since we have not proved that $\W_{r, n}$
is free over $R$, we could not say $\W_{r, n_1}$ is a subalgebra of
$\W_{r, n_2}$ if $n_1< n_2$. However, there is an algebraic
homomorphism from $\W_{r, n_1}$ to $\W_{r, n_2}$. So, we define
$\W_{r, n_1}'$ to  be the image of $\W_{r, n_1}$ in $\W_{r, n_2}$.

\begin{Prop}\label{epsilon f} Given a positive integer
 $n\ge 2$.  Let $\Ef_n=\W_{r,n}E_1\W_{r,n}$ be the
two-sided ideal of $\W_{r, n}$ generated by~$E_1$. Then there is a
unique $R$--algebra isomorphism $\proj_{
n}:\H_{r,n}\cong\W_{r,n}/\Ef_n$ such that
$$\proj_{n}(g_i)=T_i+\Ef_n \text{ and } \proj_{n}(y_j)=X_j+\Ef_n,$$
for $1\le i<n$ and $1\le j\le n$.
\end{Prop}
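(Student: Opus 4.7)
The plan is as follows. Existence of a surjective $R$-algebra homomorphism $\proj_n:\H_{r,n}\to\W_{r,n}/\Ef_n$ with the stated effect on generators is already recorded in (\ref{eta}): one checks routinely that the defining relations of $\H_{r,n}$ hold for the images of the generators, and the map is surjective because $\Ef_n$ contains every $E_i$ (since $E_i$ is obtained from $E_1$ by conjugation via the tangle relations of Definition~\ref{Waff relations}(h)), so $\W_{r,n}/\Ef_n$ is generated by the $T_i+\Ef_n$ and $X_j+\Ef_n$. Uniqueness is automatic from the fact that $\{g_i,y_j\}$ generate $\H_{r,n}$. The real work is injectivity, which I would prove by constructing a two-sided inverse $\bar\phi$ by hand.

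To this end, I would define $\phi:\W_{r,n}\to\H_{r,n}$ on generators by $T_i\mapsto g_i$, $X_j\mapsto y_j$, $E_i\mapsto 0$, and check that $\phi$ respects each of the relations (a)--(k) in Definition~\ref{Waff relations}. The large majority of these are essentially trivial: (d), (g), (h), (i), (j), and the $E_1 X_1^a E_1$ relation all carry a factor of some $E_i$ and reduce to $0=0$; (a) uses the invertibility of each $y_j$ in $\H_{r,n}$ (guaranteed by the invertibility of the $u_i$, as noted after the definition of $\H_{r,n}$); and (c), (e), (k) translate directly into Ariki--Koike relations. The Kauffman skein relation (b) becomes $g_i^2-\delta g_i=1$, which is the quadratic relation $(g_i-q)(g_i+q^{-1})=0$ rewritten, and in particular gives the inversion formula $g_i^{-1}=g_i-\delta$. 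The only substantive verification is the skein relation (f): the image of (f)(i) reads $g_i y_i - y_{i+1} g_i = -\delta y_{i+1}$, equivalently $g_i y_i = y_{i+1}(g_i-\delta) = y_{i+1} g_i^{-1}$, which is immediate from the Ariki--Koike relation $y_{i+1}=g_i y_i g_i$ by right multiplication with $g_i^{-1}$; (f)(ii) is handled symmetrically.

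Once $\phi$ is known to be a well-defined homomorphism, $\phi(E_1)=0$ forces $\phi$ to annihilate $\Ef_n$, so $\phi$ descends to $\bar\phi:\W_{r,n}/\Ef_n\to\H_{r,n}$. The two compositions $\bar\phi\circ\proj_n$ and $\proj_n\circ\bar\phi$ agree with the identity on the respective algebra generators ($g_i,y_j$ and $T_i+\Ef_n, X_j+\Ef_n$), and therefore equal the identity maps on the respective algebras, which proves that $\proj_n$ is an isomorphism. The main (but mild) obstacle is the book-keeping for relation (f); every other relation is either automatically killed by $\phi$ or is an Ariki--Koike relation in disguise.
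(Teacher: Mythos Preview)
Your proof is correct and considerably more elementary than the paper's own argument. You construct an explicit inverse $\phi:\W_{r,n}\to\H_{r,n}$ by sending $T_i\mapsto g_i$, $X_j\mapsto y_j$, $E_i\mapsto 0$ and verifying the presentation; since $\Ef_n\subset\Ker\phi$, this descends to a two-sided inverse of $\proj_n$. The relation checks you outline are all correct, including the only nontrivial one, relation~(f), which you reduce cleanly to $y_{i+1}=g_iy_ig_i$ together with $g_i^{-1}=g_i-\delta$.

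The paper takes a genuinely different route: rather than building an inverse by hand, it establishes injectivity via a dimension count. It specializes to generic parameters over~$\mathbb R$, invokes the seminormal representations $\Delta(\lambda)$ for $\lambda\in\Lambda_r^+(n)$ constructed in Section~4, and applies Wedderburn--Artin to show $\dim_{\mathbb R}\W_{r,n}/\Ef_n\ge r^nn!$; combined with the known basis of $\H_{r,n}$ and a base-change argument, this forces $\proj_n$ to be injective. Your approach is more self-contained: it does not rely on the seminormal representations, the $\bu$-admissibility hypothesis, or any specialization/base-change machinery, and it works over an arbitrary commutative ring in which the $u_i$ are units. The paper's route, on the other hand, fits its overall strategy of using the seminormal forms to establish freeness results, and the same machinery is reused later (e.g.\ in the proof of Theorem~\ref{W cellular}); so while heavier, it is not wasted effort in context.
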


\begin{proof} Let $S=\{X_j+\Ef_n, T_i+\Ef_n\mid 1\le i\le
n-1, 1\le j\le n\}$. By Definition~\ref{Waff relations}, $S$
generates $\W_{r,n}/\Ef_n$. Therefore,  $\proj_{n}$ is an algebraic
epimorphism. We claim that $ \W_{r,n}/\Ef_n$ is free over $R$ with
rank $r^n n!$. In fact, we consider $ \W_{r,n}/\Ef_n$ over
$R_0:=\mathbb Z[\bu^{\pm 1} , q^{\pm 1}, \delta^{\pm 1} ]$ where
$\delta=q-q^{-1}$. Further, we assume that $\bu, q$ are
indeterminates. We have constructed the seminormal representations
for $\W_{r, n}$ with respect to all $\lambda\in \Lambda_r^+(n-2f)$,
$0\le f\le \floor{n2}$ under the conditions in Lemma~\ref{generic u}
and (\ref{be}). In particular, we have seminormal representations of
$\W_{r, n}$ over $\mathbb R$. As $\mathbb R$ is not finitely
generated over $\mathbb Q$, we can take $r+1$ algebraically
independent transcedental real numbers $v_i\in \mathbb R$ and
$\mathbf q$. We define $R_1=\mathbb Z[v_1, v_2 \dots, v_r, \mathbf
q^{\pm 1}, \delta^{\pm}]$ Also, we assume $\Omega\cup\{\varrho\}$ is
$\mathbf v$-admissible. By Lemma~\ref{be real},
 $\Delta(\lambda)$ are $
\W_{r,n}/\Ef_n$-modules for all $\lambda\in \Lambda_r^+(n)$ over the
field $\mathbb R$.
 By Wedderburn-Artin theorem for
semisimple finite dimensional algebra,
$$\dim_{\mathbb R} \W_{r, n}/\langle E_1\rangle \ge r^n n!.$$  So,
 the image of an  $\mathbb R$-basis  of $\H_{r, n}$ has to be $\mathbb R$-linear
independent, and hence $R_1$-linear independent. Therefore,
$\W_{r,n}/\Ef_n$ is free over $R_1$ with rank $r^n n!$.

Note that $R_1\cong R_0$ as rings. So,  $\W_{r, n}(\u)$ over $R_0$
is isomorphic to $\W_{r, n}(\mathbf v)$ over $R_1$ as $R_0$-modules.
The corresponding isomorphism sends $u_i$ (resp. $q$) to $v_i$
(resp. $\mathbf q$). So, $\W_{r,n}/\Ef_n$ is free over $R_0$ with
rank $r^n n!$. By base change, it is free over an arbitrary
commutative ring $R$. So, $\proj_{n}$ is an isomorphism.
\end{proof}

\begin{Defn}\label{Ef} Given a non-negative integer $f\le
\floor{n2}$.  Let $E^f=E_{n-1}E_{n-3}\cdots E_{n-2f+1}$ and let
$\W_{r,n}^f=\W_{r,n}E^f\W_{r,n}$. If $f=\floor{ n2}\in \mathbb Z$,
then we set  $\W_{r,n}^{f+1}=0$.
\end{Defn}
By Definition~\ref{Ef}, there is  a filtration of two-sided ideals
of  $\W_{r,n}$ as follows:
$$\W_{r,n}=\W_{r,n}^0\supset \W_{r,n}^1\supset\dotsi
     \supset \W_{r,n}^{\floor{n2}}\supset \W_{r,n}^{\floor{n2}+1}=0.$$

For $0\le f\le\floor{n2}$ let $\pi_{f,
n}\map{\W_{r,n}^f}\W_{r,n}^f/\W_{r,n}^{f+1}$ be the corresponding
projection map of $\W_{r,n}$--bimodules.

Since we are assuming that $r=2p+1$ for some non-negative integer
$p$,  we set $\N_r=\{-p\dots,-1, 0,1,\dots,p\}$ and define $\Nrf$ to
be the set of $n$--tuples $\kappa=(k_1,\dots,k_n)$ such that
$k_i\in\N_r$ and $k_i\ne0$ only for $i=n-2j+1$, $1\le j\le f$. Thus,
$X^\kappa=X_{n-1}^{k_{n-1}}X_{n-3}^{k_{n-3}}\dots
X_{n-2f+1}^{k_{n-2f+1}}$.

\begin{Lemma}\label{if} Suppose that
$\kappa\in \Nrf$ with  $0 \le f\le \floor{n2}$. Then $E^f
X^\kappa\Ef_{n-2f}'\subset B_{r,n}^{f+1}$ where $\Ef'_{ n-2f}$ is
the image of $\Ef_{n-2f}$ in $\W_{r, n}$.
\end{Lemma}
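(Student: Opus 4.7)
The plan is to reduce the problem to the single ideal containment $E^fE_1\in\W_{r,n}^{f+1}$, and then establish the latter by conjugating $E_1$ to $E_{n-2f-1}$ via a braid that commutes with~$E^f$.

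First, since $\Ef_{n-2f}$ is the two-sided ideal of $\W_{r,n-2f}$ generated by~$E_1$, its image satisfies $\Ef_{n-2f}'=\W_{r,n-2f}'\,E_1\,\W_{r,n-2f}'$, so it is enough to treat generators of the form $aE_1b$ with $a,b\in\W_{r,n-2f}'$. The subalgebra $\W_{r,n-2f}'$ is generated by $T_i,E_i$ for $i\le n-2f-1$ and $X_j$ for $j\le n-2f$, while $E^f$ and $X^\kappa$ involve only the indices $n-2f+1,n-2f+3,\ldots,n-1$. The $T$--$T$, $T$--$X$ and $X$--$X$ commutations of Definition~\ref{Waff relations} cover every case, and commutation with $E_l$ for $l\ge n-2f+1$ is transferred from commutation with $T_l$ via the skein relation $E_l=(1-T_l^2+\delta T_l)/(\delta\varrho)$. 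Consequently $\W_{r,n-2f}'$ commutes elementwise with $E^f$ and with $X^\kappa$; the same reasoning shows that $E_1$ commutes with both $E^f$ and $X^\kappa$ provided $n-2f+1\ge 3$ (the extreme case $f=\lfloor n/2\rfloor$ being trivial since $\Ef_{n-2f}'=0$ then). Combining these,
\[
E^fX^\kappa\cdot aE_1b \;=\; a\,(E^fE_1)\,X^\kappa b,
\]
and because $\W_{r,n}^{f+1}$ is a two-sided ideal the lemma reduces to showing $E^fE_1\in\W_{r,n}^{f+1}$.

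The key tool is the conjugation identity
\[
E_{j+1} \;=\; T_jT_{j+1}\,E_j\,T_{j+1}T_j \qquad (1\le j\le n-2),
\]
which I would derive by feeding Definition~\ref{Waff relations}(h)(ii) in the forms $E_{j+1}E_j=T_jT_{j+1}E_j$ and (via the anti-involution of Lemma~\ref{antiinvolution}) $E_jE_{j+1}=E_jT_{j+1}T_j$ into the left side, and then collapsing the centre with the untwisting relation $E_{j+1}E_jE_{j+1}=E_{j+1}$ of Definition~\ref{Waff relations}(i)(i). Iterating this identity downward from $j=n-2f-2$ to $j=1$ produces
\[
E_{n-2f-1} \;=\; v\,E_1\,v^{\ast}, \qquad v \;=\; (T_{n-2f-2}T_{n-2f-1})(T_{n-2f-3}T_{n-2f-2})\cdots(T_1T_2),
\]
with $v^{\ast}$ the image of $v$ under the anti-involution.

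Every $T_i$ appearing in $v$ satisfies $i\le n-2f-1$, so both $v$ and $v^{\ast}$ commute with every factor of~$E^f$; each $T_i$ is invertible because the Kauffman skein relation together with $T_iE_i=\varrho E_i$ yields $T_i^{-1}=T_i-\delta+\delta E_i$. Therefore
\[
E^{f+1} \;=\; E^fE_{n-2f-1} \;=\; v\,E^fE_1\,v^{\ast} \quad\Longrightarrow\quad E^fE_1 \;=\; v^{-1}\,E^{f+1}\,(v^{\ast})^{-1}\in\W_{r,n}^{f+1},
\]
which completes the proof. The main technical point is verifying the conjugation identity for $E_{j+1}$ without residual terms; the remaining work is routine bookkeeping with disjoint-support commutations.
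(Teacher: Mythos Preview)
Your proof is correct and follows the same idea as the paper's one-line argument, which simply asserts that ``the result follows since $E^fX^\kappa$ commutes with $\Ef_{n-2f}'$.'' You have made explicit the step the paper leaves to the reader, namely that $E^fE_1\in\W_{r,n}^{f+1}$, and your braid-conjugation argument $E_{j+1}=T_jT_{j+1}E_jT_{j+1}T_j$ together with the disjoint-support commutations is a clean way to verify this.
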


\begin{proof} The result follows since $E^f X^\kappa$ commutes with  $\Ef_{n-2f}'$.
\end{proof}

By Proposition~\ref{epsilon f} and Lemma~\ref{if}, there is a
well-defined $R$-module homomorphism $\sigma_f\map{\H_{r,
n-2f}}\W^f_{r,n}/\W_{r,n}^{f+1}$, for each non-negative integer $
f\le\floor{n2}$, such that
$$\sigma_f(h)=E^f\proj_{n-2f}(h)'+\W_{r,n}^{f+1}, \text{
for $h\in \H_{r, n-2f}$, }$$ where $\proj_{ n-2f}(h)'$ is the image
of $\proj_{ n-2f}(h)$ in $\W_{r, n}$.

We recall the following definition in \cite{AMR}. Suppose that $f$
is a non-negative integer with $f\le \floor{n2}$. Let $\Bcal_f$ be
the subgroup of $\Sym_n$ generated by $\{s_{n-1}\}\cup\{s_{n-2i+2}
         s_{n-2i+1}s_{n-2i+3} s_{n-2i+2}\mid 2\le i\le f\}$.
Let $\tau=((n-2f),(2^f)\)$ and define
$$\Dcal_{f,n}=\Set[80]d\in \Sym_n|
      $\t^{\tau} d=(\t_1, \t_2)$ is a row standard $\tau$-tableau
     and the first column of $\t_2$ is increasing from top to bottom|.$$
For any positive integers $i, j$, write
$$
s_{i,j}=\begin{cases}
            s_{i-1}s_{i-2}\cdots s_j,  & \text{if  $ i>j$,} \\
            1,                    & \text{if $ i=j$,}\\
              s_{i}s_{i+1}\cdots s_{j-1} & \text{if  $ i<j$.} \\
        \end{cases} \quad
E_{i,j}=\begin{cases}
            E_{i-1}E_{i-2}\cdots E_j,  & \text{if  $ i>j$,} \\
            1,                    & \text{if $ i=j$,}\\
              E_{i}E_{i+1}\cdots E_{j-1} & \text{if  $ i<j$.} \\
        \end{cases}
       $$
\begin{Lemma}\label{cosets}
Suppose that $0\le f\le\floor{n2}$. Then {\small
\begin{equation}\label{co1}\Dcal_{f,
n}=\Set[40]s_{n-2f+1,i_f}s_{n-2f+2,j_f}\cdots s_{n-1,i_1}s_{n,j_1}|
      $1\leq i_f<\cdots<i_1\leq n ;\atop 1\leq i_k<j_k\leq n-2k+2;1\leq k\leq f$
      |.\end{equation}}
\end{Lemma}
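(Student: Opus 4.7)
The plan is to proceed by induction on $f$. The base case $f=0$ is immediate, since both $\Dcal_{0,n}$ and the right-hand side reduce to $\{1\}$ (the empty product, and the unique row-standard tableau with $\tau=(n)$, respectively). For the inductive step assume the identity for $(n-2,f-1)$ and split an element of the right-hand side as $d=d''\cdot\sigma$, where $\sigma=s_{n-1,i_1}s_{n,j_1}$ is the $k=1$ block and
$$d''=s_{n-2f+1,i_f}s_{n-2f+2,j_f}\cdots s_{n-3,i_2}s_{n-2,j_2}$$
collects all the factors with $k\ge 2$.

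The key observation is that every simple transposition $s_a$ appearing in $d''$ satisfies $a\le n-3$, so $d''$ fixes $n-1$ and $n$ pointwise and may be regarded as a permutation of $\{1,\dots,n-2\}$. A direct case analysis of the cycles $s_{n-1,i_1}=(i_1,i_1+1,\dots,n-1)$ and $s_{n,j_1}=(j_1,j_1+1,\dots,n)$ then gives
$$d(n-1)=i_1,\qquad d(n)=j_1,\qquad d(k)=\psi(d''(k))\text{ for }k\le n-2,$$
where $\psi\colon\{1,\dots,n-2\}\to\{1,\dots,n\}\setminus\{i_1,j_1\}$ denotes the unique order-preserving bijection. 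Applying the same algebraic computation to $d''$ (with the roles of $n,i_1,j_1$ replaced by $n-2,i_2,j_2$) yields $d''(n-3)=i_2$ and $d''(n-2)=j_2$, and since $i_2<i_1$ this forces $d(n-3)=\psi(i_2)=i_2$.

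From these formulas, membership of the right-hand side in $\Dcal_{f,n}$ is straightforward: the bottom row $(i_1,j_1)$ of $\t_2$ in $\t^\tau d$ is row-standard because $i_1<j_1$; the top $f-1$ rows are the entrywise $\psi$-image of the corresponding rows of the tableau associated with $d''$, so both row-standardness and the first-column-increasing condition transport from $d''$ (which lies in $\Dcal_{f-1,n-2}$ by the inductive hypothesis) since $\psi$ is order-preserving; and the boundary first-column inequality $d(n-3)<d(n-1)=i_1$ becomes $i_2<i_1$, which is part of $i_f<\cdots<i_1$. For the reverse inclusion, given $d\in\Dcal_{f,n}$, I set $i_1=d(n-1)$, $j_1=d(n)$ and define $d''=d\sigma^{-1}$; running the relabeling computation backwards shows that $d''$ fixes $n-1$ and $n$, that its restriction to $\{1,\dots,n-2\}$ lies in $\Dcal_{f-1,n-2}$, and that $d(n-3)=d''(n-3)<i_1$. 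The inductive hypothesis applied to $d''$ then supplies the required factorization with all the constraints $i_f<\cdots<i_2<i_1\le n$ and $i_k<j_k\le n-2k+2$ automatically in place.

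The main technical step is the case analysis behind the relabeling formula $d(k)=\psi(d''(k))$, which requires separating three regimes ($d''(k)<i_1$, $i_1\le d''(k)\le j_1-2$, and $d''(k)\ge j_1-1$) and tracking how the two cycles act on each. Once that formula is verified the induction closes cleanly, giving a bijection between the parameter set and $\Dcal_{f,n}$.
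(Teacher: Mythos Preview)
Your argument is correct, and it is genuinely different from the paper's proof. The paper does not argue bijectively at all: it invokes the fact from \cite{AMR} that $\Dcal_{f,n}$ is a complete set of right coset representatives for $\mathfrak S_{n-2f}\times\Bcal_f$ in $\mathfrak S_n$, so $\#\Dcal_{f,n}=\dfrac{n!}{(n-2f)!\,f!\,2^f}$; it then counts the right-hand side $\Dcal_{f,n}'$ by a recursion in $f$ (summing over the choices of $i_f$ and $j_f$ and using the inductive count of $\Dcal_{f-1,k}'$), obtains the same number, asserts the inclusion $\Dcal_{f,n}'\subseteq\Dcal_{f,n}$, and concludes equality by cardinality. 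Your proof, by contrast, never appeals to the coset count: the explicit relabeling formula $d(k)=\psi(d''(k))$ gives both inclusions directly and makes the correspondence between the parameters $(i_k,j_k)$ and the entries of $\t^\tau d$ completely transparent. In effect you supply the content that the paper leaves to the reader (the inclusion $\Dcal_{f,n}'\subseteq\Dcal_{f,n}$) and simultaneously replace the counting step by an explicit inverse construction. The paper's route is shorter if one is willing to quote the coset cardinality; yours is self-contained and yields more structural information about how $d$ acts.
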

\begin{proof} It has been proved in \cite{AMR} that  $\Dcal_{f,
n}$ is a complete set of right coset representatives for $\mathfrak
S_{n-2f}\times \Bcal_f$ in $\mathfrak S_{n}$. So, $$\#
\Dcal_{f,n}=\frac{|\mathfrak S_n|}{|\mathfrak S_{n-2f}||\Bcal_f|}=
\frac{n!}{(n-2f)!f!2^f}.$$   Let $\Dcal_{f,n}'$ be  the set given in
the right hand of (\ref{co1}).  Then $\# \Dcal_{1,n}'=\binom
{n}{2}$. In general, since we are assuming that $ 1\leq i_f<j_f\leq
n-2f+2$, $i_f\in \{1, 2,\cdots, n-2f+1\}$.  There are $n+2-2f-i_f$
choices for $j_f$. In this case, by induction assumption, there are
$\#\Dcal_{f-1,  n-i_f}$ choices for the sequences $i_1, j_1, \dots,
i_{f-1}, j_{f-1}$ which satisfy the inequalities in (\ref{co1}). So,
$$\# \Dcal_{f,n}'=(n-2f+1)\# \Dcal_{f-1,n-1}'+\cdots+1\cdot \#
\Dcal_{f-1,2f-1}'.$$ By induction assumption  on $\# \Dcal_{f-1,k}'$
for $2f-1\le k\le n-1$,
$$\# \Dcal_{f,n}'=\sum_{k=2f}^n  \frac{(k-2f+1)(k-1)!} {(k-2f+1)! (f-1)!
2^{f-1}}. $$ By induction on $n$, we have
 $\# \Dcal_{f,n}'=\frac{n!}{(n-2f)!f!2^f}=\# \Dcal_{f, n}$.
Since  $\Dcal_{f,n}\supseteq \Dcal_{f,n}'$, $\Dcal_{f,n}=
\Dcal_{f,n}'$.
\end{proof}

Given two standard $\lambda$--tableaux $\s, \t$ for some
$r$-partition $\lambda$. It has been defined in \cite{DJM:cyc} that
$$\m_{\s\t}=g_{d(\s)^{-1}}\cdot \prod_{s=2}^r
      \prod_{i=1}^{a_{s-1}}(y_i-u_s)
        \sum_{w\in\Sym_\lambda}g_w \cdot g_{d(\t)}, $$
where $a_{s-1}=|\lambda^{(1)}|+\dots+|\lambda^{(s-1)}|$.

For each $w\in \mathfrak S_n$, let $T_w=T_{i_1} \cdots T_{i_k}$ if
$s_{i_1} s_{i_2}\cdots s_{i_k}$ is a reduced expression of $w$ in
$\mathfrak S_n$.  By Matsumoto's theorem (see e.g.
\cite[1.2.2]{GP}), $T_w\in \W_{r, n}$ is independent of the reduced
expression of $w$.

\begin{Defn}\label{M_st} Suppose that
$\lambda\in\Lambda_r^+(n-2f)$, for some non-negative integer $f\le
\floor{n2}$. For each pair $(\s,\t)$ of standard $\lambda$--tableaux
define
$$M_{\s\t}=T_{d(\s)^{-1}}\cdot \prod_{s=2}^r
      \prod_{i=1}^{a_{s-1} }(X_i-u_s)
        \sum_{w\in\Sym_\lambda}T_w \cdot T_{d(\t)}.$$
\end{Defn}

\begin{Lemma}\label{M_st properties} Suppose that
$\lambda\in\Lambda_r^+(n-2f)$, for some non-negative integer $f\le
\floor{n2}$.  For any  $\s,\t\in\Std(\lambda)$,
\begin{enumerate}
\item $E^fM_{\s\t}=M_{\s\t}E^f\in \W_{r,n}^f$.
\item If $\kappa\in \Nrf$ then $M_{\s\t}X^\kappa=X^\kappa M_{\s\t}$.
\item If $w$ is a permutation on $\{n-2f+1,\dots,n\}$ then
$M_{\s\t}T_w=T_wM_{\s\t}$.
\item  $\sigma_f(\m_{\s\t})=\pi_{f, n} (E^fM_{\s\t})$.\end{enumerate}
\end{Lemma}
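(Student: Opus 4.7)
The plan is to exploit the separation of index ranges: every factor of $M_{\s\t}$ uses only the generators $T_i$ with $i\le n-2f-1$ and $X_j$ with $j\le n-2f$, while $E^f$, $X^\kappa$, and $T_w$ in parts (b)--(c) use only generators with indices strictly larger than $n-2f$. The defining relations of $\W_{r,n}$ force the two groups of generators to commute, and parts (a)--(c) reduce to this bookkeeping.

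For (a), I would first confirm the index ranges. Since $\lambda\in\Lambda_r^+(n-2f)$, the distinguished coset representatives $d(\s),d(\t)$ and each $w\in\Sym_\lambda$ lie in $\mathfrak S_{n-2f}$, so their reduced expressions involve only $s_i$ with $i\le n-2f-1$; and the factors $\prod_{i=1}^{a_{s-1}}(X_i-u_s)$ use $X_i$ with $i\le a_{r-1}\le n-2f$. The factors of $E^f=E_{n-1}E_{n-3}\cdots E_{n-2f+1}$ are $E_j$ with $j\ge n-2f+1$, so $j-i\ge 2$ and $k<j$ for every $T_i$, $X_k$ appearing in $M_{\s\t}$. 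By Definition~\ref{Waff relations}(b), $E_j$ is a polynomial in $T_j$, so $T_i$ commutes with $E_j$ whenever $T_i$ commutes with $T_j$ (which holds by (c)(i) since $|i-j|>1$), and $X_k$ commutes with $E_j$ whenever $X_k$ commutes with $T_j$ (which holds by (c)(iii) since $k\ne j,j+1$). Thus $E^f$ commutes with every factor of $M_{\s\t}$, giving $E^fM_{\s\t}=M_{\s\t}E^f$; membership in $\W_{r,n}^f=\W_{r,n}E^f\W_{r,n}$ is then immediate.

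Parts (b) and (c) follow from the same template. For (b), $X^\kappa$ is a product of $X_j$ with $j\ge n-2f+1$; each such $X_j$ commutes with every $X_i$ appearing in $M_{\s\t}$ by Definition~\ref{Waff relations}(e), and with every $T_i$ appearing in $M_{\s\t}$ (for which $i\le n-2f-1$, so $j\ne i,i+1$) by Definition~\ref{Waff relations}(c)(iii). For (c), $T_w$ is built from $T_i$ with $i\in\{n-2f+1,\dots,n-1\}$; each such $T_i$ commutes with $T_j$ for $j\le n-2f-1$ by the far-commutation relation (c)(i), and with $X_k$ for $k\le n-2f$ by (c)(iii).

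Part (d) is essentially an unwinding of definitions. Because $\m_{\s\t}$ and $M_{\s\t}$ are built from identical expressions in corresponding generators, and $\proj_{n-2f}$ sends $g_i\mapsto T_i+\Ef_{n-2f}$ and $y_j\mapsto X_j+\Ef_{n-2f}$, one has $\proj_{n-2f}(\m_{\s\t})=M_{\s\t}+\Ef_{n-2f}$. Taking $M_{\s\t}\in\W_{r,n-2f}$ as a representative and pushing to $\W_{r,n}$ yields $\proj_{n-2f}(\m_{\s\t})'=M_{\s\t}$, and the definition of $\sigma_f$ then gives $\sigma_f(\m_{\s\t})=E^fM_{\s\t}+\W_{r,n}^{f+1}=\pi_{f,n}(E^fM_{\s\t})$. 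Independence of the chosen representative is guaranteed by Lemma~\ref{if} applied with $\kappa=0$: any two lifts of $\proj_{n-2f}(\m_{\s\t})$ differ on the right by an element of $\Ef'_{n-2f}$, and the corresponding $E^f$-multiples agree modulo $\W_{r,n}^{f+1}$. The only real obstacle throughout the four parts is keeping the index ranges straight; there is no conceptual difficulty beyond that.
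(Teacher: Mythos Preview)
Your proof is correct and follows exactly the approach the paper has in mind; in fact the paper states this lemma without proof, treating all four parts as routine consequences of the index separation and the definition of $\sigma_f$. Your bookkeeping (in particular, deriving $T_iE_j=E_jT_i$ and $X_kE_j=E_jX_k$ from relation~(b) by writing $E_j$ as a polynomial in $T_j$) is precisely what is needed, and your handling of the well-definedness in part~(d) via Lemma~\ref{if} is the intended justification.
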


\begin{Defn}\label{Wlambda}
Given a $\lambda\in \Lambda_r^+(n-2f)$ and  $0\le f\le\floor{n2}$.
Define $\W_{r,n}^{\unrhd(f, \lambda)}$ to be the two--sided ideal of
$\W_{r,n}$ generated by $\W_{r,n}^{f+1}$ and the elements
$$\set{E^fM_{\s\t}|\s,\t\in \Std(\mu) \text{ and }
    \mu\in\Lambda_r^+(n-2f)\text{ with }\mu\trianglerighteq\lambda}.$$
We also define  $\Wlam =\sum_{\mu\gdom \lambda}\W_{r,n}^{\gedom(f,
\mu)}$, where in the sum $\mu\in\Lambda_r^+(n-2f)$.
\end{Defn}

\begin{Theorem}\label{cellular1} Suppose that $\s\in \Std(\lambda)$. Let $\Delta_\s(f,\lambda)$ be the $R$-submodule of
$\W_{r,n}^{\unrhd(f, \lambda)}/\W_{r,n}^{\rhd(f, \lambda)}$  spanned
by $\set{E^fM_{\s\t}X^\kappa
T_d+\Wlam|(\t,\kappa,d)\in\delta(f,\lambda)},$ where
$\delta(f,\lambda)
  =\set{(\t,\kappa,d)|\t\in\Std(\lambda),\kappa\in \Nrf \text{ and }
                        d\in \Dcal_{f,n}}$.
Then $\Delta_\s(f,\lambda)$ is a right $\W_{r, n}$-module.
                        \end{Theorem}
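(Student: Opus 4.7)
The plan is to verify that $\Delta_\s(f,\lambda)$ is closed under right multiplication by each algebra generator of $\W_{r,n}$, working throughout modulo $\Wlam$. Since $\W_{r,n}$ is generated by $\{T_i,\, E_i,\, X_j^{\pm 1}\}$ and $X_j^{-1}$ is a polynomial in $X_j$ via the cyclotomic relation together with $X_i = T_{i-1}X_{i-1}T_{i-1}$, it suffices to handle right multiplication by $T_i$, $E_i$, and $X_j$. Fix $(\t,\kappa,d) \in \delta(f,\lambda)$. The main tools are: Lemma \ref{M_st properties}, which lets $M_{\s\t}$ commute with any $X^\kappa$ and with any $T_w$ supported on $\{n-2f+1,\dots,n\}$; Lemma \ref{relations2}, which pushes $X_k^a$ past $T_k^{\pm 1}$; Corollary \ref{relation1}, which collapses any $E\cdot X\cdot T\cdot E$ sandwich into a canonical $E$-form; and the isomorphism $\sigma_f\colon \H_{r,n-2f}\cong \W_{r,n-2f}/\Ef_{n-2f}$ of Proposition \ref{epsilon f} together with the cellular basis $\{\m_{\s\t}\}$ of the Ariki--Koike algebra.

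For right multiplication by $X_j$, I would commute $X_j$ leftward through $T_d$ one simple factor at a time, using Lemma \ref{relations2} and the explicit braid form for $d\in\Dcal_{f,n}$ provided by Lemma \ref{cosets}. Each step produces a ``straight'' term in which the index of $X$ shifts and an ``error'' term carrying an extra factor $E_{a_i}$. The straight term eventually becomes either an $X$-monomial in variables indexed in $\{n-2f+1,\dots,n\}$ (absorbed into $X^\kappa$ via Lemma \ref{M_st properties}(b), and reduced to a new $\kappa'\in\Nrf$ using the cyclotomic relation), or an $X_\ell$ with $\ell\le n-2f$ acting directly on $E^f M_{\s\t}$. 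In the latter case, Lemma \ref{M_st properties}(a)(d) and $\sigma_f$ transport the computation to $\H_{r,n-2f}$, where the cellular-basis expansion of $\m_{\s\t}\cdot y_\ell$ lifts back to the required $R$-combination of $E^f M_{\s\t'} X^{\kappa'} T_{d'}$-terms.

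For right multiplication by $T_i$, I would use Lemma \ref{cosets}: either $ds_i$ is again a distinguished representative of strictly greater length, giving $T_d T_i = T_{ds_i}$ directly, or else the Kauffman skein relation $T_i^2 = 1+\delta T_i - \delta\varrho E_i$ combined with the braid relations rewrites $T_d T_i$ as an $R$-combination of $T_{d'}$'s with $d'\in\Dcal_{f,n}$ of smaller length, together with $E_i$-terms that reduce to the next case. For right multiplication by $E_i$, I would use the tangle relations in Definition \ref{Waff relations}(h) to push $E_i$ leftward through $T_d$: either $E_i$ is absorbed into $E^f$, producing an element of $\W_{r,n}^{f+1}\subseteq\Wlam$, or it collapses against a neighbouring $T$-pair and returns the computation to a case already handled; Corollary \ref{relation1} is the workhorse for any residual $E\cdots E$ sandwiches that appear along the way.

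The principal obstacle is the dominance bookkeeping that appears in the $X$-action step when the shifted main term $X_\ell$ has $\ell\le n-2f$. After transport through $\sigma_f$, the Dipper--James--Mathas cellular identity reads $\m_{\s\t}\cdot y_\ell \equiv \sum_{\t'} r_{\t'}\m_{\s\t'}$ modulo the cellular ideal of $\H_{r,n-2f}$ indexed by partitions dominating $\lambda$, and one must show that the $E^f$-lift of this ideal sits inside $\Wlam$. This in turn requires that for every $\mu\gdom\lambda$ in $\Lambda_r^+(n-2f)$ and every $\s',\t'\in\Std(\mu)$, the lifted element $E^f M_{\s'\t'}$ lies in $\W_{r,n}^{\unrhd(f,\mu)}$ (immediate from Definition \ref{Wlambda}), while any ``rank-raising'' correction introduced by the cyclotomic reduction on $X^{\kappa'}$ factors through $\W_{r,n}^{f+1}\subseteq\Wlam$. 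This is precisely the dominance bookkeeping performed in \cite{AMR} for the cyclotomic Nazarov--Wenzl algebras and is the chief combinatorial delicacy of the argument.
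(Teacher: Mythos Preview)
Your plan is in the right spirit but misidentifies where the real work lies, and this leads to a genuine gap.

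The paper's proof makes a crucial structural separation that your sketch does not: it first proves that the $R$--module $M$ generated by $\W_{r,n-2f}'\,E^f X^\kappa T_d$ (for $\kappa\in\Nrf$, $d\in\Dcal_{f,n}$) is already a right $\W_{r,n}$--module, \emph{without any reference to} $M_{\s\t}$. Only after this is established does $M_{\s\t}$ enter: since $M_{\s\t}$ commutes with $E^f$, one has $E^fM_{\s\t}X^\kappa T_d\cdot h\in M_{\s\t}\cdot M$, and then a single application of Lemma~\ref{M_st properties}(d) together with the Ariki--Koike cellular basis finishes the argument in one line. So the ``dominance bookkeeping'' you flag as the principal obstacle is in fact the trivial step.

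The hard step is the claim that $M$ is a right module. This is precisely what Lemmas~\ref{l1}--\ref{l21} and Proposition~\ref{2line} are for, and they occupy several pages of delicate case analysis. Your treatment of $T_i$ and $E_i$ is too optimistic: it is \emph{not} true that $T_dT_i$ is either $T_{ds_i}$ with $ds_i\in\Dcal_{f,n}$ or else reducible by a single skein relation. When $ds_i=wd'$ with $w\in\mathfrak S_{n-2f}\times\Bcal_f$ and $d'\in\Dcal_{f,n}$, the $\Bcal_f$--part of $T_w$ must be absorbed into $E^fX^\kappa$, and this interaction between multiple $E$--strands and the intervening $X$--powers is exactly the content of Lemmas~\ref{l9}--\ref{l21}. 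Likewise, $E_i$ acting on the right does \emph{not} simply drop the element into $\W_{r,n}^{f+1}$; the typical outcome (see for instance Corollary~\ref{et}) is that it stays at level~$f$ but scrambles the $X^\kappa T_d$ data in a way that only the inductive machinery of those lemmas can straighten. Your sketch contains no mechanism to handle this, and without it the argument does not close.
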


In order to prove Theorem~\ref{cellular1}, we need several Lemmas as
follows.

\begin{Lemma}\label{l1} Let $N_1$  be the $R$-submodule generated by $\W_{r,  n-2}' E_{n-1}X_{n-1}^{\alpha}T_d$
 where $d\in\Dcal_{1,n}$ and $-p\le \alpha \leq p$. Then $N_1$ is a right $\W_{r, n}$-module.
\end{Lemma}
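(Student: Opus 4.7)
The plan is to verify that $N_1\cdot g\subseteq N_1$ for each generator $g$ of $\W_{r,n}$, i.e.\ for $g\in\{T_k,E_k\mid 1\le k<n\}\cup\{X_j^{\pm 1}\mid 1\le j\le n\}$. A typical spanning element of $N_1$ has the form $h\cdot E_{n-1}X_{n-1}^{\alpha}T_d$ with $h\in\W_{r,n-2}'$, $\alpha\in\{-p,\ldots,p\}$, and $d=s_{n-1,i_1}s_{n,j_1}$ for some $1\le i_1<j_1\le n$, using the explicit description of $\Dcal_{1,n}$ in Lemma~\ref{cosets}. The three families of generators are then treated separately.

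For right multiplication by $T_k$ or $E_k$, I would push the new factor leftward through $T_d$ using the braid and tangle relations of Definition~\ref{Waff relations}(c,h,i), together with the Kauffman skein identity $T_k^2=1+\delta T_k-\delta\varrho E_k$ whenever a length reduction occurs in $\Sym_n$. Since $\Dcal_{1,n}$ is a complete set of right coset representatives of $\Sym_{n-2}\times\langle s_{n-1}\rangle$ in $\Sym_n$, the product $ds_k$ factors uniquely as $\sigma d'$ with $\sigma\in\Sym_{n-2}\times\langle s_{n-1}\rangle$ and $d'\in\Dcal_{1,n}$, so $T_dT_k=T_\sigma T_{d'}$ plus lower-length corrections. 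The factor $T_\sigma$ either lies in $\W_{r,n-2}'$ and is absorbed into $h$, or contributes one copy of $T_{n-1}$ which collapses against $E_{n-1}$ via $T_{n-1}E_{n-1}=\varrho E_{n-1}$. The $E_k$ error terms are migrated leftward until they meet $E_{n-1}$, producing expressions of the form $E_{n-1}X_{n-1}^{\alpha}T_{n-2}^{\pm 1}E_{n-1}$ or $E_{n-1}X_{n-1}^{\alpha}T_{n-1}^{\pm 1}E_{n-1}$, which Corollary~\ref{relation1} rewrites as $R[X_1^{\pm 1},\ldots,X_{n-3}^{\pm 1}]$-linear combinations of $X_{n-2}^{i}E_{n-1}$ or $X_n^{i}E_{n-1}$; another use of $E_{n-1}X_{n-1}X_n=E_{n-1}$ returns these to $N_1$.

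For right multiplication by $X_j^{\pm 1}$ I would apply Lemma~\ref{relations2} to slide $X_j$ across the successive braid generators comprising $T_d$, accumulating error terms of the shape $\pm\delta X_\star^{a}(E_k-1)X_\star^{b}$ whose word length in the braid generators is strictly smaller and which are therefore handled as above. Writing $k=d(j)$, three cases appear: if $k\le n-2$ then $X_k$ ends up inside $\W_{r,n-2}'$ and is absorbed into $h$; if $k=n$ the anti-symmetry relation $E_{n-1}X_{n-1}X_n=E_{n-1}$ of Definition~\ref{Waff relations}(j) converts $X_n^{\pm 1}$ into $X_{n-1}^{\mp 1}$ against $E_{n-1}$; and if $k=n-1$ the element $E_{n-1}X_{n-1}^{\alpha\pm 1}$ arises directly.

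The hard part is this last case when $\alpha=\pm p$, since then the new exponent leaves the admissible window $[-p,p]$. To resolve it I would combine the recursion (\ref{omegaa}) from Lemma~\ref{tilde W}, which expresses $\widetilde W_{n-1}(y)$ as a rational function in $X_1,\ldots,X_{n-2}$ and $y$, with the cyclotomic relation $\prod_{i=1}^{r}(X_1-u_i)=0$ and the admissibility identity $\sum_{s=0}^{r}(-1)^{r-s}\sigma_{r-s}(\bu)\omega_{s+b}=0$ to produce a degree-$r$ identity of the form $E_{n-1}\cdot P(X_{n-1})\equiv 0$ modulo $\sum_{\beta=-p}^{p}\W_{r,n-2}'\cdot E_{n-1}X_{n-1}^{\beta}$. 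Because $r=2p+1$ is odd, this relation can be balanced symmetrically around zero, so any Laurent power of $X_{n-1}$ outside $\{-p,\ldots,p\}$ can be rewritten as an element of $N_1$; this is precisely the technical reason for the oddness hypothesis on $r$ in this section.
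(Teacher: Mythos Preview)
Your broad plan (checking closure under generators, pushing factors through $T_d$ with braid relations and the Kauffman skein, and treating the exponent overflow $\alpha\to\alpha\pm 1\notin[-p,p]$ as the crux) matches the paper's proof in outline.  The paper, however, uses only the generators $T_i,E_i,X_1$ (since $X_j=T_{j-1}X_{j-1}T_{j-1}$), so the $X$-part reduces to multiplying by $X_1$ and the analysis of ``sliding $X_j$ through $T_d$'' you describe is unnecessary.  Where your proposal genuinely diverges---and fails---is the treatment of the overflow case.

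Your suggestion to extract from the generating-function recursion~(\ref{omegaa}) and the scalar admissibility identity a ``degree-$r$ relation $E_{n-1}\cdot P(X_{n-1})\equiv 0$'' does not work: the recursion and Lemma~\ref{tilde W} control only the sandwiched elements $E_{n-1}X_{n-1}^aE_{n-1}=\omega_{n-1}^{(a)}E_{n-1}$, not the one-sided products $E_{n-1}X_{n-1}^a$.  There is no polynomial relation on $X_{n-1}$ in $\W_{r,n}$ analogous to the cyclotomic relation on $X_1$, and the admissibility identity $\sum_s(-1)^{r-s}\sigma_{r-s}(\bu)\omega_{s+b}=0$ is a relation among scalars, not among the operators $E_{n-1}X_{n-1}^s$.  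So your proposed mechanism does not reduce $E_{n-1}X_{n-1}^{p+1}$.

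What the paper does instead is an explicit \emph{cascade down to $X_1$}.  One writes $X_{n-1}^{p+1}=X_{n-1}^p\cdot T_{n-2}X_{n-2}T_{n-2}$, applies Lemma~\ref{relations2}(1) to move the factor $T_{n-2}$ past $X_{n-1}^p$, and uses the already-established closure under right multiplication by $T_i,E_i$ to absorb the resulting terms.  This reduces the problem to showing $E_{n-1}E_{n-2}\cdots E_iX_i^k\in N_1$ for successively smaller $i$; at $i=1$ one finally has $X_1^{p+1}$, and it is \emph{here} that the genuine cyclotomic relation $\prod_{i=1}^r(X_1-u_i)=0$ with $r=2p+1$ is invoked to rewrite $X_1^{p+1}$ as an $R$-combination of $X_1^{-p},\ldots,X_1^{p}$.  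The oddness of $r$ enters exactly at this final step, allowing the $r$ consecutive powers of $X_1$ to be chosen symmetrically about zero.  This cascading argument is the missing idea in your proposal.
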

\begin{proof} We need to verify
\begin{equation}\label{key1}  E_{n-1}X_{n-1}^{\alpha}T_d h\in  N_1\end{equation}   for any
$h\in \{T_i, E_i, X_1, 1\le i\le n-1\}$. Since
$X_{i}=T_{i-1}X_{i-1}T_{i-1}$,  $\{T_i, E_i, X_1, 1\le i\le n-1\}$
are algebraic generators of $\W_{r, n}$.  Since we are assuming that
$d\in \mathcal \Dcal_{1, n}$, by Lemma~\ref{cosets}, $d=s_{n-1, i}
s_{n, j}$ for some integers $i, j$ with $1\le i<j\le n$.

First, we verify (\ref{key1}) for  $h=E_k$, $1\le k\le n-1$. By
Definition~\ref{Waff relations}(b)(c), $E_{n-1}X_{n-1}^{\alpha}T_d
E_k= E_k E_{n-1}X_{n-1}^{\alpha}T_d\in N_1$ if $k\le i-2$. When
$k=i$ and $j=i+1$,
$$E_{n-1}X_{n-1}^{\alpha}T_{n-1, i} T_{n, j} E_k
=E_{n-1}X_{n-1}^{\alpha}E_{n, i}=\omega_{n-1}^{(\alpha)} E_{n, i},$$
where $\omega_{n-1}^{(\alpha)}\in Z(\W_{r, n-2})$ is given in
Lemma~\ref{tilde W}. For the simplification of notation, we use
$\omega_{n-1}^{(\alpha)}$ to express its image in $\W_{r, n-2}'$ in
the previous equation. Since $E_{n, i}=E_{n-1} T_{n-1, i} T_{n,
i+1}$, $\omega_{n-1}^{(\alpha)} E_{n, i}\in N_1$. If $j+1\leq k$,
then $$\begin{aligned}
   & E_{n-1}X_{n-1}^{\alpha}T_{n-1, i}T_{n, j}E_{k}\\
                                  = &E_{n-1}X_{n-1}^{\alpha}T_{n-1, i}T_{n, k-1}E_{k}T_{k-1, j}\\
                                  = &E_{n-1}X_{n-1}^{\alpha}T_{n-1, i}T_{n, k+1}E_{k-1}E_{k}T_{k-1, j}\\
                                  = & E_{n-1}X_{n-1}^{\alpha}T_{n-1, k-2}E_{k-1}T_{k-2, i}T_{n, k+1}E_{k}T_{k-1, j}\\
                                  = & E_{k-2}E_{n-1}X_{n-1}^{\alpha}T_{n-1, k}E_{k-1}T_{k-2, i}T_{n, k+1}E_{k}T_{k-1, j}\\
                                   = & E_{k-2}T_{k-2, i}T_{n-2, k-1}^{-1}E_{n-1}X_{n-1}^{\alpha}E_{n-1, k-1}T_{n, k+1}E_{k}T_{k-1, j}\\
                                     = & E_{k-2}T_{k-2, i}T_{n-2, k-1}^{-1}X_{n-2}^{-\alpha}T_{n-2, k-1}
                                     E_{n, k+1} E_{k}E_{k-1}E_{k}T_{k-1, j}\\
                                     = &E_{k-2}T_{k-2, i}T_{n-2, k-1}^{-1}X_{n-2}^{-\alpha}T_{n-2, j}E_{n, k}\in N_1\\
\end{aligned}$$

It is not difficult to check (\ref{key1}) if one of the conditions
holds:  (1) $k=i-1$, (2) $k=i, j>i+1$, (3) $i+1\le k\le j-2$, (4)
$k=j-1$ and $j>i+1$,  (5) $k=j$. We leave the details to the reader.
So, (\ref{key1}) holds for $h=E_k$ for all positive integers $k\le
n-1$.

Now, suppose $h=T_k$. Similarly, there are eight cases we have to
discuss.  We only check three cases and leave the remainder  cases
to the reader. Note that  the result for $ k\leq i-2$ follows
immediately from Definition~\ref{Waff relations}(b)(c).

If $k=i$ and $j>i+1$, then {\small $$ E_{n-1}X_{n-1}^{\alpha}T_{n-1,
i+1} T_{i}^2T_{n, j}
                             \overset{\ref{Waff relations}(b)}  {=}E_{n-1}X_{n-1}^{\alpha}T_{n-1, i+1}(1+\delta T_i-\varrho\delta E_i)T_{n, j}\in
                             N_1$$}
since we have already proved that (\ref{key1}) holds for $h=E_i$.
Similarly, when $k=j$,  $$ E_{n-1}X_{n-1}^{\alpha}T_{n-1, i}T_{n,
j+1}T_{j}^2=E_{n-1}X_{n-1}^{\alpha}T_{n-1, i}T_{n, j+1}(1+\delta
T_{j}-\varrho\delta E_{j})\in N_1. $$

Finally, we consider the case when  $h=X_1$. By Definition~\ref{Waff
relations}(b)-(c), (\ref{key1}) holds when $i>1$. If $i=1$, then $$
E_{n-1}X_{n-1}^{\alpha}T_{n-1, i} T_{n, j}X_1
=E_{n-1}X_{n-1}^{\alpha+1}T_{1, n-1}^{-1}T_{n, j}.$$ Since we have
verified (\ref{key1}) for  $h\in \{T_i, E_i\mid 1\le i\le n-1\}$, by
Definition~\ref{Waff relations}, (\ref{key1}) still holds when
$h=T_i^{-1}$ for all $1\le i\le n-1$. So, $
E_{n-1}X_{n-1}^{\alpha}T_{n-1, i} T_{n, j}X_1 \in N_1$ if
$E_{n-1}X_{n-1}^{\alpha+1}\in N_1$. By assumption, it is the case
when $-p\le \alpha< p$. So, we need consider the case $\alpha=p$. We
have $$\begin{aligned}
 E_{n-1}X_{n-1}^{p+1}&= E_{n-1}{X_{n-1}^{p}T_{n-2}}X_{n-2}T_{n-2}\\
                     &\overset{\ref{relations2}}{=} E_{n-1}(T_{n-2}X_{n-2}^p-\sum_{i=1}^p \delta X_{n-1}^i(E_{n-2}-1)X_{n-2}^{p-i})X_{n-2}T_{n-2}.\\
                   \end{aligned}$$

We claim that $E_{n-1}E_{n-2}X_{n-2}^{k}\in N_1$ for all positive
integers $k\le p+1$. If so, then  $E_{n-1}X_{n-1}^{p+1}\in N_1$
since $E_{n-1} X_{n-1}^i\in N_1$ for $1\le i\le p$, $
E_{n-1}T_{n-2}X_{n-2}^{p+1}=E_{n-1} E_{n-2}
                   X_{n-2}^{p+1} T_{n-1}^{-1}$ and $E_{n-1}  X_{n-1}^i
                   E_{n-2}=X_{n-2}^{-i} E_{n-1}E_{n-2}$. We remark
                   that, in this case,  we need to use (\ref{key1}) for $h\in \{T_i,
                   E_i\mid 1\le i\le n-1\}$, which has been proved.

In order to prove our claim, we use $E_{n-2}X_{n-2}^{p+1}$ instead
of $E_{n-1}X_{n-1}^{p+1}$, and repeat the previous arguments, we see
that our claim  holds if  $E_{n, i} X_i^k \in N_1$ for all positive
integers $2\le i\le n-1$, $0\le k\le p$  and $0\le k\le p+1$ when
$i=1$. Since $\prod_{i=1}^r(X_1-u_i)=0$ and $r=2p+1$,  we can use
$-p\le k\le p$ instead of $0\le k\le p+1$ when $i=1$.

Obviously, $E_{n, n-1} X_{n-1}^k \in N_1$. In general, $E_{n,
i}X_{i}^{k}=E_{n, i+1}T_{i}X_{i}^{k}T_{i+1}$. By
Lemma~\ref{relations2},
$$E_{n, i} X_i^k= E_{n, i+1}(X_{i+1}^{k} T_i +\sum_{j=1}^k
\delta X_{i+1}^j (E_i-1) X_i^{k-j})T_{i+1}.$$

Since we have already proved that $N_1$ is stable under the actions
of $E_j, T_j$, $1\le j\le n-1$, by our induction assumption on $i$
and $k$, $E_{n, i}X_{i}^{k}\in N_1$.

 When $i=1$, we have to discuss the case
when $-p\le k\le 0$.
 In fact,  $E_{n-1}X_{n-1}^{k}\in N_1$ for any $-p\le k\le 0$.
In general,
\begin{align*}
E_{n, i}X_{i}^{k}&=E_{n, i+1} T_{i}^{-1}X_{i}^{k}T_{i+1}^{-1}\\
                         & \overset{\ref{relations2}}{=}E_{n, i+1}(X_{i+1}^{k}T_{i}^{-1}-\sum_{j=1}^{-k} \delta
                         X_{i+1}^{-j}(E_{i}-1)X_{i}^{k+j})T_{i+1}^{-1}\\
\end{align*}
 By our  induction assumption on $i$ and $k$, we have  $E_{n, i}X_{i}^{k}\in N_1$. Setting $i=1$
 yields the result as required.
\end{proof}

The following results can be proved by arguments in the proof of
Lemma~\ref{l1}.
\begin{Cor} Let $d\in \Dcal_{1, n}$ and let $\alpha$ be integers with $-p\le \alpha\le p$.
\begin{enumerate} \label{et}
\item $E_{n-1}X_{n-1}^{\alpha}T_dE_{n-2}\in \W_{r,
n-2}'E_{n-1}E_{n-2}$. \item $E_{n-1}X_{n-1}^{\alpha}T_{d}E_{n-1}\in
\W_{r, n-2}' E_{n-1}$.\end{enumerate}\end{Cor}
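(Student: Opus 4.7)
By Lemma~\ref{cosets}, any $d \in \Dcal_{1,n}$ has the unique form $d = s_{n-1,i}s_{n,j}$ with $1 \le i < j \le n$, so both (a) and (b) will be proved by a case analysis on the pair $(i,j)$, exactly paralleling the structure of the proof of Lemma~\ref{l1}. At each step we reduce the given product using the defining relations of Definition~\ref{Waff relations}, together with Lemma~\ref{relations2}, Corollary~\ref{relation1}, and the identity $E_{n-1}X_{n-1}^\alpha E_{n-1} = \omega_{n-1}^{(\alpha)} E_{n-1}$ of Lemma~\ref{tilde W}, whose coefficient $\omega_{n-1}^{(\alpha)}$ lies in the image of $R[X_1^{\pm 1}, \ldots, X_{n-2}^{\pm 1}]$ in $\W_{r,n-2}'$.

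For part~(b) the simplest case $i = n-1$, $j = n$ gives $T_d = 1$ and $E_{n-1}X_{n-1}^\alpha E_{n-1} = \omega_{n-1}^{(\alpha)} E_{n-1} \in \W_{r,n-2}' E_{n-1}$. When $j = n$ and $i \le n-2$, the factors $T_i, \ldots, T_{n-3}$ appearing in $T_{n-1,i}$ commute with the right-hand $E_{n-1}$ (indices differ by at least two), so we pull them past and apply Corollary~\ref{relation1}(a) to the remaining $E_{n-1} X_{n-1}^\alpha T_{n-2} E_{n-1}$, producing a sum $\sum_\ell f_\ell X_{n-2}^\ell E_{n-1}$ with $f_\ell \in \W_{r,n-2}'$. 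When $j < n$, the leading $T_{n-1}$ from $T_{n,j}$ is absorbed by $E_{n-1}X_{n-1}^\alpha$ via Lemma~\ref{relations2}(3), which expresses $E_{n-1}X_{n-1}^\alpha T_{n-1}$ as an $R$-linear combination of terms of the form $E_{n-1} X_{n-1}^\beta$ (the $E_{n-1}X_{n-1}^{a-i}E_{n-1}X_{n-1}^{-i}$ summands collapse via Lemma~\ref{tilde W}); the remaining $T_{n-2}\cdots T_j$ portion then commutes with the trailing $E_{n-1}$ or is handled by the previous subcase.

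For part~(a) the scheme is the same, but each case must be completed by moving the extra $E_{n-2}$ into the standard form $\W_{r,n-2}' E_{n-1} E_{n-2}$. The key additional identities are
$$E_{n-1} T_{n-2} T_{n-1} = E_{n-1} E_{n-2}, \qquad E_{n-2} T_{n-2} = \varrho E_{n-2}, \qquad E_{n-1}E_{n-2}E_{n-1} = E_{n-1},$$
together with the anti-symmetry $E_{n-2}X_{n-2}X_{n-1} = E_{n-2}$. Using these, any trailing $T$- or $X_{n-1}$-factor picked up while absorbing $T_d$ can be transferred across the final $E_{n-2}$ at the cost of lower-index $X$'s and scalar multiples of $\varrho$, all of which already lie in $\W_{r,n-2}'$.

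The main obstacle is the middle range of cases, where neither $i$ nor $j$ is extremal, so that $T_d$ carries both an upward braid $T_{n-2}\cdots T_i$ and a downward braid $T_{n-1}\cdots T_j$ that meet at index $n-1$. As in the proof of Lemma~\ref{l1}, one must then migrate an $E_{n-1}$ (and, for (a), an additional $E_{n-2}$) through a nontrivial braid word using iterated applications of the Kauffman skein, braid, and tangle relations, absorbing the resulting correction terms (which involve $X_k^{\pm 1}$ and $E_k$ for $k \le n-2$) into $\W_{r,n-2}'$. Proceeding inductively on the lengths of these braid factors, each case eventually reduces to the extremal one, yielding the desired containment.
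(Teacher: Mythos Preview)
Your approach is essentially the same as the paper's: the paper's entire proof is the single sentence ``can be proved by arguments in the proof of Lemma~\ref{l1}'', and your proposal is an explicit elaboration of precisely that idea --- a case analysis on $(i,j)$ for $d=s_{n-1,i}s_{n,j}$ that reuses the manipulations already carried out there.

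One refinement that would tighten your write-up: rather than re-deriving each subcase from scratch, you can simply observe that the explicit computations in Lemma~\ref{l1} for $E_{n-1}X_{n-1}^\alpha T_d E_k$, when specialized to $k=n-1$ or $k=n-2$, already land in $\W_{r,n-2}'E_{n-1}$ or $\W_{r,n-2}'E_{n-1}E_{n-2}$ respectively. For example, the displayed computation in the case $j+1\le k$ of Lemma~\ref{l1} terminates in $E_{k-2}T_{k-2,i}T_{n-2,k-1}^{-1}X_{n-2}^{-\alpha}T_{n-2,j}E_{n,k}$; with $k=n-1$ this is $E_{n-3}T_{n-3,i}X_{n-2}^{-\alpha}T_{n-2,j}E_{n-1}$, and every factor to the left of $E_{n-1}$ has index at most $n-2$, hence lies in $\W_{r,n-2}'$. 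This makes the ``middle range'' you flag as the main obstacle immediate, and removes the need for the somewhat loose inductive sketch in your final paragraph. Note also that your treatment of the subcase ``$j<n$'' in part~(b) implicitly assumes $i=n-1$ (so that $T_{n-1}$ is adjacent to $E_{n-1}X_{n-1}^\alpha$); when $i\le n-2$ there is a $T_{n-1,i}$ in between, and it is cleaner to invoke the corresponding case of Lemma~\ref{l1} directly than to rearrange by hand.
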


\begin{Lemma}\label{l9} For any  $k\in \mathbb Z$,
$E_{n-1}X_{n-1}^{k}E_{n-3}\in N_3$ where $N_3$ is the $R$-submodule
of $\W_{r, n}$ generated by $\W_{r,
n-4}'E_{n-1}X_{n-1}^{\ell}E_{n-3}$ and $\W_{r,
n-4}'E_{n-3}X_{n-3}^{\ell}E_{n-1} T_w$ where $w=d_1 s_{n-1,
n-3}s_{n, n-2}$ for some $1\ne d_1\in \Dcal_{1,n-2}$ and  $\ell \in
\mathbb Z$ with $|\ell|\le p$.
\end{Lemma}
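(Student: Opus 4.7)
The plan is to proceed by induction on $|k|$. The base case $|k| \le p$ is immediate, since $E_{n-1} X_{n-1}^k E_{n-3}$ already has the first generating form of $N_3$ (with left factor $1 \in \W_{r,n-4}'$).

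For the inductive step, by the symmetry between Lemma~\ref{relations2}(1)--(3) and (4)--(6), it suffices to treat $k = p+1$; the cases $k = -p-1$ and $|k| > p+1$ then follow by iterating the same reduction in conjunction with the inductive hypothesis. Writing $X_{n-1} = T_{n-2} X_{n-2} T_{n-2}$ and rearranging Lemma~\ref{relations2}(1) as
\[
X_{n-1}^{p} T_{n-2} = T_{n-2} X_{n-2}^{p} - \delta \sum_{i=1}^{p} X_{n-1}^i (E_{n-2}-1) X_{n-2}^{p-i},
\]
one obtains
\[
E_{n-1} X_{n-1}^{p+1} E_{n-3} = E_{n-1} T_{n-2} X_{n-2}^{p+1} T_{n-2} E_{n-3} - \delta \sum_{i=1}^{p} E_{n-1} X_{n-1}^i (E_{n-2}-1) X_{n-2}^{p+1-i} T_{n-2} E_{n-3}.
\]
The main term is processed using $E_{n-1} T_{n-2} = E_{n-1} E_{n-2} T_{n-1}^{-1}$ (rearranged from Definition~\ref{Waff relations}(h)(ii)) together with the commutation of $T_{n-1}$ and $E_{n-3}$, while the correction terms are simplified by means of $E_{n-2} T_{n-2} = \varrho E_{n-2}$ (Definition~\ref{Waff relations}(h)(i)), the untwisting relations in Definition~\ref{Waff relations}(i), and the inductive hypothesis (since each power $X_{n-1}^i$ now lies in $1 \le i \le p$).

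The main obstacle is that $X_{n-2}$ does not commute with $E_{n-3}$ (as $n-2 = (n-3)+1$), so powers $X_{n-2}^a$ adjacent to $E_{n-3}$ cannot simply be absorbed into the left factor from $\W_{r,n-4}'$. The resolution is to substitute $X_{n-2} = T_{n-3} X_{n-3} T_{n-3}$ and apply Lemma~\ref{relations2} iteratively, transferring the exponent onto $X_{n-3}$ at the cost of producing a $T$-word of the form $T_{n-2} T_{n-3} T_{n-1} T_{n-2} = T_{s_{n-1,n-3}} T_{s_{n,n-2}}$; any further $T$-factor needed in order to push pieces into $\W_{r,n-4}'$ aggregates into a prefix $T_{d_1}$ with $d_1 \in \Dcal_{1,n-2}$, producing exactly the second family of generators of $N_3$. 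Reducing the resulting $X_{n-3}$ exponent into the admissible range $|\ell| \le p$ then requires a nested induction paralleling the treatment of $E_{n-1} X_{n-1}^{p+1} \in N_1$ in the proof of Lemma~\ref{l1}, invoking the cyclotomic relation and the analogues for lower indices of the auxiliary facts $E_{n,i} X_i^k \in N_1$ established there.
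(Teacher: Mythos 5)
Your strategy misses the one identity that makes this lemma easy, and as a result it runs into two concrete obstructions. First, the passage from $k=p+1$ to arbitrary $|k|>p+1$ ``by iterating the same reduction in conjunction with the inductive hypothesis'' does not go through: $X_{n-1}$ satisfies no cyclotomic relation in $\W_{r,n}$ (only $X_1$ does), and $N_3$ is only an $R$-submodule, not a right ideal, so when the expansion of $E_{n-1}X_{n-1}^{k}E_{n-3}$ produces terms of the form $E_{n-1}X_{n-1}^{j}\cdot(\text{factors involving } E_{n-2},T_{n-2},X_{n-2})\cdot E_{n-3}$ with $j$ still at the boundary of the admissible range, the inductive hypothesis cannot be applied to them. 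Second, and more fundamentally, the trailing words you collect do not all land among the generators of $N_3$: the second family requires $d_1\ne 1$, and your rewriting inevitably produces terms in $\W_{r,n-4}'E_{n-3}X_{n-3}^{\ell}E_{n-1}T_{n-2}T_{n-3}T_{n-1}T_{n-2}$ with trivial prefix. To see that these lie in $N_3$ one needs precisely the identity
$$E_{n-3}X_{n-3}^{\ell}E_{n-1}E_{n-2}E_{n-3}=E_{n-1}X_{n-1}^{\ell}E_{n-3},$$
which your proposal never states. (A smaller point: the simplification of the main term via $E_{n-1}T_{n-2}=E_{n-1}E_{n-2}T_{n-1}^{-1}$ ``together with the commutation of $T_{n-1}$ and $E_{n-3}$'' is blocked by the intervening $T_{n-2}$, which does not commute with $T_{n-1}^{-1}$.)

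The displayed identity, read from right to left, is essentially the whole proof in the paper: one starts from $E_{n-1}X_{n-1}^{k}E_{n-3}=E_{n-3}X_{n-3}^{k}E_{n-1}E_{n-2}E_{n-3}$; Lemma~\ref{l1}, applied inside $\W_{r,n-2}'$, writes $E_{n-3}X_{n-3}^{k}$ as an $R$-linear combination of elements of $\W_{r,n-4}'E_{n-3}X_{n-3}^{\ell}T_{d_1}$ with $|\ell|\le p$ and $d_1\in\Dcal_{1,n-2}$ --- this is where all of the exponent reduction happens, legitimately, because the module $N_1$ of Lemma~\ref{l1} \emph{is} a right $\W$-module; right multiplication by $E_{n-1}E_{n-2}E_{n-3}=E_{n-1}T_{n-2}T_{n-3}T_{n-1}T_{n-2}$, together with $T_{d_1}E_{n-1}=E_{n-1}T_{d_1}$, then yields generators of the second family when $d_1\ne1$ and, by the identity again, of the first family when $d_1=1$. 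You should replace the induction on $|k|$ by this reduction to Lemma~\ref{l1}.
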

\begin{proof}
Note that  $E_{n-1}X_{n-1}^{k}E_{n-3}
=E_{n-3}X_{n-3}^{k}E_{n-1}E_{n-2}E_{n-3}$. Applying Lemma~\ref{l1}
on  $E_{n-3}X_{n-3}^{k}$, we can write
$E_{n-3}X_{n-3}^{k}E_{n-1}E_{n-2}E_{n-3}$ as an $R$-linear
combination of elements in $\W_{r, n-4}'
E_{n-3}X_{n-3}^{\ell}T_{d_1}E_{n-1}E_{n-2}E_{n-3}$ where $d_1\in
\Dcal_{1, n-2}$ and  $\ell\in \mathbb Z$ with $| \ell |\le p$.  Such
elements are in $N_3$ if $d_1\neq 1$ since $E_{n-1} T_{d_1}=T_{d_1}
E_{n-1}$ and  $E_{n-1}E_{n-2}E_{n-3}=E_{n-1}
T_{n-2}T_{n-1}T_{n-3}T_{n-2}$. When $d_1=1$,
$E_{n-3}X_{n-3}^{\ell}T_{d_1}E_{n-1}E_{n-2}E_{n-3}=E_{n-1}X_{n-1}^{\ell}E_{n-3}\in
N_3$.
\end{proof}

\begin{Lemma}\label{l11}
Given an integer $-p\le k\le p$. $E_{n-3}  \W_{r, n-2}'
E_{n-1}E_{n-2}X_{n-2}^{k}\in N_{4, k}$ where $N_{4, k}$ is the
$R$-submodule of $\W_{r, n}$ generated by $$\left\{\W_{r,
n-4}'E_{n-3}X_{n-3}^{\ell_1}E_{n-1}X_{n-1}^{k_1}T_{d_1}
T_{d_2}\Big|{ d_2\in\{1,s_{n-2},s_{n-2}s_{n-1}\},\atop d_1\in
\Dcal_{1,n-2}, |\ell_1|\leq p,  |k_1|\leq |k|}\right\}.$$
\end{Lemma}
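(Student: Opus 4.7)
The plan is to reduce a general element $E_{n-3}hE_{n-1}E_{n-2}X_{n-2}^k$ with $h\in\W_{r,n-2}'$ by pushing the $h$-factor leftward into $\W_{r,n-4}'E_{n-3}$, simplifying the middle tangle $E_{n-1}E_{n-2}X_{n-2}^k$ to an $E_{n-1}X_{n-1}^{k_1}$ factor, and collecting the surviving $T$-letters into the prescribed $T_{d_1}T_{d_2}$ tail.

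First observe that each generator of $\W_{r,n-2}'$ commutes with both $E_{n-1}$ and $E_{n-3}$: the generators $T_i,E_i$ with $i\le n-3$ by Definition~\ref{Waff relations}(a), (c)(i); and $X_j$ for $j\le n-2$ because the Kauffman skein expresses $E_{n-1}$ as a polynomial in $T_{n-1}$ and $T_{n-1}$ commutes with $X_j$ for $j\le n-2$ by Definition~\ref{Waff relations}(c)(iii). Hence
$$E_{n-3}hE_{n-1}E_{n-2}X_{n-2}^k=E_{n-1}(E_{n-3}h)E_{n-2}X_{n-2}^k.$$
The proof of Lemma~\ref{l1} now applies verbatim with $n$ replaced by $n-2$, showing that the $R$-submodule of $\W_{r,n-2}$ generated by $\W_{r,n-4}'E_{n-3}X_{n-3}^{\alpha}T_d$ with $d\in\Dcal_{1,n-2}$, $|\alpha|\le p$ is a right $\W_{r,n-2}$-module; in particular $E_{n-3}h$ can be written as an $R$-linear combination of elements $h'E_{n-3}X_{n-3}^{\ell_1}T_{d_1}$ with $h'\in\W_{r,n-4}'$, $|\ell_1|\le p$, $d_1\in\Dcal_{1,n-2}$. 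After commuting the $\W_{r,n-2}'$-factors $T_{d_1}$ and $X_{n-3}^{\ell_1}$ back past $E_{n-1}$, the claim reduces to
$$h'E_{n-3}X_{n-3}^{\ell_1}E_{n-1}T_{d_1}E_{n-2}X_{n-2}^k\in N_{4,k}.$$

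The remainder of the argument simplifies the tail $E_{n-1}T_{d_1}E_{n-2}X_{n-2}^k$ by induction on $|k|$. For $k=0$, commuting $T_{d_1}$ past $E_{n-1}$ (on both sides) and using Definition~\ref{Waff relations}(h)(ii) gives $E_{n-1}T_{d_1}E_{n-2}=E_{n-1}T_{d_1}T_{n-2}T_{n-1}$, which matches the target shape with $d_2=s_{n-2}s_{n-1}$ and $k_1=0$ (here $T_{d_1}T_{d_2}$ need not be a reduced $T_w$; it is merely the concatenation allowed by the definition of $N_{4,0}$). For the inductive step $|k|>0$, use $T_{n-1}X_{n-2}=X_{n-2}T_{n-1}$ (Definition~\ref{Waff relations}(c)(iii)) to pull $T_{n-1}$ through $X_{n-2}^k$ and apply Lemma~\ref{relations2}(1) (for $k>0$) or (4) (for $k<0$) to $T_{n-2}X_{n-2}^k$. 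This produces the principal term $E_{n-1}X_{n-1}^kT_{d_1}T_{n-2}T_{n-1}$, which fits the target with $k_1=k$ and $d_2=s_{n-2}s_{n-1}$, together with error terms of the form $\delta E_{n-1}T_{d_1}X_{n-1}^i E_{n-2}X_{n-2}^{k-i}T_{n-1}$ and the $-\delta$-pieces without $E_{n-2}$. For each error, the anti-symmetry relation $X_{n-1}^iE_{n-2}=X_{n-2}^{-i}E_{n-2}$ converts $X_{n-1}^i$ into $X_{n-2}^{-i}$, which is then commuted leftward past $E_{n-1}$ and past $X_{n-3}^{\ell_1}$ and absorbed into the leading $\W_{r,n-4}'E_{n-3}X_{n-3}^{\bullet}T_{\bullet}$ factor by a further application of the $(n-2)$-analogue of Lemma~\ref{l1} (using the anti-symmetry $E_{n-3}X_{n-2}^{-i}=E_{n-3}X_{n-3}^{i}$ when crossing $E_{n-3}$). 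The remaining factor has $X_{n-1}$-exponent of strictly smaller absolute value, hence lies in $N_{4,|k|-1}\subseteq N_{4,k}$ by the inductive hypothesis.

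The main technical obstacle is the bookkeeping of $T_{d_1}$ when $d_1\in\Dcal_{1,n-2}$ contains a $T_{n-3}$: then $T_{d_1}$ fails to commute with $X_{n-2}$, and the interaction with the Lemma~\ref{relations2} expansion produces mixed $T$-and-$E$ strings that require the identity $T_{n-3}E_{n-2}=T_{n-2}^{-1}E_{n-3}E_{n-2}$ (obtained from (h)(ii) and the anti-involution of Lemma~\ref{antiinvolution}), combined with $T_{n-2}^{-1}=T_{n-2}-\delta(1-E_{n-2})$ (from the Kauffman skein and (h)(i)), to be unwound. These unwound terms contribute the additional values $d_2=1$ (from the $-\delta$-summand of $T_{n-2}^{-1}$) and $d_2=s_{n-2}$ (from the $T_{n-2}$-summand) of the target shape, and any extra $E_{n-3}$-factors they create are absorbed into the leading $E_{n-3}$ via Lemma~\ref{l9} together with the untwisting relation $E_{n-3}E_{n-4}E_{n-3}=E_{n-3}$.
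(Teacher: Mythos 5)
Your core computation --- writing $E_{n-1}E_{n-2}X_{n-2}^k=E_{n-1}T_{n-2}X_{n-2}^kT_{n-1}$, expanding $T_{n-2}X_{n-2}^k$ by Lemma~\ref{relations2}(1)/(4), keeping the principal term, and feeding the $E_{n-2}$-carrying error terms into an induction on $|k|$ via the anti-symmetry relation --- is the same as the paper's, and your base case $k=0$ is fine. But there are two genuine problems. First, every error term produced by the expansion carries a trailing $T_{n-1}$ detached from any $T_{n-2}$, so after the inductive hypothesis (or Lemma~\ref{l1}) is applied you are left with elements of the form $(\text{element of }N_{4,k'})\cdot T_{n-1}$, or with tails $T_{d'}T_{n-1}$ where $s_{n-1}\notin\{1,s_{n-2},s_{n-2}s_{n-1}\}$. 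Concluding membership in $N_{4,k}$ therefore requires the stability $N_{4,k}T_{n-1}\subseteq N_{4,k}$, which you never establish; the paper proves exactly this at the start of its argument, using Lemma~\ref{relations2}(3),(6) and Corollary~\ref{relation1} to rewrite $E_{n-1}X_{n-1}^{\ell}T_{n-1}$ and $E_{n-1}X_{n-1}^{\ell}T_{n-2}E_{n-1}$ as combinations of elements of $\W_{r,n-2}'E_{n-1}X_{n-1}^{m}$ with $|m|\le|\ell|$. Without this your induction does not close.

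Second, your decision to apply Lemma~\ref{l1} to $E_{n-3}h$ \emph{before} expanding the tail manufactures the ``main technical obstacle'' of a $T_{d_1}$ (possibly containing $T_{n-3}$) trapped against $E_{n-2}X_{n-2}^{k}$, and the proposed escape is not sound as written: Lemma~\ref{l9} concerns $E_{n-1}X_{n-1}^{k}E_{n-3}\in N_3$, a different module and configuration, and does not absorb the $E_{n-3}$-factors created by $T_{n-3}E_{n-2}=T_{n-2}^{-1}E_{n-3}E_{n-2}$; the claim that the unwound terms land precisely on $d_2\in\{1,s_{n-2}\}$ is asserted, not proved. The paper sidesteps all of this by keeping $h$ generic until the end: after the expansion, the surviving $X_{n-2}^{k-i}$ and $X_{n-2}^{-i}$ powers are commuted \emph{leftward} past $E_{n-1}X_{n-1}^{i}$ and merged into $h$ (forming $hX_{n-2}^{k-i}\in\W_{r,n-2}'$), and only then is Lemma~\ref{l1} invoked, so the resulting $T_{d_1}$ only ever has to cross $E_{n-1}X_{n-1}^{k_1}$, with which it commutes. (Also, your preliminary claim that every generator of $\W_{r,n-2}'$ commutes with $E_{n-3}$ is false --- $X_{n-3}$, $X_{n-2}$ and $T_{n-4}$ do not --- though only the commutation with $E_{n-1}$ is actually used at that step.)
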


\begin{proof} Applying  Lemma~\ref{relations2}(3)(6) (resp. Corollary~\ref{relation1}) on $
E_{n-1}X_{n-1}^{\ell}T_{n-1}$ (resp.
$E_{n-1}X_{n-1}^{\ell}T_{n-2}E_{n-1}$), we see that both of them can
be written as $R$-linear combinations of elements in
$\W_{r,n-2}'E_{n-1}X_{n-1}^{m}$ with $|m|\leq |\ell|$. Now, one can
verify $N_{4, k} T_{n-1}\subseteq N_{4, k}$ by Definition~\ref{Waff
relations} and Lemma~\ref{l1}  for $\W_{r,n-2}'$ without any
difficult.

Given a positive integer $ k\leq p$ and an element $h\in \W_{r,
n-2}'$. Then
\begin{equation}\label{l6}\begin{aligned} &E_{n-3}h
E_{n-1}E_{n-2}X_{n-2}^{k}
             =E_{n-3} h E_{n-1}T_{n-2}X_{n-2}^{k}T_{n-1} \\
              \overset{\ref{relations2}}{=} & E_{n-3}hE_{n-1}
                           (X_{n-1}^{k}T_{n-2}+\sum_{i=1}^{k}\delta
                           X_{n-1}^{i}(E_{n-2}-1)X_{n-2}^{k-i})T_{n-1}\\
\end{aligned}\end{equation}
By Lemma~\ref{l1} for $E_{n-3}h$, $E_{n-3}hE_{n-1}
X_{n-1}^{k}T_{n-2}T_{n-1}\in N_{4, k}$.
 Also, we
have  $$E_{n-3}hE_{n-1}
                           X_{n-1}^{i}X_{n-2}^{k-i}T_{n-1}
= E_{n-3} h X_{n-2}^{k-i}E_{n-1} X_{n-1}^{i}T_{n-1}\in N_{4, k}.$$
One can verify the above inclusion by using $N_{4, k} T_{n-1}\subset
N_{4, k}$ and Lemma~\ref{l1} for $E_{n-3} h X_{n-2}^{k-i}$.

Finally, by induction assumption on $k$ (when $k=0$, the result is
trivial since $E_{n-1}E_{n-2} =E_{n-1} T_{n-2} T_{n-1}$), we have
$$ h_1:= E_{n-3}hX_{n-2}^{-i}E_{n-1}
E_{n-2}X_{n-2}^{k-i}\in N_{4, k}$$ for all positive integers $i\le
k$. Since $N_{4, k} T_{n-1}\subset  N_{4, k}$, $ h_1  T_{n-1} \in
N_{4. k}$. So, $E_{n-3}h E_{n-1}E_{n-2}X_{n-2}^{k}\in N_{4,
k}$.

The case when  $-p\leq k\leq 0$ can be discussed similarly. The only
difference is that we have to use Lemma~\ref{relations2}(4) instead
of Lemma~\ref{relations2}(1). We leave the details to the reader.
\end{proof}

\begin{Lemma} \label{n4} Let $N_4$ be the $R$-submodule of $\W_{r, n}$ generated by $\W_{r,
n-4}'E_{n-3}X_{n-3}^{\ell}E_{n-1}X_{n-1}^{k}T_{d_1} T_{d_2}$ where
$d_2\in\{1,s_{n-2},s_{n-2}s_{n-1}\}$, $d_1\in \Dcal_{1,n-2}$, $\ell,
k\in \mathbb Z$ with  $-p\le k, \ell\le p$. Then $N_4 h\subset h$
for $h\in \{T_{n-1}, T_{n-2}, E_{n-1}, E_{n-2}\}$. \end{Lemma}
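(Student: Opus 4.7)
\emph{Plan.} The key observation will be that $T_{d_1}\in\W_{r,n-2}'$ commutes with $E_{n-1}$, $X_{n-1}$, and $T_{n-1}$, so every spanning element of $N_4$ can be rewritten as $(h_0 E_{n-3}X_{n-3}^{\ell}T_{d_1})\cdot(E_{n-1}X_{n-1}^{k}T_{d_2})$ with $h_0\in\W_{r,n-4}'$. The first factor is a typical generator of the right $\W_{r,n-2}'$-submodule that results from applying Lemma~\ref{l1} inside the subalgebra $\W_{r,n-2}'$; in particular it is preserved by right multiplication by any element of $\W_{r,n-2}'$. It suffices then to show, for each $h\in\{T_{n-1},T_{n-2},E_{n-1},E_{n-2}\}$, that
\[
(E_{n-1}X_{n-1}^{k}T_{d_2})\cdot h\;\in\;\W_{r,n-2}'\cdot B,
\]
where $B$ denotes the $R$-span of the elements $E_{n-1}X_{n-1}^{k'}T_{d_2'}$ with $|k'|\le p$ and $d_2'\in\{1,s_{n-2},s_{n-2}s_{n-1}\}$. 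I will dispose of $h=T_{n-1}$, $h=E_{n-1}$, and $h=E_{n-2}$ first (they are mutually independent) and then attack $h=T_{n-2}$ using the stability just established.

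\emph{Cases $h=T_{n-1}$ and $h=E_{n-1}$.} For each of the three choices of $d_2$, I compute $T_{d_2}h$ using the braid, Kauffman skein, and tangle relations of Definition~\ref{Waff relations}; the outcome is a word in $T_{n-2}$, $T_{n-1}$, and $E_{n-1}$. Sliding $E_{n-1}X_{n-1}^{k}$ against this word, Lemma~\ref{relations2}(3),(6) reduces any factor of the form $E_{n-1}X_{n-1}^{a}T_{n-1}^{\pm 1}$ and Corollary~\ref{relation1}(a) reduces any factor of the form $E_{n-1}X_{n-1}^{a}T_{n-2}E_{n-1}$. Each $E_{n-1}X_{n-1}^{a}E_{n-1}$ that appears collapses to $\omega_{n-1}^{(a)}E_{n-1}$ by Lemma~\ref{tilde W}; since $\omega_{n-1}^{(a)}\in Z(\W_{r,n-2})$, it slides through to the $\W_{r,n-2}'$-coefficient, leaving a combination of elements of $B$.

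\emph{Case $h=E_{n-2}$.} This case is the cleanest. Using $T_{n-2}E_{n-2}=\varrho E_{n-2}$ and $T_{n-2}T_{n-1}E_{n-2}=E_{n-1}E_{n-2}$, all three $d_2$-subcases collapse to the same shape; after applying the anti-symmetry relation $X_{n-1}^{k}E_{n-2}=X_{n-2}^{-k}E_{n-2}$, the product lies in $E_{n-3}\,\W_{r,n-2}'\,E_{n-1}E_{n-2}$, which is contained in $N_{4,0}\subseteq N_4$ by Lemma~\ref{l11}.

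\emph{Main obstacle: $h=T_{n-2}$.} Here the new $T_{n-2}$ fails to commute with $T_{d_1}$. The subcase $d_2=s_{n-2}$ reduces via the Kauffman skein relation $T_{n-2}^{2}=1+\delta T_{n-2}-\delta\varrho E_{n-2}$ to the $d_2=1$ subcase and to the $h=E_{n-2}$ case. The subcase $d_2=s_{n-2}s_{n-1}$ reduces, via the braid relation $T_{n-2}T_{n-1}T_{n-2}=T_{n-1}T_{n-2}T_{n-1}$ and one application of Lemma~\ref{relations2}(3), to an element of $B$ with $d_2'=s_{n-2}s_{n-1}$. For the delicate subcase $d_2=1$, I will first use Lemma~\ref{relations2}(1),(4) to commute $X_{n-1}^{k}$ past $T_{n-2}$, producing a main term $E_{n-1}T_{n-2}X_{n-2}^{k}$ together with error terms of the form $\delta E_{n-1}X_{n-1}^{i}(E_{n-2}-1)X_{n-2}^{k-i}$ which are absorbed by the $h=E_{n-2}$ case. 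The identity $E_{n-1}T_{n-2}=E_{n-1}E_{n-2}T_{n-1}^{-1}$, a consequence of the tangle relation in Definition~\ref{Waff relations}(h)(ii), then realises the main term as an element of $E_{n-3}\,\W_{r,n-2}'\,E_{n-1}E_{n-2}X_{n-2}^{k}$ multiplied on the right by $T_{n-1}^{-1}$. By Lemma~\ref{l11}, $E_{n-3}\,\W_{r,n-2}'\,E_{n-1}E_{n-2}X_{n-2}^{k}\subseteq N_{4,k}\subseteq N_4$; expanding $T_{n-1}^{-1}=T_{n-1}-\delta+\delta E_{n-1}$ and invoking the already-established stability of $N_4$ under right multiplication by $T_{n-1}$ and $E_{n-1}$ completes the argument. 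Exponents of $X_{n-1}$ overshooting $[-p,p]$ are reabsorbed via the cyclotomic relation exactly as in the final paragraph of the proof of Lemma~\ref{l1}.
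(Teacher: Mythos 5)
Your proof is correct and follows essentially the same route as the paper's (very terse) argument: establish stability under $E_{n-1}$, $E_{n-2}$ and $T_{n-1}$ first via Lemma~\ref{l1}, Corollary~\ref{et}/\ref{relation1} and Lemma~\ref{tilde W}, then handle $T_{n-2}$ using Lemma~\ref{relations2} and the Kauffman skein relation, with Lemma~\ref{l11} (which precedes this lemma, so no circularity) absorbing the $E_{n-1}E_{n-2}X_{n-2}^{k}$ terms. The reduction to the factor $E_{n-1}X_{n-1}^{k}T_{d_2}$ is legitimate because the $T_{d_1'}$ produced by reapplying Lemma~\ref{l1} lands to the left of $T_{d_2'}$, exactly matching the generator format of $N_4$.
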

\begin{proof} By   Corollary~\ref{et} and Lemma~\ref{l1}, we have
$N_4 E_{i}\subset N_4$ for $i\in \{n-1, n-2\}$.  Using this result
together with  Lemma~\ref{relations2} and Definition~\ref{Waff
relations}(b), we have  $N_4 T_{i}\subset N_4$ for $i\in \{n-1,
n-2\}$.\end{proof}

\begin{Lemma} \label{ml}Fix an integer $\ell$ with $-p\le l\le p$.  Let $M_\ell\subset \W_{r, n}$ be the $R$-module generated by $\{\W_{r,
n-4}' E_{n-1} X_{n-1}^{k} E_{n-3}X_{n-3}^{i}|k\in \mathbb Z, |i|\le
|\ell|\}$. Let $N_4$ be the $R$-module defined in Lemma~\ref{n4}. If
$E_{n-1}X_{n-1}^{\ell'}E_{n-3}X_{n-3}^{m}T_{n-2}T_{n-3}\in N_4$ for
all integers  $|\ell'|< |\ell|$ and $|m|\le p$,  then
$M_\ell\subseteq N_4$.
\end{Lemma}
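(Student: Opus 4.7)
The plan is to prove the containment $M_\ell \subseteq N_4$ by showing that every generator $h E_{n-1} X_{n-1}^{k} E_{n-3} X_{n-3}^{i}$ (with $h \in \W_{r,n-4}'$, $k\in\mathbb{Z}$, $|i|\le|\ell|$) lies in $N_4$. Since every generator of $N_4$ has a leading $\W_{r,n-4}'$ factor, the module $N_4$ is closed under left multiplication by $\W_{r,n-4}'$, and this reduces the problem to proving
\begin{equation*}
E_{n-1} X_{n-1}^{k} E_{n-3} X_{n-3}^{i} \in N_4
\quad \text{for all } k\in\mathbb{Z},\ |i|\le|\ell|.
\end{equation*}

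Next, I would use the fact that $E_{n-3}$ and $X_{n-3}$ commute with both $E_{n-1}$ and $X_{n-1}$ (their indices differ by at least $2$, so Definition~\ref{Waff relations}(a)(c) applies and the commutativity of $E$'s at distance $\ge 2$ follows from the tangle and untwisting relations, as used throughout Section~4). This lets me rewrite
\begin{equation*}
E_{n-1} X_{n-1}^{k} E_{n-3} X_{n-3}^{i} \;=\; E_{n-3} X_{n-3}^{i} E_{n-1} X_{n-1}^{k}.
\end{equation*}
When $|k|\le p$ (and automatically $|i|\le|\ell|\le p$), the right-hand side is exactly a generator of $N_4$ with $d_1=1\in\Dcal_{1,n-2}$ and $d_2=1\in\{1,s_{n-2},s_{n-2}s_{n-1}\}$, and the base case is done.

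For $|k|>p$, I would induct on $|k|$. The idea is to apply $X_{n-1}=T_{n-2}X_{n-2}T_{n-2}$ (or the corresponding identity for $X_{n-1}^{-1}$) to trade one power of $X_{n-1}$ for $T_{n-2}X_{n-2}T_{n-2}$, then push the leading $T_{n-2}$ through $X_{n-1}^{k-1}$ via Lemma~\ref{relations2}(1)(4). The resulting correction terms carry either an $E_{n-2}$ factor, which can be absorbed using the tangle relations $E_{n-1}E_{n-2}=T_{n-2}T_{n-1}E_{n-2}$ and $E_{n-2}E_{n-3}=T_{n-3}T_{n-2}E_{n-3}$ together with Lemma~\ref{tilde W}, or a strictly smaller power of $X_{n-1}$, to which induction applies. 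Iterating, the substitution $X_{n-1}=T_{n-2}T_{n-3}X_{n-3}T_{n-3}T_{n-2}$ eventually converts the stubborn correction terms into ones of the shape $E_{n-1}X_{n-1}^{\ell'}E_{n-3}X_{n-3}^{m}T_{n-2}T_{n-3}$ with $|\ell'|<|\ell|$ and $|m|\le p$, which belong to $N_4$ by the hypothesis of the lemma. The remaining terms are either already of the base-case form or fall into $M_{\ell'}$ with $|\ell'|<|\ell|$ and can be reduced to $N_4$ by an outer induction on $|\ell|$.

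The main obstacle will be the bookkeeping in the last step: each application of Lemma~\ref{relations2} generates several auxiliary terms involving $E_{n-2}$ and intermediate powers of $X_{n-2}$, and I must verify that after collapsing these via the tangle and unwrapping relations (together with the stability properties of $N_4$ established in Lemma~\ref{n4}), every resulting summand is either manifestly in $N_4$ via commutativity (base case), covered by the induction on $|k|$, or of the precise form $E_{n-1}X_{n-1}^{\ell'}E_{n-3}X_{n-3}^{m}T_{n-2}T_{n-3}$ with $|\ell'|<|\ell|$ that the hypothesis supplies. If no other types of correction terms survive, the reduction terminates and $M_\ell\subseteq N_4$ follows.
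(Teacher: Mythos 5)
Your reduction to the single generator $E_{n-1}X_{n-1}^{k}E_{n-3}X_{n-3}^{i}$ and your base case are fine: for $|k|\le p$ the element commutes into $E_{n-3}X_{n-3}^{i}E_{n-1}X_{n-1}^{k}$, which is a generator of $N_4$ with $d_1=d_2=1$; this is exactly the paper's remark that its condition (1) holds by the definition of $N_4$. The gap is in the case $|k|>p$. Reducing $E_{n-1}X_{n-1}^{k}$ for $|k|>p$ is not just a matter of trading powers of $X_{n-1}$ for powers of $X_{n-3}$ and absorbing some $E_{n-2}$'s: by Lemma~\ref{l1} the reduction lands in the span of $\W_{r,n-2}'E_{n-1}X_{n-1}^{\alpha}T_d$ with $d\in\Dcal_{1,n}$, i.e.\ it produces arbitrary elements of $\W_{r,n-2}'$ and nontrivial permutations sitting between $E_{n-1}$ and the trailing data. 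The paper packages this via Lemma~\ref{l9}, which rewrites $E_{n-1}X_{n-1}^{k}E_{n-3}$ through $E_{n-1}E_{n-2}E_{n-3}$ and applies Lemma~\ref{l1} to $E_{n-3}X_{n-3}^{k}$; besides terms of your base-case form, this leaves the genuinely hard elements $E_{n-3}X_{n-3}^{a}E_{n-1}T_{d_1}T_{n-2}T_{n-3}T_{n-1}T_{n-2}X_{n-3}^{i}$ with $1\ne d_1\in\Dcal_{1,n-2}$. Essentially the whole proof (an induction on $i$ with a case analysis on $d_2\in\{1,s_{n-2},s_{n-2}s_{n-1}\}$, resting on Lemmas~\ref{l11} and~\ref{n4}) is devoted to these terms, and your sketch never produces or addresses them.

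Moreover, your use of the hypothesis is structurally misaligned. You induct on $|k|$ and expect the surviving correction terms to have $X_{n-1}$-exponent $\ell'$ with $|\ell'|<|\ell|$; but $\ell$ bounds the $X_{n-3}$-exponent $i$, not $k$, and nothing in a reduction of $X_{n-1}^{k}$ forces the new $X_{n-1}$-exponents strictly below $|\ell|$. The case $\ell=0$ makes this concrete: the hypothesis is then vacuous, yet the lemma still asserts $E_{n-1}X_{n-1}^{k}E_{n-3}\in N_4$ for every $k\in\mathbb Z$, so your induction on $|k|$ has nothing to fall back on. In the paper the hypothesis is invoked exactly once, in the subcase $d_2=s_{n-2}$, $d'=1$, for an element $E_{n-3}X_{n-3}^{u}E_{n-1}X_{n-1}^{v}T_{n-2}T_{n-3}T_{n-1}T_{n-2}$ whose exponent satisfies $|v|\le i-1<|\ell|$ precisely because it was produced by the induction on $i$, not by reducing $k$. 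Without Lemma~\ref{l9} and that induction on $i$, the argument does not close.
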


\begin{proof} First, we assume that $0\le i\le |\ell|$.
By Lemma~\ref{l9},  $M_\ell\subseteq N_4$ if

\begin{itemize}\item
[(1)]$E_{n-1} X_{n-1}^a E_{n-3} X_{n-3}^{i}\in N_4$ for all $ a\in
\mathbb Z$ and  $|a|\le p$,
\item [(2)] $A\in N_4$ where  $A=E_{n-3} X_{n-3}^a E_{n-1} T_{d_1}
T_{n-2}T_{n-3}T_{n-1}T_{n-2} X_{n-3}^{i}$ for all $a\in \mathbb Z$,
$|a|\le p$  and  $1\ne d_1\in \Dcal_{1, n-2}$.
\end{itemize}

By the definition of $N_4$,  (1) holds. We prove (2) by induction on
$i$.

Suppose  $i=0$. (2) holds since  $d_1
s_{n-2}s_{n-3}s_{n-1}s_{n-2}\in \Dcal_{2,n}$. When $i>0$, {\small
\begin{equation}\label{l13}
\begin{aligned} A=&
E_{n-3}X_{n-3}^{a}T_{d_1}E_{n-1}T_{n-2}X_{n-2}^{i}T_{n-3} T_{n-1}T_{n-2}  \\
& +  E_{n-3}X_{n-3}^{a}T_{d_1}E_{n-1}T_{n-2} \sum_{j=1}^{i}\delta
                           X_{n-2}^{j}(E_{n-3}-1)X_{n-3}^{i-j}T_{n-1}T_{n-2}\\
                           \end{aligned}
\end{equation}}
We consider the second term (up to a scalar in $R$)  of (\ref{l13})
as follows.
$$
\begin{aligned}
&E_{n-3}X_{n-3}^{a}T_{d_1}E_{n-1}T_{n-2}X_{n-2}^{j}E_{n-3}X_{n-3}^{i-j}T_{n-1}T_{n-2}\\
=&E_{n-3}X_{n-3}^{a}T_{d_1}X_{n-3}^{-j}E_{n-1}E_{n-2}E_{n-3}X_{n-3}^{i-j}T_{n-2}\\
=&E_{n-3}X_{n-3}^{a}T_{d_1}X_{n-3}^{-j}E_{n-1}T_{n-2}T_{n-3}T_{n-1}T_{n-2}X_{n-3}^{i-j}T_{n-2}\\
 \overset {\ref{key1}} {\in } &\sum_{\substack {-p\le a'\le p\\ d_1'\in \Dcal_{1, n-2}}}
 \W_{r, n-4}'E_{n-3}X_{n-3}^{a'}T_{d_1'}E_{n-1}T_{n-2}T_{n-3}T_{n-1}T_{n-2}X_{n-3}^{i-j}T_{n-2}\\
\end{aligned}
$$
If $d_1'\neq 1$, then
$E_{n-3}X_{n-3}^{a'}T_{d_1'}E_{n-1}T_{n-2}T_{n-3}T_{n-1}T_{n-2}X_{n-3}^{i-j}T_{n-2}\in
N_4$ by induction assumption on $i$ and Lemma~\ref{n4}.

If $d_1'= 1$, we still have
$E_{n-3}X_{n-3}^{a'}E_{n-1}T_{n-2}T_{n-3}T_{n-1}T_{n-2}X_{n-3}^{i-j}T_{n-2}
=E_{n-1}X_{n-1}^{a'}E_{n-3}X_{n-3}^{i-j}T_{n-2}\in N_4$, by
Lemma~\ref{n4}.

We consider the third term on the right hand side of (\ref{l13}). We
have
$$
\begin{aligned}
&E_{n-3}X_{n-3}^{a}T_{d_1}E_{n-1}T_{n-2}X_{n-2}^{j}X_{n-3}^{i-j}T_{n-1}T_{n-2}\\
=&E_{n-3}X_{n-3}^{a}T_{d_1}E_{n-1}E_{n-2}X_{n-2}^{j}X_{n-3}^{i-j}T_{n-2}\\
=&E_{n-3}X_{n-3}^{a}T_{d_1}X_{n-3}^{i-j}E_{n-1}E_{n-2}X_{n-2}^{j}T_{n-2}\in N_4\\
\end{aligned}
$$
We remark that we use Lemma~\ref{l11} for
$E_{n-3}X_{n-3}^{a}T_{d_1}X_{n-3}^{i-j}E_{n-1}E_{n-2}X_{n-2}^{j}$
and Lemma~\ref{n4} to get the above inclusion.

We use  Lemma~\ref{relations2} to express the first term on the
right hand side of (\ref{l13}) as follows:  {\small
\begin{equation} \label{l141}
E_{n-3}X_{n-3}^{a}T_{d_1}E_{n-1}(X_{n-1}^{i}T_{n-2}+\sum_{j=1}^{i}\delta
                           X_{n-1}^{j}(E_{n-2}-1)X_{n-2}^{i-j})T_{n-3}T_{n, n-2}.\end{equation}}

Since we are assuming that   $d_1\neq 1$, $d_1 s_{n-1, n-3}s_{n,
n-2}\in \Dcal_{2, n}$. So, the first term on the right hand side of
(\ref{l141}) is in $N_4$.

By Lemma~\ref{l1} for $E_{n-3} X_{n-3}^a T_{d_1} X_{n-2}^{i-j}
T_{n-3}$, and Lemma~\ref{n4},
$$E_{n-3} X_{n-3}^a T_{d_1} X_{n-2}^{i-j} T_{n-3} E_{n-1}
X_{n-1}^j T_{n-1} T_{n-2}\in N_4.$$ In other words, the third term
on the right hand side of (\ref{l141}) is in $N_4$.

In order to show that the second term on the right hand side of
(\ref{l141}) is in $N_4$, we need to show that
$$ B:= E_{n-3}X_{n-3}^{a}T_{d_1}E_{n-1}X_{n-1}^{j}E_{n-2}X_{n-2}^{i-j}T_{n-3}T_{n-1}T_{n-2}\in
N_4.$$ Note that
$$E_{n-3}X_{n-3}^{a}T_{d_1}E_{n-1}X_{n-1}^{j}E_{n-2}X_{n-2}^{i-j}
=E_{n-3}X_{n-3}^{a}T_{d_1}X_{n-2}^{-j}
E_{n-1}E_{n-2}X_{n-2}^{i-j}.$$ By Lemma~\ref{l11},  $B$ can be
written as linear combination of elements in $\W_{r, n-4}'
E_{n-3}X_{n-3}^uT_{d'
}E_{n-1}X_{n-1}^vT_{d_2}T_{n-3}T_{n-1}T_{n-2}$, where $u, v\in
\mathbb Z$ with  $|u|\leq p$, $|v|\leq i-j\leq i-1$, $d'\in
\Dcal_{1,n-2}$, $d_2\in\{1,s_{n-2},s_{n-2}s_{n-1}\}$. In order to
finish the proof, we need to show that
\begin{equation}\label{c} C:=E_{n-3}X_{n-3}^uT_{d'}E_{n-1}X_{n-1}^vT_{d_2}T_{n-3}T_{n-1}T_{n-2}\in
N_4.\end{equation}

There are four cases we have to discuss.

\begin{itemize}\item[(1)]   $d_2=1$. By Lemma~\ref{l1}
and Lemma~\ref{n4}, $C\in N_4$ since
$E_{n-3}X_{n-3}^uT_{d'}E_{n-1}X_{n-1}^vT_{d_2}T_{n-3}
=E_{n-3}X_{n-3}^uT_{d'} T_{n-3}E_{n-1}X_{n-1}^v$.

\item [(2)] $d_2=s_{n-2}$ and  $d'\neq 1$.  $C\in N_4$    since $d' s_{n-2}s_{n-3}s_{n-1}s_{n-2}\in \Dcal_{2,n}$.

\item [(3)] $d_2=s_{n-2}$ and  $d'=1$. By Lemma~\ref{n4} and our
assumption $C\in N_4$ since
$C=E_{n-3}X_{n-3}^uE_{n-1}X_{n-1}^vT_{n-2}T_{n-3}T_{n-1}T_{n-2}$ and
$|v|\le i-1\le \ell-1$.

\item [(4)] $d_2=s_{n-2}s_{n-1}$: (\ref{c}) follows from Lemma~\ref{n4}
and the result for $d_2=s_{n-2}$.
\end{itemize}
This completes the proof of the result under the assumption $0\le
i\le |\ell|$. When $-|\ell|\le i\le 0$, One  can verify the result
similarly by induction on $i$. Note that the result holds when
$i=0$. We remark that   we have to use Lemma~\ref{relations2}(5)
instead of Lemma~\ref{relations2}(1). We also need  $N_4
T_i^{-1}\subset N_4$ for $i=n-2, n-1$ which follows from
Lemma~\ref{n4} and Definition~\ref{Waff relations}(b), immediately.
We leave the details to the reader.
\end{proof}

\begin{Lemma}\label{01} Fix an integer $\ell$ with $-p\le \ell\le p$. Let   $N_4$ be  the
$R$-module defined in Lemma~\ref{n4}. If
$E_{n-1}X_{n-1}^{\ell'}E_{n-3}X_{n-3}^{m}T_{n-2}T_{n-3}\in N_4$ for
all integers  $|\ell'|< |\ell|$ and $|m|\le p$, then
$E_{n-3}X_{n-3}^{a}E_{n-1}E_{n-2}X_{n-2}^{b}T_{n-3}\in N_4$ with
$a\in\mathbb{Z}$ and $|b|\le|\ell|$.
\end{Lemma}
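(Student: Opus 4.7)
The plan is to first reduce the element $E_{n-3}X_{n-3}^{a}E_{n-1}E_{n-2}X_{n-2}^{b}T_{n-3}$ to a manageable linear combination by invoking Lemma~\ref{l11}, and then to show each summand, once right--multiplied by the stray $T_{n-3}$, lies in $N_4$. Since $X_{n-3}^a \in \W_{r,n-2}'$, Lemma~\ref{l11} applied with $h = X_{n-3}^a$ and $k = b$ (using $|b|\le|\ell|$) gives that $E_{n-3}X_{n-3}^{a}E_{n-1}E_{n-2}X_{n-2}^{b}$ lies in $N_{4,b}$, hence is an $R$-linear combination of elements
\[
h'\,E_{n-3}X_{n-3}^{\ell_{1}}E_{n-1}X_{n-1}^{k_{1}}T_{d_{1}}T_{d_{2}},
\qquad h'\in\W_{r,n-4}',
\]
with $|\ell_{1}|\le p$, $|k_{1}|\le |b|\le|\ell|$, $d_{1}\in\Dcal_{1,n-2}$ and $d_{2}\in\{1,s_{n-2},s_{n-2}s_{n-1}\}$. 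Thus the problem reduces to showing each term, multiplied on the right by $T_{n-3}$, is in $N_4$.

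Next, I would handle the right-multiplication by $T_{n-3}$ via a case analysis on $d_{2}$. Because $T_{n-3}$ commutes with both $E_{n-1}$ and $X_{n-1}$ (they involve disjoint strands), we may transport $T_{n-3}$ to the right of $T_{d_{2}}$, and we are left with analysing the Hecke product $T_{d_{1}}T_{d_{2}}T_{n-3}$. Using the explicit parametrisation $d_{1}=s_{n-3,i}s_{n-2,j}$ given by Lemma~\ref{cosets} together with the braid relation $T_{n-2}T_{n-3}T_{n-2}=T_{n-3}T_{n-2}T_{n-3}$ and the commutations $T_{i}T_{j}=T_{j}T_{i}$ for $|i-j|>1$, one checks that in the ``generic'' cases this product either equals $T_{d'}$ for some $d'\in\Dcal_{2,n}$ (so that the whole term lands in the defining generating set of $\W_{r,n}^{2}$, and then in $N_{4}$ by its definition and the stability properties in Lemma~\ref{n4}) or reduces, via $T_{n-3}^{2}=1+\delta T_{n-3}-\delta\varrho E_{n-3}$, to a sum of simpler terms to which $N_4 T_{n-1}\cup N_{4}T_{n-2}\subset N_{4}$ applies.

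The remaining cases are those where the braid reduction of $T_{d_{1}}T_{d_{2}}T_{n-3}$ forces an additional $E_{n-3}$ to appear via the skein relation, producing a factor of the shape $E_{n-3}X_{n-3}^{m}T_{n-2}T_{n-3}$ adjacent to the existing $E_{n-1}X_{n-1}^{k_{1}}$. After moving the central factors into place using Lemma~\ref{relations2} and the commutation $E_{n-1}X_{n-1}^{k_{1}}\leftrightarrow E_{n-3}X_{n-3}^{m}$ (legal as the supports are disjoint), the resulting element takes exactly the form
\[
E_{n-1}X_{n-1}^{\ell'}E_{n-3}X_{n-3}^{m}T_{n-2}T_{n-3}
\]
appearing in the hypothesis of the lemma, in which the index $\ell'$ is driven strictly below $|\ell|$ because each application of the skein identity strictly decreases the exponent on $X_{n-1}$ by at least one (the dangerous boundary case $|k_{1}|=|\ell|$ is precisely the one in which the braid reduction bypasses the skein step and lands in $\Dcal_{2,n}$ instead). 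Invoking the assumed membership $E_{n-1}X_{n-1}^{\ell'}E_{n-3}X_{n-3}^{m}T_{n-2}T_{n-3}\in N_{4}$ for $|\ell'|<|\ell|$, $|m|\le p$, completes the verification.

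The principal obstacle will be the bookkeeping in the last step: one must carefully arrange the relation-by-relation reductions so that (i) every intermediate use of $E_{n-3}$ or $E_{n-1}$ respects the pre-established stability $N_{4}\cdot\{E_{n-1},E_{n-2},T_{n-1},T_{n-2}\}\subseteq N_{4}$ from Lemma~\ref{n4}, and (ii) the $T_{n-2}T_{n-3}$ ``tail'' required by the hypothesis emerges cleanly rather than being swallowed by further skein corrections. The case $b<0$ is handled identically using Lemma~\ref{relations2}(4)--(5) in place of parts (1)--(2).
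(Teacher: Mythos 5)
Your opening move --- feeding $E_{n-3}X_{n-3}^{a}E_{n-1}E_{n-2}X_{n-2}^{b}$ into Lemma~\ref{l11} first and only afterwards dealing with the trailing $T_{n-3}$ --- is where the argument breaks down. Lemma~\ref{l11} returns terms $\W_{r,n-4}'E_{n-3}X_{n-3}^{\ell_1}E_{n-1}X_{n-1}^{k_1}T_{d_1}T_{d_2}$ with the only control being $|k_1|\le|b|\le|\ell|$, and $N_4$ is \emph{not} known to be stable under right multiplication by $T_{n-3}$: Lemma~\ref{n4} covers only $T_{n-1},T_{n-2},E_{n-1},E_{n-2}$. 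When $|b|=|\ell|$ the decomposition genuinely produces summands with $|k_1|=|\ell|$ (already the first term of (\ref{l6}) contributes the exponent $k_1=b$ unchanged, with $d_1=1$, $d_2=s_{n-2}s_{n-1}$), and after commuting $T_{n-3}$ past $T_{n-1}$ you are left needing
$E_{n-1}X_{n-1}^{k_1}E_{n-3}X_{n-3}^{\ell_1}T_{n-2}T_{n-3}\in N_4$ with $|k_1|=|\ell|$. The hypothesis of the lemma covers only $|\ell'|<|\ell|$, and whether such an element lies in $N_4$ at level $|\ell|$ is precisely the content of Proposition~\ref{l21}, which is proved \emph{by induction using the present lemma}; invoking it here is circular. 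Your assertion that the boundary case $|k_1|=|\ell|$ ``bypasses the skein step and lands in $\Dcal_{2,n}$'' is the crux, and it is stated without proof and appears false --- $T_{n-2}T_{n-3}$ is not of the form $T_{d_1}T_{d_2}$ occurring in the generators of $N_4$. This is a genuine gap, not bookkeeping.

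The paper performs the reductions in the opposite order precisely to dodge this. It first applies Lemma~\ref{relations2}(1) to trade $X_{n-2}^{b}T_{n-3}$ for $T_{n-3}X_{n-3}^{b}$ plus correction terms. In the leading term the relation $E_{n-2}T_{n-3}=E_{n-2}E_{n-3}T_{n-2}^{-1}$ absorbs the offending $T_{n-3}$ into an idempotent, and the identity $E_{n-3}X_{n-3}^{a}E_{n-1}E_{n-2}E_{n-3}=E_{n-1}X_{n-1}^{a}E_{n-3}$ collapses everything to $E_{n-1}X_{n-1}^{a}E_{n-3}X_{n-3}^{b}T_{n-2}^{-1}$, which lies in $M_\ell\subseteq N_4$ by Lemma~\ref{ml} (whose proof needs the hypothesis only for $|\ell'|<|\ell|$) together with $N_4T_{n-2}^{-1}\subseteq N_4$. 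The corrections carrying $E_{n-3}$ collapse the same way, and only the residual terms with no trailing $T_{n-3}$ are sent to Lemma~\ref{l11}. To salvage your route you would have to establish the $|k_1|=|\ell|$ case independently, which is essentially re-proving Proposition~\ref{l21}.
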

\begin{proof}
First, we assume that  $0\le b\le |\ell|$. Let
$h=E_{n-3}X_{n-3}^{a}E_{n-1}E_{n-2}X_{n-2}^{b}T_{n-3}$. By
Lemma~\ref{relations2}(1), we have
\begin{equation}\label{hh} h=E_{n-3}X_{n-3}^{a}E_{n-1}E_{n-2}(T_{n-3}X_{n-3}^{b} -
\sum_{i=1}^{b}\delta
                           X_{n-2}^{i}(E_{n-3}-1)X_{n-3}^{b-i}).\end{equation}
                           The first term on the right hand side of
                           (\ref{hh})   is equal to
\begin{equation}\label{hhh}E_{n-3}X_{n-3}^{a}E_{n-1}E_{n-2}E_{n-3}X_{n-3}^{b}T_{n-2}^{-1}
=E_{n-1}X_{n-1}^{a}E_{n-3}X_{n-3}^{b}T_{n-2}^{-1}\end{equation}
which is in $N_4$ by Lemmas~\ref{n4}--\ref{ml}.

The second term on the right hand side of (\ref{hh}) (up to a
scalar) is equal to $
E_{n-3}X_{n-3}^{a-i}E_{n-1}E_{n-2}E_{n-3}X_{n-3}^{b-i}$, which is in
$N_4$ by (\ref{hhh}). Finally, by Lemma~\ref{l11}, the last term is
in $N_4$.

When $b<0$, one can verify the result similarly. We remark that we
have to use Lemma~\ref{relations2}(4) instead of
Lemma~\ref{relations2}(1).\end{proof}

\begin{Prop}\label{l21}
$E_{n-3}X_{n-3}^{k}E_{n-1}X_{n-1}^{\ell}T_{n-2}T_{n-3}\in N_4$ for
all integers $k,\ell$ with $-p\le k,\ell\le p$.
\end{Prop}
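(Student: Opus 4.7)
The plan is to argue by induction on $|\ell|$, which will keep the hypotheses of Lemmas 5.12 and 5.13 available at the inductive step. When $\ell=0$, the target element is $E_{n-3}X_{n-3}^k E_{n-1} T_{n-2} T_{n-3}$. Using the tangle relation $E_{n-1}E_{n-2}=E_{n-1}T_{n-2}T_{n-1}$, I rewrite $E_{n-1}T_{n-2}=E_{n-1}E_{n-2}T_{n-1}^{-1}$, and since $T_{n-1}^{-1}$ commutes with both $X_{n-2}$ and $T_{n-3}$ (index gap $\ge 2$), the element becomes $E_{n-3}X_{n-3}^k E_{n-1}E_{n-2}T_{n-3}\cdot T_{n-1}^{-1}$. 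The first factor lies in $N_4$ by Lemma~\ref{01} (the hypothesis there is vacuous when $|\ell|=0$), and $N_4\cdot T_{n-1}^{-1}\subseteq N_4$ by Lemma~\ref{n4} together with the Kauffman identity $T_{n-1}^{-1}=T_{n-1}-\delta+\delta E_{n-1}$.

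For the inductive step with $|\ell|>0$, I apply Lemma~\ref{relations2}(1) (or (2) for negative $\ell$) to obtain
$$X_{n-1}^{\ell}T_{n-2}=T_{n-2}X_{n-2}^{\ell}-\sum_{i=1}^{\ell}\delta X_{n-1}^i(E_{n-2}-1)X_{n-2}^{\ell-i},$$
which splits $E_{n-3}X_{n-3}^k E_{n-1}X_{n-1}^{\ell}T_{n-2}T_{n-3}$ into a main term and correction terms. The main term is $E_{n-3}X_{n-3}^k E_{n-1}T_{n-2}X_{n-2}^{\ell}T_{n-3}$; applying $E_{n-1}T_{n-2}=E_{n-1}E_{n-2}T_{n-1}^{-1}$ and commuting $T_{n-1}^{-1}$ to the far right yields $E_{n-3}X_{n-3}^k E_{n-1}E_{n-2}X_{n-2}^{\ell}T_{n-3}\cdot T_{n-1}^{-1}$, which is in $N_4$ by Lemma~\ref{01} (applied with $b=\ell$) and the $T_{n-1}^{-1}$ stability argument above.

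Each correction term for $1\le i\le|\ell|$ has two pieces. The $E_{n-2}$-piece $E_{n-3}X_{n-3}^k E_{n-1}X_{n-1}^i E_{n-2}X_{n-2}^{\ell-i}T_{n-3}$ is handled by the anti-symmetry relation $X_{n-1}^iE_{n-2}=X_{n-2}^{-i}E_{n-2}$, which rewrites it as $E_{n-3}\cdot(X_{n-3}^k X_{n-2}^{-i})\cdot E_{n-1}E_{n-2}X_{n-2}^{\ell-i}\cdot T_{n-3}$; since $X_{n-3}^k X_{n-2}^{-i}\in\W_{r,n-2}'$ and $|\ell-i|\le|\ell|$, this lies in $N_4$ via Lemma~\ref{l11} together with Lemma~\ref{01} (and Lemma~\ref{n4} for the right-multiplication by $T_{n-3}$ that emerges after further normalization through $E_{n-3}E_{n-2}=E_{n-3}T_{n-2}T_{n-3}$). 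The non-$E_{n-2}$ piece $E_{n-3}X_{n-3}^k E_{n-1}X_{n-1}^i X_{n-2}^{\ell-i}T_{n-3}$ is treated by commuting $X_{n-1}^i$ past $X_{n-2}^{\ell-i}$ and $T_{n-3}$ (index gaps $\ge 2$), and then applying Lemma~\ref{relations2}(1) again to rewrite $X_{n-2}^{\ell-i}T_{n-3}=T_{n-3}X_{n-3}^{\ell-i}-\sum_{j=1}^{\ell-i}\delta X_{n-2}^j(E_{n-3}-1)X_{n-3}^{\ell-i-j}$; the resulting terms either collapse via Lemma~\ref{relations2}(3) on $E_{n-3}X_{n-3}^{?}T_{n-3}$ to elements of the form $\W_{r,n-4}'E_{n-3}X_{n-3}^{*}E_{n-1}X_{n-1}^{?}$ (which after another $E_{n-3}E_{n-2}$ insertion reduce by Lemma~\ref{ml} and the inductive hypothesis), or land directly in $M_{|\ell|}\subseteq N_4$ by Lemma~\ref{ml}.

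The main obstacle is the bookkeeping in the second correction piece: each application of Lemma~\ref{relations2} produces new correction terms whose exponents or word shapes are not immediately in the generating form $\W_{r,n-4}'E_{n-3}X_{n-3}^{\ell_1}E_{n-1}X_{n-1}^{k_1}T_{d_1}T_{d_2}$ with $d_2\in\{1,s_{n-2},s_{n-2}s_{n-1}\}$, so one must verify case-by-case that each surviving term fits this frame after absorbing the commutators into $\W_{r,n-4}'$ and, when necessary, using the cyclotomic relation $\prod_{i=1}^r(X_1-u_i)=0$ (with $r=2p+1$) to cap exponents at $p$ via conjugation by $T_j$'s. Lemma~\ref{ml} is designed precisely so that these residual $M_\ell$-type terms are absorbed without breaking the induction.
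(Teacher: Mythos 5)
Your proof is correct and follows essentially the same route as the paper: induction on $|\ell|$, the identical three-term decomposition of $E_{n-3}X_{n-3}^{k}E_{n-1}X_{n-1}^{\ell}T_{n-2}T_{n-3}$ via Lemma~\ref{relations2}(1) (resp.\ (4) for $\ell<0$), and the same use of Lemma~\ref{01} --- with its hypothesis supplied by the inductive hypothesis --- for the main term and the $E_{n-2}$-correction term. The only divergences are cosmetic: the paper settles the base case by reducing directly to the $N_4$-generator $E_{n-1}X_{n-1}^{k}E_{n-3}T_{n-2}^{-1}T_{n-1}^{-1}$, and it disposes of the non-$E_{n-2}$ correction term more quickly by commuting $E_{n-1}X_{n-1}^{i}$ to the far right and citing Lemma~\ref{l1} rather than re-expanding with Lemma~\ref{relations2} and Lemma~\ref{ml}; note also that in your $E_{n-2}$-piece the appeal to Lemma~\ref{n4} for right-multiplication by $T_{n-3}$ is unavailable (that lemma covers only $T_{n-1},T_{n-2},E_{n-1},E_{n-2}$), but Lemma~\ref{01}, which you also cite, already carries the trailing $T_{n-3}$ in its conclusion, so nothing is lost.
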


\begin{proof}
Let $h=E_{n-3}X_{n-3}^{k}E_{n-1}X_{n-1}^{\ell}T_{n-2}T_{n-3}$. We
prove $h\in N_4$ by induction on $|\ell|$.

If $\ell = 0$, then
$$\begin{aligned} h = &E_{n-3}X_{n-3}^{k}E_{n-1}T_{n-2}T_{n-3}
=E_{n-3}X_{n-3}^{k}E_{n-1}E_{n-2}T_{n-1}^{-1}T_{n-3}\\
=&E_{n-1}X_{n-1}^{k}E_{n-3}E_{n-2}T_{n-3}T_{n-1}^{-1}
= E_{n-1}X_{n-1}^{k}E_{n-3}T_{n-2}^{-1}T_{n-1}^{-1}\in N_4\\
\end{aligned}$$ by Lemma~\ref{n4}. If $\ell> 0$, by Lemma~\ref{relations2}(1),
we have
\begin{equation}\label{02} h=E_{n-3}X_{n-3}^{k}E_{n-1}(T_{n-2}X_{n-2}^{\ell} - \sum_{i=1}^{\ell}\delta
                           X_{n-1}^{i}(E_{n-2}-1)X_{n-2}^{\ell-i})T_{n-3}
\end{equation}
The first term on the right hand side (\ref{02}) is equal to
$$E_{n-3}X_{n-3}^{k}E_{n-1}T_{n-2}X_{n-2}^{\ell}T_{n-3}
=E_{n-3}X_{n-3}^{k}E_{n-1}E_{n-2}X_{n-2}^{\ell}T_{n-3}T_{n-1}^{-1}$$
which is in $N_4$ by our induction assumption on for all integers
$\le \ell-1$, together with Lemma~\ref{01} and Lemma~\ref{n4}.

By induction and Lemma~\ref{01}, $E_{n-3}X_{n-3}^{k}E_{n-1}
X_{n-1}^{i}E_{n-2}X_{n-2}^{\ell-i}T_{n-3}=E_{n-3}X_{n-3}^{k+i}E_{n-1}
E_{n-2}X_{n-2}^{\ell-i}T_{n-3}\in N_4$. So, The second term on the
right hand side (\ref{02}) is in $N_4$.

Finally, we consider the third term on the right hand side
(\ref{02}). However, this term is equal to
$E_{n-3}X_{n-3}^{k}X_{n-2}^{\ell-i}T_{n-3}E_{n-1}X_{n-1}^{i}$, which
is in $N_4$ by Lemma~\ref{l1}.

When $\ell<0$, one can verify the result similarly. We remark that
we have to use Lemma~\ref{relations2}(4) instead of
Lemma~\ref{relations2}(1).
\end{proof}

\begin{Prop} \label{2line} Let $N_2$ be the $R$-submodule of $\W_{r,
n}$ generated by $\W_{r, n-4}'
E_{n-3}X_{n-3}^{k}E_{n-1}X_{n-1}^{\ell}T_d$ where $-p\leq k, \ell
\leq p$ and  $d\in \Dcal_{2,n}$. Then $N_2$ is a right
$\W_{r,n}$-module.
\end{Prop}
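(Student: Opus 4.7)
The plan is to verify that $N_2 h \subset N_2$ for every algebra generator $h \in \{T_i, E_i \mid 1\le i\le n-1\} \cup \{X_1\}$ of $\W_{r,n}$, following the scheme used for $N_1$ in Lemma~\ref{l1} but now managing two interleaved factors $E_{n-3}X_{n-3}^k$ and $E_{n-1}X_{n-1}^\ell$ sitting in front of a double-coset word $T_d$ with $d\in\Dcal_{2,n}$. First I would invoke Lemma~\ref{cosets} to write each such $d$ in the canonical form $d = s_{n-3,i_2}s_{n-2,j_2}s_{n-1,i_1}s_{n,j_1}$ with $1\le i_2<i_1\le n$ and $i_k<j_k\le n-2k+2$, giving a concrete description of $T_d$. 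For any generator $h = E_k$ or $T_k$ with $k\le i_2-2$, the factor $h$ commutes with everything to its left by Definition~\ref{Waff relations}(a)(c)(h), so closure is immediate from the fact that $\W_{r,n-4}'$ is a ring.

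The substantive cases occur when $k$ falls inside one of the bands $\{i_2-1,\dots,j_2\}$ or $\{i_1-1,\dots,j_1\}$, where $h$ interacts non-trivially with $T_d$ or with the leading $E_{n-3}E_{n-1}$. In each such sub-case I would push $h$ rightward through $T_d$ using the braid relations, the Kauffman skein $T_i^2 = 1 + \delta T_i - \delta\varrho E_i$, the tangle relations in Definition~\ref{Waff relations}(h), the untwisting relations in Definition~\ref{Waff relations}(i), and the anti-symmetry relation in Definition~\ref{Waff relations}(j). Each simplification either rewrites the product as $(\text{prefix})\cdot T_{d'}$ with $d'\in\Dcal_{2,n}$, or creates extra $E$-factors that collapse via $E_{i\pm1}E_iE_{i\pm1} = E_{i\pm1}$ into the canonical shape $\W_{r,n-4}'\,E_{n-3}X_{n-3}^{k'}E_{n-1}X_{n-1}^{\ell'}T_{n-2}T_{n-3}$, which lies inside $N_4\subseteq N_2$ by Proposition~\ref{l21}. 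Closure under $T_k^{\pm 1}$ then follows from closure under $E_k$ via the Kauffman skein.

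For $h=X_1$ I would iterate the identity $X_i = T_{i-1}X_{i-1}T_{i-1}$ together with the $T$-closure just established, reducing to understanding how powers of $X_{n-3}$ and $X_{n-1}$ interact with the prefix. Lemma~\ref{relations2} produces intermediate elements in which the exponent of $X_{n-3}$ or $X_{n-1}$ may exceed $p$ in absolute value; in that boundary case the cyclotomic relation $\prod_{i=1}^{r}(X_1-u_i)=0$, together with the identity $E_k X_k^a E_k = \omega_k^{(a)} E_k$ from Lemma~\ref{tilde W}, is used to express the offending power as an $R$-linear combination of $X^i$ with $|i|\le p$, modulo lower-degree terms already placed in $N_2$ by the induction.

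The main obstacle will be this last step: the cyclotomic bookkeeping. The hypothesis $r=2p+1$ is essential precisely because it symmetrizes the allowed exponent window $\{-p,\dots,p\}$, so that reducing $X^{p+1}$ via the cyclotomic polynomial produces only terms $X^i$ with $|i|\le p$, matching the band in the definition of $N_2$; the induction parallels the end of Lemma~\ref{l1} but must simultaneously control both the $X_{n-3}$-band and the $X_{n-1}$-band. A secondary difficulty is the combinatorial tracking of the permutation normal form when $E_k$ or $T_k$ is pushed past $T_d$: at each step one must verify that the resulting transposition product again lies in $\Dcal_{2,n}$ as described by Lemma~\ref{cosets}, or otherwise collapses to the $T_{n-2}T_{n-3}$ configuration governed by Proposition~\ref{l21}.
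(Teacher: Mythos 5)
Your plan names the right ingredients (Lemma~\ref{l1}, Lemma~\ref{cosets}, Proposition~\ref{l21}), but it conceals the actual difficulty behind two assertions that are not justified and are, in fact, where essentially all of the work lies.

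First, you claim that the products which do not immediately reorganize into $T_{d'}$ with $d'\in\Dcal_{2,n}$ ``collapse \dots into the canonical shape $\W_{r,n-4}'E_{n-3}X_{n-3}^{k'}E_{n-1}X_{n-1}^{\ell'}T_{n-2}T_{n-3}$, which lies inside $N_4\subseteq N_2$ by Proposition~\ref{l21}.'' Proposition~\ref{l21} puts such elements in $N_4$, but the inclusion $N_4\subseteq N_2$ is not proved anywhere and is not obvious: the generators of $N_4$ carry a trailing factor $T_{d_1}T_{d_2}$ with $d_2\in\{1,s_{n-2},s_{n-2}s_{n-1}\}$, and $d_1d_2$ need not lie in $\Dcal_{2,n}$. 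The paper never establishes $N_4\subseteq N_2$; instead, after applying Lemma~\ref{l1} twice to reach the normal form $E_{n-3}X_{n-3}^{k_1}T_{n-3,i_4}T_{n-2,i_3}E_{n-1}X_{n-1}^{\ell_1}T_{n-1,i_2}T_{n,i_1}$ and invoking Proposition~\ref{l21} only in the problematic case $i_4\ge i_2$, it must still run a genuine case analysis on $d_2$ (the case $d_2=s_{n-2}s_{n-1}$ requires two applications of the Kauffman skein relation, Corollary~\ref{et}, and further commutations) to land back in $N_2$. By writing ``$N_4\subseteq N_2$'' you have assumed the conclusion of the hardest part of the argument.

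Second, your proposal contains no termination mechanism. The rewriting you describe does not simply stop: pushing a generator through $T_d$ produces, between the two $E$-blocks, a factor $T_{n-3,i_4}T_{n-2,i_3}$ that fails to commute past $E_{n-1}X_{n-1}^{\ell_1}T_{n-1,i_2}$ exactly when $i_4\ge i_2$, and resolving this produces new elements of the same shape with different indices. The paper closes this loop by a downward induction on $i_2$ (base case $i_2\in\{n-1,n-2\}$, inductive hypothesis~(\ref{indassump}) for all larger $i_2'$), and every branch of the $d_2$ case analysis is checked to strictly increase $i_2$. Without identifying this induction variable your procedure is an unbounded rewriting process. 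Relatedly, your case analysis on where the generator index falls in the ``bands'' of $d$, and your separate cyclotomic bookkeeping for $X_1$, re-derive by hand what the paper obtains in one stroke by using Lemma~\ref{l1} as a module statement (applied twice, once per $E$-block); that is only a matter of efficiency, but the two issues above are genuine gaps.
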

\begin{proof}  Applying  Lemma~\ref{l1} twice,
$E_{n-3}X_{n-3}^{k}E_{n-1}X_{n-1}^{\ell}T_d h$, for $h\in \W_{r,
n}$, can be written as an $R$-linear combination of elements
$$\W_{r, n}'E_{n-3}X_{n-3}^{k_1}T_{n-3, i_4}T_{n-2,
i_3}E_{n-1}X_{n-1}^{\ell_1}T_{n-1, i_2}T_{n, i_1}$$ where $i_1, i_2,
i_3, i_4, k_1, \ell_1\in \mathbb Z$  with $i_2<i_1$, $i_4<i_3$,
$-p\le k_1, \ell_1\le p$. In order to show that
$E_{n-3}X_{n-3}^{k}E_{n-1}X_{n-1}^{\ell}T_d h\in N_2$, we need to
show
\begin{equation}\label{l22}A:=E_{n-3}X_{n-3}^{k_1}T_{n-3, i_4}T_{n-2,
i_3}E_{n-1}X_{n-1}^{\ell_1}T_{n-1, i_2}T_{n, i_1}\in
N_2.\end{equation} We are going to prove (\ref{l22})  by induction
on $i_2$.

We assume that $i_4\ge i_2$. Otherwise, $A\in N_2$ and (\ref{l22})
follows. In particular, (\ref{l22}) holds for  $i_2\in \{n-1,
n-2\}$. By Lemma~\ref{l1} and induction on $i_2$,
\begin{equation}\label{indassump}
E_{n-3}  \W_{r, n-2}' E_{n-1} X_{n-1}^j T_{n-1, i_2'} T_{n, i_1'}\in
N_2\end{equation} for all integers $i, j, i_2', i_1'$ with
$i_2'<i_1'$, $i, j\in \mathbb Z$, $-p\le j\le p$ and $i_2'>i_2$.

Since we are assuming that $i_4\ge i_2$ and $i_2<n-2$,
$$ A=E_{n-3}X_{n-3}^{k_1}E_{n-1}X_{n-1}^{\ell_1}T_{n-1, i_2}T_{n-2,
i_4+1}T_{n-1, i_3+1}T_{n, i_1}.$$

By  Proposition~\ref{l21}, $A$ is in the $R$-submodule of $\W_{r,
n}$ generated by $$\W_{r, n-4}'
E_{n-3}X_{n-3}^{k_2}T_{d_1}E_{n-1}X_{n-1}^{\ell_2}T_{d_2}T_{n-3,
i_2}T_{n-2, i_4+1}T_{n-1, i_3+1}T_{n, i_1}$$ where $d_1\in
\Dcal_{1,n-2}$, $d_2\in \{s_{n-2},s_{n-2}s_{n-1},1\}$ and $-p\le
k_2, \ell_2\le p$. In order to prove (\ref{l22}), it suffices to
prove  \begin{equation}\label{l23} B:=
E_{n-3}X_{n-3}^{k_2}T_{d_1}E_{n-1}X_{n-1}^{\ell_2}T_{d_2}T_{n-3,
i_2}T_{n-2, i_4+1}T_{n-1, i_3+1}T_{n, i_1}\in N_2\end{equation}
There are three cases we have to discuss.

\Case{1. $d_2=1$} We have $B=E_{n-3}X_{n-3}^{k_2}T_{d_1} T_{n-3,
i_2}T_{n-2, i_4+1} E_{n-1}X_{n-1}^{\ell_2}T_{n-1, i_3+1}T_{n, i_1}$
which is in $N_2$ by (\ref{indassump}) if $i_3+1<i_1$. When
$i_3+1\geq i_1$,  $T_{n-1, i_3+1}T_{n, i_1}=T_{n, i_1}T_{n, i_3+2}$
and
\begin{equation}\label{b100} B= E_{n-3}X_{n-3}^{k_2}T_{d_1} T_{n-3, i_2}T_{n-2, i_4+1}
E_{n-1}X_{n-1}^{\ell_2}T_{n, i_1}T_{n, i_3+2}.\end{equation}   We
use Lemma~\ref{relations2}(3)(6) for $
E_{n-1}X_{n-1}^{\ell_2}T_{n-1}$ to write $B$ as a linear combination
of elements in $ E_{n-3} \W_{r, n-2}' E_{n-1}X_{n-1}^{j}T_{n-1,
i_1}T_{n, i_3+2}$ with $-p\le j\le p$. By (\ref{indassump}), $B\in
N_2$. This completes the proof for $d_2=1$.

\Case{2. $d_2=s_{n-2}$} We have  $B=E_{n-3}X_{n-3}^{k_2}T_{d_1}
T_{n-3, i_2} T_{n-2, i_3} E_{n-1}X_{n-1}^{\ell_2}T_{n-1, i_4+1}T_{n,
i_1}$ which is in $N_2$ by the result for $d_2=1$.

\Case{3. $d_2=s_{n-2}s_{n-1}$} If $i_3+1<i_1$, then
$$\begin{aligned}B&=E_{n-3}X_{n-3}^{k_2}T_{d_1} T_{n-3, i_2}
E_{n-1}X_{n-1}^{\ell_2}T_{n-1, i_4+1}T_{n, i_3+1}T_{n, i_1}\\
&=E_{n-3}X_{n-3}^{k_2}T_{d_1} T_{n-3, i_2} T_{n-2, i_1-2}
E_{n-1}X_{n-1}^{\ell_2}  T_{n-1, i_4+1}T_{n, i_3+1}\\
\end{aligned}$$
So, (\ref{l23}) follows from (\ref{indassump}). Finally, we assume
that  $i_3+1\geq i_1$. Then $$ B=E_{n-3}X_{n-3}^{k_2}T_{d_1} T_{n-3,
i_2} E_{n-1}X_{n-1}^{\ell_2}T_{n-1, i_4+1} T_{n-1}^2 T_{n-1,
i_1}T_{n, i_3+2}.$$  By Definition~\ref{Waff relations}(b),
\begin{equation}\label{l25}\begin{aligned}B = &E_{n-3}X_{n-3}^{k_2}T_{d_1} T_{n-3, i_2}
E_{n-1}X_{n-1}^{\ell_2}T_{n-1, i_4+1}\\ & \times (1+\delta
T_{n-1}-\delta\varrho E_{n-1})  T_{n-1, i_1}T_{n, i_3+2}.\\
\end{aligned}
\end{equation}

The second term on the right hand side of (\ref{l25}) is equal to
$$\delta E_{n-3}X_{n-3}^{k_2}T_{d_1} T_{n-3, i_2}T_{n-2, i_3}
E_{n-1}X_{n-1}^{\ell_2}T_{n-1, i_4+1}T_{n, i_1}.$$ By our result for
$d_2=1$, it is in $N_2$. Similarly, using Corollary~\ref{et}b,
Lemma~\ref{l1}, (\ref{indassump}),  we see that the third term on
the right hand side of (\ref{l25}) is in $N_2$. In order to prove
$B\in N_2$, it remains to prove that
\begin{equation}\label{l26} C:=E_{n-3}X_{n-3}^{k_2}T_{d_1} T_{n-3,
i_2} E_{n-1}X_{n-1}^{\ell_2}T_{n-1, i_4+1}  T_{n-1, i_1}T_{n,
i_3+2}\in M.\end{equation}  In fact, when  $i_4+1<i_1$,
$$C=E_{n-3}X_{n-3}^{k_2}T_{d_1} T_{n-3, i_2}T_{n-2, i_1-1} E_{n-1}X_{n-1}^{\ell_2}
 T_{n-1, i_4+1} T_{n, i_3+2}\overset{
(\ref{indassump})} \in N_2.$$ Suppose that  $i_4+1\geq i_1$. By
Definition~\ref{Waff relations}(b),
\begin{equation}\label{l27} \begin{aligned} C =& E_{n-3}X_{n-3}^{k_2}T_{d_1} T_{n-3, i_2}
E_{n-1}X_{n-1}^{\ell_2}\\ & \times (1+\delta T_{n-2}-\delta\varrho
E_{n-2}) T_{n-2, i_1}T_{n-1, i_4+2} T_{n, i_3+2}.\\ \end{aligned}
\end{equation}

Note that $T_{n-1, i_1} T_{n-1, i_4+2} T_{n, i_3+2}=T_{n-2, i_4+1}
T_{n-1, i_1} T_{n, i_3+2}$. By (\ref{indassump}), the first and the
second terms on the right of (\ref{l27}) are in $N_2$. The third
term on the right of (\ref{l27}) (up to a scalar) is equal to
$$\begin{aligned} &  E_{n-3}X_{n-3}^{k_2}T_{d_1} T_{n-3, i_2}
E_{n-1}X_{n-1}^{\ell_2}E_{n-2} T_{n-2, i_1}T_{n-1, i_4+2} T_{n,
i_3+2}\\ = & E_{n-3}X_{n-3}^{k_2}T_{d_1} T_{n-3, i_2}
X_{n-2}^{-\ell_2} E_{n-1}E_{n-2} T_{n-2, i_1}T_{n-1, i_4+2} T_{n,
i_3+2}\\
=& E_{n-3}X_{n-3}^{k_2}T_{d_1} T_{n-3, i_2}
X_{n-2}^{-\ell_2} E_{n-1} T_{n-1, i_1}T_{n, i_4+2} T_{n, i_3+2} \\
=&  E_{n-3}X_{n-3}^{k_2}T_{d_1} T_{n-3, i_2}
X_{n-2}^{-\ell_2}T_{n-2, i_3} E_{n-1}  T_{n-1, i_1}T_{n, i_4+2}\overset{\ref{indassump}}\in N_2.\\
\end{aligned}
$$  We have proved that (\ref{l26}) holds in any case.
So, $B\in N_2$ and hence (\ref{l22}) holds.  \end{proof}

\textbf{Proof of Theorem~\ref{cellular1}:}  We claim that $M$ is a
right $\W_{r, n}$-module where $M$ is  the $R$--module generated by
$\W_{r, n-2f}' E^f X^\kappa T_d$ with $\kappa\in  \Nrf$ and $d\in
\Dcal_{f, n}$. If so, $ E^fM_{\s\t} X^\kappa T_d h+\W_{r,n}^{\rhd(f,
\lambda)}$ can be written as an $R$-linear combination of elements
$M_{\s\t}\W_{r, n-2f}' E^f X^{\kappa'} T_{d'}+\W_{r,n}^{\rhd(f,
\lambda)}$ for $\kappa'\in \Nrf$ and $d'\in \Dcal_{f, n}$. Now, the
result follows immediately from Lemma~\ref{M_st properties}(d).

It remains to prove our claim. Let $d\in \Dcal_{f, n}$. By
Lemma~\ref{cosets},  $$d=  s_{n-2f+1,i_f}s_{n-2f+2,j_f}\cdots
s_{n-1,i_1}s_{n,j_1}$$ for some integers $i_1, \dots, i_f, j_1,
\dots, j_f$ such that $1\leq i_f<\cdots<i_1\leq n$, $1\leq
i_k<j_k\leq n-2k+2$ for $1\leq k\leq f$. Write $d=d_1
s_{n-1,i_1}s_{n,j_1}$. For any $h\in \W_{r, n}$, since $$  E^f
X^\kappa T_d h=\prod_{i=2}^f E_{n-2i+1}X_{n-2i+1}^{\kappa_{n-2i+1}}
T_{d_1} E_{n-1}X_{n-1}^{k_{n-1}}T_{n-1,i_1}T_{n,j_1} h, $$  by
Lemma~\ref{l1},
 $ E^f X^\kappa
T_d h \in N$ where $N$  is  the $R$-submodule of $\W_{r, n}$
generated by $$\prod_{i=2}^f E_{n-2i+1}X_{n-2i+1}^{\kappa_{n-2i+1}}
T_{d_1}\W_{r, n-2}'
E_{n-1}X_{n-1}^{\kappa_{n-1}'}T_{n-1,k_1}T_{n,\ell_1}$$ where
$\kappa_{n-1}' k_1, \ell_1\in \mathbb Z$  with $k_1<\ell_1$ and
$|\kappa_{n-1}'|\le p$. By induction assumption on $n-2$ for our
claim, $E^f X^\kappa T_d h $ is in the $R$-submodule of $\W_{r, n}$
generated by
$$\W_{r, n-2f}' \prod_{i=1}^f
E_{n-2i+1}X_{n-2i+1}^{\kappa_{n-2i+1}'} T_{w_1} T_{n-1, k_1} T_{n,
\ell_1}$$ with $w_1=s_{n-2f+1,k_f}s_{n-2f+2,\ell_f}\cdots
s_{n-3,k_2}s_{n-2,\ell_2}$ with $w_1\in \Dcal_{f-1, n-2}$ and
$|\kappa_{n-2i+1}'|\le p$ for $1\le i\le f$. If $w_1 s_{n-1, k_1}
s_{n, \ell_1}\in \Dcal_{f, n}$ then our claim follows. In
particular, our claim follows if $k_1\in \{n-2, n-1\}$. It remains
to prove
\begin{equation}\label{cellw12} A:=\prod_{i=1}^f
E_{n-2i+1}X_{n-2i+1}^{\kappa_{n-2i+1}'}T_{{w_1}}T_{n-1,k_1}T_{n,\ell_1}\in
M\end{equation} for   $k_2\ge k_1$. We prove it by induction on
$k_1$.   In general, we have
\begin{equation}\label{cellw13}
A=\prod_{i=3}^f E_{n-2i+1}X_{n-2i+1}^{\kappa_{n-2i+1}'}T_{{d_2}}
B\end{equation}  where
$B=E_{n-3}X_{n-3}^{\kappa_{n-3}'}E_{n-1}X_{n-1}^{\kappa_{n-1}'}
T_{n-3,k_2}T_{n-2,\ell_2}T_{n-1,k_1}T_{n,\ell_1}$ and $d_2=w_1
(s_{n-3, k_2} s_{n-2, \ell_2})^{-1}$.

When $k_1\in \{n-1,n-2\}$, there is nothing to be proved since $k_2
\le n-3< k_1$. Suppose $k_1\le n-3$. By  arguments in the proof of
(\ref{l22}) for $i_4\ge i_2$, we can write $B$ as an $R$-linear
combination of elements
$$C:=E_{n-3}\W_{r,n-2}'E_{n-1}X_{n-1}^{\kappa_{n-1}''}
T_{n-1,k_1'}T_{n,l_1'}, $$ where $k_1<k_1'<l_1'$ and $-p\le
\kappa_{n-1}^{''}\le p$. By induction assumption on $\W_{r, n-2}$
for our claim, we can write $E^fX^{\kappa}T_{d}h$ as an $R$-linear
combination of elements
$$\W_{r, n-2f}'\prod_{i=1}^f E_{n-2i+1}X_{n-2i+1}^{\alpha_{n-2i+1}}T_{{y_1}}T_{n-1,k_1'}T_{n,l_1'},$$
where $\alpha\in \Nrf$, $y_1\in \Dcal_{f-1, n-2}$ and $k_1'>k_1$. By
our  induction assumption on $k_1$, $C\in M$. This completes the
proof of our claim. \qed

\begin{Prop}\label{Wlam spanning} Let $\ast:\W_{r, n}\rightarrow \W_{r, n}$  be
the $R$-linear anti-involution in Lemma~\ref{antiinvolution}.
Suppose $0\le f\le \floor{n2}$ and $\lambda\in\Lambda_r^+(n-2f)$.
Then $\W_{r,n}^{\unrhd(f, \lambda)}/\Wlam$ is spanned by the
elements
\begin{equation}\label{genW} \set{T_e^*X^\rho E^fM_{\s\t}X^\kappa T_d+\W_{r,n}^{\rhd (f, \lambda)}|
            (\t,\kappa,d),(\s,\rho,e)\in\delta(f,\lambda)}.
\end{equation}
\end{Prop}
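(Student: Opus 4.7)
The plan is to derive the proposition from Theorem~\ref{cellular1} by symmetrising via the anti-involution $\ast$ of Lemma~\ref{antiinvolution}.

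By Definition~\ref{Wlambda}, $\W_{r,n}^{\unrhd(f,\lambda)}$ is generated by $\W_{r,n}^{f+1}$ together with the elements $E^fM_{\s\t}$ for $\s,\t\in\Std(\mu)$ and $\mu\unrhd\lambda$ in $\Lambda_r^+(n-2f)$. Whenever $\mu\rhd\lambda$, any product $h_1E^fM_{\s\t}h_2$ already lies in $\W_{r,n}^{\unrhd(f,\mu)}\subseteq\Wlam$; and $\W_{r,n}^{f+1}\subseteq\W_{r,n}^{\unrhd(f,\mu)}\subseteq\Wlam$ whenever some $\mu\rhd\lambda$ exists. A downward induction on $f$, with trivial base case $f=\floor{n2}$ (where $\W_{r,n}^{f+1}=0$), will handle the remaining case in which $\lambda$ is maximal in $\Lambda_r^+(n-2f)$, so it suffices to show that $h_1E^fM_{\s\t}h_2+\Wlam$ lies in the $R$-span of (\ref{genW}) for all $h_1,h_2\in\W_{r,n}$ and all $\s,\t\in\Std(\lambda)$.

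The right multiplication by $h_2$ is handled directly by Theorem~\ref{cellular1}: since $\Delta_\s(f,\lambda)$ is a right $\W_{r,n}$-module, $E^fM_{\s\t}h_2+\Wlam$ is an $R$-linear combination of generators $E^fM_{\s\t'}X^{\kappa'}T_{d'}+\Wlam$ with $(\t',\kappa',d')\in\delta(f,\lambda)$, so it remains to express $h_1E^fM_{\s\t'}X^{\kappa'}T_{d'}+\Wlam$ in the required form. Since $\ast$ fixes each $T_i,E_i,X_j$, we have $M_{\s\t}^*=M_{\t\s}$, $(E^f)^*=E^f$, $(X^\kappa)^*=X^\kappa$ and $T_d^*=T_{d^{-1}}$, so both $\W_{r,n}^{\unrhd(f,\lambda)}$ and $\Wlam$ are $\ast$-stable. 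Applying $\ast$ and using Lemma~\ref{M_st properties}(a) to commute $E^f$ past $M_{\t'\s}$ converts the target into
$$T_{d'}^*X^{\kappa'}E^fM_{\t'\s}h_1^*+\Wlam.$$
By Theorem~\ref{cellular1} applied to $\Delta_{\t'}(f,\lambda)$, the subexpression $E^fM_{\t'\s}h_1^*+\Wlam$ is an $R$-linear combination of elements $E^fM_{\t'\s''}X^{\kappa''}T_{d''}+\Wlam$ with $(\s'',\kappa'',d'')\in\delta(f,\lambda)$. Multiplying on the left by $T_{d'}^*X^{\kappa'}$ and pulling back through $\ast$ then expresses the original element as an $R$-linear combination of $T_{d''}^*X^{\kappa''}E^fM_{\s''\t'}X^{\kappa'}T_{d'}+\Wlam$, which are precisely the elements listed in (\ref{genW}).

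The main obstacle will be the inductive step when $\lambda$ is maximal in $\Lambda_r^+(n-2f)$, where one must show that the classes of elements of $\W_{r,n}^{f+1}$ are captured by the spanning set: this requires rewriting $E^{f+1}M_{\s\t}$ with $\s,\t$ tableaux of a shape in $\Lambda_r^+(n-2f-2)$ in terms of $E^fM_{\s'\t'}$ with $\s',\t'\in\Std(\lambda)$. Beyond this technicality, the argument is a direct symmetrisation of Theorem~\ref{cellular1} through $\ast$.
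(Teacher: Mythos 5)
Your main argument is exactly the paper's: let $W$ be the span of (\ref{genW}); Theorem~\ref{cellular1} makes $W$ a right $\W_{r,n}$-submodule, and applying $\ast$ (which sends $T_e^*X^\rho E^fM_{\s\t}X^\kappa T_d$ to $T_d^*X^\kappa E^fM_{\t\s}X^\rho T_e$ by Lemma~\ref{M_st properties}, hence preserves the spanning set and $\Wlam$) makes it a left submodule as well; since $W$ contains the generators $E^fM_{\s\t}+\Wlam$, it is all of $\W_{r,n}^{\unrhd(f,\lambda)}/\Wlam$. The paper's proof is precisely this three-line symmetrisation, and your execution of it is sound.

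Where you diverge is the closing paragraph, and there the fix you sketch would fail. You are right that when $\lambda$ is maximal in $\Lambda_r^+(n-2f)$ the sum defining $\Wlam$ is empty, so the quotient is $\W_{r,n}^{\unrhd(f,\lambda)}$ itself and $\W_{r,n}^{f+1}$ must be accounted for. But this cannot be done by rewriting $E^{f+1}M_{\s\t}$ in terms of $E^fM_{\s'\t'}$ with $\s',\t'\in\Std(\lambda)$: take $f=0$, $n=2$ and $\lambda=((2),\emptyset,\dots,\emptyset)$; then $\delta(0,\lambda)$ is a singleton, the span of (\ref{genW}) is the one-dimensional space $R\cdot M_{\t^\lambda\t^\lambda}$, whose image in $\H_{r,2}=\W_{r,2}/\langle E_1\rangle$ is nonzero, so it cannot contain $E_1\in\W_{r,2}^{1}$. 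No downward induction on $f$ will repair this. The statement is only correct if $\W_{r,n}^{f+1}$ is absorbed into the dominant ideal, i.e. if $\Wlam$ is read as containing $\W_{r,n}^{f+1}$ — which is consistent with the order on $\Lambda_{r,n}^+$ implicit in Theorem~\ref{W cellular}, where $(f',\mu)$ dominates $(f,\lambda)$ whenever $f'>f$. The paper's proof silently makes the same assumption when it calls $\{E^fM_{\s\t}+\Wlam\}$ "the generators" of the quotient; the correct move is to amend the definition of $\Wlam$ accordingly, not to attempt the rewriting you propose.
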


\begin{proof}Let $W$ be the $R$--submodule of
$\W_{r,n}^{\unrhd(f, \lambda)}/\Wlam$ spanned by the elements in
(\ref{genW}).  By Theorem~\ref{cellular1}, $W$ is both left and
right $\W_{r, n}$-submodule of $\W_{r,n}^{\unrhd(f, \lambda)
}/\Wlam$. As the generators $\{E^fM_{\s\t}+\Wlam\}$ of
$\W_{r,n}^{\unrhd(f, \lambda)}/\Wlam$ are contained in~$W$,
$W=\W_{r,n}^{\unrhd(f, \lambda)}/\Wlam$.
\end{proof}

\begin{Defn}\label{W cell basis} Let $\Lambda_{r, n}^+=\set{(f,\lambda)|0\le
f\le\floor{n2}\text{ and }
                     \lambda\in\Lambda_r^+(n-2f)}$.
If $(f,\lambda)\in\Lambda_{r, n}^+$ and
$(\s,\rho,e),(\t,\kappa,d)\in\delta(f,\lambda)$ then we define
$$C^{(f,\lambda)}_{(\s,\rho,e)(\t,\kappa,d)}
              =T_e^*X^\rho E^fM_{\s\t}X^\kappa T_d.$$\end{Defn}

We recall the definition of cellular algebra as follows.

\begin{Defn}\cite{GL}\label{GL}
    Let $R$ be a commutative ring and $A$ an $R$--algebra.
    Fix a partially ordered set $\Lambda=(\Lambda,\gedom)$ and for each
    $\lambda\in\Lambda$ let $T(\lambda)$ be a finite set. Finally,
    fix $C^\lambda_{\bfs\bft}\in A$ for all
    $\lambda\in\Lambda$ and $\bfs,\bft\in T(\lambda)$.

    Then the triple $(\Lambda,T,C)$ is a \textsf{cell datum} for $A$ if:
    \begin{enumerate}
    \item $\set{C^\lambda_{\bfs\bft}|\lambda\in\Lambda\text{ and }\bfs,\bft\in
        T(\lambda)}$ is an $R$--basis for $A$;
    \item the $R$--linear map $*\map AA$ determined by
        $(C^\lambda_{\bfs\bft})^*=C^\lambda_{\bft\bfs}$, for all
        $\lambda\in\Lambda$ and all $\bfs,\bft\in T(\lambda)$ is an
        anti--isomorphism of $A$;
    \item for all $\lambda\in\Lambda$, $\bfs\in T(\lambda)$ and $a\in A$
        there exist scalars $r_{\bft\bfu}(a)\in R$ such that
        $$C^\lambda_{\bfs\bft} a
            =\sum_{\bfu\in T(\lambda)}r_{\bft\bfu}(a)C^\lambda_{\bfs\bfu}
                     \pmod{A^{\gdom\lambda}},$$
            where
    $A^{\gdom\lambda}=R\text{--span}%
      \set{C^\mu_{\bfu\bfv}|\mu\gdom\lambda\text{ and }\bfu,\bfv\in T(\mu)}$.
    \end{enumerate}
    \noindent Furthermore, each scalar $r_{\bft\bfu}(a)$ is independent of $\bfs$.
     An algebra $A$ is a \textsf{cellular algebra} if it has
    a cell datum and in this case we call
    $\set{C^\lambda_{\bfs\bft}|\bfs,\bft\in T(\lambda), \lambda\in\Lambda}$
    a \textsf{cellular basis} of $A$.
\end{Defn}

We  recall the representation  theory of cellular algebras in
\cite{GL}. Every irreducible $A$--module arises in a unique way as
the simple head of some cell module. For each $\lambda\in\Lambda$
fix $\bfs \in T(\lambda)$ and let $C^{\lambda}_{\bft}
       =C^{\lambda}_{\bfs\bft}+ A^{\rhd \lambda}$.
       The cell modules of $A$ are the modules $\Delta(\lambda)$
which are the free $R$--modules with basis
$\set{C^{\lambda}_{\bft}|\bft\in T(\lambda)}$. The cell module
$\Delta(\lambda)$ comes equipped with a natural bilinear form
$\phi_{\lambda}$ which is determined by the equation
$$C^{\lambda}_{\bfs\bft}
           C^{\lambda}_{\bft'\bfs}
 \equiv\phi_{\lambda}\big(C^{\lambda}_{\bft},
               C^{\lambda}_{\bft'}\big)\cdot
        C^{\lambda}_{\bfs\bfs}\pmod{ A^{\rhd \lambda}}.$$
The form $\phi_{\lambda}$ is $A$--invariant in the sense that
$\phi_{\lambda}(xa,y)=\phi_{\lambda}(x,ya^*)$, for
$x,y\in\Delta(\lambda)$ and $a\in A$. Consequently,
$$\Rad\Delta(\lambda)
   =\set{x\in\Delta(\lambda)|\phi_{\lambda}(x,y)=0\text{ for all }
                          y\in\Delta(\lambda)}$$
is an $A$--submodule of $\Delta(\lambda)$ and
$D^\lambda=\Delta(\lambda)/\Rad\Delta(\lambda)$ is either zero or
absolutely irreducible. Graham and Lehrer have proved  that
$\{D^\lambda\mid D^\lambda\neq 0\}$  consists of a complete set of
pairwise non-isomorphic irreducible $A$-modules.

Now, we use the representation theory  of a cellular algebra to
prove Theorem~\ref{W cellular}, the main result of this section.
\begin{Theorem}\label{W cellular}
Let $R$ be a commutative ring which contains the invertible elements
$q, u_1, u_2, \dots, u_r$ and $q-q^{-1}$.  Suppose that
$\Omega\cup\{\varrho\}$ is $\bu$--admissible. Let $\W_{r, n}$ be the
cyclotomic BMW algebras over $R$ with $2\nmid r$.  Then $\W_{r, n}$
is free over $R$ with
$$\mathscr C=\set{C^{(f,\lambda)}_{(\s,\rho,e)(\t,\kappa,d)}|
            (\s,\rho,e),(\t,\kappa,d)\in\delta(f,\lambda),
              \text{ where }(f,\lambda)\in\Lambda_{r, n}^+}$$
as its an $R$-basis. Further, $\mathscr C$ is a   cellular basis of
$\W_{r,n}(\bu)$.
\end{Theorem}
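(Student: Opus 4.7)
\emph{The plan is as follows.} The theorem reduces to two essentially independent ingredients, both already prepared by the preceding sections: that $\mathscr{C}$ spans $\W_{r,n}$, and that its cardinality matches the lower bound on $\dim\W_{r,n}$ furnished by the seminormal representations. Base change then transfers everything to an arbitrary admissible ring, and the cellular axioms follow by direct inspection.

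\textbf{Step 1 (Spanning).} I refine the filtration $\W_{r,n}\supset\W_{r,n}^1\supset\cdots\supset 0$ by intercalating the two-sided ideals $\W_{r,n}^{\unrhd(f,\lambda)}$ for $(f,\lambda)\in\Lambda_{r,n}^+$, partially ordered so that larger $f$ comes first and, within a fixed $f$, by dominance on $\lambda$. Proposition~\ref{Wlam spanning} says each successive layer $\W_{r,n}^{\unrhd(f,\lambda)}/\Wlam$ is $R$-spanned by the images of the elements $C^{(f,\lambda)}_{(\s,\rho,e)(\t,\kappa,d)}\in\mathscr{C}$ at level $(f,\lambda)$. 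Lifting through the filtration, $\mathscr{C}$ spans $\W_{r,n}$ as an $R$-module.

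\textbf{Step 2 (Cardinality).} Using $|\Dcal_{f,n}|=n!/((n-2f)!\,f!\,2^f)$ from Lemma~\ref{cosets} and $|\Nrf|=r^f$, I compute
$$
|\mathscr{C}|=\sum_{(f,\lambda)}|\delta(f,\lambda)|^2
 =\sum_{f=0}^{\lfloor n/2\rfloor}r^{2f}\Big(\frac{n!}{(n-2f)!\,f!\,2^f}\Big)^{\!2}\sum_{\lambda\in\Lambda_r^+(n-2f)}|\Std(\lambda)|^2 .
$$
Since the Ariki--Koike algebra $\H_{r,n-2f}$ is free of rank $r^{n-2f}(n-2f)!$ with the Dipper--James--Mathas cellular basis indexed by pairs of standard tableaux, the inner sum equals $r^{n-2f}(n-2f)!$. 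Collecting factors reduces the total to $r^n\sum_f\frac{(n!)^2}{(n-2f)!(f!)^2 4^f}$, and the classical identity $\sum_f\frac{(n!)^2}{(n-2f)!(f!)^2 4^f}=(2n-1)!!$ (the Brauer-diagram count, obtained by partitioning diagrams on $2n$ strands according to the number $f$ of horizontal arcs on each row) gives $|\mathscr{C}|=r^n(2n-1)!!$.

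\textbf{Step 3 (Dimension bound via seminormal forms, then base change).} I work first over the universal ring $R_0=\Z[q^{\pm1},u_1^{\pm1},\ldots,u_r^{\pm1},\delta^{-1}]$ with $q,u_i$ indeterminate and with the unique $\bu$-admissible $\Omega\cup\{\varrho\}$ from Definition~\ref{add1}; Steps 1--2 apply verbatim. Specialize to $F=\R$ with parameters satisfying Lemma~\ref{be real}. By Theorem~\ref{seminormal} and Lemma~\ref{be real}, for every $(f,\lambda)\in\Lambda_{r,n}^+$ the seminormal representation $\Delta(\lambda)$ is a $\W_{r,n}(F)$-module of dimension $|\UPD_n(\lambda)|$. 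Lemma~\ref{generic u}(a) shows that an updown tableau is determined by its content sequence, so the $X_i$-weight spaces separate basis vectors within each $\Delta(\lambda)$ and distinguish the $\Delta(\lambda)$ across different $\lambda$; a standard cyclicity argument from the explicit action then gives simplicity, and distinctness of content multisets forces pairwise non-isomorphism. Hence Wedderburn--Artin yields
$$
\dim_F\W_{r,n}(F)\ \ge\ \sum_{(f,\lambda)}|\UPD_n(\lambda)|^2\ =\ r^n(2n-1)!!,
$$
the final equality being the count of closed walks of length $2n$ on the Bratteli diagram of $\W_{r,n}$. Combined with the upper bound $\dim_F\W_{r,n}(F)\le|\mathscr{C}|=r^n(2n-1)!!$ from Step~1, equality holds throughout, so $\mathscr{C}$ is an $F$-basis. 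Any $R_0$-linear relation in $\mathscr{C}$ would specialize to an $F$-linear relation, so $\mathscr{C}$ is $R_0$-linearly independent, hence an $R_0$-basis. Since $\W_{r,n}(R)\cong\W_{r,n}(R_0)\otimes_{R_0}R$ for any $R$ satisfying the hypotheses, $\mathscr{C}$ is an $R$-basis, proving freeness of rank $r^n(2n-1)!!$.

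\textbf{Step 4 (Cellularity).} The anti-involution axiom follows from Lemma~\ref{antiinvolution}: since $T_i^*=T_i$, $E_i^*=E_i$, $X_j^*=X_j$, the factors satisfy $(X^\rho)^*=X^\rho$, $(X^\kappa)^*=X^\kappa$, $(E^f)^*=E^f$ (the $E_i$'s in $E^f$ have indices differing by $\ge 2$, hence commute), and $M_{\s\t}^*=M_{\t\s}$ by the defining formula, so
$$
(T_e^*X^\rho E^fM_{\s\t}X^\kappa T_d)^*=T_d^*X^\kappa E^fM_{\t\s}X^\rho T_e=C^{(f,\lambda)}_{(\t,\kappa,d)(\s,\rho,e)}.
$$
The right-multiplication axiom is exactly the content of Theorem~\ref{cellular1}: for $a\in\W_{r,n}$, the expression $E^fM_{\s\t}X^\kappa T_d\cdot a$ projects, modulo $\Wlam$, to an $R$-linear combination of $E^fM_{\s\t'}X^{\kappa'}T_{d'}$ with coefficients depending only on $(\t,\kappa,d)$ and $a$; premultiplying by $T_e^*X^\rho$ yields the required cellular expansion with coefficients independent of the row index $(\s,\rho,e)$.

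\textbf{Main obstacle.} The principal technical burden is Step~3: verifying that the $\Delta(\lambda)$ really are pairwise non-isomorphic simple modules over $F$, and identifying $\sum_{(f,\lambda)}|\UPD_n(\lambda)|^2$ with $r^n(2n-1)!!$. The spanning argument in Step~1 depends critically on Proposition~\ref{Wlam spanning}, whose derivation occupied Lemmas~\ref{l1}--\ref{l21} and relied on the assumption $r=2p+1$ via the symmetric exponent range $\N_r=\{-p,\ldots,p\}$ in $\Nrf$. Once these two inputs are in place the rest of the proof is formal.
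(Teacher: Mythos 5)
Your proof is correct and follows essentially the same route as the paper: spanning via Proposition~\ref{Wlam spanning}, a lower bound on $\dim_{\mathbb R}\W_{r,n}$ from the seminormal representations over $\mathbb R$ (Lemma~\ref{be real}, Theorem~\ref{seminormal}) together with Wedderburn--Artin, and base change from the universal ring $R_0$. The only (welcome) difference is that you verify the count $|\mathscr C|=r^n(2n-1)!!$ by an explicit combinatorial computation, where the paper instead cites the branching-rule identity $\sum_{(f,\lambda)}\#\UPD_n(\lambda)^2=r^n(2n-1)!!$ from \cite{RuiYu}.
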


\begin{proof} By Proposition~\ref{Wlam spanning}, $\W_{r, n}$ is
an $R$-module spanned by $\mathscr C$. First, we assume $R=R_0$
where $R_0=\mathbb Z[\bu^{\pm 1} , q^{\pm 1}, (q-q^{-1})^{-1}]$ and
$\bu, q$ are indeterminates over $\mathbb Z$. we prove that
$\mathscr C$ is $R_0$-linear independent.
 As
$\mathbb R$ is not finitely generated over $\mathbb Q$, we can take
$r+1$ algebraically independent transcedental real numbers $v_i\in
\mathbb R$ and $\mathbf q$. We define $R_1=\mathbb Z[v_1, v_2 \dots,
v_r, \mathbf q^{\pm 1}, \delta^{\pm}]$. Then $R_1\cong R_0$ as ring
isomorphism. Therefore, $\W_{r, n}$ over $R_0$ is isomorphic to
$\W_{r, n}$ over $R_1$ as $R_0$-algebra.

We have constructed the seminormal representations for $\W_{r, n}$
with respect to all $\lambda\in \Lambda_r^+(n-2f)$, $0\le f\le
\floor{n2}$ under the conditions in Lemma~\ref{generic u} and
(\ref{be}). In particular, by Lemma~\ref{be real}, we have
seminormal representations of $\W_{r, n}$ over $\mathbb R$. We
remark that we are assuming that $\Omega\cup{\varrho}$ is $\mathbf
v$-admissible. By arguments in the proof of Theorem~5.3 in
\cite{AMR}, we have that $\Delta( \lambda)$ are  irreducible $\W_{r,
n}$-modules for all $\lambda\in \Lambda_r(n-2f)$ and $0\le f\le
\floor{n2} $. Further, $\Delta(\lambda)\not\cong \Delta(\mu)$ if
$\lambda\neq \mu$. By Wedderburn--Artin theorem on semisimple finite
dimension algebras,
$$\dim_{\mathbb R} \W_{r, n}\ge \dim_{\mathbb R} \W_{r, n}/\Rad \W_{r, n}\ge \sum_{(f,
\lambda)\in \Lambda^+_{r, n}} \# \UPD_{n}(\lambda)^2 =r^n(2n-1)!!,$$
the last equality follows from classical branching rule for
cyclotomic Brauer algebras, which was  proved in Theorem~5.11 in
\cite{RuiYu}. It was also proved in \cite[5.2]{AMR}. Therefore,
$\dim_{\mathbb R} \W_{r, n}=r^n(2n-1)!!$ and $\mathscr C$ is
$R_1$-linear independent. So is over $R_0$. This shows that
$\mathscr C$ is an $R_0$ basis of $\W_{r, n}$.  By base change,
$\mathscr C$ is an $R$--basis for an arbitrary commutative ring.
Further, by Proposition~\ref{Wlam spanning}, $\mathscr C$ is a
cellular basis of $\W_{r, n}$ as required.
\end{proof}

In Theorem~\ref{W cellular}, we have assumed that $r$ is odd. We
remark that the only place we  need this assumption is that we use
Proposition~\ref{Wlam spanning} to prove that  $\W_{r, n}$ is an
$R$-module spanned by $\mathscr C$.

When $r=1$, $\W_{r, n}$ is the usual BMW algebra defined in
\cite{BirmanWenzl}. It has been proved in \cite{Xi} that BMW algebra
is  cellular. Late,  Enyang gave an another proof of this result in
\cite{Enyang}.

\section{Classification of the irreducible $\W_{r,n}(\bu)-modules$}
In this section we assume that $F$ is a field which contains
invertible elements $u_1, \dots, u_r$, $q$ and $q-q^{-1}$. We also
assume that $\Omega\cup\{\varrho\}$ is $\bu$-admissible. By
Theorem~\ref{W cellular}, $\W_{r, n}$ is a subalgebra of $\W_{r,
n_1}$ if $n\le n_1$. Therefore, we will identify $\W_{r, n}$ with
$\W_{r, n}'$ defined in the previous section.

We are going to classify the irreducible $\W_{r,n}$--modules over
$F$. We remark that we assume that $r$ is odd.

\textsf{All modules considered in this section  are right modules.}

\begin{Lemma}\label{class2}
Given a positive integer $f\le \lfloor \frac n2\rfloor$. We have
$E^f \W_{r, n} E^f =\W_{r, n-2f} E^f$.
\end{Lemma}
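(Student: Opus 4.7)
The plan is to prove the two inclusions separately, with the harder one $E^f \W_{r,n} E^f \subseteq \W_{r,n-2f} E^f$ done by induction on $f$. For the reverse inclusion $\W_{r,n-2f} E^f \subseteq E^f \W_{r,n} E^f$, observe first that $\W_{r,n-2f}$ commutes with $E^f$: every generator of $\W_{r,n-2f}$ has index at most $n-2f$, whereas the factors of $E^f$ are $E_{n-2k+1}$ for $1\le k\le f$, and each such index is at distance $\ge 2$ from those of the $\W_{r,n-2f}$-generators, so the braid and commutation relations in Definition~\ref{Waff relations} apply. Together with $E^f E^f = \omega_0^f E^f$ (following from the pairwise commutativity of the factors $E_{n-2k+1}$ and the idempotent relation $E_i^2=\omega_0 E_i$), we obtain $hE^f = \omega_0^{-f} E^f h E^f \in E^f \W_{r,n} E^f$ for any $h\in \W_{r,n-2f}$, under the standing hypothesis that $\omega_0$ is invertible.

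For the harder inclusion, the base case $f=1$ reads $E_{n-1}\W_{r,n}E_{n-1}\subseteq \W_{r,n-2}E_{n-1}$. By Lemma~\ref{l1}, the $R$-module $N_1$ spanned by $\W_{r,n-2}' E_{n-1}X_{n-1}^{\alpha}T_d$ (with $d\in\Dcal_{1,n}$ and $|\alpha|\le p$) is a right $\W_{r,n}$-module; since $E_{n-1}\in N_1$ (take $h=1$, $\alpha=0$, $d=1$), we have $E_{n-1}\W_{r,n}\subseteq N_1$. Multiplying any element of $N_1$ on the right by $E_{n-1}$ and invoking Corollary~\ref{et}(b), each spanning element $h\,E_{n-1}X_{n-1}^{\alpha}T_d\cdot E_{n-1}$ lies in $h\,\W_{r,n-2}'E_{n-1}\subseteq \W_{r,n-2}E_{n-1}$, which establishes the base case.

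For the inductive step I will write $E^f = E'E_{n-1}$ where $E' = E_{n-3}E_{n-5}\cdots E_{n-2f+1}\in \W_{r,n-2}$, using that the factors of $E^f$ pairwise commute. The decisive point is that $E'$ is exactly the analog of $E^{f-1}$ for the smaller algebra $\W_{r,n-2}$ (i.e.\ applying the definition of $E^{f-1}$ with $n$ replaced by $n-2$), and that $E_{n-1}$ commutes with both $E'$ and with all of $\W_{r,n-2}$ (the latter since every generator of $\W_{r,n-2}$ has index $\le n-3$ among the $T_i,E_i$ or $\le n-2$ among the $X_j$, hence distance $\ge 2$ or more from $n-1$, $n$). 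Applying the base case to the inner expression,
$$E^f \W_{r,n} E^f \;=\; E'\bigl(E_{n-1}\W_{r,n}E_{n-1}\bigr)E' \;\subseteq\; E'\,\W_{r,n-2}\,E_{n-1}\,E' \;=\; \bigl(E'\W_{r,n-2}E'\bigr)E_{n-1}.$$
The induction hypothesis, applied to $\W_{r,n-2}$ with $f-1$ in place of $f$, gives $E'\W_{r,n-2}E' \subseteq \W_{r,(n-2)-2(f-1)}\,E' = \W_{r,n-2f}\,E'$. Combining, $E^f \W_{r,n} E^f \subseteq \W_{r,n-2f}\,E'E_{n-1} = \W_{r,n-2f}\,E^f$, completing the induction.

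The only real obstacle is the $f=1$ base case, and the spanning-set analysis of Section~5 (Lemma~\ref{l1} and Corollary~\ref{et}(b)) has already done the work there; the rest is a clean commutativity-plus-induction argument. The essential bookkeeping is the identification of $E'=E_{n-3}\cdots E_{n-2f+1}$ as the element ``$E^{f-1}$'' for the subalgebra $\W_{r,n-2}$, which is what allows the induction hypothesis to apply directly.
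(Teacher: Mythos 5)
Your treatment of the inclusion $E^f\W_{r,n}E^f\subseteq\W_{r,n-2f}E^f$ is essentially the paper's argument: the $f=1$ case via Lemma~\ref{l1} (the right-module $N_1$ contains $E_{n-1}$) and Corollary~\ref{et}(b), followed by the induction $E^f=E'E_{n-1}$ with $E'$ identified as ``$E^{f-1}$'' for the subalgebra $\W_{r,n-2}$; the paper compresses this into ``using the result for $f=1$ repeatedly,'' and your bookkeeping of that step is correct.

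The genuine gap is in the reverse inclusion. You derive $hE^f=\omega_0^{-f}E^fhE^f$ ``under the standing hypothesis that $\omega_0$ is invertible,'' but there is no such hypothesis: the assumptions in force are only that $q$, $q-q^{-1}$, and the $u_i$ are invertible and that $\Omega\cup\{\varrho\}$ is $\bu$-admissible. Under $\bu$-admissibility one has $\omega_0=\delta^{-1}\varrho(\prod_l u_l^2-1)+1$ (for $r$ odd), which can perfectly well vanish --- indeed Lemma~\ref{key21}(c) explicitly treats the case $\omega_i=0$ for all $0\le i\le r-1$, and the introduction singles out the avoidance of the hypothesis ``$\omega_0$ invertible'' as a point of the paper. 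So your argument for $\W_{r,n-2f}E^f\subseteq E^f\W_{r,n}E^f$ does not go through as written. The fix is the paper's use of the untwisting relation $E_{n-1}E_{n-2}E_{n-1}=E_{n-1}$ in place of the idempotent relation: for $h\in\W_{r,n-2}$ one has
$$hE_{n-1}=hE_{n-1}E_{n-2}E_{n-1}=E_{n-1}\bigl(hE_{n-2}\bigr)E_{n-1}\in E_{n-1}\W_{r,n}E_{n-1},$$
with no invertibility needed; combined with your induction (writing $hE'=\sum_iE'a_iE'$ with $a_i\in\W_{r,n-2}$ and then $E'a_iE'E_{n-1}=E^f(a_iE_{n-2})E^f$), this repairs the reverse inclusion for all $f$.
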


\begin{proof} First, we assume that $f=1$.  By Theorem~\ref{W cellular},
the result follows if we prove $E_{n-1} h E_{n-1} \in
\W_{r,n-2}E_{n-1}$ for each cellular basis element $h =T_e^*X^\rho
E^fM_{\s\t}X^\kappa T_d$, where $
(\s,\rho,e),(\t,\kappa,d)\in\delta(f,\lambda)$.

By Lemma~\ref{l1}, $E_{n-1}T_e^*X^\rho E^fM_{\s\t}X^\kappa T_d\in
N_1$ where  $N_1$ is the $R$-submodule of $\W_{r, n}$  generated by
$\W_{r,  n-2} E_{n-1}X_{n-1}^{k}T_d$
 where $d\in\Dcal_{1,n}$ and $-p\le k \leq p$.
Further, by Corollary~\ref{et}(b) for $E_{n-1}X_{n-1}^{k}T_d
E_{n-1}$, we have $$E_{n-1}T_e^*X^\rho E^fM_{\s\t}X^\kappa T_d
E_{n-1}\in \W_{r, n-2} E_{n-1}.$$ The inverse inclusion follows
since $\W_{r, n-2} E_{n-1}=E_{n-1} \W_{r, n-2} E_{n-2}  E_{n-1}
\subset  E_{n-1} \W_{r, n} E_{n-1}$.
 Using the result for $f=1$
repeatedly, we have $E^f \W_{r, n} E^f =\W_{r, n-2f} E^f$ for all
positive integers $f\le \lfloor \frac n2\rfloor$.
\end{proof}

 It is  proved in
\cite{DJM:cyc} that $\cup_{\lambda\in \Lambda_r^+(n)}
\{\m_{\s\t}\mid \s, \t\in \Std(\lambda)\}$ is a cellular basis for
$\H_{r, n}$. Let $\Delta(\lambda)$ be the cell module of $\H_{r, n}$
defined by this cellular basis. Let $\phi_\lambda$ be the
corresponding symmetric associative bilinear form. Let $\phi_{f,
\lambda}$ be the symmetric associative bilinear form on the cell
module $\Delta(f, \lambda)$ which is defined via the cellular basis
of $\W_{r, n}$ given in  Theorem~\ref{W cellular}.

\begin{Lemma}\label{key21} Assume that
$(f,\lambda)\in\Lambda_{r,n}^+$. \begin{enumerate}\item Let $f\neq
n/2$. Then $\phi_{f,\lambda}\neq 0$ if and only if $\phi_\lambda
\neq 0$.
\item Let $f=n/2$,  and assume that $\omega_a\neq 0$ for some non-negative
integer $a\le r-1$. Then $\phi_{f, 0}\neq 0$.
\item If $\omega_i=0$
for all non-negative integers $i\le r-1$, then $\phi_{f,0}=0$ for
$f=n/2$.
\end{enumerate}
\end{Lemma}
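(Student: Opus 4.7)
The plan is to express $\phi_{f,\lambda}$ in terms of the Murphy form $\phi_\lambda$ of the Ariki--Koike algebra $\H_{r,n-2f}$ together with a ``positional'' contribution built from products of the $\omega_i$'s.  Fix the base element $\bfs_0=(\t^\lambda,\underline 0,1)\in\delta(f,\lambda)$.  For any $(\t,\kappa,d),(\t',\kappa',d')\in\delta(f,\lambda)$, the defining identity of $\phi_{f,\lambda}$ reads
\[
E^f M_{\t^\lambda\t}\,X^\kappa T_d\,T_{d'}^* X^{\kappa'}\,E^f M_{\t'\t^\lambda}\equiv \phi_{f,\lambda}\bigl(v_{(\t,\kappa,d)},v_{(\t',\kappa',d')}\bigr)\cdot E^f M_{\t^\lambda\t^\lambda}\pmod{\Wlam}.
\]
By Lemma~\ref{M_st properties}(a)--(c), $M_{\t^\lambda\t}$ commutes with both $E^f$ and $X^\kappa$, and by Lemma~\ref{class2} the middle factor $E^f X^\kappa T_d T_{d'}^* X^{\kappa'}E^f$ lies in $\W_{r,n-2f}E^f$, say equals $h(\kappa,d,\kappa',d')\cdot E^f$ for some $h\in\W_{r,n-2f}$.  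Transporting through the Murphy basis of $\H_{r,n-2f}$ (via Proposition~\ref{epsilon f} and Lemma~\ref{M_st properties}(d)) I expect the cellular form to factorise as
\[
\phi_{f,\lambda}\bigl(v_{(\t,\kappa,d)},v_{(\t',\kappa',d')}\bigr)=\phi_\lambda\bigl(\bar h\cdot\m_\t,\m_{\t'}\bigr),
\]
where $\bar h\in\H_{r,n-2f}$ denotes the image of $h$.

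For part~(a) with $f<n/2$, the implication ``$\phi_{f,\lambda}\ne 0\Rightarrow\phi_\lambda\ne 0$'' is immediate from the factorisation.  For the converse, I first try $\kappa=\underline 0$, $d=d'=1$: since $E_iE_j=E_jE_i$ whenever $|i-j|>1$, one has $(E^f)^2=\omega_0^f E^f$, hence $\bar h=\omega_0^f$ and $\phi_{f,\lambda}(v_{(\t,\underline 0,1)},v_{(\t',\underline 0,1)})=\omega_0^f\phi_\lambda(\m_\t,\m_{\t'})$, which handles the case $\omega_0\ne 0$.  If $\omega_0=0$, the recursion of Lemma~\ref{tilde W} (in particular $\omega_{k+1}^{(1)}-\omega_k^{(1)}=\delta\varrho^{-1}(X_k-X_k^{-1})$) shows that the central operators $\omega_k^{(a)}\in Z(\W_{r,k-1})$ are nontrivial polynomials in $X_1^{\pm1},\dots,X_{k-1}^{\pm1}$, and their actions on the $X_i$-eigenbasis of $\Delta(\lambda)$ are non-zero; the plan is then to choose $\kappa$ so that $\bar h$ realises such an $\omega_k^{(a)}$, producing a non-zero form value.

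For parts~(b) and~(c) with $f=n/2$ and $\lambda=0$: $\Std(0)$ is a singleton, $\phi_\lambda$ is trivially non-zero, and $\phi_{n/2,0}$ reduces to the scalar $\bar h(\kappa,d,\kappa',d')\in F$ (since $\W_{r,0}=F$).  Unwrapping $E_1X_1^aE_1=\omega_a E_1$ combined with $X_1E_j=E_jX_1$ for $j\ge 2$ gives $E^{n/2}X_1^aE^{n/2}=\omega_a\omega_0^{n/2-1}E^{n/2}$; iterating up the stack identifies every $E^{n/2}X^\kappa T_dT_{d'}^*X^{\kappa'}E^{n/2}$ as an explicit $F$--polynomial in the $\omega_i$'s times $E^{n/2}$.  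For~(b), if $\omega_0\ne 0$ the diagonal choice $\kappa=\kappa'=\underline 0$, $d=d'=1$ already yields $\bar h=\omega_0^{n/2}\ne 0$; if $\omega_0=0$ but some $\omega_a\ne 0$ with $0<a\le r-1$, the plan is to pick a non-trivial $d\in\Dcal_{n/2,n}$ that redistributes the $X$-weight across distinct pair-blocks, isolating the surviving $\omega_a$ from the $\omega_0$-prefactors and producing a non-vanishing polynomial.  For~(c), the vanishing hypothesis together with the admissibility recursion~(\ref{ad1}) force $\omega_a=0$ for every $a\in\Z$, so every such polynomial vanishes and $\phi_{n/2,0}\equiv 0$.

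The principal obstacle will be to make the element $\bar h(\kappa,d,\kappa',d')\in\H_{r,n-2f}$ explicit from the middle factor in each case, and, within part~(b) with $\omega_0=0$, to carry out the combinatorial choice of $(\kappa,d,\kappa',d')$ that yields a non-vanishing $\omega$-polynomial despite the vanishing $\omega_0$-prefactor.
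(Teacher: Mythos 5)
Your overall skeleton---reduce the middle factor $E^f X^\kappa T_d T_{d'}^* X^{\kappa'} E^f$ into $\W_{r,n-2f}E^f$ via Lemma~\ref{class2} and compare the resulting scalar/element with the Ariki--Koike form---matches the paper's strategy. However, the proposal has genuine gaps exactly at the points where $\omega_0$ may vanish. In part (a), the converse direction is only proved when $\omega_0\ne 0$ (via $(E^f)^2=\omega_0^fE^f$). When $\omega_0=0$, and a fortiori when \emph{every} $\omega_a$ vanishes (which admissibility does not exclude, cf.\ part (c)), your fallback of choosing $\kappa$ so that $\bar h$ ``realises'' some $\omega_k^{(a)}$ cannot work as stated: all purely closed-loop evaluations then vanish, and in any case you would still have to show $\phi_\lambda(\bar h\,\m_\t,\m_{\t'})\ne0$ for a central $\bar h$ over an arbitrary field, where no $X_i$-eigenbasis is available. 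The mechanism that actually rescues (a) for $f<n/2$ is independent of the $\omega$'s: one takes nontrivial $d,d'$ which thread each contracted pair through an open strand, so that the untwisting relations $E_iE_{i\pm1}E_i=E_i$ produce coefficient $1$. This is the content of the argument of \cite{RS} that the paper invokes, and it is the key idea missing from your proof of (a).

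The same issue recurs in (b): your diagonal choice yields $\omega_a\omega_0^{n/2-1}$, which dies when $\omega_0=0$, and the proposed remedy (``pick a non-trivial $d$ that redistributes the $X$-weight'') is not carried out. The paper's fix is different and cleaner: place $X^\ell$ on \emph{every} contracted pair and prove the identity $\omega_{2k+1}^{(\ell)}E_{2k+1}E_{2k-1}\cdots E_1=\omega_\ell E_{2k+1}E_{2k-1}\cdots E_1$ by an explicit computation with the tangle and untwisting relations, whence $\phi_{n/2,0}\bigl(E^{n/2},E^{n/2}X_{n-1}^\ell\cdots X_3^\ell X_1^\ell\bigr)=\omega_\ell^{n/2}\ne0$; no $\omega_0$-prefactor ever appears. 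Finally, in (c) you assert that every scalar $E^{n/2}(\cdots)E^{n/2}$ is an $\omega$-polynomial with zero constant term; this needs justification, since the untwisting relations could a priori contribute constants. The paper instead collapses everything to $E^{n/2}hE_1$ with $h\in\W_{r,2}$ by repeated use of Lemma~\ref{class2} and verifies $E_1hE_1=0$ directly on the known basis of $\W_{r,2}$.
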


\begin{proof} (a) can be proved by arguments similar to those for
\cite[3.1]{RS}.  In order to prove (b), we assume that $\ell\in
\mathbb Z$ and $k\in \mathbb Z^{\ge 0}$. We have
$$\begin{aligned}
&\omega_{2k+1}^{(\ell)}E_{2k+1}E_{2k-1}\cdots E_1\\
=&E_{2k+1}X_{2k+1}^{\ell}E_{2k+1}E_{2k-1}\cdots E_1\\
=&E_{2k+1}X_{2k+1}^{\ell}E_{2k-1}E_{2k}E_{2k-1}E_{2k+1}E_{2k-3}\cdots
E_1\\
=&E_{2k+1}E_{2k-1}X_{2k-1}^{\ell}E_{2k}E_{2k-1}E_{2k+1}E_{2k-3}\cdots
E_1\\
=&E_{2k-1}X_{2k-1}^{\ell}E_{2k+1}E_{2k}E_{2k+1}E_{2k-1}E_{2k-3}\cdots
E_1\\
=&E_{2k-1}X_{2k-1}^{\ell}E_{2k+1}E_{2k-1}E_{2k-3}\cdots
E_1\\
=&E_{2k+1}E_{2k-1}X_{2k-1}^{\ell}E_{2k-1}E_{2k-3}\cdots
E_1\\
=&E_{2k+1}E_{2k-1}E_{2k-3}\cdots E_1X_1^\alpha E_1, \quad \text {by induction assumption}\\
=&\omega_\ell E_{2k+1}E_{2k-1}E_{2k-3}\cdots E_1.\\
\end{aligned}
$$
 Since we are assuming that
$\Omega\cup\{\varrho\}$ is $\bu$-admissible, by Theorem~\ref{W
cellular},   $\mathscr C$ is an $F$-basis of $\W_{r, n}$. Since
$E_{2k+1}E_{2k-1}E_{2k-3}\cdots E_1\in \mathscr C$,
$\omega_{2k+1}^{(\ell)}=\omega_\ell$. So,
$$\phi_{\frac{n}{2},0}(E^{\frac{n}{2}},E^{\frac{n}{2}}X_{n-1}^\ell\cdots
X_3^\ell X_1^\ell)=(\omega_\ell )^{\frac{n}{2}}\neq 0.$$ This proves
(b).

Suppose that  $\alpha, \beta\in \Nrf$ for $f=n/2$ . Using
Lemma~\ref{class2} repeatedly, we have, for any $w \in\mathfrak
{S}_n$
$$E^{\frac n2} X^\alpha \cdot T_w\cdot
X^\beta E^{\frac n2}=E^{\frac n2}h E_1$$ for some $h\in \W_{r, 2} $.
By direct computation, $E_1 h E_1=0$ for all $h\in \W_{r, 2} $,
forcing $E^{\frac n2}h E_1=0$. Therefore, $\phi_{\frac n2, 0}=0$.
This proves (c).
\end{proof}

Lemma~\ref{key21} sets up a relationship between the irreducible
$\W_{r, n}$-modules and the irreducible $\H_{r, n-2f}$-modules for
all non-negative integers $f\le \floor{n2}$. Note that we can keep
the assumption that $u_i=q^{k_i}, k_i\in \mathbb Z$ by using
Dipper-James-Mathas's Morita equivalent theorem for $\H_{r, n-2f}$.
In ``separate condition", such a result was proved in \cite{DuRui}.
By \cite{AM:simples}, \cite{Ariki:can} and \cite{A5}, irreducible
$\H_{r, n-2f}$-modules  are indexed by \textsf{$\bu$-Kleshchev
$r$-multipartitions of $n-2f$}.

\begin{Theorem}\label{simplem}
Suppose $F$ is a field which contains non-zero elements $q, u_1,
\dots, u_r$ and $q-q^{-1}$. Assume that $\Omega\cup\{\varrho\}$ is
$\bu$-admissible. Let $\W_{r,n}$, $2\nmid r $ be the cyclotomic BMW
algebra over $F$.
\begin{enumerate}
\item If $n$ is odd, then the set of all pair-wise non-isomorphic
irreducible $\W_{r,n}$-modules are indexed by $(f, \lambda)$ where
$0\le f\le \lfloor \frac n2\rfloor$ and  $\lambda$ are \textsf{
$\bu$-Kleshchev multipartitions} of $n-2f$.
\item Suppose that $n$ is an even number.
\begin{enumerate}
\item If $\omega_i\neq 0$ for some non-negative integers $i\le r-1$, then the set of all pair-wise non-isomorphic
irreducible $\W_{r,n}$-modules are indexed by   $(f, \lambda)$ where
$0\le f\le \frac n2$ and  $\lambda$ are \textsf{ $\bu$-Kleshchev
multipartitions} of $n-2f$.
\item If $\omega_i= 0$ for all non-negative integers  $i\le r-1$, then the set of all pair-wise non-isomorphic
irreducible $\W_{r,n}$-modules are  indexed by $(f, \lambda)$ where
$0\le f< \frac n2$ and  $\lambda$ are \textsf{ $\bu$-Kleshchev
multipartitions} of $n-2f$.
\end{enumerate}
\end{enumerate}
\end{Theorem}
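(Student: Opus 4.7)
The plan is to invoke the general representation theory of cellular algebras (Graham--Lehrer~\cite{GL}) directly, since Theorem~\ref{W cellular} provides a cellular basis of $\W_{r,n}$ with cell index set $\Lambda_{r,n}^+=\{(f,\lambda)\mid 0\le f\le\floor{n2},\ \lambda\in\Lambda_r^+(n-2f)\}$. By the Graham--Lehrer machinery, the pairwise non--isomorphic irreducible $\W_{r,n}$--modules are indexed precisely by those $(f,\lambda)\in\Lambda_{r,n}^+$ for which the natural bilinear form $\phi_{f,\lambda}$ on the cell module $\Delta(f,\lambda)$ is non--zero. So the problem reduces to determining which $\phi_{f,\lambda}$ are non--zero.

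First I would exploit Lemma~\ref{key21}, which relates $\phi_{f,\lambda}$ to the Dipper--James--Mathas bilinear form $\phi_\lambda$ on the cell module of the Ariki--Koike algebra $\H_{r,n-2f}$. Specifically, when $f\ne n/2$ we have $\phi_{f,\lambda}\ne 0$ iff $\phi_\lambda\ne 0$; when $f=n/2$ (so necessarily $n$ is even and $\lambda=\underline{0}$) we have $\phi_{n/2,\underline{0}}\ne 0$ iff some $\omega_a\ne 0$ with $0\le a\le r-1$. Next I would combine this with the known classification of irreducibles for Ariki--Koike algebras: by the Dipper--James--Mathas Morita equivalence one reduces to the ``connected'' case in which the parameters $u_1,\dots,u_r$ are powers of $q$, and then by Ariki--Mathas~\cite{AM:simples}, Ariki~\cite{Ariki:can}, and~\cite{A5}, the condition $\phi_\lambda\ne 0$ is equivalent to $\lambda$ being a $\bu$--Kleshchev $r$--multipartition of $n-2f$.

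Putting these together yields the three cases. If $n$ is odd, then the value $f=n/2$ never occurs, and the indexing set is exactly $\{(f,\lambda)\mid 0\le f\le\lfloor n/2\rfloor,\ \lambda\in\Lambda_r^+(n-2f)\text{ is $\bu$--Kleshchev}\}$, giving~(a). If $n$ is even and some $\omega_i\ne 0$ with $0\le i\le r-1$, the case $f=n/2$ contributes the single pair $(n/2,\underline{0})$ (with $\underline{0}$ automatically $\bu$--Kleshchev as the empty multipartition), so we get~(b)(i). If $n$ is even and every such $\omega_i$ vanishes, Lemma~\ref{key21}(c) kills the form $\phi_{n/2,\underline{0}}$, and so that single pair is excluded, giving~(b)(ii).

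The substantive work is already done: Theorem~\ref{W cellular} supplies cellularity (using $2\nmid r$), and Lemma~\ref{key21} translates the non--vanishing of $\phi_{f,\lambda}$ on the $\W_{r,n}$--side to the non--vanishing of $\phi_\lambda$ on the $\H_{r,n-2f}$--side. The only delicate point, and in my view the ``main obstacle'', is the careful bookkeeping for the invocation of the $\H_{r,n-2f}$ classification: one must verify that the standard Dipper--James--Mathas Morita reduction (in the possibly non--separated setting) is available under our hypotheses on the parameters $u_1,\dots,u_r,q$, so that it is legitimate to pass to $u_i=q^{k_i}$ and then apply the Ariki--Mathas/Ariki Kleshchev classification. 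Once that reduction is in place, the remaining argument is purely combinatorial assembly of the three cases above.
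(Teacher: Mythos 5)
Your proposal is correct and follows essentially the same route the paper intends: the paper gives no separate proof environment for Theorem~\ref{simplem}, but the paragraph preceding it outlines exactly your argument --- Graham--Lehrer's criterion that the irreducibles are the $(f,\lambda)$ with $\phi_{f,\lambda}\neq 0$, Lemma~\ref{key21} to transfer non-vanishing to the Ariki--Koike form $\phi_\lambda$ (with the separate treatment of $f=n/2$ via the $\omega_i$), the Dipper--James--Mathas Morita reduction to $u_i=q^{k_i}$, and the Ariki--Mathas/Ariki identification of the surviving $\lambda$ with $\bu$-Kleshchev multipartitions. Your case assembly for $n$ odd, $n$ even with some $\omega_i\neq 0$, and $n$ even with all $\omega_i=0$ matches the statement exactly.
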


\providecommand{\bysame}{\leavevmode ---\ } \providecommand{\og}{``}
\providecommand{\fg}{''} \providecommand{\smfandname}{and}
\providecommand{\smfedsname}{\'eds.}
\providecommand{\smfedname}{\'ed.}
\providecommand{\smfmastersthesisname}{M\'emoire}
\providecommand{\smfphdthesisname}{Th\`ese}


\begin{thebibliography}{DWH99}

\bibitem{AK}
{\scshape S.~Ariki {\normalfont \smfandname} K.~Koike}, {\og A
{H}ecke algebra
  of {$({\bf {Z}}/r{\bf {Z}})\wr{\mathfrak {S}}\sb n$} and construction of its
  irreducible representations\fg}, \emph{Adv. Math.} \textbf{106} (1994),
  216--243.

\bibitem{AM:simples}
{\scshape S.~Ariki {\normalfont \smfandname} A.~Mathas}, {\og {The
number of
  simple modules of the Hecke algebras of type $G(r,1,n)$}\fg}, \emph{Math. Z.}
  \textbf{233} (2000), 601--623.

\bibitem{AMR}
{\scshape S.~Ariki {\normalfont \smfandname} A.~Mathas, H. Rui},
{\og {Cyclotomic Nazarov-Wenzl algebras}\fg}, \emph{Nagoya Math. J.}
  \textbf{182} (2006), 47--134.


\bibitem{Ariki:can}
{\scshape S.~Ariki}, {\og On the decomposition numbers of the
{Hecke} algebra
  of {$G(m,1,n)$}\fg}, \emph{J.~Math. Kyoto Univ.} \textbf{36} (1996),
  789--808.

\bibitem{Ariki:class}
\bysame , {\og On the classification of simple modules for
cyclotomic {Hecke
  algebras of type $G(m,1,n)$ and Kleshchev} multipartitions\fg}, \emph{Osaka
  J.~Math.} \textbf{38} (2001), 827--837.

\bibitem{A5}\bysame, {\og  Proof of the modular branching rule for cyclotomic Hecke
algebras\fg}, \emph{  J.   Algebra}\textbf {306} (2006) 290¨C300.



\bibitem{BrauerAlg}
{\scshape R.~Brauer}, {\og On algebras which are connected with the
semisimple
  continuous groups\fg}, \emph{Ann. of Math.} \textbf{38} (1937), 857--872.


\bibitem{BirmanWenzl}
{\scshape J.~S. Birman {\normalfont \smfandname} H.~Wenzl}, {\og
Braids, link
  polynomials and a new algebra\fg}, \emph{Trans. Amer. Math. Soc.}
  \textbf{313} (1989), 249--273.

\bibitem{DJM:cyc}
{\scshape R.~Dipper, G.~James {\normalfont \smfandname} A.~Mathas},
{\og
  Cyclotomic $q$--{Schur} algebras\fg}, \emph{Math.~Z.} \textbf{229} (1999),
  385--416.

\bibitem{DuRui}
{\scshape J. Du and H. Rui}, {\og
 Ariki-Koike algebras with semisimple bottoms\fg}, \emph{Math.~Z.} \textbf {234}, (2000), 807-830.


\bibitem{Enyang}
{\scshape J.~Enyang}, {\og Cellular bases for the {Brauer} and
  {BMW} algebras\fg}, \emph{J. Algebra} \textbf{281}
  (2004), 413--449.



\bibitem{GP}
{\scshape M. Geck and G. Pfeiffer}, \emph {Characters of Finite
Coxeter groups and Iwahori--Hecke Algebras}, Clarendon Press.
Oxford, 2000.

\bibitem{GoodmanHauschild}
{\scshape F.~M. Goodman {\normalfont \smfandname} H.~M. Hauschild},
{\og
  {Affine Birman-Wenzl-Murakami Algebras and Tangles in the Solid Torus}\fg},
  arXiv:math.QA/0411155.

\bibitem{GH1}\bysame,
{\og
  {Cyclotomic  Birman-Wenzl-Murakami Algebras, I: freeness and realization as tangle algebras }\fg},
  arXiv:math.QA/0612064.

\bibitem{GH2}\bysame,{\og  Cyclotomic
Birman--Wenzl--Murakami algebras, II: Admissibility Relations and
Representation theory\fg} arXiv:math.QA/0612065.

\bibitem{GL}
{\scshape J.~J. Graham {\normalfont \smfandname} G.~I. Lehrer}, {\og
Cellular
  algebras\fg}, \emph{Invent. Math.} \textbf{123} (1996), 1--34.

\bibitem{HO:cycBMW}
{\scshape R.~H{\"a}ring-Oldenburg}, {\og Cyclotomic
{B}irman-{M}urakami-{W}enzl
  algebras\fg}, \emph{J. Pure Appl. Algebra} \textbf{161} (2001), 113--144.

\bibitem{Macdonald}
{\scshape I.~G. Macdonald}, \emph{Symmetric functions and {H}all
polynomials},
  second \smfedname, Oxford Mathematical Monographs, Clarendon Press, Oxford,
  1995.



\bibitem{Nazarov:brauer}
{\scshape M.~Nazarov}, {\og Young's orthogonal form for {B}rauer's
centralizer
  algebra\fg}, \emph{J. Algebra} \textbf{182} (1996), 664--693.

\bibitem{OrelRam}
{\scshape R.~Orellana {\normalfont \smfandname} A.~Ram}, {\og
{Affine braids,
  Markov traces and the category O}\fg},
  \emph {Proceedings of the International Colloquium on Algebraic Groups and Homogeneous Spaces}
   Mumbai (2004), V.B. Mehta ed., Tata Institute of Fundamental Research, Narosa Publishing House, Amer. Math. Soc. (2007) 423-473.



\bibitem{RS}
{\scshape H.~Rui {\normalfont \smfandname} M.~Si}, {\og On the
structure of cyclotomic Nazarov--Wenzl algebras\fg},\emph{J. Pure
Appl. Algebra}, to appear.

\bibitem{RuiYu}
{\scshape H.~Rui {\normalfont \smfandname} W.~Yu}, {\og On the
semisimplicity
  of cyclotomic {Brauer} algebras\fg}, \emph{J. Algebra} \textbf{277} (2004),
  187--221.

\bibitem{Yu1}
{\scshape Shona Yu }, {\og The cyclotomic Birman-Murakami-Wenzl
algebras\fg}, Ph.D thesis, Sydney University, 2007.

\bibitem{WY} {\scshape S. Wilcox and S. Yu}, {\og The cyclotomic BMW
algebra associated with two string type $B$ braid group, \fg},
preprint, 2006.

\bibitem{Xi}
{\scshape C.~Xi}, {\og On the quasi-heredity of {Birman--Wenzl}
algebras\fg},
  \emph{Adv. Math.} \textbf{154} (2000), 280--298.

\bibitem{Xu}
{\scshape J.~Xu}, {\og Cyclotomic Birman-Murakami-Wenzl algebras
$\W_{r, 2}$,\fg} thesis for Master degree, East China Normal
University, June, 2006. In chinese.
\end{thebibliography}
\end{document}